\newtheorem{thm}[subsection]{Theorem}
\newtheorem*{thm*}{Theorem}
\newtheorem{cor}[subsection]{Corollary}
\newtheorem{lem}[subsection]{Lemma}
\newtheorem*{lem*}{Lemma}
\newtheorem{prop}[subsection]{Proposition}
\newtheorem*{prop*}{Proposition}
\theoremstyle{definition}
\newtheorem{defn}[subsection]{Definition}
\numberwithin{equation}{subsection}
\DeclareMathOperator*{\tnull}{}
\newcommand{\tr}{\sideset{^{t{}}}{}\tnull\nolimits\hskip-3.5pt}
\renewcommand\mod[1]{\ (\mathop{\rm mod}#1)}
\newcommand{\quash}[1]{}
\newcommand{\Diag}{\text{Diag}}
\newcommand{\Sp}{{\rm{Sp}}}
\newcommand{\FF}{\mathbb F}
\newcommand{\CC}{\mathbb C}
\newcommand{\RR}{\mathbb R}\newcommand{\QQ}{\mathbb Q}
\newcommand{\ZZ}{\mathbb Z}
\newcommand{\pp}{\mathfrak p}
\newcommand{\gsp}{\mathop{\textstyle\rm GSp}}
\newcommand{\Gsp}{\mathop{\textstyle\rm GSp}}
\newcommand{\GL}{\textstyle\mathop{\rm GL}}\newcommand{\GSp}{\textstyle\mathop{\rm GSp}}
\newcommand{\sGL}{\scriptstyle\mathop{\rm GL}}
\newcommand{\Hom}{\mathop{\rm Hom}}\newcommand{\hHom}{{\rm Hom}}
\newcommand{\End}{\mathop{\rm End}}
\newcommand{\Gal}{\mathop{\rm Gal}}
\renewcommand{\Diag}{\mathop{\rm diag}}
\newcommand{\rank}{\mathop{\rm rank}}
\newcommand{\contains}{\supset}
\renewcommand{\AA}{\mathbb A}
\newcommand{\vol}{\mathop{\rm vol}}
\newcommand{\val}{\mathop{\rm val}}
\newcommand{\Tr}{\mathop{\rm Trace}}
\newcommand{\tTr}{{\rm Trace}}
\newcommand{\diag}{{\rm diag}}
\newcommand{\GO}{{\rm GO}}\newcommand{\sig}{{\rm sig}}
\newcommand{\B}{\omega}
\renewcommand{\pp}{\prime\prime}
\newcommand{\ToFrom}{{\ensuremath\raisebox{-1ex}{$\overset{\textstyle\hookrightarrow}{\twoheadleftarrow}$ }}}
\newcommand{\taub}{\bar\tau}
\renewcommand{\k}{{\Bbbk}}
\renewcommand{\pp}{\prime\prime}
\newcommand{\qreal}{$q$-inversive\xspace}
\newcounter{remarkscounter}
\begin{document}
\today
\title{Real structures on ordinary abelian varieties}
\author{Mark Goresky}
\author{Yung sheng Tai}
\maketitle
\setcounter{tocdepth}{1}
{\footnotesize\tableofcontents}

\makeatletter
\quash{
\renewcommand{\subsection}{\@startsection
{subsection}{2}{0mm}
{-\baselineskip} 
{0.2\baselineskip}%
{\bfseries\normalsize}}
}

\renewcommand{\section}{\@startsection
{section}{1}{0mm}
{-\baselineskip}
{.5\baselineskip}
{\bf\sffamily\centering}}
\makeatother

\section{Introduction}

\subsection{}\label{subsec-intro1}
The moduli space $X_{\CC}$ of $n$-dimensional principally polarized complex 
Abelian varieties (with a level structure) is the quotient
\[ X_{\CC} = \Gamma \backslash G(\RR) /K\]
where $G(\RR) = \Sp_{2n}(\RR)$, $K = U_n$ is a maximal compact subgroup, and
$\Gamma = \Sp_{2n}(\ZZ)$ (or a congruence subgroup thereof).  This space has the 
structure of a quasi-projective complex algebraic
variety and it can be defined over a certain number field $F$.  With appropriate 
choices, the reduction of $X$ modulo a prime ideal $\mathfrak p$ can be 
interpreted as the moduli space $X_k$ of principally polarized Abelian 
varieties (with level structure) over the finite field 
$k = \mathcal O_F/\mathfrak p$ of characteristic $p>0$.  
The number of points in $X_k$ was computed by R. Kottwitz \cite{Kottwitz, Kottwitz2} (and proves a
reformulation of the conjecture of R. Langlands and M. Rapoport (\cite{LanglandsRapoport}) and follows
earlier work on this question by J. Milne, W. Waterhouse, Langlands, Rapoport and others).
The Kottwitz ``counting formula", which is recalled below, (see also the review article by Clozel, \cite{Clozel})
is a certain sum of (finite ad\`elic) orbital 
integrals over the symplectic group, designed to facilitate comparison with the trace formula.
\quash{
of) this formula and the trace formula is one of the key steps in the proof 
of Langlands' conjectures (for the symplectic group) relating automorphic 
forms and Galois representations.
}

\subsection{}
One might ask whether there is a similar circle of geometric results for the 
space $Y = \Gamma_1 \backslash
G_1(\RR) /K_1$ where $G_1(\RR) = \GL_{n}(\RR)$, $K_1 = O_n$, and $\Gamma_1 = \GL_{n}(\ZZ)$
(or a congruence subgroup thereof).  Enthusiasm for this question follows
the recent spectacular advances
(\cite{HarrisLan, HarrisTaylor, PatrikisTaylor, Barnet, Taylor, Scholze}) in associating automorphic forms for $\GL_n$ with Galois
representations.  Unfortunately the space $Y$
does not carry the structure of a complex algebraic variety.  Rather, it has a natural interpretation as the moduli space of $n$-dimensional
compact Riemannian tori (with level structure), and there is no obvious
characteristic $p$ analog of a compact Riemannian torus.

Complex conjugation acts on the variety $X$ and it has been observed
(\cite{Silhol1, Silhol2, Comessatti, Shimura3, MilneShih, Adler, Gross, GT}) 
that its fixed points (that is, the set of ``real" points in $X$) correspond to (principally polarized)
Abelian varieties that admit a real structure.  A given Abelian variety may admit many distinct
real structures. However, in \cite{GT} the authors showed,  
with the addition of appropriate level structures, that \begin{enumerate}
\item the real points of $X$ are in a natural one to one correspondence with real isomorphism classes of
principally polarized Abelian varieties with anti-holomorphic involution and
\item
this set of real points in $X$  is a finite disjoint union of copies of the space $Y$.
\end{enumerate}
This provides a sort of ``algebraic moduli" interpretation for the space $Y$.

In an effort to find a characteristic $p>0$ analog of this result, 
one might ask whether it is possible to make sense of the notion of an anti-holomorphic 
involution of an Abelian variety $A/k$ that is defined over a finite field $k$.  We do not
have an answer to this question.  In this paper however, we investigate a construction that
makes sense when  $A$ is an {\em ordinary} Abelian variety.  We then discover that there are
finitely many isomorphism classes of principally polarized ordinary Abelian varieties with
anti-holomorphic involution.  We are able to count the number of
isomorphism classes of such varieties and to obtain a formula, similar to that of
Kottwitz, which involves ad\`elic integrals over the general linear group
rather than the symplectic group.  It resembles the finite ad\`elic part
(away from $p$) of the relative trace formula.

\subsection{} \label{subsec-Deligne-modules}
We use the descriptions, due to P. Deligne \cite{Deligne} and E. Howe \cite{Howe}, of the
category of $n$-dimensional polarized ordinary Abelian varieties over the finite field
$k = \FF_q$ with $q=p^a$ elements.  This
category is equivalent to the category $\mathcal D(n,q)$ of  {\em polarized
Deligne modules}, whose objects are triples $(T,F,\B)$ where $T$ is a
free $\ZZ$-module of rank $2n$, where $F:T \to T$ is a semisimple endomorphism whose
eigenvalues are (ordinary) Weil $q$-numbers, where $\B:T \times T \to \ZZ$ is a symplectic 
form, and where $F$ and $\B$ are required
to have a certain further list of properties, all of which are reviewed in Section
\ref{subsec-Deligne2} and Appendix  \ref{subsec-polarizations}, 
but we mention in particular that there exists $V:T \to T$ such that $VF = FV = qI$.
(This equivalence of categories involves a choice of
embedding $\varepsilon:W(\overline{k}) \to \CC$. )

There is a natural involution on the category of polarized Deligne modules, which may be viewed as a characteristic
$p$ analog of complex conjugation on the moduli space $X$.  This involution takes
$(T,F,\B)$ to $(T,V,-\B)$.  It takes a mapping $f:T \to T'$ to the same mapping so 
it takes isomorphism classes to isomorphism classes.  It is compatible with the natural
functor $\mathcal D(n,q) \to \mathcal D(n,q^r)$ which takes $(T,F,\B)$ to
$(T,F^r, \B)$.  In \S \ref{subsec-proof-iso} we show:

\begin{lem}\label{lem-fixed-iso-class}
Let $(T,F,\B)$ be a polarized Deligne module.  Then $(T,V,-\B)$ is also a polarized Deligne
module.  Suppose $T\otimes \QQ$ is a simple $\QQ[F]$ module.  Then $(T,F,\B)$ and
$(T,V,-\B)$ are isomorphic if and only if $(T,F,\B)$ admits a {\em real structure}, 
that is, a group isomorphism $\tau:T \to T$ such that
\begin{enumerate}
\item $\tau^2 = I$
\item $\tau F \tau^{-1} = V$
\item $\B(\tau x, \tau y) = -\B(x, y)$.
\end{enumerate}
\end{lem}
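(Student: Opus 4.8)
The reverse implication is immediate: if $\tau$ is a real structure then, by (1)--(3), $\tau$ is a $\ZZ$-module isomorphism with $\tau F\tau^{-1}=V$ and $(-\B)(\tau x,\tau y)=\B(x,y)$, so $\tau\colon(T,F,\B)\to(T,V,-\B)$ is an isomorphism of polarized Deligne modules (no simplicity needed). So assume conversely that such an isomorphism $\phi$ exists. The plan is to show that $\phi$ \emph{itself} may be taken as the real structure.

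\emph{Structure of the endomorphism algebra.} Since $F$ is semisimple, $\QQ[F]$ is a product of fields; as $F$ is invertible (its eigenvalues, being Weil $q$-numbers, are nonzero) it acts faithfully, so simplicity of $M:=T\otimes\QQ$ forces $\QQ[F]=K$ to be a field and $M$ to be a $1$-dimensional $K$-vector space with $[K:\QQ]=2n$; in particular $\End_{\QQ[F]}(M)=K$. Because $VF=qI$ we have $V=qF^{-1}\in K$, and since the conjugates of $F$ are stable under $\alpha\mapsto q/\alpha$ the assignment $F\mapsto V$ extends to an automorphism $\sigma$ of $K$ which, under any embedding $K\hookrightarrow\CC$, is complex conjugation; as the abelian variety is ordinary, $K$ is a CM field and $\sigma\ne\mathrm{id}$. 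Write $K^{+}$ for the fixed field of $\sigma$.

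\emph{The isomorphism is semilinear, and $\phi^{2}$ is a norm.} From $\phi F=V\phi$ we get $\phi\,g(F)\,\phi^{-1}=g(V)=\sigma(g(F))$ for every polynomial $g$, hence $\phi a\phi^{-1}=\sigma(a)$ for all $a\in K$, i.e.\ $\phi$ is $\sigma$-semilinear for the $K$-action on $M$. Fix a generator $m_{0}$ of $M$ over $K$ and write $\phi(m_{0})=b\,m_{0}$ with $b\in K^{\times}$. Then $\phi(xm_{0})=\sigma(x)\phi(m_{0})=b\,\sigma(x)\,m_{0}$, and applying $\phi$ once more gives $\phi^{2}(xm_{0})=b\,\sigma(b)\,x\,m_{0}$; that is, $\phi^{2}$ is multiplication by $\Norm_{K/K^{+}}(b)\in(K^{+})^{\times}$. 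Next, the properties of a polarized Deligne module (Appendix \ref{subsec-polarizations}) make $F$ and $V$ adjoint for $\B$, i.e.\ $\B(ax,y)=\B(x,\sigma(a)y)$ for all $a\in K$; identifying $M$ with $K$ via $m_{0}$ this forces $\B(x,y)=\mathrm{Tr}_{K/\QQ}\!\big(\xi\,\sigma(x)\,y\big)$ for a unique $\xi\in K^{\times}$, and alternation gives $\sigma(\xi)=-\xi$. Substituting $\phi(xm_{0})=b\,\sigma(x)\,m_{0}$ into $\B(\phi x,\phi y)=-\B(x,y)$ and using $\sigma$-invariance and nondegeneracy of $\mathrm{Tr}_{K/\QQ}$ yields $\Norm_{K/K^{+}}(b)\,\xi=\xi$, hence $\Norm_{K/K^{+}}(b)=1$. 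Therefore $\phi^{2}$ is the identity on $M$, hence on $T$, and $\tau:=\phi$ satisfies (1)--(3).

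\emph{Where the difficulty lies.} The only genuine computation is the last one, rewriting $\B(\phi x,\phi y)=-\B(x,y)$ in terms of $\Norm_{K/K^{+}}(b)$; everything else is bookkeeping once one has the field structure. The essential use of the hypothesis is this: simplicity of $T\otimes\QQ$ is exactly what makes $\phi$ semilinear over the \emph{field} $K$, so that $\phi^{2}$ is a norm $\Norm_{K/K^{+}}(b)$, which the polarization then pins down to be $1$. Without that hypothesis, $\phi^{2}$ lives in a larger, possibly noncommutative, endomorphism algebra and need not be trivializable by rescaling, so the statement really does require $T\otimes\QQ$ to be a simple $\QQ[F]$-module.
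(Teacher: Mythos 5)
Your proof is correct and follows essentially the same route as the paper's: both arguments reduce to showing that $\phi^{2}$ is multiplication by $b\,\sigma(b)=\Norm_{K/K^{+}}(b)$ for the element $b\in K=\QQ[F]$ determined by the image of a cyclic vector, and then use the relation $\B(\phi x,\phi y)=-\B(x,y)$ to force this norm to equal $1$. The paper phrases this with a cyclic vector $v_{0}$ and a polynomial $f$ with $\psi(v_{0})=f(F)v_{0}$, so that your $b$ is $f(F)$ and your $\Norm_{K/K^{+}}(b)=1$ is the paper's $f(V)f(F)=I$; your rendering of $\B$ as $\Tr_{K/\QQ}(\xi\,\sigma(x)y)$ is a mild repackaging of the same computation.
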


\subsection{}  Exploring this simple definition is the main object of this paper. 
 Morphisms of Deligne modules with real structures 
are required to commute with the involutions\footnote{The resulting category of
``real" polarized Deligne modules may be described as the ``fixed category"
$\mathcal D(n,q)^G$ where $G \cong \ZZ/(2)$ is the group generated by
the above involution.}. In \S \ref{def-real-structure} we
incorporate level structures. In Theorem \ref{prop-finite-isomorphism} we prove 
 that there is a
finite number of isomorphism classes of principally polarized Deligne modules (of rank $2n$, over
the field $k=\FF_q$) with real structure and principal level $N$ structure (where $(N,q)=1$).  The main goal of this
paper (Theorem \ref{thm-statement2}) is to determine this number, expressing it in a 
form that is parallel to the ``counting
formula" of R. Kottwitz, but with the symplectic group replaced by the general linear group.  
For $n=1$ the answer is very simple, see \S \ref{prop-case-n-equals-one}.

\subsection{}
Roughly speaking, the formula of Kottwitz (\cite{Kottwitz, Kottwitz2}) has two parts:  
an orbital integral (which is broken into a product of contributions at $p$ and 
contributions away from $p$) over the symplectic group gives the number of
isomorphism classes (of principally polarized Abelian varieties with level structure) within
each isogeny class, and the second part, which is a sum over certain conjugacy classes in
the symplectic group, which ``labels" the isogeny classes.  The set of 
ordinary Abelian varieties is a union of isogeny classes.  Therefore the number of 
isomorphism classes of ordinary Abelian varieties (with principal polarization and 
level structure) is a sum of a subset of the terms in Kottwitz' formula.  However, rather than starting 
from Kottwitz' general formula and identifying the relevant terms,  we instead
follow his method of proof, which, in this case of ordinary Abelian varieties,
gives rise to a formula that is slightly simpler in two ways.  First, the ``Kottwitz invariant"
$\alpha(\gamma_0;\gamma,\delta)$ does not appear in our formula (so presumably this invariant 
equals 1 in the case of ordinary Abelian varieties).  Second, the orbital integral at $p$ in
our formula is an ``ordinary" orbital integral over $\GSp(\QQ_p)$, rather than the twisted
orbital integral over $\GSp(K(k))$ that appears in \cite{Kottwitz}.  (Here, $K(k)$ denotes the
fraction field of the ring of Witt vectors $W(k)$ over the finite field $k$.)
The resulting formula, which may be viewed as the ``ordinary" part of Kottwitz' formula,
 is described in Theorem \ref{thm-statement1}.

\subsection{}
In \cite{Kottwitz}, Kottwitz uses the fundamental lemma for base change in order to convert
the twisted orbital integral into an ordinary orbital integral.   For completeness, and also 
to serve as a model for how to proceed in the presence of a real structure, in
\S \ref{sec-dieudonne-lattices} we do the reverse:  we convert the ordinary orbital integral
into a twisted orbital integral by identifying certain $\ZZ_p$-lattices in $\QQ_p^{2n}$ (which
are counted by the ordinary orbital integral) with
corresponding $W(k)$-lattices in $K(k)^{2n}$, which are counted by the twisted orbital integral.

\subsection{}  
By following the same method of proof as that in Theorem \ref{thm-statement1}
 we arrive at the ``counting formula" of Theorem \ref{thm-statement2} for the number of (``real")
isomorphism classes of principally polarized Deligne modules with real structure.  As in Theorem
\ref{thm-statement1}, the formula involves two parts.  The first part is  an ordinary orbital integral
(which is a product of contributions at $p$ and contributions away from $p$) over the general
linear group\footnote{more precisely, it is an integral over the group $\GL_n^* = \GL_1 \times \GL_n$}
which ``counts" the number of real Abelian varieties (with principal polarization and
principal level structure) within an isogeny class. The second part, which labels the
isogeny classes, is a sum over certain conjugacy classes in the general linear group.

\subsection{}
As in Theorem \ref{thm-statement1} the (ordinary) orbital integral at $p$ may be replaced 
by a twisted orbital integral, but a new idea is needed.
Let $(T,F,\B,\tau)$ be a polarized Deligne module with real structure.
The Tate module $T_{\ell}(A)$ (at a prime $\ell \ne p$) of the corresponding Abelian variety $A$ 
is naturally isomorphic to $T \otimes \ZZ_{\ell}$ so it inherits an involution
$\tau_{\ell}:T_{\ell}A \to T_{\ell}A$ from the real structure $\tau$.  
However, the Dieudonn\'e module $T_pA$ does not
inherit an involution that exchanges the actions of $F$ and $V$ until a further universal choice,
(analogous to the choice of embedding $\varepsilon:W(\overline{k}) \to \CC$) is made,
namely, that of a continuous $W(k)$-linear  involution $\bar\tau:W(\overline{k}) \to W(\overline{k})$
such that $\bar\tau\sigma^a = \sigma^{-a}\bar\tau$ where $\sigma$ denotes the absolute Frobenius.  In 
Proposition \ref{prop-tau-Witt} we show
that such involutions exist.  Using this choice we show in Proposition \ref{prop-conjugation-Dieudonne} 
that a real structure determines an involution of the Dieudonn\'e module $T_pA$.  Finally, these
pieces are assembled to construct a twisted orbital integral (over $\GL^*_n(K(k))$) that is equal to
the (ordinary) orbital integral at $p$ in Theorem \ref{thm-statement2}.

\subsection{} The statements and outline of proof of the main results appear in Section
\ref{sec-statement-of-results}, which may also be read as an overall guide to the paper.

\quash{
\subsection{}
The key technical observation in this paper is the fact that a choice of real structure $\tau$ on
a polarized Deligne module $(T,F,\B)$ corresponds to the involution of $\GSp_{2n}$
that was studied in \cite{GT}, namely, $g \mapsto \tau_0 g \tau_0^{-1}$ where
$\tau_0 = \left(\begin{smallmatrix} -I_n & 0 \\ 0 & I_n \end{smallmatrix} \right)$.  The
subgroup fixed by this involution is the group $\GL_n^* = \GL_1 \times \GL_n$.
}

\subsection{Acknowledgments}  We wish to thank R. Guralnick for useful conversations
about the symplectic group.  We are particularly grateful to the Defense Advanced
Research Projects Agency for their support under grant no. HR0011-09-1-0010
and to our program officer, Ben Mann, who continued to believe in this project in
its darkest hour.  The first author was also partially supported by the Institute
for Advanced Study through a grant from the Simonyi Foundation.


\section{The complex case}\label{sec-complex}
We briefly recall several aspects of the theory of moduli of real Abelian varieties, which 
serve as a partial motivation for the results in this paper.  We refer to Appendix
\ref{appendix-symplectic} for notations concerning the symplectic group, and to Appendix
\ref{sec-involutions} for discussion of the ``standard involution" $\tau_0\left(
\begin{smallmatrix} A & B \\ C & D \end{smallmatrix} \right) = \left( \begin{smallmatrix}
A & -B \\ -C & D \end{smallmatrix}\right)$.  For $x \in \Sp_{2n}$ let $\tilde x =
\tau_0 x \tau_0^{-1}$.
\subsection{}
Recall that a real structure on a complex Abelian variety $A$ is an anti-holomorphic involution of $A$.
It has been observed \cite{Silhol1, Silhol2, Comessatti, Shimura3, MilneShih, Adler, Gross, GT} 
that principally polarized 
Abelian varieties (of dimension $n$) with real structure correspond to ``real points" of the moduli space 
\[ X = \Sp_{2n}(\ZZ) \backslash \mathfrak h_n\] 
of all principally polarized Abelian varieties, where  
$\mathfrak h_n$ is the Siegel upper halfspace.  On this
variety, complex conjugation is induced from the mapping on $\mathfrak h_n$ that 
is given by $Z \mapsto \tilde{Z} = -\bar Z$ which is in turn induced from the ``standard involution"
$\tau_0$. 
A fixed point in $X$ therefore comes from a point
$Z \in \mathfrak h_n$ such that $\tilde{Z} = \gamma Z$ for some $\gamma \in \Sp_{2n}(\ZZ)$.  By the
Comessatti Lemma (\cite{Silhol1, Silhol2, Comessatti, GT}) this implies that $Z = \frac{1}{2} S + iY$ where
$S \in M_{n \times n}(\ZZ)$ is a symmetric integral matrix, which may be taken to consist of zeroes 
and ones, and $Y\in C_n=\GL_n(\RR)/O(n)$ is an element of the cone
of positive definite symmetric real matrices.  Let $\left\{S_1,S_2,\cdots,S_r\right\}$ be a collection
of representatives of symmetric integral matrices consisting of zeroes and ones, 
modulo $\GL_n(\ZZ)$-equivalence\footnote{that is, $S \sim A S \tr{A}$}.  The element
\[ \gamma_j = \left(\begin{matrix} I & \frac{1}{2}S_j \\ 0 & I \end{matrix} \right)\in \Sp_{2n}(\QQ)\]
takes the cone $iC_n$ into the cone $\frac{1}{2}S_j + iC_n$.  It follows that:
\begin{prop}  The set of real points
$Y$ in $X$ is the union $(1 \le j \le r)$ of translates by $\gamma_i$ of the arithmetic quotients
\[ Y_j = \Gamma_j \backslash C_n\]
where $\Gamma_j = \GL_n(\RR) \cap \left( \gamma_j \Sp_{2n}(\ZZ) \gamma_j^{-1}\right)$
and where $\GL_n \to \Sp_{2n}$ is the embedding 
\[A \mapsto \left( \begin{smallmatrix}
A & 0 \\ 0 & \tr{A}^{-1}\end{smallmatrix} \right).\qedhere\]\end{prop}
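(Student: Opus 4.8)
\noindent\emph{Sketch of the intended argument.} The plan is to read the real locus off the Siegel upper half space by means of the Comessatti Lemma, to sort it according to the $\GL_n(\ZZ)$--congruence class of its integral symmetric ``real part'', and then to identify each resulting piece --- after conjugating by a unipotent element --- with an arithmetic quotient of the cone $C_n$.

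First I would pass to the preimage $R=\{Z\in\mathfrak h_n : \tilde Z\in\Sp_{2n}(\ZZ)\cdot Z\}$, so that the set of real points of $X$ is exactly the image of $R$ under $\mathfrak h_n\to X=\Sp_{2n}(\ZZ)\backslash\mathfrak h_n$. Because $\tau_0\in\GSp_{2n}(\ZZ)$, conjugation by $\tau_0$ preserves $\Sp_{2n}(\ZZ)$, and the identity $\widetilde{\gamma Z}=\tilde\gamma\,\tilde Z$ then shows that $R$ is $\Sp_{2n}(\ZZ)$--stable; hence the real locus is $\Sp_{2n}(\ZZ)\backslash R$. By the Comessatti Lemma (\cite{Silhol1,Silhol2,Comessatti,GT}), $R$ is the set of $Z=\tfrac12 S+iY$ with $S=\tr S\in M_{n\times n}(\ZZ)$ and $Y\in C_n$: the content is the inclusion $\subseteq$, while $\supseteq$ is immediate, since for such a $Z$ one has $\tilde Z=-\bar Z=Z-S$, the image of $Z$ under $\bigl(\begin{smallmatrix}I&-S\\0&I\end{smallmatrix}\bigr)\in\Sp_{2n}(\ZZ)$. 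In particular $R$ is the disjoint union of the ``slices'' $\tfrac12 S+iC_n$, one for each symmetric integral $S$, and these slices are precisely the connected components of $R$ (on $R$ the map $Z\mapsto 2\Real Z$ is continuous and integer valued).

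Next I would determine how $\Sp_{2n}(\ZZ)$ permutes these slices. Each $\gamma\in\Sp_{2n}(\ZZ)$ carries a slice onto a slice; if it carries $\tfrac12 S+iC_n$ onto $\tfrac12 S'+iC_n$, then, with $\gamma_S=\bigl(\begin{smallmatrix}I&\tfrac12 S\\0&I\end{smallmatrix}\bigr)$, the element $\gamma_{S'}^{-1}\gamma\,\gamma_S$ stabilises $iC_n$. Here I invoke the description of $\Stab_{\Sp_{2n}(\RR)}(iC_n)$ from Appendix~\ref{sec-involutions} --- the embedded $\GL_n(\RR)$, $A\mapsto\bigl(\begin{smallmatrix}A&0\\0&\tr A^{-1}\end{smallmatrix}\bigr)$, possibly together with the involution $Z\mapsto-Z^{-1}$; writing $\gamma=\gamma_{S'}\bigl(\begin{smallmatrix}A&0\\0&\tr A^{-1}\end{smallmatrix}\bigr)\gamma_S^{-1}$ and imposing integrality of $\gamma$ then forces $A\in\GL_n(\ZZ)$ and $S'\equiv AS\,\tr A\pmod 2$. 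Hence the slices fall into finitely many $\Sp_{2n}(\ZZ)$--orbits, each meeting a zero--one matrix, and as $S_j$ runs through the chosen representatives $S_1,\dots,S_r$ the slices $\tfrac12 S_j+iC_n$ meet every orbit. The real locus is therefore the union over $1\le j\le r$ of the images in $X$ of the slices $\tfrac12 S_j+iC_n$; and since $\gamma_j^2=\bigl(\begin{smallmatrix}I&S_j\\0&I\end{smallmatrix}\bigr)\in\Sp_{2n}(\ZZ)$, conjugation by $\gamma_j$ sends $\Gamma_j:=\GL_n(\RR)\cap\gamma_j\Sp_{2n}(\ZZ)\gamma_j^{-1}$ into $\Sp_{2n}(\ZZ)$ and $iC_n$ onto $\tfrac12 S_j+iC_n$, so the $j$-th of these images is the image of the induced map $\Gamma_j\backslash C_n\to X$. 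This is the asserted description.

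The one step that is more than bookkeeping is the computation of $\Stab_{\Sp_{2n}(\RR)}(iC_n)$ carried out in Appendix~\ref{sec-involutions}: an element preserving this slice satisfies $\tau_0 g\tau_0^{-1}=\pm g$, hence lies in the embedded $\GL_n(\RR)$ or in its translate by $\bigl(\begin{smallmatrix}0&-I\\I&0\end{smallmatrix}\bigr)$. The resulting extra involution $Z\mapsto-Z^{-1}$ is harmless here: it lies in $\gamma_j\Sp_{2n}(\ZZ)\gamma_j^{-1}$ only when $S_j=0$, in which case it merely makes the map $\Gamma_j\backslash C_n\to X$ two--to--one onto its image rather than injective, which does not affect the stated union. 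Everything else is the Comessatti Lemma together with the explicit action of the Siegel--parabolic blocks $\bigl(\begin{smallmatrix}A&0\\0&\tr A^{-1}\end{smallmatrix}\bigr)$ and $\bigl(\begin{smallmatrix}I&B\\0&I\end{smallmatrix}\bigr)$ on $\mathfrak h_n$.
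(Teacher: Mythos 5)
Your proof is correct and follows essentially the same route as the paper's own (very terse) argument in the paragraph preceding the proposition: the Comessatti Lemma reduces the real locus to the slices $\tfrac12 S+iC_n$, $\Sp_{2n}(\ZZ)$ permutes them according to the $\GL_n(\ZZ)$--congruence class of $S$ modulo $2$, and conjugation by $\gamma_j$ identifies the image of the $j$-th slice with $\Gamma_j\backslash C_n$. You merely supply more detail than the paper; the only small inaccuracy is that a description of $\Stab_{\Sp_{2n}(\RR)}(iC_n)$ is not actually recorded in Appendix~\ref{sec-involutions}, but your inline derivation of it (from $\tau_0 g\tau_0^{-1}=\pm g$, using that only $\pm I$ act trivially on $iC_n$) is correct and suffices.
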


\subsection{}  However, a given principally polarized Abelian variety
$A$ may admit several non-isomorphic real structures (\cite{Silhol1}).  
Thus, the coarse moduli space of principally polarized
Abelian varieties with real structure does not coincide with $Y$ but rather, it maps to $Y$ by a finite mapping.
This multiplicity may be removed by replacing $X$ with the moduli space of  
principally polarized Abelian varieties with a sufficiently high level structure.  
More generally let $K^{\infty} \subset \Sp_{2n}(\AA_f)$ be a compact open subgroup of the finite
ad\`elic points of $\Sp_{2n}$ that is preserved by the involution $\tau_0$ and is
sufficiently small that $K^{\infty} \cap \Sp_{2n}(\QQ)$ is torsion free.  (We use $\Sp$
rather than $\GSp$ for expository purposes because the argument for $\GSp$ is
similar but slightly messier.)
Let $K_{\infty} = U(n)$. For $N \ge 2$ let
\begin{align*} 
\widehat{K}^0_N &= \ker\left( \Sp_{2n}(\widehat{\ZZ}) \to \Sp_{2n}(\ZZ/N\ZZ)\right)\\
\widehat{K}'_N &= \ker\left(\GL_n(\widehat{\ZZ}) \to \GL_n(\ZZ/N\ZZ)\right)
\end{align*}
denote the principal congruence subgroups of the symplectic and general linear groups.

As in \cite{Rohlfs}, the fixed points of the 
involution $\tau_0$ on  double coset space
\[X = \Sp_{2n}(\QQ) \backslash \Sp_{2n}(\AA)/K_{\infty}K^{\infty}\] 
are classified by classes in the nonabelian cohomology $H^1(\langle \tau_0 \rangle,
K^{\infty})$ (cf. Appendix \ref{appendix-cohomology}).
  
\begin{prop} \cite{Rohlfs, GT} The involution $\tau_0:\Sp_{2n} \to \Sp_{2n}$ passes to an anti-holomorphic involution 
$\tau:X \to X$ whose fixed point $X^{\tau}$ is isomorphic to the finite disjoint union,
\[ X^{\tau}\ \cong \coprod_{\alpha \in H^1(\langle \tau_0 \rangle,K^{\infty})} Y_{\alpha}\]
over cohomology classes $\alpha$, where (see below)
\[ Y_{\alpha} =  \GL_n(\QQ) \backslash \GL_n(\AA)/K_{\alpha}O(n)\] 
is an arithmetic quotient of $\GL_n(\RR)$ and $K_{\alpha}$ is a certain compact
open subgroup of $\GL_n(\AA_f)$.  If $4|N$ and if $K^{\infty} = \widehat{K}^0_N$ is the principal
congruence subgroup of $\Sp_{2n}(\widehat{\ZZ})$ of level $N$  then 
$K_{\alpha} = \widehat{K}'_N$ is independent of the cohomology class $\alpha$,
and $X^{\tau}$ may be
identified with the parameter space (or coarse moduli space) of principally polarized
Abelian varieties with real structure and level $N$ structure.
\end{prop}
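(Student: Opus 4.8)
The strategy is to combine Rohlfs's method for computing fixed points of a finite-order automorphism on a locally symmetric space with the explicit description of the centralizer of $\tau_0$. The main input is the general fact (recalled in \cite{Rohlfs} and Appendix \ref{appendix-cohomology}) that if a group of order $2$ generated by $\tau_0$ acts on the double coset space $X = \Sp_{2n}(\QQ)\backslash \Sp_{2n}(\AA)/K_\infty K^\infty$, then $X^\tau$ decomposes into a disjoint union indexed by $H^1(\langle\tau_0\rangle, K^\infty)$ (the relevant class group being $\ker$ on the $\QQ$-points since $\Sp_{2n}$ is simply connected and satisfies strong approximation, so that $H^1(\langle\tau_0\rangle,\Sp_{2n}(\QQ))$ and the Galois cohomology of $\Sp_{2n}$ over $\RR$ contribute trivially or uniformly). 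For each cocycle $z$ in class $\alpha$, the corresponding piece of $X^\tau$ is the locally symmetric space of the twisted fixed-point group $H_z = (\Sp_{2n})^{\,\mathrm{Int}(z)\circ\tau_0}$, with arithmetic subgroup cut out by $z K^\infty z^{-1} \cap H_z(\AA_f)$ and real points $H_z(\RR)/ (\text{max.\ compact})$.

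First I would record, from Appendix \ref{sec-involutions}, that the fixed subgroup of $\tau_0$ on $\Sp_{2n}$ is the Levi $\GL_n$ embedded via $A \mapsto \mathrm{diag}(A, {}^tA^{-1})$, so its real form is $\GL_n(\RR)$ with maximal compact $O(n)$ and symmetric space $C_n = \GL_n(\RR)/O(n)$. Next I would check that every twist $H_z$ appearing is an \emph{inner} form of this $\GL_n$ that is in fact isomorphic to $\GL_n$ over $\QQ$: since the relevant cocycles take values in the compact open $K^\infty$ and the pointed set $H^1(\QQ,\GL_n)$ is trivial (Hilbert 90), the $\QQ$-group $H_z$ is $\GL_n$, only the adelic level subgroup varies with $\alpha$. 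This yields $Y_\alpha = \GL_n(\QQ)\backslash \GL_n(\AA)/K_\alpha O(n)$ with $K_\alpha = z_\alpha K^\infty z_\alpha^{-1}\cap \GL_n(\AA_f)$, giving the stated disjoint union. To get the anti-holomorphicity of $\tau$ on $X$ I would invoke the computation in Section \ref{sec-complex} that $\tau_0$ induces $Z\mapsto -\bar Z$ on $\mathfrak h_n$, which is anti-holomorphic, and note that this descends to the adelic model.

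For the last two sentences I would specialize to $K^\infty = \widehat K^0_N$ with $4\mid N$. The point is that the $p$-local (for $p\mid N$) and away-from-$N$ factors of $K_\alpha$ must be computed: away from $N$ the congruence condition is vacuous and one gets $\GL_n(\widehat\ZZ)$-type factors, while at primes dividing $N$ one must show $z_\alpha \widehat K^0_N z_\alpha^{-1}\cap \GL_n$ equals $\widehat K'_N$ independently of $\alpha$. Here the hypothesis $4\mid N$ enters: the cocycle $z_\alpha$ can be chosen with entries in $\mathrm{GL}_{2n}(\ZZ_\ell)$ congruent to the identity mod $2$ (or mod $4$), so conjugation by $z_\alpha$ preserves the principal congruence subgroup of level $N$ at each such prime; this is exactly the kind of $2$-adic normalization used in \cite{GT}. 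The identification of $X^\tau$ with the coarse moduli space of principally polarized abelian varieties with real structure and level $N$ structure then follows from item (1) of \cite{GT} quoted in \S\ref{subsec-intro1}, once the level is high enough to rigidify (torsion-freeness of $K^\infty\cap\Sp_{2n}(\QQ)$, guaranteed since $N\ge 3$).

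\textbf{Main obstacle.} The delicate point is the $\alpha$-independence of $K_\alpha$ when $4\mid N$: one must produce, for each cohomology class $\alpha$, a representative cocycle $z_\alpha$ whose conjugation action normalizes $\widehat K'_N$ at every prime, which requires a careful choice of representatives in $\Sp_{2n}(\widehat\ZZ)$ lying in a small enough congruence neighborhood of the identity — this is where the factor of $4$ rather than $2$ is forced, and it is precisely the technical heart shared with \cite{Rohlfs} and \cite{GT}.
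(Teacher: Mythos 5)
Your overall strategy coincides with the paper's: a Rohlfs-style fixed-point decomposition of $X$ indexed by $H^1(\langle\tau_0\rangle,K^\infty)$, with the fixed group of $\tau_0$ correctly identified as the $\GL_n$ embedded by $A\mapsto\mathrm{diag}(A,{}^tA^{-1})$. But there are two substantive slips. First, you conflate the cocycle $k\in K^\infty$ with the trivializing element. The component $Y_\alpha$ is obtained by writing $k = h^{-1}\tilde h$ with $h\in\Sp_{2n}(\AA_f)$ (possible since $H^1(\langle\tau_0\rangle,\Sp_{2n}(\AA_f))$ is trivial), translating the representative by $h^{-1}$ to land in $\GL_n(\AA)$, and setting $K_\alpha = (h^{-1}K^\infty h)\cap\GL_n(\AA_f)$. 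Your formula $K_\alpha = z_\alpha K^\infty z_\alpha^{-1}\cap\GL_n(\AA_f)$ uses $z_\alpha$ both as the cocycle (which lives in $K^\infty$, so conjugation by it would change nothing) and as the trivializer $h$; as written it does not define the right subgroup. Also, the triviality you need at the rational place is that of $H^1(\langle\tau_0\rangle,\Sp_{2n}(\QQ))$ (Proposition~\ref{prop-classification} / \ref{prop-cohomology-involutions}), not Galois-cohomological Hilbert~90 for $\GL_n$; the cocycles here are $\tau_0$-twisted, not Galois, so the wrong vanishing theorem is being invoked.

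Second, your explanation of the $\alpha$-independence of $K_\alpha$ is not the correct mechanism. You propose choosing $z_\alpha$ close to the identity $\ell$-adically so that conjugation preserves the level-$N$ congruence subgroup, and attribute the constraint $4\mid N$ to that normalization. In fact the paper proceeds differently: when $2\mid N$, Proposition~\ref{prop-trivial-cohomology} shows the class $[k]$ already dies in $H^1(\langle\tau_0\rangle,\Sp_{2n}(\widehat\ZZ))$, so $h$ may be taken in $\Sp_{2n}(\widehat\ZZ)$ (integral, not near the identity). Since $\widehat K^0_N$ is normal in $\Sp_{2n}(\widehat\ZZ)$, conjugation by any integral $h$ fixes $\widehat K^0_N$, and hence $K_\alpha = \widehat K^0_N\cap\GL_n(\widehat\ZZ) = \widehat K'_N$ with no dependence on $\alpha$. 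The hypothesis $4\mid N$ is not forced by the cocycle normalization (for which $2\mid N$ suffices); it is a citation to \cite{GT} for the identification of $X^\tau$ with the moduli space of principally polarized abelian varieties with anti-holomorphic involution and level-$N$ structure, which requires the stronger divisibility for the bijection of isomorphism classes.
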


\begin{proof}
The fixed point components correspond to nonabelian cohomology 
classes in the following (well known) way.
Suppose $x \in \Sp_{2n}(\AA)$ gives rise to a fixed point $\langle x\rangle \in X$.  
Then there exists
$\gamma \in \Sp_{2n}(\QQ)$, $k\in K^{\infty}$ and $m\in K_{\infty}$ such that
$\tilde x = \gamma x km$, hence
\[ x = \tilde\gamma  \gamma x k \tilde k m \tilde m.\]
The assumption that $K^{\infty}$ is sufficiently small implies that $\tilde\gamma \gamma = I$
and $k \tilde k m \tilde m = I$.  This means that $\gamma$, $k$ and $m$ are 1-cocycles 
in the $\tau_0$-cohomology of $\Sp_{2n}(\QQ)$, $K^{\infty}$ and $U(n)$ respectively, 
the first and third of which are trivial (see Appendix \ref{appendix-cohomology} 
and also \cite{GT} Prop. 4.6).  So there exists $b \in \Sp_{2n}(\QQ)$ and $u \in U(n)$ so that
$\gamma = (\tilde b)^{-1}b$ and $m = u {\tilde u}^{-1}$.  Set $y = bxu$; it
represents the same point $\langle y\rangle = \langle x\rangle \in X$ and 
\[ \tilde y = \tilde b \tilde x \tilde u = bxku = yk.\]
In this way we have associated a cohomology class $[k] \in 
H^1(\langle \tau_0 \rangle, K^{\infty})$
to the fixed point $\langle x\rangle \in X$ which is easily seen to be 
independent of the choice of representative $x$.

Suppose $\langle z\rangle \in X$ is a point whose associated cohomology class 
is the same as that of $\langle x\rangle=\langle y \rangle$.  As above there 
exists a representative $z \in \Sp_{2n}(\AA)$ so that $\tilde z = z k'$ for some 
$k' \in K^{\infty}$.  If the cohomology classes $[k] = [k'] $ coincide there exists
$u \in K^{\infty}$ so that $k' = u^{-1} k \tilde u$.  This implies that the element
$ g = z u^{-1} y^{-1} $ is fixed under the involution so it lies in $\GL_n(\AA)$,
which is to say that $\langle z \rangle = \langle zu^{-1} \rangle = \langle gy \rangle
\in X$. Thus the point $\langle z \rangle$ lies in the $\GL_n(\AA)$ ``orbit'' of
$\langle y \rangle = \langle x \rangle$.  The set of such points can therefore be
described as follows.

Let $\alpha = [k] \in H^1(\langle \tau_0\rangle, K^{\infty})$ denote the cohomology
class determined by the cocycle $k$.  It vanishes in
$H^1(\langle \tau_0 \rangle, \Sp_{2n}(\AA_f)$ so we may express $k = h^{-1} \tilde h$
for some $h \in \Sp_{2n}(\AA_f)$. Let $z = yh^{-1}$ and let
\[K_{\alpha} = K_h = \left( h^{-1} K^{\infty} h\right) \cap \GL_n(\AA_f).\]
Then $\tilde z = z$ so $z \in \GL_n(\AA)$.  
This shows that (right) translation by $h^{-1}$ defines a mapping
\[ \GL_n(\QQ) \backslash \GL_n(\AA) / O(n).K_{\alpha} \to X\]
whose image consists of the set $Y_{\alpha}\subset X^{\tau}$
fixed points that are associated to the cohomology class $\alpha=[k]$.  

If $N$ is even and if $K^{\infty} = \widehat{K}^0_N\subset\Sp_{2n}(\widehat{\ZZ})$ 
is the principal congruence subgroup of level
$N$  then the cohomology class $[k] \in H^1(\langle \tau_0\rangle, K^{\infty})$
actually vanishes in $H^1(\langle\tau_0\rangle,\Sp_{2n}(\widehat{\ZZ}))$ (see Proposition
\ref{prop-trivial-cohomology}).  If $N\ge 3$ then $\widehat{K}^0_N$ is sufficiently small.
In this case we can therefore take the element
$h$ to lie in $\Sp_{2n}(\widehat{\ZZ})$.  Since $K^{\infty}$ is a normal subgroup it follows that 
\[ K_h = (h^{-1}K^{\infty}h) \cap \GL_n(\AA_f) = K^{\infty} \cap \GL_n(\widehat{\ZZ})\]
is the principal level $N$ subgroup $\widehat{K}'_N\subset\GL_n(\widehat{\ZZ})$ 
and is independent of the
cohomology class $\alpha = [k]$.  Consequently the set $X^{\tau}$ consists of 
$|H^1(\langle\tau_0\rangle, K^{\infty})|$
isomorphic copies of $\GL_n(\QQ) \backslash \GL_n(\AA)/\widehat{K}'_N$.  When $4|N$ 
the set $X^{\tau}$ of real points in $X$  is identified
in \cite{GT} with a parameter space for isomorphism classes of Abelian varieties with level
$N$ structure and anti-holomorphic involution.
\end{proof}
\quash{
\subsection{} In \cite{GT} it is shown, for any $m \ge 1$ that the natural mapping
\[ H^1(\langle\tau_0\rangle,\widehat{K}^0_{4m}) \to 
H^1(\langle\tau_0\rangle, \Sp_{2n}(\widehat{\ZZ}))\]
is trivial, where $\widehat{K}^0_{4m}$ is the principal level $N=4m$ subgroup of 
$\Sp_{2n}(\widehat{\ZZ})$.  (See also Proposition \ref{prop-trivial-cohomology}).  
In this case, for any cohomology class
$\alpha \in H^1(\langle \tau_0 \rangle, \widehat{K}^0_{4m})$ one may therefore take the above
element $h$ to lie in $\Sp_{2n}(\widehat{\ZZ})$, which contains $\widehat{K}^0_{4m}$ as a normal
subgroup.  Therefore $K_{h}$ is the principal congruence subgroup $K'_{4m}$ of level
 $N=4m$ in $\GL_n(\widehat{\ZZ})$, and the full fixed point set $X^{\tau}$ consists of
$|H^1(\langle\tau_0\rangle,\widehat{K}^0_{N})|$ isomorphic copies of
\[ Y_{\alpha} = \GL_n(\QQ) \backslash \GL_n(\AA)/O(n)K'_{4m}.\]
Moreover, as discussed in \cite{GT} the full fixed point set $X^{\tau}$ may 
in this case be identified with the parameter space (or coarse moduli space) 
of principally polarized Abelian varieties with real structure and  level $N$ structure.
}
\subsection{} The Siegel space $\mathfrak h_n$ admits other interesting anti-holomorphic
involutions.  In \cite{GT1} such an involution is described on $\mathfrak h_2$ whose fixed point
set is hyperbolic 3-space (cf. \cite{Nygaard}).  After appropriate choice of level structure, it passes to an
involution of the moduli space $X$ whose fixed point set is a union of arithmetic hyperbolic
3-manifolds which may be interpreted as constituting a coarse moduli space for Abelian 
varieties with ``anti-holomorphic multiplication" by an order in an imaginary quadratic number field.

\section{Deligne modules}\label{sec-ordinary}
\subsection{Ordinary Abelian varieties} \label{subsec-ordinary}
Throughout this section we fix a finite field $k = \FF_q$ of characteristic $p$.
Let $A/k$ be a dimension $n$ Abelian variety.  Recall that $A$ is {\em ordinary} if any of the following
equivalent conditions is satisfied.
\begin{enumerate}
\item If $\cdot p:A(\bar k) \to A(\bar k)$ denotes the multiplication by $p$
then its kernel has exactly $p^g$ points.
\item the local-local component of the $p$-divisible group
$A(p^{\infty}) = \underset{\longleftarrow}{\lim}A[p^r]$ is trivial.
\item The middle coefficient of the characteristic polynomial $h_A$ of the Frobenius
endomorphism of $A$ is not divisible by $p$.
\item Exactly half of the roots of $h_A$ in $\overline{\QQ}_p$ are $p$-adic units.
\end{enumerate}

\subsection{}\label{subsec-Deligne2}  Recall the basic definitions of Deligne \cite{Deligne}.
A {\em Deligne module}  of rank $2n$ over the field $k = \FF_q$ of $q$ elements is a pair
$(T,F)$  where $T$ is a free $\ZZ$-module of dimension $2n$
and $F:T \to T$ is an endomorphism such that the following conditions are satisfied:
\begin{enumerate}
\item The mapping $F$ is semisimple and all of its eigenvalues in $\CC$ have magnitude
$\sqrt{q}.$
\item Exactly half of the eigenvalues of $F$ in $\overline{\QQ}_p$ are $p$-adic units 
and half of the eigenvalues are divisible by $q.$  (So $\pm\sqrt{q}$ is not an eigenvalue.)
\item The middle coefficient of the characteristic polynomial of $F$ is coprime to $p.$
\item There exists an endomorphism $V:T \to T$ such that $FV=VF=q.$
\end{enumerate}
A morphism $(T_A,F_A) \to (T_B,F_B)$ of Deligne modules is a group homomorphism
$\phi:T_A \to T_B$ such that $F_B \phi = \phi F_A.$ 

\subsection{}
Let $W(k)$ be the ring of (infinite) Witt vectors over $k.$
Following \cite{Deligne},  fix an embedding
\begin{equation}\label{eqn-epsilon}
\varepsilon: W(\bar k) \to \CC.\end{equation}
By a theorem of Serre and Tate,  \cite{Drinfeld, Katz, Messing, Srinivas} the ordinary Abelian
variety $A$ has a canonical lift $\bar A$ over $W(k)$ which, using (\ref{eqn-epsilon}) 
gives rise to a complex variety $A_{\CC}$ over $\CC.$  Let $F \in \Gal(\bar k/k)$ denote the Frobenius.
The geometric action of $F$ on $A$ lifts to an automorphism $F_A$ on
\[ T = T_A = H_1(A_{\CC},\ZZ).\]

\begin{thm} \cite{Deligne} \label{thm-Deligne}
This association $A \to (T_A,F_A)$, determined by the embedding (\ref{eqn-epsilon}), 
induces an equivalence of categories between
the category of $n$-dimensional ordinary Abelian varieties over $k=\FF_q$ and the
category of Deligne modules of rank $2n$.
\end{thm}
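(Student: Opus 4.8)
\emph{Overview.} The plan is to write the functor $A\mapsto (T_A,F_A)$ as the composition of the Serre--Tate canonical lifting functor $A\mapsto\bar A$ (from ordinary abelian varieties over $k$ to abelian schemes over $W(k)$) with the Betti realisation $\bar A\mapsto H_1(\bar A\otimes_\varepsilon\CC,\ZZ)$, and to establish full faithfulness and essential surjectivity separately. The common technical point is that the Hodge --- equivalently, the complex --- structure on $T_A\otimes\RR$ is forced by $F_A$ together with the $p$-adic place of $\overline\QQ$ singled out by $\varepsilon$: explicitly, $\mathrm{Lie}(\bar A\otimes_\varepsilon\CC)\subseteq T_A\otimes\CC$ is the sum of the eigenspaces of $F_A$ for the eigenvalues that are \emph{not} $p$-adic units. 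That this recovers the Hodge decomposition is the defining property of the canonical lift (the unit-root splitting of the crystalline $H^1$ lifts the Hodge filtration), and conditions (1) and (2) --- so that $\pi\bar\pi=q$ and exactly half the eigenvalues are units --- guarantee that the two eigenvalue sets are complementary and disjoint, so that the prescription yields a genuine complex structure, real because complex conjugation interchanges the two sets.

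\emph{Full faithfulness.} For ordinary $A,B/k$, reduction induces a bijection $\Hom_k(A,B)\cong\Hom_{W(k)}(\bar A,\bar B)$: injectivity is rigidity of homomorphisms under specialisation, and surjectivity holds because a homomorphism $f\colon A\to B$ lifts to $\bar A\times\bar B$, its graph $\Gamma_f\subseteq A\times B$ admitting a canonical lift inside the canonical lift $\bar A\times\bar B$ of $A\times B$ (functoriality of Serre--Tate). Base change along $\varepsilon$ embeds $\Hom_{W(k)}(\bar A,\bar B)$ into $\Hom_\CC(A_\CC,B_\CC)=\Hom_{\mathrm{Hdg}}(T_A,T_B)$, and the image is the subgroup of maps commuting with the lifted Frobenius --- a descent statement I would check on realisations, $\ell$-adically via the comparison $T_A\otimes\ZZ_\ell\cong T_\ell A$ carrying geometric Frobenius to $F_A\otimes 1$ (for $\ell\ne p$, where conditions on a finitely generated $\Hom$-group are detected after $\otimes\ZZ_\ell$), and at $p$ via the Dieudonn\'e module of $\bar A$, which for the canonical lift splits according to slopes. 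By the Overview, a $\ZZ$-linear $T_A\to T_B$ commuting with $F_A,F_B$ automatically respects the Hodge decompositions, so $\Hom_{W(k)}(\bar A,\bar B)=\Hom_{\ZZ[F]}(T_A,T_B)$. Finally, recovering $\Hom_k(A,B)$ as the intersection of its $\ell$-adic completions inside $\Hom(A,B)\otimes\AA$, and likewise on the Deligne-module side, upgrades the rational isomorphism to an integral one.

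\emph{Essential surjectivity, and the main obstacle.} Given $(T,F)$ satisfying the four conditions: since $\pm\sqrt q$ is not an eigenvalue, $K:=\QQ[F]$ is a product of CM fields, $T\otimes\QQ$ is a faithful $K$-module of $\QQ$-dimension $2n$, and $V:=qF^{-1}$ lies in $\End(T)$, so $T$ is a module over the order $\OO:=\ZZ[F,V]\subseteq K$. Using $\varepsilon$, take $\mathrm{Lie}$ to be the sum of eigenspaces of $F$ on $T\otimes\CC$ for the non-unit eigenvalues; as in the Overview this is a complex structure on $T\otimes\RR$, so $X:=(T\otimes\RR)/T$ is a complex torus with $\OO$-action whose induced $K$-action on $\mathrm{Lie}(X)$ is a CM type, whence $X$ is of CM type and thus an abelian variety --- a polarisation being built from the Weil condition (1) and a suitable totally imaginary element of $K$. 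By the theory of complex multiplication $X$, with its $\OO$-action, descends to a number field and has potentially good reduction at all finite primes; reducing modulo the prime above $p$ picked out by $\varepsilon$ produces an abelian variety $A/k$, and by the Shimura--Taniyama formula the eigenvalue split we chose is exactly the canonical-lift normalisation, so $A$ is ordinary, $X=\bar A$, and $F$ is the (lifted) geometric Frobenius; thus $(T_A,F_A)\cong(T,F)$ integrally. The main obstacle is precisely this last, essential-surjectivity half: exhibiting the polarisation on the CM torus (where conditions (1) and (4) are genuinely used) and, above all, verifying through Shimura--Taniyama that the complex structure dictated by $\varepsilon$ is the one for which $X$ is the canonical lift of an \emph{ordinary} $A/k$ with Frobenius the given $F$, all while keeping track of the integral lattice $T$ rather than only $T\otimes\QQ$, for which Honda--Tate and Tate's theorem alone would not suffice.
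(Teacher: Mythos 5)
The paper does not prove this theorem: it is Deligne's original result (\emph{Vari\'et\'es ab\'eliennes ordinaires sur un corps fini}), and the paper simply cites it. There is therefore no in-paper proof to compare against. Your proposal is, broadly, a faithful reconstruction of Deligne's argument, and you have correctly identified the three ingredients --- Serre--Tate canonical lifting, the unit-root characterisation of the Hodge filtration on the canonical lift, and the Shimura--Taniyama computation that pins down the reduction in the essential-surjectivity direction. Your observation that complex conjugation interchanges the unit-eigenspaces with the non-unit eigenspaces (because $\bar\alpha=q/\alpha$ and $v_{\varepsilon}(q/\alpha)=a>0$ when $v_{\varepsilon}(\alpha)=0$) is exactly the point that makes the prescribed decomposition into a genuine complex structure.

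Two places in the full-faithfulness half deserve more care. First, the identification of the image of $\Hom_{W(k)}(\bar A,\bar B)\hookrightarrow\Hom_{\CC}(A_{\CC},B_{\CC})$ with the $F$-equivariant maps is not most naturally an ``$\ell$-adic realisation'' check; the cleaner route is Galois descent of morphisms of abelian varieties along $K(k)\subset K(\bar k)\subset\CC$: one has $\Hom_{W(k)}(\bar A,\bar B)=\Hom_{K(k)}=\Hom_{K(\bar k)}^{\Gal}=\Hom_{\CC}^{\Gal}$ (the first equality by the N\'eron property over the DVR $W(k)$, the third because $\Hom$ of abelian varieties is insensitive to passing to a larger algebraically closed field), and then one identifies the action of arithmetic Frobenius on $\Hom_{\CC}(A_{\CC},B_{\CC})$ with conjugation by $F_A,F_B$. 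Your subsequent sentence that $\Hom_{W(k)}(\bar A,\bar B)=\Hom_{\ZZ[F]}(T_A,T_B)$ is the right conclusion, but the remark about the Hodge decomposition being automatic only gives the containment $\Hom_{\ZZ[F]}(T_A,T_B)\subseteq\Hom_{\mathrm{Hdg}}(T_A,T_B)$, not the descent; and the final sentence about recovering the integral $\Hom$ by intersecting $\ell$-adic completions inside $\Hom\otimes\AA$ is unnecessary --- the Serre--Tate and Galois-descent steps already give the integral statement directly, and this coda as written would need its own justification. Second, in essential surjectivity you should also note that the CM construction a priori produces an abelian variety over a finite extension of $k$, and a further (short, but nontrivial) argument with the characteristic polynomial of Frobenius is needed to descend to $k$ itself and to identify the reduction's Frobenius integrally, not just up to isogeny, with $F$ --- which you do flag as the crux, correctly.
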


\subsection{} \label{subsec-Howe}
The main results of Howe \cite{Howe} are recalled in Appendix
\ref{appendix-polarizations} and summarized here.  A polarization of a
Deligne module $(T,F)$ is a pair $(\B,\Phi)$ where $\B: T\times T
\to \ZZ$ is a symplectic form (alternating and nondegenerate over
$\QQ$) such that 
\begin{enumerate}
\item[(0)] $\omega(x,y) = - \omega(y,x)$ for all $x,y\in T$,
\item $\omega:(T\otimes \QQ) \times (T \otimes \QQ) \to \QQ$ is nondegenerate,
\item $\omega(Fx,y) = \omega(x,Vy)$ for all $x,y \in T$,
\end{enumerate}
and where $\Phi$ is a CM type on $\QQ[F]$ such that the following {\em positivity
condition} holds:
\begin{enumerate} 
\item[(3)] the form $R(x,y) = \omega(x,\iota y)$ is symmetric and positive definite,
where $\iota$ is some (and hence, any) totally $\Phi$-positive imaginary element of $\QQ[F]$ 
(cf.~Appendix \ref{subsec-polarizations} or \ref{subsec-viable}).
\end{enumerate}
In this case we say that $\B$ is a $\Phi$-positive polarization.

If $(T,F)$ is a simple Deligne module and if $\omega$ is a symplectic form 
satisfying (0),(1),(2) above then there exists a (unique) CM type $\Phi$ on 
$\QQ[F]$ so that $\omega$ is $\Phi$-positive (cf.~Lemma \ref{lem-unique-positive}).
 A polarization of $(T,F)$ is a symplectic form $\omega$ satisfying the above conditions for some CM type $\Phi$, in which case there is only one such.
In \cite{Deligne} it is shown that the embedding (\ref{eqn-epsilon}) determines a CM type
$\Phi_{\varepsilon}$ on every such CM algebra $\QQ[F]$, see \S \ref{subsec-phi-varepsilon}.  
In (\cite{Howe}), Howe proves the following.
\begin{thm}
The equivalence of categories in Theorem \ref{thm-Deligne} (that is determined by
the embedding (\ref{eqn-epsilon})) extends to an equivalence between
the category of  polarized $n$-dimensional Abelian varieties over $\FF_q$ with the category of
$\Phi_{\varepsilon}$-positively polarized Deligne modules (over $\FF_q$) of rank $2n$.
\end{thm}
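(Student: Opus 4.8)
The plan is to mimic Deligne's proof of Theorem~\ref{thm-Deligne}: transport a polarization of an ordinary abelian variety $A/k$ through the Serre--Tate canonical lift $\bar A/W(k)$ and the fixed embedding $\varepsilon$ to a polarization of the complex abelian variety $A_{\CC}$, and then recognize the resulting Riemann form on $T_A = H_1(A_{\CC},\ZZ)$ as a $\Phi_{\varepsilon}$-positive polarization of the Deligne module $(T_A,F_A)$ in the sense of \S\ref{subsec-Howe}. Since Theorem~\ref{thm-Deligne} already supplies the equivalence on underlying objects and on morphisms, it suffices to set up this dictionary on polarizations and verify that it is bijective and compatible with morphisms.

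First I would show that polarizations behave well under the canonical lift. Given a polarization $\lambda\colon A\to\hat A$, functoriality of the canonical lift for homomorphisms of ordinary abelian varieties, together with the fact that the dual of the canonical lift of $A$ is the canonical lift of $\hat A$ (\cite{Drinfeld,Katz,Messing}), produces a unique lift $\bar\lambda\colon\bar A\to\widehat{\bar A}$; it is again a polarization, since symmetry is forced by rigidity from its reduction, and ``being a polarization'' is detected on the special fibre (the reduction map on symmetric homomorphisms to the dual, equivalently on N\'eron--Severi groups, is an isomorphism in the ordinary canonical-lift situation by Grothendieck--Messing deformation theory). Reduction mod $\mathfrak p$ is an inverse operation. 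Applying $\varepsilon$ then yields a polarized complex abelian variety $(A_{\CC},\lambda_{\CC})$ with $A_{\CC}=(T_A\otimes\RR)/T_A$ and complex structure $J$ read off from the Hodge decomposition of $H_1$; classically $\lambda_{\CC}$ is the same datum as an alternating $\ZZ$-valued Riemann form $\omega=E_{\lambda_{\CC}}$ on $T_A$ which is $\QQ$-nondegenerate with $\omega(Jx,Jy)=\omega(x,y)$ and $\omega(Jx,x)>0$ for $x\neq0$. Integrality of $\omega$ holds because $E_{\lambda_{\CC}}$ is a first Chern class, and conversely, so genuine polarizations correspond to integral forms. This establishes conditions (0) and (1).

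For condition (2), $\omega(F_Ax,y)=\omega(x,V_Ay)$, I would use that over $k$ the dual of $\Frob_A$ is the Verschiebung of $\hat A$ and $\lambda$ is Frobenius-compatible, so the Rosati involution of $\lambda$ carries $\Frob_A$ to the Verschiebung of $A$; this relation between endomorphisms lifts to $\bar A$, where it is exactly the identity $FV=VF=q$ of the Deligne module, and is preserved by $\varepsilon$. For condition (3), note that $\QQ[F_A]$ is a product of CM fields, that $J$ commutes with $F_A$, and that by Deligne's analysis (\S\ref{subsec-phi-varepsilon}) the decomposition of $T_A\otimes\CC$ into the $\pm i$-eigenspaces of $J$ is precisely the CM type $\Phi_{\varepsilon}$, the one selecting in each conjugate pair the embedding under which $F_A$ becomes a $p$-adic unit through $\varepsilon$; choosing a totally $\Phi_{\varepsilon}$-positive imaginary $\iota\in\QQ[F_A]$, the Hermitian positivity $\omega(Jx,x)>0$ becomes the statement that $R(x,y)=\omega(x,\iota y)$ is symmetric and positive definite. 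Hence $(\omega,\Phi_{\varepsilon})$ is a polarization of $(T_A,F_A)$. The inverse functor runs the dictionary backwards: from $(T,F,\omega,\Phi_{\varepsilon})$ one forms $(T\otimes\RR)/T$ with the complex structure $J$ prescribed by $\Phi_{\varepsilon}$, condition (3) makes $H(x,y)=\omega(Jx,y)+i\,\omega(x,y)$ positive-definite Hermitian and hence a complex polarization, and one descends through $\varepsilon$ and the canonical lift to a polarization of $A/k$; a morphism of Deligne modules pulls back $\omega$-forms exactly as an isogeny pulls back polarizations, so the correspondence is functorial and, together with Theorem~\ref{thm-Deligne}, gives the asserted equivalence of categories.

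I expect the main obstacle to be the first step: controlling polarizations (rather than merely endomorphisms) under the canonical lift — that they lift uniquely, reduce, and that ampleness/positivity is preserved in both directions, all compatibly with passage to the dual abelian variety. This rests on the compatibility of Serre--Tate theory with duality and on the invariance of the N\'eron--Severi group under canonical lifting, both of which one extracts from Grothendieck--Messing deformation theory in the ordinary case. Everything afterwards is the classical correspondence among complex polarizations, Riemann forms, and CM-positivity, where the one delicate point is matching the Hodge structure with $\Phi_{\varepsilon}$, and that identification is already carried out in \cite{Deligne}.
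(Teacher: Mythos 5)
The paper states this as a theorem of Howe and delegates the proof entirely to \cite{Howe} (with a partial unwinding in Appendix \ref{subsec-HoweTheorem}); there is no internal proof to compare against. Your sketch is nonetheless the right route and, as far as the paper's exposition indicates, the one Howe follows: extend Deligne's functor to duals so that $\lambda\colon A\to\widehat{A}$ becomes a symplectic form on $T_A$ satisfying conditions (0)--(2), transport through the canonical lift and the embedding $\varepsilon$ to $\CC$, and match ampleness with the Appell--Humbert positivity of the Riemann form, which is then recognized as the $\Phi_{\varepsilon}$-positivity condition (3). Howe packages the duality step as a canonical isomorphism $(T_A,F_A)\cong(\widehat{T}_B,\widehat{F}_B)$ (his Prop.~4.5, cited in the appendix) rather than your formulation via canonical lifts of duals, but these are the same fact.

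The step that deserves the most scrutiny is the one you pass over most quickly: matching condition (3) to complex-analytic positivity. Conditions (0)--(2) are forced formally once duality is compatible with the Deligne functor; condition (3) is the only place $\Phi_{\varepsilon}$ actually enters and the only thing separating polarizations from anti-polarizations. Your argument that the complex structure $J$ on $T_A\otimes\RR$ and a totally $\Phi_{\varepsilon}$-positive imaginary $\iota\in\QQ[F_A]$ scale $\omega$ in the same direction eigenspace-by-eigenspace is correct but sign-sensitive: the embeddings $\phi$ with $\val_p(\phi(F))>0$ must be exactly those acting on the holomorphic tangent space of $A_{\CC}$, not its conjugate, and this identification is Deligne's computation with the canonical lift. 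Get the sign backwards and you land on the anti-polarizations, so this is where the real content of the proof lives and your sketch should flag it, not take it as read. Your other stated obstacle --- lifting and reducing polarizations, invariance of the N\'eron--Severi group under the canonical lift --- is genuine but standard once one knows Serre--Tate theory is compatible with duality and that ampleness is an open and closed condition in an abelian scheme over $W(k)$.
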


\subsection{} The {\em standard symplectic form} on $\ZZ^{2n}$ is the symplectic form
$\B_0:\ZZ^{2n} \times \ZZ^{2n} \to \ZZ$ whose matrix is $\left( \begin{smallmatrix}
0 & I \\ -I & 0 \end{smallmatrix} \right)$.  The {\em standard involution} on
$\ZZ^{2n}$ is the involution $\tau_0(x,y) = (-x,y)$.

\begin{defn}\label{def-real-structure}
Fix a Deligne module $(T,F)$ over $k = \FF_q$ of dimension $2n$.  A {\em real structure} on  
$(T,F)$ is a $\ZZ$-linear homomorphism $\tau:T \to T$ such that $\tau^2 = I$ and such that 
$\tau F \tau^{-1} = V.$  A (real) morphism $\phi:(T,F,\tau) \to (T',F',\tau')$ of
Deligne modules with real structures is a group homomorphism $\phi:T \to T'$ so
that $\phi F = F'\phi$ and $\phi\tau = \tau'\phi$.  
A real structure $\tau$ is compatible with a 
polarization $\B:T \times T \to \ZZ$ if
  \begin{equation}\label{eqn-real-polarization}
 \B(\tau x, \tau y) = -\B(x,y)\end{equation}
for all $x, y \in T.$

Let $N \ge 1$.  We define a {\em (principal) level $N$ structure} on $(T,F)$ under the conditions that $p\nmid N$ and
that $F \equiv I \mod N$.  In this case, a level $N$ structure is an isomorphism $\beta:T/NT \to (\ZZ/N\ZZ)^{2n}$.
A level $N$ structure is compatible with a polarization $\B:T \times T \to \ZZ$
if $\beta_*(\B) = \bar\B_0$ is the reduction modulo $N$ of the standard symplectic form.  

If $(T,F,\tau)$ is a Deligne module with real structure then a level $N$ structure $\beta$
on $(T,F)$ is {\em compatible with} $\tau$ if $\beta_*(\tau) = \bar\tau_0$ is the reduction 
modulo $N$ of the standard involution (cf.~Appendix \ref{appendix-level}).  A necessary condition for the
existence of a level $N$ structure that is compatible with $\tau$ is that $p \equiv 1 \mod N$, which
also implies that $V \equiv I \mod N$, cf.~\S \ref{subsec-q-real}.
\end{defn}

Let $(T,F,\tau)$ be a Deligne module with real structure. For any CM type $\Phi$ on
$\QQ[F]$ there exists a polarization that is $\Phi$-positive and compatible with $\tau$, by
Lemma \ref{lem-unique-positive} and equation (\ref{eqn-omega-conjugation}).
The proof of the following finiteness theorem will appear in Appendix
\ref{sec-finiteness}, along with an explicit description of the count for $n=1$.

\begin{thm}  \label{prop-finite-isomorphism}  Assume $p \nmid N$.
There are finitely many isomorphism classes of principally ($\Phi_{\varepsilon}$-positively)
polarized Deligne modules of rank $2n$ over $\FF_q$ with real structure and with 
principal level $N$ structure. \end{thm}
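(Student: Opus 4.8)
The plan is to stratify the count according to the characteristic polynomial $h\in\ZZ[x]$ of $F$ on $T\otimes\QQ$. Since $h$ is monic of degree $2n$ with all complex roots of absolute value $\sqrt q$, its coefficients are bounded, so only finitely many $h$ occur; as $F$ is semisimple, $h$ determines $(T\otimes\QQ,F)$ up to isomorphism, hence there are only finitely many isogeny classes to consider. It then suffices to prove finiteness within a fixed isogeny class, and for this I would combine two facts: (a) there are only finitely many principally $\Phi_{\varepsilon}$-polarized Deligne modules of rank $2n$ over $\FF_q$ carrying a principal level $N$ structure, and (b) each such module $(T,F,\B,\beta)$ admits only finitely many compatible real structures. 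Adding the finitely many contributions then gives the theorem.

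The heart of the argument is (b). Fix $(T,F,\B)$ and suppose it carries at least one real structure $\tau_0$; the claim is that the set of all real structures on $(T,F,\B)$ is finite. Conjugation by any involution $\tau$ with $\tau F\tau^{-1}=V$ is a ring automorphism of $\End(T\otimes\QQ)$ carrying $\QQ[F]$ onto $\QQ[V]=\QQ[F]$ by the rule $F\mapsto V=q/F$, and this coincides both with the Rosati involution for $\B$ (compare condition~(2)) and with the complex conjugation induced by $\varepsilon$. Hence, for a second real structure $\tau$, the element $u:=\tau_0^{-1}\tau$ commutes with $F$, so it lies in $\OO^\times$, where $\OO=\End(T,F)$ is one of the finitely many orders of $\QQ[F]$ containing $\ZZ[F,V]$; and the relations $\tau^2=I$ and $\B(\tau x,\tau y)=-\B(x,y)$ translate into $u\bar u=1$ and $\B(ux,uy)=\B(x,y)$, that is, $u\in\mathrm{Aut}(T,F,\B)$. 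Thus the real structures on $(T,F,\B)$ form a torsor under the group $\mathrm{Aut}(T,F,\B)$, which is finite: every such $u$ commutes with a totally $\Phi_{\varepsilon}$-positive imaginary $\iota\in\QQ[F]$ and preserves $\B$, hence preserves the positive-definite form $R(x,y)=\B(x,\iota y)$ of condition~(3), and the stabilizer of a positive-definite form in $\GL(T)$ is discrete and bounded, so finite. Finally the level $N$ structures compatible with $(\B,\tau)$ form a torsor under a subgroup of the finite group $\Sp_{2n}(\ZZ/N\ZZ)$, so passing from modules to modules-with-level-structure only multiplies the count by a finite factor.

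For (a) one may either invoke the classical fact that the moduli space $\mathcal A_{n,N}$ is of finite type over $\ZZ[1/N]$, so that $\mathcal A_{n,N}(\FF_q)$ is finite, and transport this through Theorem~\ref{thm-Deligne} and the equivalence of \S\ref{subsec-Howe}; or argue directly: by the Jordan--Zassenhaus theorem the $\ZZ[F,V]$-lattices in a fixed $\QQ[F]$-module $W=T\otimes\QQ$ (and there are only finitely many isomorphism types of $W$ of $\QQ$-dimension $2n$) fall into finitely many isomorphism classes, while on each $(T,F)$ the principal polarizations form finitely many classes modulo $\mathrm{Aut}(T,F)$ by the results of Howe \cite{Howe}, the level structures again contributing only a finite factor. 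The step I expect to be the main obstacle is the bookkeeping in this fibred count: a $\ZZ[F,V]$-module isomorphism $T\xrightarrow{\sim}T'$ need not respect $\B$, $\tau$, or $\beta$, so one must check with care that the product of the four finite counts dominates the number of isomorphism classes of the full datum $(T,F,\B,\tau,\beta)$, keeping track of which automorphism group acts at each stage and of the finitely many possibilities for the order $\OO$. Once the identification of conjugation-by-$\tau$ with complex conjugation on $\QQ[F]$ is in hand, the remaining verifications are routine.
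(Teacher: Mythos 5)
Your argument is correct, but it takes a genuinely different route from the one in the paper. The paper fixes the lattice and the symplectic form as the standard pair $(\ZZ^{2n},\B_0)$ and then encodes the remaining data $(F,\tau)$ as a pair of integer matrices $(\gamma_0,\eta_0)$; it applies Borel's orbit-counting theorem (Lemma \ref{lem-finiteness}) to the arithmetic group $\Sp_{2n}(\ZZ)$ acting on $M_{2n\times 2n}(\ZZ)\times M_{2n\times 2n}(\ZZ)$, with the key geometric input being that the complex $\Sp_{2n}$-orbit of $(\gamma_0,\eta_0)$ is closed (which is checked using the $q$-inversive normal form from Proposition \ref{prop-q-conjugacy}). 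Your proof instead factors the count: first bound the number of $(T,F,\B,\beta)$ by moduli-theoretic or Jordan--Zassenhaus considerations, then observe that the real structures compatible with a fixed $(T,F,\B)$ inject (via $\tau\mapsto\tau_0^{-1}\tau$) into $\mathrm{Aut}(T,F,\B)$, which is finite because any such automorphism commutes with $\iota\in\QQ[F]$, hence preserves the positive-definite form $R(x,y)=\B(x,\iota y)$, so lies in a compact group intersected with $\GL_{2n}(\ZZ)$. That second step is a clean, re-usable lemma that the paper does not isolate, and it makes the role of the positivity condition transparent; the paper's Borel-theorem argument, on the other hand, is more uniform (it treats $F$, $\B$, $\tau$, and the lattice in one stroke, and is the same machinery used elsewhere in Kottwitz's method) and avoids importing the separate finiteness of $\mathcal A_{n,N}(\FF_q)$. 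Two small imprecisions in your write-up, neither fatal: the endomorphism ring $\End(T,F)$ is an order in the semisimple algebra $\End_{\QQ[F]}(T\otimes\QQ)$, which equals $\QQ[F]$ only when $T\otimes\QQ$ is $F$-cyclic, so calling it an order of $\QQ[F]$ is too restrictive; and conjugation by $\tau_0$ agrees with the Rosati involution only on the commutative subalgebra $\QQ[F]$, not on the full endomorphism algebra. Neither point matters for the conclusion, since the finiteness of $\mathrm{Aut}(T,F,\B)$ and the injection of real structures into it are what carry the argument.
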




\subsection{Isogeny}\label{sec-isogeny}
Let $S$ be a commutative ring with 1.  Let $\mathcal C$ be a $\ZZ$-linear
Abelian category.  Following \cite{DeligneLetter} and \cite{Kottwitz}, define the associated  
category {\em up to $S$-isogeny} to be the category with the same objects but with morphisms
\[ \hHom_S(A,B) = \hHom_{\mathcal C}(A,B)\otimes_{\ZZ} S.\]
An $S$-isogeny  of polarized Deligne modules $\phi:(T_1,F_1,\B_1) \to (T_2,F_2,\B_2)$ is defined to be
an $S$-isogeny $\phi:(T_1,F_1) \to
(T_2,F_2)$ for which there exists $c \in S^{\times}$ such that $\phi^*(\B_2) = c\B_1$, in which
case $c$ is called the {\em multiplier} of the isogeny $\phi$. 

\subsection{Remark}
A $\overline{\QQ}$-isogeny $h:(T_1,F_1,\omega_1) \to (T_2,F_2,\omega_2)$  between polarized Deligne modules induces an isomorphism $\QQ[F_1] \cong \QQ[F_2]$ which may fail to take 
the corresponding CM type $\Phi_1$ to $\Phi_2$.   This difficulty vanishes if $(T_i, F_i,
\omega_i)$ ($i=1,2$) are positively polarized with respect to the canonical CM 
type $\Phi_{\varepsilon}$ that is determined by the embedding (\ref{eqn-epsilon}). 
Moreover if $(T_i,F_i,\omega_i)$ are $\Phi_{\varepsilon}$-positively polarized then 
so is their direct sum.  If $S \subset \RR$ then an $S$-isogeny of 
$\Phi_{\varepsilon}$-positively polarized Deligne modules will have positive multiplier.

\begin{prop}\label{prop-real-existence}
The category of Deligne modules (resp.  polarized Deligne modules)
 with real structure, up to $\QQ$-isogeny is semisimple.
\end{prop}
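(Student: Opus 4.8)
The plan is to prove this by a form of Poincar\'e complete reducibility, run on the level of $\QQ$-vector spaces with extra structure. After tensoring with $\QQ$ (so that, by abuse of notation, $T$ now denotes the $\QQ$-vector space $T\otimes\QQ$), an object of either category is a tuple $(T,F,\B,\tau)$ --- omitting $\B$ in the unpolarized case --- in which $F$ is semisimple with invertible action, $\B$ is a $\Phi$-positive symplectic form, $\tau^{2}=I$, and $\tau F\tau^{-1}=V:=qF^{-1}$; morphisms are the $\QQ$-linear maps commuting with $F$ (and with $\tau$). I would first record that each such category is abelian: kernels and cokernels of these maps carry the evident induced data, the only point needing an argument being that the restriction of $\B$ to an $F$-stable subspace is again a polarization, which is part of the core step below. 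A subobject of $(T,F,\B,\tau)$ is then precisely an $F$- and $\tau$-stable $\QQ$-subspace $W\subseteq T$ with the restricted structures, so it suffices to show that every such $W$ is a direct summand.

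The core step is the Deligne-module analogue of the fact that the restriction of a polarization to an abelian subvariety is again a polarization. Fix a $\Phi$-positively polarized Deligne module $(T,F,\B)$ up to $\QQ$-isogeny and an $F$-stable subspace $W\subseteq T$. Since $F$ acts invertibly and semisimply, $F^{-1}$ --- and hence $V=qF^{-1}$ and the whole commutative semisimple algebra $\QQ[F]\subseteq\End_\QQ(T)$ --- preserves $W$; in particular the totally $\Phi$-positive imaginary element $\iota\in\QQ[F]$ does, and being a unit of $\QQ[F]$ it acts invertibly on $W$. Consequently the positive-definite symmetric form $R(x,y)=\B(x,\iota y)$ restricts to a positive-definite form on $W$, which forces $\B|_{W}$ to be nondegenerate; together with the evident positivity this shows $(W,F|_{W},\B|_{W})$ is again a polarized Deligne module up to $\QQ$-isogeny. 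Therefore $T=W\oplus W^{\perp}$ with $W^{\perp}$ the $\B$-orthogonal complement, which is $F$-stable because $\B(Fx,y)=\B(x,Vy)$ and $W$ is $V$-stable, and on which $\B$ is again nondegenerate. Iterating on $\dim T$ settles the polarized case without real structure.

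For the real structure I would only need to observe that $W^{\perp}$ is also $\tau$-stable when $W$ is: if $x\in W^{\perp}$ and $y\in W$ then $\B(\tau x,y)=-\B(x,\tau y)=0$ since $\tau y\in W$. So $T=W\oplus W^{\perp}$ as polarized Deligne modules with real structure up to $\QQ$-isogeny, and induction on $\dim T$ gives the polarized-with-real-structure case. To get the remaining category --- Deligne modules with real structure, i.e.\ tuples $(T,F,\tau)$ --- I would reduce to the polarized one: given $(T,F,\tau)$, choose a $\Phi_{\varepsilon}$-positive polarization $\B$ compatible with $\tau$, which exists by Lemma \ref{lem-unique-positive} and (\ref{eqn-omega-conjugation}), apply the decomposition above to a given $(F,\tau)$-stable subspace, and then discard $\B$. (A more direct route here: $\tau$ acts on the semisimple commutative algebra $\QQ[F]$ as an automorphism of order dividing $2$ via $F\mapsto qF^{-1}$, so $\QQ[F,\tau]\subseteq\End_\QQ(T)$ is the associated skew group algebra; since $2\in\QQ^{\times}$, Maschke's theorem makes it semisimple, whence $T$ is a semisimple $\QQ[F,\tau]$-module, i.e.\ a finite direct sum of simple Deligne modules with real structure.)

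The one genuinely non-formal point is the nondegeneracy of $\B|_{W}$ in the second paragraph: this is exactly where the positivity condition (condition (3) in the definition of a polarization) enters, in the role played by ampleness --- equivalently by positivity of the Rosati involution --- in classical Poincar\'e reducibility. The remaining ingredients ($\tau$-equivariance of the orthogonal complement, stability of $\QQ[F]$ under the relevant operators, and the checks that all constructions stay inside the category at hand) are routine, so I expect no serious obstacle beyond invoking positivity correctly.
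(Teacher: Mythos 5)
Your proof is correct. The polarized case is essentially the paper's argument: both take the $\B$-orthogonal complement $W^{\perp}$ of a stable subspace $W$, with $\tau$-stability of $W^{\perp}$ coming from $\B(\tau x,\tau y)=-\B(x,y)$, and both rely on the positivity condition to ensure $\B|_W$ is nondegenerate --- your version spells out the mechanism (the positive-definite $R(x,y)=\B(x,\iota y)$ restricts to a positive-definite form on $W$ because $\iota\in\QQ[F]$ preserves and acts invertibly on the $F$-stable subspace $W$) where the paper merely asserts it. Where you diverge is the unpolarized case with real structure and the order of logic: the paper proves the unpolarized case \emph{first}, by exhibiting a $\QQ[F]$-equivariant projection $\pi:W\to W_1$ and averaging it to $e=\pi+\tau\pi\tau$, which commutes with $F$ and $\tau$ and restricts to $2\cdot\mathrm{id}$ on $W_1$, so $\ker e$ is an $(F,\tau)$-stable complement; it then treats the polarized case separately. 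You instead prove the polarized case first and dispose of the unpolarized case by choosing a $\tau$-compatible $\Phi$-positive polarization (an existence statement the paper itself asserts just after Definition \ref{def-real-structure}, so this is not circular) and then discarding it. Your parenthetical Maschke observation --- that $\QQ[F,\tau]$ is a skew group algebra for a group of order $2$ acting on the semisimple commutative algebra $\QQ[F]$, hence semisimple since $2\in\QQ^{\times}$ --- is, once unwound, exactly the paper's averaging operator in disguise, and is arguably the cleanest way to package the unpolarized case since it avoids the appeal to the auxiliary existence of a compatible polarization. Both routes are sound; the reduction-to-polarized route buys you economy (one nondegeneracy argument does all the work), while the paper's route keeps the unpolarized case self-contained.
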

\begin{proof} The proof is more or less standard.
For the first statement, it suffices to check complete reducibility.  
Let  $(T,F,\tau)$ be a Deligne module,  and let $(T_1,F,\tau)$ be a submodule.
Since $F$ is semisimple the ring $\QQ[F]$ is isomorphic to a product of distinct 
number fields.   It follows that $(T,F,\tau)$ decomposes canonically into a sum of modules over 
these constituent fields.  So we may assume that $\QQ[F]$ is a field.
Set $W = T\otimes\QQ$ and let $W_1 = T_1 \otimes \QQ$.   Choose any decomposition of $W$ into simple
$\QQ[F]$-submodules so that $W_1$ is a summand.  The resulting projection $\pi:W \to W_1$ is 
$\QQ[F]$-equivariant.  Let $e = \pi + \tau \pi \tau:W \to W_1$.  Then $e$ is surjective (since its 
restriction to $W_1$ coincides with multiplication by $2$) and $W'_1:= \ker(e)$
is preserved by $F$ and by $\tau$.  Thus, the decomposition $W = W_1 \oplus W'_1$ 
is preserved by $F$ and by $\tau$.  For any choice of lattice $T'_1 \subset W'_1$ preserved
by $F$ and $\tau$ the module $(T_1\oplus T'_1,F,\tau)$ is $\QQ$-isogenous to $(T,F,\tau)$.

The same method works in the presence of a polarization.  
Let $(T,F, \B,\tau)$ be a Deligne
module with real structure and $\Phi$-positive polarization with respect to a choice $\Phi$ of
CM type on $\QQ[F]$.  Let $W = T\otimes \QQ$ and suppose that $W_1\subset W$ is a subspace
preserved by $F$ and by $\tau$.  Set $F_1 = F|W_1$.  It follows that
\begin{enumerate}
\item the restriction of $\B$ to $W_1$ is nondegenerate and is $\Phi_1$-positive, where
$\Phi_1$ is the CM type on $\QQ[F_1]$ that is induced from $\Phi$,
\item the subspace
$W_2 = \left\{ y \in W|\ \B(w,y) = 0 \text{ for all } w \in W_1 \right\} $
is also preserved by $F$ and by $\tau$ and it is $\Phi_2$-positively polarized by
the restriction $\B|W_2$ where $\Phi_2$ is the CM type induced from $\Phi$ on 
$\QQ[F_2]$ (where $F_2 = F|W_2$), and
\item The module $W$ decomposes as an orthogonal sum $W = W_1 \oplus W_2$.\qedhere
\end{enumerate}
\end{proof}

\subsection{Proof of Lemma \ref{lem-fixed-iso-class}}\label{subsec-proof-iso}
Although Lemma \ref{lem-fixed-iso-class} of the Introduction is not used in the rest of
this paper, it provides some motivation for the definition of a real structure, 
and it is most convenient to give its proof here.

If $(T,F,\B)$ is a $\Phi_{\varepsilon}$-positively polarized Deligne module then 
$(T,V,-\B)$ is one also because the symplectic form $-\B$ satisfies the positivity 
condition (3) of \S \ref{subsec-polarizations} for $(T,V)$.

Now suppose that $T\otimes \QQ$ is a simple $\QQ[F]$ module and suppose the isomorphism 
class of $(T,F,\B)$ is fixed under this involution.  
Let $\psi:T \to T$ be an isomorphism such that $\psi F \psi^{-1} = V$ and $\psi^*(\B) = -\B$.  
We will prove that in fact $\psi^2= I$ so that $\psi$ is a real structure.  Let $v_0 \in T$
be a cyclic vector for the action of $F$, so that $\left\{ v_0, Fv_0,\cdots,
F^{n-1}v_0\right\}$ form a basis of $F \otimes \QQ$.  Therefore we can express
$\psi(v_0) = f(F)(v_0)$ where $f(F) = \sum_{i=0}^{n-1} a_i F^i$ is a polynomial in $F$.
Let $w \in T$ be arbitrary and similarly write $w = g(F)(v_0)$ for some polynomial 
$g(F) = \sum_ib_iF^i$. Then $\psi(w) = g(V)\psi(v_0) = g(V)f(F)v_0$.  Now calculate
in two ways:
\begin{align*}
 \omega(\psi(v_0),\psi(w)) &= \omega(f(F)v_0, f(F)g(V)v_0) = \omega(v_0, f(V)f(F)g(V)v_0)\\
&=-\omega(v_0,w) = \omega(w,v_0) = \omega(g(F)v_0, v_0) = \omega(v_0, g(V)v_0).
\end{align*}
Since $w$ was arbitrary this implies that $f(V)f(F) = I$.  Therefore
\[\psi^2(w) = \psi(g(V)f(F)v_0) = g(F) f(V) \psi(v_0) = f(V)f(F)g(F)v_0 = w.\qed\]

\quash{
It suffices to consider the case that $(T,F,\omega)$ is a simple Deligne module, which 
(following Proposition \ref{prop-real-existence}) implies that
$(T,F,\omega)$ is isomorphic to a triple $(L, \pi, \lambda)$ where $\QQ[\pi]$ is a CM field, 
$L \subset \QQ[\pi]$ is a lattice, and $\lambda(x,y) = \Tr(\alpha x \bar y)$ for some totally imaginary element $\alpha \in \QQ[\pi]$.  The mapping $T \otimes \QQ \to \QQ[\pi]$ takes $F$ to $\pi$ and it 
takes $V$ to $\bar \pi$.  Using the CM type $\Phi_{\varepsilon}:\QQ[\pi] \to \CC^n$ of \S 
\ref{subsec-polarizations} we obtain an Abelian variety $A = \CC^n/\Phi_{\epsilon}(L)$ that 
admits a period matrix of the form $(I,Z)$ where $Z=X+iY \in \mathfrak h_n$ is a
symmetric matrix whose imaginary part is positive definite.  The isomorphism $\psi$ therefore
induces an isomorphism $A \cong \bar A$ to the complex conjugate Abelian variety, which admits a 
the period matrix $(I, \bar Z)$.  Let $\Lambda \subset \CC^n$ be the lattice spanned by the 
columns of this matrix, with resulting map which we also denote by $\psi_{\Lambda}:\Lambda
 \to \bar\Lambda$.  In summary we have a diagram

{\begin{diagram}[size=2em]
T \otimes \QQ & \rTo & \QQ[\pi] & \rTo & \CC^{n}\\
\cup && \cup && \cup \\
T & \rTo & L &\rTo & \Lambda \\
\dTo^{\psi} && \dTo_{\psi_L} && \dTo_{\psi_{\Lambda}} \\
T & \rTo & \bar{L}  & \rTo &\bar\Lambda
\end{diagram}}

By the lemma of Comessatti (see \cite{Comessatti, Silhol1, Silhol2, Gross}) this
implies that $Z = X + iY$ where $2X$ is an integral matrix (and $Y$ is positive definite).
The involution $Z \mapsto \tau(Z)=2X-Z= \bar{Z}$ therefore preserves the lattice $\Lambda$ 
(hence $\Lambda = \bar\Lambda$) so it induces an involution $\tau:T \to T$  that exchanges $F$ and $V$. 
(This argument essentially shows that the symplectic mapping $\psi_L$ differs from complex conjugation by conjugation by a unit in $\QQ[\pi]$.)
\qed    
}

In order to  ``count" the number of real Deligne modules it is necessary to
describe them in terms of algebraic groups as follows.

\begin{lem}\label{subsec-real-standard-form}  Let $(T,F,\B,\tau)$ be a rank $2n$
Deligne module with real structure that is positively polarized with respect to a 
CM type $\Phi$ of $\QQ[F]$.  Then it is isomorphic to one of the form
$(L, \gamma, \B_0, \tau_0)$ where $L \subset \QQ^{2n}$ is a lattice,
$\gamma \in \GSp_{2n}(\QQ)$, $\B_0$ is the standard symplectic structure on
$\QQ^{2n}$, and $\tau_0$ is the standard involution, and where: 
\begin{itemize}
\item[($\gamma 1$)] $\gamma \in \GSp_{2n}(\QQ)$ is semisimple with multiplier $c(\gamma) = q$
\item[($\gamma 2$)] its characteristic polynomial is an ordinary Weil $q$-polynomial (see Appendix
\ref{sec-Weil-polynomials})
\item[$(\gamma 3$)] The bilinear form $R(x,y) = \B_0(x,\iota y)$ is symmetric and 
positive definite, where $\iota$ is any $\Phi_{\varepsilon}$-positive imaginary 
element of $\QQ[\gamma]$  (see Appendix \ref{subsec-polarizations}).
\item[($\gamma 4$)] the element $\gamma$ is \qreal, meaning that $\tau_0 \gamma \tau_0^{-1}
= q \gamma^{-1}$; 
\end{itemize}
and where $L\subset \QQ^{2n}$ is a lattice such that
\begin{itemize}
\item[(L1)] the symplectic form $\B_0$ takes integer values on $L$
\item[(L2)] $L$ is preserved by $\gamma$ and by $q\gamma^{-1}$.
\item[(L3)] the lattice $L$ is preserved by the standard involution $\tau_0$ 
\end{itemize}
With these choices, the group of self $\QQ$-isogenies of $(T,F,\B)$ 
(resp. of $(T,F,\B, \tau)$) is isomorphic to the
centralizer $Z_{\gamma}(\QQ)$ in $\GSp_{2n}(\QQ)$ (resp. in $\GL_n^*(\QQ))$.  Every element
$\phi \in Z_{\gamma}(\QQ)$ has positive multiplier $c(\gamma) >0$.
\end{lem}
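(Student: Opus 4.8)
The plan is to first produce the "standard form" and then identify the isogeny groups. For the standard form, I would start from the data $(T,F,\B,\tau)$ and tensor with $\QQ$ to get a symplectic $\QQ$-vector space $(W,\B)$ of dimension $2n$ equipped with the semisimple operator $F$, its partner $V=qF^{-1}$, and the involution $\tau$ exchanging $F$ and $V$. Since all symplectic forms on $\QQ^{2n}$ are equivalent, I can choose a symplectic isomorphism $W\cong(\QQ^{2n},\B_0)$; the subtle point is to arrange simultaneously that $\tau$ becomes the standard involution $\tau_0(x,y)=(-x,y)$. To do this I would diagonalize $\tau$ over $\QQ$: its $\pm1$-eigenspaces $W^{\pm}$ are each of dimension $n$ (this uses condition (3), $\B(\tau x,\tau y)=-\B(x,y)$, which forces $W^{+}$ and $W^{-}$ to be isotropic and dually paired, hence of equal dimension $n$), and choosing a basis of $W^{+}$ together with the $\B_0$-dual basis of $W^{-}$ gives a symplectic frame in which $\tau=\tau_0$. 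Transporting $F$ through this frame yields $\gamma\in\GL_{2n}(\QQ)$; property (2) of \S\ref{subsec-Deligne2} together with $\B(Fx,y)=\B(x,Vy)$ gives $\gamma\in\GSp_{2n}(\QQ)$ with multiplier $q$ (this is $(\gamma1)$), semisimplicity of $F$ gives semisimplicity of $\gamma$, and the Weil-number hypothesis gives that its characteristic polynomial is an ordinary Weil $q$-polynomial ($(\gamma2)$). Property $(\gamma3)$ is just a restatement of the positivity condition (3) in \S\ref{subsec-polarizations}, transported through the isomorphism. Property $(\gamma4)$, the "\qreal" condition, is the translation of $\tau F\tau^{-1}=V=q F^{-1}$: applying $\tau_0(\cdot)\tau_0^{-1}$ to $\gamma$ gives the matrix of $V$, which is $q\gamma^{-1}$. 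The lattice $L$ is simply the image of $T$ under the chosen isomorphism; (L1) comes from $\B$ being $\ZZ$-valued on $T$, (L2) from $T$ being stable under $F$ and $V$, and (L3) from $T$ being stable under $\tau$.

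Next, the identification of the isogeny groups. A self $\QQ$-isogeny of $(T,F,\B)$ is by definition an element $\phi\in\GL(W)$ with $\phi F=F\phi$ and $\phi^{*}\B=c\B$ for some $c\in\QQ^{\times}$; transported to $\QQ^{2n}$ this is precisely an element of $\GSp_{2n}(\QQ)$ commuting with $\gamma$, i.e. the centralizer $Z_{\gamma}(\QQ)$ in $\GSp_{2n}(\QQ)$. For the real version one adds the requirement $\phi\tau=\tau\phi$, i.e. $\phi$ commutes with $\tau_0$; the subgroup of $\GSp_{2n}$ commuting with $\tau_0$ is exactly $\GL_n^{*}=\GL_1\times\GL_n$ (this is the standard computation recalled in Appendix \ref{sec-involutions}, also quoted in the quashed remark in the Introduction), so the self-isogeny group of $(T,F,\B,\tau)$ is the centralizer of $\gamma$ inside $\GL_n^{*}(\QQ)$. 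Finally, for the positivity of multipliers: any $\phi\in Z_{\gamma}(\QQ)$ is a self-isogeny of a $\Phi_{\varepsilon}$-positively polarized Deligne module, and by the Remark following the definition of isogeny in \S\ref{sec-isogeny} such an isogeny (being defined over $\QQ\subset\RR$) has positive multiplier; alternatively one notes directly that $c(\phi)$ is positive because $\phi$ preserves the positive-definite form $R$ up to the scalar $c(\phi)$.

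I expect the main obstacle to be the simultaneous normalization in the first paragraph: getting $\B$ to the standard form $\B_0$ is automatic, and conjugating $\tau$ to $\tau_0$ is automatic, but doing both at once requires the eigenspace argument — specifically one must verify that the $\pm1$-eigenspaces of $\tau$ on $W$ are Lagrangian and in perfect $\B_0$-duality, which is where condition (3) $\B(\tau x,\tau y)=-\B(x,y)$ is genuinely used. Once that is in place, the remaining verifications $(\gamma1)$–$(\gamma4)$ and (L1)–(L3) are bookkeeping translations of the hypotheses on $(T,F,\B,\tau)$, and the centralizer statements are immediate from the definitions together with the identification $\GSp_{2n}^{\tau_0}=\GL_n^{*}$.
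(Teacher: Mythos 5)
Your proposal is correct and follows essentially the same route as the paper. The paper's proof is short: it simply cites Lemma~\ref{lem-Darboux} (existence of a symplectic basis) and Proposition~\ref{prop-classification} (any involution in $\GSp_{2n}(\QQ)$ with multiplier $-1$ is $\Sp_{2n}(\QQ)$-conjugate to $\tau_0$) to get the simultaneous normalization of $\B$ and $\tau$, then defines $\gamma = \phi F\phi^{-1}$ and $L = \phi(T)$, and finally gives exactly your computation $R(\phi x,\phi x) = \B_0(\phi x,\iota\phi x) = \B_0(\phi x,\phi\iota x) = c(\phi)R(x,x) > 0$ for the multiplier positivity. What you do differently is to unwind the citation to Proposition~\ref{prop-classification}: you reproduce its underlying argument that $\B(\tau x,\tau y) = -\B(x,y)$ forces the $\pm1$-eigenspaces of $\tau$ to be Lagrangian and in $\B$-duality, so a basis/dual-basis pair simultaneously standardizes both $\B$ and $\tau$. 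That is precisely the content of the case $c = -1$ in the proof of Proposition~\ref{prop-classification}, so your version is the same proof with the black box opened.
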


\begin{proof} 
By Lemma \ref{lem-Darboux} and Proposition \ref{prop-classification}
 there is a basis $\phi:T \otimes \QQ \to \QQ^{2n}$ of $T \otimes\QQ$
so that $\B$ becomes $\B_0$ and so that $\tau$ becomes $\tau_0$.  
Take $\gamma = \phi F \phi^{-1}$.  This gives an isomorphism
$\QQ[F] \cong \QQ[\gamma]$ so we obtain a CM type $\Phi'$ on $\QQ[\gamma]$ for which
$\B_0$ is $\Phi'$-positive.  Taking $L=\phi(T)$ gives a lattice that is
preserved by $\gamma$ and $\tau$ on which $\B_0$ takes integral values,
which proves the first statements.  The
centralizer statement is clear.  If $\phi \in Z_\gamma(\QQ)$ then
\[ R(\phi(x),\phi(x)) = \B_0(\phi x, \iota \phi x) = \B_0(\phi x, \phi \iota x) 
= c(\phi) R(x,x)>0.  \qedhere\]
\end{proof}

\subsection{Totally real lattice modules}
In this section we add the simple observation that, up to $\QQ$-isogeny, a Deligne
module with real structure is determined by the $\tau$-fixed sublattice.
Fix $q = p^r$ and fix $n \ge 1$.  A totally real lattice module (of rank $n$ 
and characteristic $q$) is a pair $(L,A)$ where
$L$ is a free Abelian group of rank $n$ and $A:L \to L$
is a semisimple endomorphism whose eigenvalues $\alpha$
are totally real with $|\rho(\alpha)|<2\sqrt{q}$ for
every embedding $\rho:\QQ[\alpha] \to \RR$. The module $(L,A)$ is
{\em ordinary} if $\det(A)$ is not divisible by $p$.
The characteristic polynomial of an (ordinary) totally real lattice module
is an (ordinary) real Weil $q$-polynomial, cf. Appendix \ref{sec-Weil-polynomials}.
A level $N$ structure on $(L,A,R)$ is an isomorphism $\alpha:L/NL 
\to (\ZZ/N\ZZ)^n$ such that $\alpha \circ \overline{A} = \alpha$ 
where $\overline{A} = A \mod N$.

A Deligne module (of rank $2n$ over $\FF_q$) with real structure, 
$(T,F,\tau)$, gives rise to a pair $(L,A)$ 
by $L = T^{\tau}$ and $A = (F+V)|L$, in which case the characteristic polynomial 
of $A$ is the real counterpart of the characteristic polynomial of $F$.
If $\alpha:T/NT \to (\ZZ/N\ZZ)^{2n}$ is a level $N$
structure that is compatible with $\tau$ then its
restriction to the fixed point set $\beta:L/NL \to
(\ZZ/N\ZZ)^n$ is a level $N$ structure on $(L,A)$.

\begin{prop} 
The association $(T,F,\tau) \mapsto
(L=T^{\tau}, A=F+V)$ defines a functor from the
category of Deligne modules with real structure to the category of ordinary
totally real lattice modules.  It becomes an equivalence on the corresponding
categories up to $\QQ$-isogeny.
\end{prop}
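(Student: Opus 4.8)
The plan is to verify the two ingredients of ``equivalence up to $\QQ$-isogeny'': essential surjectivity and full faithfulness, working inside the isogeny categories. Functoriality is built in: restriction of morphisms to the $\tau$-fixed sublattices is visibly compatible with identities and composition, and it makes sense because a morphism $\phi$ with $\phi F = F'\phi$ automatically satisfies $\phi V = V'\phi$ (as $V = qF^{-1}$ over $\QQ$) and $\phi\tau = \tau'\phi$. The target is genuinely an ordinary totally real lattice module: $A := (F+V)|_{T^\tau}$ lies in the subalgebra $\QQ[A]\subseteq\QQ[F]$, which is therefore a product of totally real fields; its eigenvalues have the form $\sigma\pi + \overline{\sigma\pi}$ for a complex embedding $\sigma$ of a CM constituent of $\QQ[F]$, so they are totally real and satisfy $|\sigma\pi + \overline{\sigma\pi}| = 2|\Real(\sigma\pi)| < 2|\sigma\pi| = 2\sqrt q$ (strict, since $\sigma\pi$ is non-real in a CM field); and $\det A$ is prime to $p$ by the valuation bookkeeping used below. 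Since $F$ and $A$ are semisimple, both isogeny categories are semisimple by the argument of Proposition \ref{prop-real-existence}, so it suffices to treat simple objects.

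The computation rests on one structural observation. On $W := T\otimes\QQ$ put $\iota := F - V \in \QQ[F]$. Then $\iota^2 = (F+V)^2 - 4FV = A^2 - 4q$ is a totally negative, hence invertible, element of the totally real algebra $\QQ[A]$, so $\iota$ is invertible; and $\tau\iota\tau^{-1} = V - F = -\iota$, so $\iota$ carries the $(+1)$-eigenspace $T^\tau\otimes\QQ$ of $\tau$ isomorphically onto the $(-1)$-eigenspace. Hence
\[ W = (T^\tau\otimes\QQ)\ \oplus\ \iota\cdot(T^\tau\otimes\QQ), \qquad F = \tfrac{1}{2}(A + \iota), \quad V = \tfrac{1}{2}(A - \iota). \]
Full faithfulness up to $\QQ$-isogeny follows at once: the functor acts on $\QQ$-linear Hom spaces by restriction to the $\tau$-fixed parts, and this is bijective. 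It is injective because a morphism $\Psi$ vanishing on $T^\tau\otimes\QQ$ also kills $\iota\cdot(T^\tau\otimes\QQ)$ (as $\Psi(\iota v) = \iota'\Psi(v)$), so vanishes on $W$. It is surjective because, given $\psi\colon(T^\tau\otimes\QQ,A)\to((T')^{\tau'}\otimes\QQ,A')$, the rule $\Psi(v + \iota w) = \psi(v) + \iota'\psi(w)$ is well-defined by the decomposition, is $\tau$-equivariant since $\tau'$ fixes $(T')^{\tau'}\otimes\QQ$ and negates its image under $\iota'$, and is $F$-equivariant since $F = \tfrac{1}{2}(A+\iota)$ and $\psi$ commutes with $A$ and hence with $A^2 - 4q = \iota^2$; and $\Psi$ restricts back to $\psi$.

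For essential surjectivity up to $\QQ$-isogeny, take a simple ordinary totally real lattice module $(L,A)$: then $L_0 := \QQ[A]$ is a totally real number field, $L\otimes\QQ$ is a one-dimensional $L_0$-space, $\alpha := A$ is a totally real algebraic integer with $|\rho(\alpha)| < 2\sqrt q$ at every real place, and $N_{L_0/\QQ}(\alpha)$ is prime to $p$. Form $E := L_0[x]/(x^2 - \alpha x + q)$, with $\pi$ the image of $x$. This is a field, since $\alpha^2 - 4q$ is totally negative, hence not a square in $L_0$; it is a CM field with maximal totally real subfield $L_0$ and complex conjugation $\pi\mapsto \alpha - \pi = q/\pi$, each complex place satisfying $|\pi| = \sqrt q$. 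Put $T = \OO_E$, $F=$ multiplication by $\pi$, $V=$ multiplication by $q/\pi$, $\tau=$ complex conjugation. Then $\tau^2 = I$, $\tau F\tau^{-1} = V$, $FV = q$, and the $2n$ eigenvalues of $F$ are the conjugates of $\pi$, all of magnitude $\sqrt q$; grouping them into the $n$ pairs $\{\pi',\,q/\pi'\}$, each pair has product $q$ and sum a $p$-adic unit (a conjugate of $\alpha$), which forces one member of each pair to be a $p$-adic unit and the other divisible by $q$. Thus exactly half the eigenvalues of $F$ are $p$-adic units, $\pm\sqrt q$ is not among them, and the middle-coefficient condition \S\ref{subsec-Deligne2}(3) holds by the equivalences in \S\ref{subsec-ordinary}. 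Hence $(T,F,\tau)$ is a Deligne module with real structure, and its image under the functor is $(\OO_E\cap L_0,\ \times\alpha) = (\OO_{L_0},\times\alpha)$, which is $\QQ$-isogenous to $(L,A)$ since both are lattices in the same $L_0$-line. (That every ordinary totally real lattice module occurs is equally visible from the bijection between ordinary Weil $q$-polynomials and their real counterparts, Appendix \ref{sec-Weil-polynomials}.)

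The main obstacle is the bookkeeping at this last step: confirming that $E$ is a CM field with $L_0$ as its maximal totally real subfield, and -- the genuinely substantive point -- that ordinariness of $(L,A)$, namely $N_{L_0/\QQ}(\alpha)$ being prime to $p$, is exactly what forces half of the eigenvalues of $F$ to be $p$-adic units. Everything else is formal manipulation with the decomposition $W = (T^\tau\otimes\QQ)\oplus\iota\cdot(T^\tau\otimes\QQ)$ and the identities $F = \tfrac{1}{2}(A+\iota)$, $\iota^2 = A^2 - 4q$.
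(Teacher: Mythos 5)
Your proof is correct, and it takes a genuinely different, more explicit route than the paper. The paper's proof is a one-liner: it observes that the $\QQ$-isogeny class of a Deligne module with real structure is determined by the characteristic polynomial of $F$ (Proposition \ref{prop-Qbar-conjugacy}), that the $\QQ$-isogeny class of a totally real lattice module is determined by the characteristic polynomial of $A$, and that these two kinds of polynomials correspond bijectively via $p(x)=x^n h(x+q/x)$ (Proposition \ref{prop-h(x)}). That argument establishes essential surjectivity and a bijection of isomorphism classes, and leaves full faithfulness to the reader (it follows, but only through a detour: decompose into simple objects, match them up by characteristic polynomial, and compare $\Hom$ spaces using that the isogeny categories are semisimple with explicit endomorphism algebras).

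Your argument instead builds a quasi-inverse in coordinates. The decomposition $W = (T^\tau\otimes\QQ)\oplus\iota\cdot(T^\tau\otimes\QQ)$ with $\iota=F-V$, together with $\iota^2 = A^2-4q$ (totally negative, hence invertible) and $F=\tfrac12(A+\iota)$, exhibits $(W,F,\tau)$ as the ``CM extension'' of $(W^\tau,A)$. This makes full faithfulness a two-line verification (restriction is injective because $\Psi$ commutes with $\iota$, surjective because $\Psi(v+\iota w):=\psi(v)+\iota'\psi(w)$ extends any given $\psi$), and it makes essential surjectivity explicit: the recipe $E=L_0[x]/(x^2-\alpha x+q)$, $T=\OO_E$, $F=\pi$, $\tau=$ conjugation is the same one the paper uses inside the proof of Proposition \ref{prop-Qbar-conjugacy} \refparens{$\rm a' \to c'$}, so you are reusing a construction that the paper relegates to a different proposition. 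What you gain is a transparent proof of full faithfulness and an explicit description of the quasi-inverse; what the paper gains is brevity.

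Two small points of hygiene. First, the phrase ``a morphism $\phi$ with $\phi F=F'\phi$ automatically satisfies \dots\ $\phi\tau=\tau'\phi$'' overreaches: the condition $\phi\tau=\tau'\phi$ is part of the \emph{definition} of a morphism of Deligne modules with real structure, not a consequence of $F$-equivariance. This does not affect the proof, since you only need $\tau$-equivariance to know $\phi$ restricts to the fixed sublattices. Second, for the middle-coefficient condition on the constructed Deligne module, the correct citation is Proposition \ref{prop-h(x)}(3) (ordinariness of $p(x)$ is equivalent to $h(0)$ being coprime to $q$), rather than the list of equivalences in \S\ref{subsec-ordinary}, which is phrased for Abelian varieties rather than for Deligne modules. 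With those citations adjusted, the argument is complete.
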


\begin{proof} In both cases the $\QQ$-isogeny class is determined by the
characteristic polynomial (cf. Proposition \ref{prop-Qbar-conjugacy} below), 
so the result follows from Proposition \ref{prop-h(x)}.
\end{proof}

For use in Lemma \ref{prop-five-part} and Proposition \ref{prop-conjugation-Dieudonne}
 we will need the following.
\begin{prop}\label{lem-tau-switches}
\cite{Deligne} Let $(T,F)$ be a Deligne module.
The endomorphism $F$ determines a unique decomposition
\begin{equation}\label{eqn-decomp}
 T \otimes \ZZ_P \cong T' \oplus T^{\prime\prime}\end{equation}
such that $F$ is invertible on $T'$ and $F$ is divisible by $q$ on
$T^{\prime\prime}.$  If $\tau$ is a real structure on $(T,F)$ then
$\tau(T') = T^{\prime\prime}$ and $\tau(T^{\prime\prime}) = T^{\prime}$.
\end{prop}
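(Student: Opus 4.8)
The plan is to work prime-by-prime over $\ZZ_p$, using the given decomposition $T \otimes \ZZ_p \cong T' \oplus T^{\prime\prime}$ and the defining relations among $F$, $V$, and $\tau$. First I would recall why the decomposition exists: since half the eigenvalues of $F$ in $\overline{\QQ}_p$ are $p$-adic units and half are divisible by $q$, the characteristic polynomial of $F$ factors over $\ZZ_p$ as $h(x) = u(x)\cdot w(x)$ where $u(x)$ has unit constant term (the unit-root part) and $w(x)$ is a polynomial all of whose roots have positive valuation, with $\gcd(u,w)=1$ in $\QQ_p[x]$; because $F$ is semisimple these are genuine factors and $T\otimes\QQ_p = \ker u(F) \oplus \ker w(F)$, with the two summands being $T'\otimes\QQ_p$ and $T^{\prime\prime}\otimes\QQ_p$ respectively. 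Actually the cleaner route, and the one I would take, is via idempotents: $F$ acts invertibly on $T'\otimes\QQ_p$ and topologically nilpotently (after dividing by $q$) on $T^{\prime\prime}\otimes\QQ_p$, so $T'$ is characterized as $\bigcap_{m} F^m(T\otimes\ZZ_p)$ (the subspace on which $F$ is a $\ZZ_p$-automorphism) and $T^{\prime\prime}$ as $\bigcup_m \ker(F^m \bmod{\text{torsion}})$, or more precisely as the submodule on which $F$ is divisible by $q$.

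The key step is then to observe that $\tau$ intertwines these two characterizations because $\tau F \tau^{-1} = V$ and $FV = VF = q$. Concretely, on $T'$ the map $F$ is invertible, hence $V = qF^{-1}$ on $T'\otimes\QQ_p$ is divisible by $q$ there; so $\tau(T')$ is a submodule on which $\tau F \tau^{-1} = V$ acts the way $F$ acts on $\tau(T')$, i.e. $F$ restricted to $\tau(T')$ equals $\tau V \tau^{-1}$... let me phrase it carefully: for $x \in T'$, write $y = \tau(x)$; then $F(y) = F\tau(x) = \tau V(x)$, and since $V = qF^{-1}$ is divisible by $q$ on $T'$, we get $F(y) \in q\,\tau(T') \subseteq q(T\otimes\ZZ_p)$, so $F$ is divisible by $q$ on $\tau(T')$, whence $\tau(T') \subseteq T^{\prime\prime}$. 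Symmetrically, for $x \in T^{\prime\prime}$, $F(x) \in qT^{\prime\prime}$ so $V(x) = qF^{-1}$... here I must be slightly more careful since $F$ need not be invertible on $T^{\prime\prime}$ — instead I would argue that on $T^{\prime\prime}\otimes\QQ_p$ the operator $V = qF^{-1}$ has $F/q$-eigenvalues of valuation... the point is that $V$ is invertible on $T^{\prime\prime}\otimes\QQ_p$ (its eigenvalues are $q/\alpha$ with $v(\alpha) = v(q)$, hence units), so $F = \tau V \tau^{-1}$ is invertible on $\tau(T^{\prime\prime})$, giving $\tau(T^{\prime\prime}) \subseteq T'$. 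Combined with $\tau^2 = I$, these two inclusions force $\tau(T') = T^{\prime\prime}$ and $\tau(T^{\prime\prime}) = T'$.

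I would also need to confirm that $\tau$ preserves $T\otimes\ZZ_p$ — but this is immediate since $\tau:T\to T$ is $\ZZ$-linear and we simply tensor with $\ZZ_p$. The one genuinely delicate point, and the step I expect to be the main obstacle, is making the "divisibility by $q$" versus "invertibility" dichotomy precise at the level of $\ZZ_p$-lattices rather than $\QQ_p$-vector spaces: one must check that $T'$ and $T^{\prime\prime}$ as submodules of $T\otimes\ZZ_p$ (not merely their rational spans) are interchanged, which requires knowing that the decomposition \eqref{eqn-decomp} is already defined integrally and is the unique one with the stated properties — this is exactly the content of the cited result of Deligne, so I would invoke it directly, noting that uniqueness is what lets us conclude $\tau(T') = T^{\prime\prime}$ from the inclusions above (the image $\tau(T')$ is a $\ZZ_p$-submodule on which $F$ is $q$-divisible, and $V = q F^{-1} \cdot (\text{something})$... and a complementary $\tau(T^{\prime\prime})$ on which $F$ is invertible, so by uniqueness of the decomposition $T\otimes\ZZ_p = \tau(T') \oplus \tau(T^{\prime\prime})$ must coincide with $T^{\prime\prime}\oplus T'$). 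The rest is bookkeeping with the relations $\tau^2 = I$, $\tau F\tau^{-1} = V$, $FV = VF = q$.
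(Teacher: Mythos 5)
Your argument is correct, and it takes a genuinely different route from the paper's. The paper proves the statement by working over $\overline{\QQ}_p$ at the level of eigenvectors: $T'\otimes\overline{\QQ}_p$ is the span of $F$-eigenvectors with unit eigenvalues, and for such an eigenvector $x$ with $Fx = \alpha x$, the relation $F\tau = \tau V$ gives $F(\tau x) = \tau(Vx) = (q/\alpha)\tau x$, an eigenvector with eigenvalue divisible by $q$ — so $\tau$ swaps the two rational eigenspace sums, and the integral statement follows because $T', T''$ are the intersections of $T\otimes\ZZ_p$ with those subspaces. You instead stay entirely at the level of $\ZZ_p$-modules: you show $\tau(T')$ is an $F$-stable submodule on which $F$ is divisible by $q$ (via $F(\tau x) = \tau(Vx) \in q\,\tau(T')$ since $V = qF^{-1}$ is an automorphism of $T'$ post-multiplied by $q$) and symmetrically $\tau(T'')$ is one on which $F$ is invertible, then invoke the uniqueness of the Deligne decomposition applied to $T\otimes\ZZ_p = \tau(T')\oplus\tau(T'')$. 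Your route is a bit more verbose but is cleaner about the integral structure, which the paper's one-line eigenvector argument leaves implicit. The only soft spot in your write-up is the clause "whence $\tau(T')\subseteq T''$" immediately following the observation that $F$ is $q$-divisible on $\tau(T')$ — that implication is not immediate on its own (one would need, e.g., $\bigcap_n q^n T' = 0$); it is the appeal to uniqueness of the decomposition, which you correctly make afterwards, that actually closes the argument, and it would be better to lead with that rather than treat it as an afterthought.
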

\begin{proof} The decomposition (\ref{eqn-decomp}) is proven in \cite{Deligne}.
The module $T'\otimes \overline{\QQ}_p$ is the sum of the eigenspaces of $F$
whose eigenvalues in $\overline{\QQ}_p$ are $p$-adic units while 
$T^{\prime\prime}\otimes \overline{\QQ}_p$
is the sum of eigenspaces whose eigenvalues are divisible by $p.$  If $x$ is an
eigenvector of $F$ whose eigenvalue $\alpha$ is a $p$-adic unit then $x$ is an
eigenvector of $V$ with eigenvalue $q/\alpha$ hence $\tau(x)$ is an eigenvector
of $F$ whose eigenvalue is divisible by $q$. 
\end{proof}


\section{\qreal elements}\label{sec-q-real}
\subsection{}\label{subsec-q-real}  
Let $R$ be an integral domain and let $\B_0$ be the standard (strongly non-degenerate)
symplectic form on $R^{2n}$ corresponding to the matrix $J=\left(\begin{smallmatrix}
0 & I \\ -I & 0 \end{smallmatrix}\right)$.
Define the {\em standard involution} $\tau_0:R^{2n} \to R^{2n}$ by
$\tau_0 = \left(\begin{smallmatrix} -I & 0 \\ 0 & I \end{smallmatrix} \right)$.
The subgroup of $\GSp_{2n}(R)$ that is fixed under conjugation by $\tau_0$ is denoted $\GL_n^*(R)$, and it
is the image of the {\em standard embedding}
\[ \delta: R^{\times} \times \GL_n(R) \to \GSp_{2n}(R) \quad
\delta(\lambda,x) = \left(\begin{smallmatrix}
\lambda X & 0 \\ 0 & \tr{\!X}^{-1} \end{smallmatrix}\right),\]
see Appendix \ref{sec-involutions}.  Let us say that an element $\gamma \in \GSp_{2n}(R)$ is 
{\em \qreal} if it is semisimple, has multiplier $q$ and 
if\footnote{Compare the equation $\tau F \tau^{-1} = V$ of \S \ref{def-real-structure}}
\[ \tau_0 \gamma \tau_0^{-1} = q \gamma^{-1},\]
or equivalently if $\gamma = \left( \begin{smallmatrix} A & B \\ C & \tr{\!A}
\end{smallmatrix} \right)\in \GSp_{2n}(R)$ and $B, C$ are symmetric,
and $A^2-BC = qI.$  It follows that $B\tr{\!A} =  AB$ and $CA = \tr{\!A}C$. 

\begin{lem}\label{lem-q-real}
Let $\gamma = \left( \begin{smallmatrix} A & B \\ C & \tr{\!A}
\end{smallmatrix} \right)\in \GSp_{2n}(\QQ)$ be \qreal.  Then the following
statements are equivalent.  \begin{enumerate}
\item The matrices $A$, $B$, and $C$ are nonsingular.
\item The element $\gamma$ has no eigenvalues in the set $\left\{ \pm \sqrt{q},
\pm \sqrt{-q} \right\}$.
\end{enumerate}
If these properties hold then the matrix $A$ is semisimple, 
and the characteristic polynomial of $A$ is $h(2x)$, where $h(x)$ 
is the real counterpart (see \S \ref{subsec-real-counterpart}) to $p(x)$, 
the characteristic polynomial of $\gamma$.  
If $p(x)$ is also a Weil $q$-polynomial then every eigenvalue $\beta$ of $A$ satisfies
\begin{equation}\label{eqn-beta}
|\beta| < \sqrt{q}.\end{equation}
Conversely, let $A \in \GL_n(\QQ)$ be semisimple and suppose that its eigenvalues
$\beta_1,\cdots,\beta_n$ (not necessarily distinct) are totally real and that
$|\beta_r|<\sqrt{q}$ for $1 \le r \le n$.   Then
for any symmetric nonsingular matrix $C \in \GL_n(\QQ)$ such that
$\tr{\!A}C = C {A}$, the following element
\begin{equation}\label{eqn-constructing-gamma}
\gamma = \left( \begin{matrix} A & (A^2-qI)C^{-1} \\
C & \tr{\!A} \end{matrix} \right) \in \GSp_{2n}(\QQ)\end{equation}
is $q$-inversive and its eigenvalues, 
\begin{equation}\label{eqn-eigenvalues}
\alpha_r = \beta_r \pm \sqrt{\beta_r^2-q}\quad (1 \le r \le n),\end{equation}
are Weil $q$-numbers. 
\end{lem}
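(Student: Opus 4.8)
The plan is to work systematically through the equivalences and then the two implications, exploiting the block description of $q$-inversive elements. First I would establish the equivalence of (1) and (2). Writing $\gamma = \left(\begin{smallmatrix} A & B \\ C & \tr{\!A}\end{smallmatrix}\right)$ with $B,C$ symmetric and $A^2 - BC = qI$, I would conjugate $\gamma$ by the matrix $\left(\begin{smallmatrix} I & 0 \\ 0 & -I\end{smallmatrix}\right)\cdot$ (something mixing the blocks), or more directly consider the change of basis sending $(x,y) \mapsto (x+y, x-y)$ (up to a factor of $2$, which is why $A$ appears with characteristic polynomial $h(2x)$). Under a suitable such coordinate change, $\gamma$ becomes block upper/lower triangular with the relevant blocks related to $A \pm (\text{something})$, and $\gamma$ has an eigenvalue $\pm\sqrt{q}$ or $\pm\sqrt{-q}$ precisely when $\gamma^2 - q I$ or a related matrix is singular; a direct computation shows $\gamma$ kills $\left(\begin{smallmatrix} v \\ w \end{smallmatrix}\right)$ with eigenvalue $\lambda \in \{\pm\sqrt{q},\pm\sqrt{-q}\}$ exactly when one of $A$, $B$, $C$ is singular. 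Concretely: if $\gamma v = \sqrt{q}\, v$ then $\gamma^2 v = q v$, and $\gamma^2 = \gamma \cdot \tau_0^{-1}(q\gamma^{-1})^{-1}\tau_0\cdots$; cleaner is to note $\det(B)=0$ forces a nonzero vector in $\ker B$ to generate a $\gamma$-invariant isotropic piece on which $\gamma$ acts with eigenvalues $\pm\sqrt{q}$, and similarly for $C$, while $\det(A)=0$ together with $A^2 = BC + qI$ handles the $\pm\sqrt{-q}$ case via the relation $BC = A^2 - qI = -qI$. The converse direction of the equivalence runs the argument backwards: an eigenvalue in $\{\pm\sqrt{q},\pm\sqrt{-q}\}$ produces a singular block.

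Next I would prove that, assuming (1)–(2), $A$ is semisimple with characteristic polynomial $h(2x)$. Since $\gamma$ is semisimple and $A$ is (up to the coordinate change above) a ``compression'' of $\gamma$ to an invariant subspace, or better: the relation $\tau_0\gamma\tau_0^{-1} = q\gamma^{-1}$ means $\gamma + q\gamma^{-1} = \gamma + \tau_0\gamma\tau_0^{-1}$ commutes with $\tau_0$, hence preserves the $\tau_0$-eigenspaces $\QQ^n \times 0$ and $0 \times \QQ^n$; on the first it acts as $2A$ (reading off the block form of $\gamma + \tau_0\gamma\tau_0^{-1} = \left(\begin{smallmatrix} 2A & 0 \\ 0 & 2\tr{\!A}\end{smallmatrix}\right)$). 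Since $\gamma$ semisimple implies $\gamma + q\gamma^{-1}$ semisimple (it is a polynomial in the semisimple $\gamma$ — here using that $\gamma$ is invertible, which holds since $\det\gamma = q^n \ne 0$), its restriction $2A$ to an invariant subspace is semisimple, so $A$ is semisimple. For the characteristic polynomial: the eigenvalues of $\gamma$ come in pairs $\alpha, q/\alpha$ (by $q$-inversivity plus the symplectic pairing), and $\beta = (\alpha + q/\alpha)/2$ is the corresponding eigenvalue of $A$; this is exactly the substitution defining the real counterpart $h$ of $p$, so $\det(xI - A) = h(2x)$ up to the normalization in \S\ref{subsec-real-counterpart}. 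When $p(x)$ is additionally a Weil $q$-polynomial, $|\alpha| = \sqrt{q}$ for all roots, so writing $\alpha = \sqrt{q}\, e^{i\theta}$ gives $\beta = \sqrt{q}\cos\theta$, hence $|\beta| \le \sqrt{q}$, with strict inequality because (2) excludes $\alpha = \pm\sqrt{q}$ (i.e. $\theta \in \{0,\pi\}$). That gives \eqref{eqn-beta}.

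Finally, for the converse construction: given semisimple $A$ with totally real eigenvalues $\beta_r$ satisfying $|\beta_r| < \sqrt{q}$ and a symmetric nonsingular $C$ with $\tr{\!A}C = CA$, I would set $B = (A^2 - qI)C^{-1}$ and verify directly that $\gamma = \left(\begin{smallmatrix} A & B \\ C & \tr{\!A}\end{smallmatrix}\right)$ lies in $\GSp_{2n}(\QQ)$: the conditions to check are that $B$ and $C$ are symmetric ($C$ is by hypothesis; $B$ is because $\tr{\!A}C=CA$ implies $(A^2-qI)C^{-1}$ is symmetric, using that $C^{-1}$ is symmetric and $A^2-qI$ commutes appropriately with $C$ — a short computation with transposes) and that the multiplier relation $\tr{\gamma} J \gamma = q J$ holds, which reduces to $B\tr{\!A} = AB$, $CA = \tr{\!A}C$, and $A^2 - BC = qI$, all immediate from the definitions. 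Then $\tau_0\gamma\tau_0^{-1} = q\gamma^{-1}$ follows from the block form, so $\gamma$ is $q$-inversive once I also check it is semisimple — which follows since each eigenvalue $\beta_r$ of $A$ gives eigenvalues $\alpha_r = \beta_r \pm \sqrt{\beta_r^2 - q}$ of $\gamma$ (solving $\alpha + q/\alpha = 2\beta_r$), and because $|\beta_r| < \sqrt{q}$ these $2n$ values $\alpha_r$ are distinct complex numbers (the two square roots are genuinely complex conjugates, never real, and distinct $\beta_r$ give disjoint conjugate pairs), forcing $\gamma$ to be diagonalizable; and $|\alpha_r|^2 = \alpha_r \bar\alpha_r = \alpha_r (q/\alpha_r) = q$ shows each $\alpha_r$ is a Weil $q$-number.

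I expect the main obstacle to be the first equivalence — pinning down the precise correspondence between singularity of each of $A$, $B$, $C$ and the four possible ``bad'' eigenvalues $\pm\sqrt{q}, \pm\sqrt{-q}$, since this requires choosing the right $\tau_0$-adapted or Darboux-type coordinates and carefully tracking how the constraint $A^2 - BC = qI$ interacts with each degeneracy. The semisimplicity-of-$A$ argument and the symmetry check for $B$ in the converse are the other places where one must be careful with transposes and with the hypothesis $\tr{\!A}C = CA$, but those are routine once set up correctly.
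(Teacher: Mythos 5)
Your overall strategy — analyzing how eigenvectors of $\gamma$ interact with the block structure, with $u$ giving an eigenvector of $A$ with eigenvalue $\tfrac12(\lambda+q/\lambda)$, and running everything through the correspondence $\alpha \leftrightarrow \beta=\tfrac12(\alpha+q/\alpha)$ — is the same as the paper's, and the converse construction (eigenvectors $w_r^\pm = (\pm\theta_r C^{-1}x_r, x_r)$) works exactly as you sketch. Your semisimplicity argument via $\gamma + q\gamma^{-1} = \gamma + \tau_0\gamma\tau_0^{-1}$, which is a polynomial in the semisimple $\gamma$ and preserves the $\tau_0$-eigenspaces where it acts as $2A$ and $2\tr{\!A}$, is a clean alternative to the paper's more implicit spanning argument; I'd even say it's slightly tidier.

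However, your treatment of the equivalence (1)$\iff$(2) has concrete problems in the direction ``singular block $\implies$ bad eigenvalue.'' You assert that $\det B = 0$ ``forces a nonzero vector in $\ker B$ to generate a $\gamma$-invariant isotropic piece,'' but this is false: if $v\in\ker B$ then $\gamma\left(\begin{smallmatrix}0\\v\end{smallmatrix}\right) = \left(\begin{smallmatrix}0\\\tr{\!A}v\end{smallmatrix}\right)$ and there is no reason $\tr{\!A}v$ stays in $\ker B$, so no invariant subspace is produced this way. You also write ``$BC = A^2-qI = -qI$'' when $A$ is singular, but $A$ singular only gives a vector $u$ with $Au=0$, hence $BCu=-qu$; it does not give $A^2=0$ as a matrix identity, so $BC=-qI$ is wrong. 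The correct argument needs a three-case analysis with explicit eigenvector constructions (as the paper does): (i) if $A$ is singular with $Au=0$, then $\left(\begin{smallmatrix}\sqrt{-q}\,u\\Cu\end{smallmatrix}\right)$ is an eigenvector with eigenvalue $\sqrt{-q}$, using $BCu=-qu$ and $\tr{\!A}Cu=CAu=0$; (ii) if $C$ is singular with $Cu=0$, then $(A^2-qI)u = BCu = 0$ forces (after possibly replacing $u$ by a suitable vector in the kernel of $A^2-qI$ intersected with $\ker C$, which is $A$-invariant) an eigenvector $Au=\sqrt{q}u$, so $\left(\begin{smallmatrix}u\\0\end{smallmatrix}\right)$ works; (iii) if $C$ is nonsingular but $B$ is singular, then $BC=A^2-qI$ is singular, one finds $u$ with $Au=\sqrt{q}u$, and then $\left(\begin{smallmatrix}0\\Cu\end{smallmatrix}\right)$ works. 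The $(x,y)\mapsto(x+y,x-y)$ change of basis you float at the start does not block-triangularize $\gamma$ and should simply be discarded. You correctly diagnose the equivalence as the delicate point; the fix is to drop the invariant-subspace heuristics and do the honest case split.
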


\begin{proof}
One checks that if $w = \left(\begin{smallmatrix}u\\v\end{smallmatrix}\right)$
is an eigenvector of $\gamma$ with eigenvalue $\lambda$ then \begin{enumerate}
\item[(a)] $\tau_0(w)=\left(\begin{smallmatrix}-u\\v\end{smallmatrix}\right)$
is an eigenvector of $\gamma$ with eigenvalue $q/\lambda$
\item[(b)] $u$ is an eigenvector of $A$ with eigenvalue $\frac{1}{2}\left(\lambda +
\frac{q}{\lambda}\right)$
\item[(c)] $v$ is an eigenvector of $\tr{\!A}$ with eigenvalue $\frac{1}{2}\left(\lambda +
\frac{q}{\lambda}\right).$
\end{enumerate}
Therefore, if $\lambda = \pm \sqrt{q}$ is an eigenvalue of $\gamma$ then it is also an
eigenvalue of $A$, hence $BC = A^2 -qI$ is singular.  If $\lambda = \pm \sqrt{-q}$ is an
eigenvalue of $\gamma$ then $\lambda + q/\lambda = 0$ so $A$ is singular.  Conversely, if
$A$ is singular, say $Au = 0$ then 
\[
\left(\begin{matrix} A & B \\ C & \tr{\!A} \end{matrix} \right)
\left(\begin{matrix} \sqrt{-q} u \\ Cu \end{matrix} \right) =
\left( \begin{matrix} BCu \\ \sqrt{-q}(Cu + CAu) \end{matrix} \right) =
\sqrt{-q}\left(  \begin{matrix} \sqrt{-q} u \\ Cu \end{matrix} \right) \]
so $\sqrt{-q}$ is an eigenvalue of $\gamma$.  If $C$ is singular then there exists a
nonzero vector $u$ so that $Cu=0$ and $Au = \sqrt{q}u$ so the vector $\left( \begin{smallmatrix}
u \\ 0 \end{smallmatrix}\right)$ is an eigenvector of $\gamma$ with eigenvalue $\sqrt{q}$.  If
$C$ is nonsingular but $B$ is singular then there exists a vector $u$ so that $Cu \ne 0$,
$BCu = 0$ and $Au = \sqrt{q}u$ hence the vector $\left(\begin{smallmatrix} 0 \\ Cu
\end{smallmatrix} \right)$ is an eigenvector of $\gamma$ with eigenvalue $\sqrt{q}$.  This
proves that conditions (1) and (2) are equivalent.  Since $\gamma$ is semisimple, 
points (a) and (b) above imply that $A$ is semisimple and that its characteristic polynomial is $h(2x)$.  
 The inequality (\ref{eqn-beta}) follows from Proposition \ref{prop-h(x)} part (2).

For the proof of the ``converse" statement, given $A$ with eigenvalues
$\beta_1,\cdots, \beta_n$ set $\theta_r = \sqrt{\beta_r^2-q}$. Let $C\in \GL_n(\QQ)$
be symmetric with $\tr{\!A}C = C A$.  One checks if
$x_r$ is an eigenvector of $\tr{\!A}$ with eigenvalue $\beta_r$ then
\[ w^{\pm}_r := \left( \begin{matrix} \pm \theta_rC^{-1}x_r \\
 x_r \end{matrix} \right)\]
are eigenvectors of $\gamma$ with eigenvalues $\beta_r \pm  \theta_r$.
These are Weil $q$-numbers because the collection $\{\beta_1,\cdots,\beta_n\}$
is a union of Galois conjugacy classes.\end{proof}

\subsection{Conjugacy of $q$-inversive elements}\label{subsec-q-inversive-conjugacy}
In this section we consider $\GL_n$ versus $\Sp_{2n}$-conjugacy of $q$-inversive elements.
Let $L \supset \QQ$ be a field and let $\gamma = \left( 
\begin{smallmatrix} A & B \\ C & \tr{A} \end{smallmatrix} \right) \in
\GSp_{2n}(L)$.  Let $x = \left( \begin{smallmatrix} \lambda X & 0 \\ 0 & \tr{X}^{-1}
\end{smallmatrix}\right) \in \GL_n^*(L)$.  Then
\begin{equation}\label{eqn-Gln-action}
x \gamma x^{-1} = \left( \begin{matrix}
XAX^{-1} & \lambda XB\tr{X} \\
\frac{1}{\lambda}\tr{X}^{-1} C X^{-1} & \tr{X}^{-1} \tr{A} \tr{X} \end{matrix} \right).
\end{equation} 
It follows that $\gamma$ is \qreal if and only if $x \gamma x^{-1}$ is \qreal.
We say that two elements of $\GSp_{2n}$ are $\GL^*_n$-conjugate if the conjugating element
lies in the image of $\delta.$ 

Suppose we attempt to diagonalize the matrix $\gamma$ using conjugation by elements
$x \in \GL_n^*(L)$.  First consider the case when the field $L$ contains all the 
eigenvalues of $A$.  Let $\beta \ne \mu$ be eigenvalues of $A$.  From
the equation $\tr{A}C = CA$ it follows that the eigenspaces $V_{\beta}$ and $V_{\mu}$ of
the matrix $A$ are
orthogonal with respect to the inner product defined by the symmetric matrix $C$.  Thus,
$C = \oplus_{\beta} C_{\beta}$ is an orthogonal direct sum of nondegenerate symmetric bilinear 
forms $C_{\beta}$ on the eigenspaces $V_{\beta}$.  Over the field $L$ we can diagonalize $A$,
grouping the eigenvalues together, from which we see that the centralizer of $A$ is
\[ Z(A) = \prod_{\beta}\GL(V_{\beta}).\]
The matrix $C$ then becomes a block matrix with one block for each eigenvalue $\beta$ of $A$, and
$B = (qI -A^2)C^{-1}$ is also diagonal.  This gives a {\em standard form} for $\gamma$.
If the field $L= {\RR}$ then each of the signatures $\sig(C_{\beta})$ is invariant under
 $Z(A)$-congruence\footnote{Symmetric matrices $S$ and $T$ are {\em congruent} if there exists
a matrix $X$ so that $T = XS\tr{\!X} $.}.
Therefore the matrix $C$ can be diagonalized so as to have $\pm 1$ diagonal entries, with
$\sig(C_{\beta})$ copies of $-1$ appearing in the $\beta$-block.  Let us denote this
collection $\{\sig(C_{\beta})\}$ of signatures (as $\beta$ varies over the eigenvalues
of $A$) by $\sig(A;C)$.

\begin{prop}\label{prop-q-conjugacy}
Let $\gamma_1,\gamma_2\in \GSp_{2n}(\QQ)$ be $q$-inversive, say
$\gamma_i = \left(\begin{smallmatrix} A_i & B_i \\ C_i & \tr{\!A}_i \end{smallmatrix} \right)$. Then

\begin{tabular}{lcl}
$\gamma_1,\gamma_2$ are $\GSp_{2n}(\overline{\QQ})$-conjugate 
&$\iff$ &they are $\GL_n^*(\overline{\QQ})$-conjugate\\
&$\iff$&$A_1,A_2$ are $\GL_n(\QQ)$-conjugate.\\
$\gamma_1,\gamma_2$ are $\Sp_{2n}(\RR)$-conjugate &$\iff$&$\gamma_1,\gamma_2$ are 
$\delta(\GL_n(\RR))$-conjugate\\
&$\iff$&$A_1,A_2$ are $\GL_n(\QQ)$-conjugate and\\
&&$\sig(A_1;C_1) = \sig(A_2;C_2)$.
\end{tabular}
\end{prop}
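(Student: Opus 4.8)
The strategy is to reduce every conjugacy statement about $\gamma_i \in \GSp_{2n}$ to a statement about $A_i \in \GL_n$, using the observations (a)--(c) from the proof of Lemma \ref{lem-q-real} together with the block-diagonalization already carried out in \S\ref{subsec-q-inversive-conjugacy}. The key structural fact is that a $q$-inversive $\gamma$ with nonsingular $A,B,C$ is determined, up to $\GL_n^*$-conjugacy, by the pair $(A,C)$ with $C$ symmetric satisfying $\tr{\!A}C = CA$ (since $B = (qI-A^2)C^{-1}$), and that the eigenvalue map $\alpha \mapsto \tfrac12(\alpha+q/\alpha) = \beta$ sets up a two-to-one correspondence between eigenvalues of $\gamma$ and eigenvalues of $A$. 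I would prove the chain of equivalences in the order $(\Leftarrow)$ at the bottom going up, since the constructive direction (from $A$ to $\gamma$) is the substantive one and was essentially supplied by the ``converse'' half of Lemma \ref{lem-q-real}.

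\emph{The $\overline{\QQ}$-row.} The implication ``$\GL_n^*(\overline{\QQ})$-conjugate $\Rightarrow$ $\GSp_{2n}(\overline{\QQ})$-conjugate'' is trivial since $\GL_n^* \subset \GSp_{2n}$. For ``$A_1,A_2$ are $\GL_n(\overline\QQ)$-conjugate $\Rightarrow$ $\gamma_1,\gamma_2$ are $\GL_n^*(\overline\QQ)$-conjugate'': after conjugating $A_i$ to a common matrix $A$ we must match the symmetric forms $C_1,C_2$ on the (common) eigenspaces of $A$; over $\overline\QQ$ every nondegenerate symmetric form on $V_\beta$ is equivalent to the identity under $\GL(V_\beta)$, so $C_1$ and $C_2$ are $Z(A)$-congruent, and since $B_i$ is determined by $(A,C_i)$ this congruence realizes the $\GL_n^*$-conjugacy via formula (\ref{eqn-Gln-action}). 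The only genuine work is ``$\gamma_1,\gamma_2$ are $\GSp_{2n}(\overline\QQ)$-conjugate $\Rightarrow$ $A_1,A_2$ are $\GL_n(\QQ)$-conjugate'': $\GSp_{2n}(\overline\QQ)$-conjugacy forces equality of characteristic polynomials $p_1 = p_2$ of $\gamma_i$, hence (by the last paragraph of the Lemma \ref{lem-q-real} proof) equality of the characteristic polynomials $h_i(2x)$ of $A_i$; since $A_i$ is semisimple, equal characteristic polynomials over $\QQ$ already give $\GL_n(\QQ)$-conjugacy. I must note that $\GL_n(\QQ)$-conjugacy and $\GL_n(\overline\QQ)$-conjugacy of semisimple matrices coincide, so all three bullets in the first row close up.

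\emph{The $\RR$-row.} Again ``$\delta(\GL_n(\RR))$-conjugate $\Rightarrow$ $\Sp_{2n}(\RR)$-conjugate'' is immediate. For the converse I would argue: $\Sp_{2n}(\RR)$-conjugacy implies $\GSp_{2n}(\overline\QQ)$-conjugacy, hence by the first row $A_1,A_2$ are $\GL_n(\QQ)$-conjugate; after arranging $A_1 = A_2 = A$ in block-diagonal form, the remaining data is the symmetric form $C_i = \oplus_\beta C_{i,\beta}$, and the invariant that survives $Z(A)$-congruence over $\RR$ is exactly the tuple of signatures $\sig(A;C_i)$, which is preserved under $\Sp_{2n}(\RR)$-conjugacy (one checks that an $\Sp_{2n}(\RR)$-conjugacy preserving $A$ necessarily restricts to a $Z(A)$-congruence of the $C$-forms, using (\ref{eqn-Gln-action}) and the symmetry constraints). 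Conversely, if the signatures agree, then $C_1$ and $C_2$ are $Z(A)$-congruent over $\RR$ (Sylvester), and this congruence, promoted through $\delta$, conjugates $\gamma_1$ to $\gamma_2$ inside $\delta(\GL_n(\RR))$ since $B_i = (qI-A^2)C_i^{-1}$ transforms correctly.

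\emph{Anticipated obstacle.} The routine directions are the forward implications from $A$-data to $\gamma$-data; the delicate point is the reverse implication in each row --- namely that a conjugacy taking $\gamma_1$ to $\gamma_2$ inside the \emph{big} group must already respect the finer invariants ($\GL_n(\QQ)$-class of $A$, and over $\RR$ the signature tuple). Over $\overline\QQ$ this is handled cleanly by characteristic polynomials plus semisimplicity. Over $\RR$ the subtlety is that an element of $\Sp_{2n}(\RR)$ conjugating $\gamma_1$ to $\gamma_2$ need not a priori lie in $\GL_n^*(\RR)$; I would circumvent this by first using the $\overline\QQ$-row to move $A_1$ to $A_2$, then observing that \emph{any} symplectic conjugacy fixing the resulting $\gamma$-up-to-$A$ is forced by the eigenspace decomposition to act on each $V_\beta \oplus V_\beta$-block compatibly with both the symplectic form and the operator, which pins it down to a $Z(A)$-congruence of $C$. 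This ``rigidity of the block action'' step is where I expect to spend the most care.
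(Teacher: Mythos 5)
Your plan for the $\overline{\QQ}$-row is correct and essentially identical to the paper's: diagonalize $A$ over $\overline\QQ$, use that every nondegenerate symmetric form is congruent to the identity over $\overline\QQ$ to reduce $C$ to $I$, and invoke rational canonical form plus semisimplicity to pass between $\GL_n(\overline\QQ)$- and $\GL_n(\QQ)$-conjugacy of $A$. Your reduction to the scalar case $A_1=A_2=\lambda I$ (after block-diagonalization) also matches what the paper does.

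The gap is in the reverse direction of the $\RR$-row, and you have yourself flagged exactly the spot. You assert that ``any symplectic conjugacy fixing the resulting $\gamma$-up-to-$A$ is forced by the eigenspace decomposition \ldots\ which pins it down to a $Z(A)$-congruence of $C$.'' This is not true as stated. Already for $n=1$ with $A=0$ and $\gamma=\left(\begin{smallmatrix}0&1\\-q&0\end{smallmatrix}\right)$, the centralizer of $\gamma$ in $\Sp_2(\RR)=\SL_2(\RR)$ is the one-parameter family $aI+b\gamma$ with $a^2+qb^2=1$, which is strictly larger than $\GL_1^*(\RR)\cap Z(\gamma)=\{\pm I\}$; so the conjugating element $h\in\Sp_{2n}(\RR)$ need not lie in $\GL_n^*(\RR)$, and its effect on $C$ is not a priori that of a $Z(A)$-congruence via \eqref{eqn-Gln-action}. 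You cannot appeal to ``the conjugacy restricts to a $Z(A)$-congruence of the $C$-forms'' without proving it, and this is essentially the content of the equivalence you are trying to establish (``$\Sp_{2n}(\RR)$-conjugate $\iff$ $\delta(\GL_n(\RR))$-conjugate''), so the argument is circular at this point.

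The paper sidesteps this by a direct computation: write $h=\left(\begin{smallmatrix}X&Y\\Z&W\end{smallmatrix}\right)\in\Sp_{2n}(\RR)$, use $h\gamma_1h^{-1}=\gamma_2$ (after subtracting $\lambda I$) to deduce $W=I_sXI_r$, $Z=d^{-1}I_sYI_r$ with $d=\lambda^2-q<0$, then set $H=X+\tfrac{1}{\sqrt d}YI_r$ and verify from the symplectic relations \eqref{eqn-symplectic-conditions3} that $HI_r\,\tr{\bar H}=I_s$. This exhibits $I_r$ and $I_s$ as Hermitian-congruent, so Sylvester forces $r=s$. To repair your proof you need some such device that extracts a congruence of the quadratic (or Hermitian) forms directly from the raw symplectic relations on the blocks of $h$, rather than asserting $h$ can be replaced by an element of $\GL_n^*(\RR)$.
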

\begin{proof}  Taking $L = \overline{\QQ}$ in the preceding paragraph we can arrange that
$C = I$ and $B = (A^2-qI)$ which proves the first statement.  Taking $L=\RR$
gives the implication ($\implies$) in the second statement so we need to prove the reverse
implication. By replacing $\gamma_1, \gamma_2$ with $\delta(\GL_n(\RR))$-conjugates, we may assume
that $A_i, B_i, C_i$ are diagonal ($i=1,2$), the 
diagonal entries of $C_i$ consist of $\pm 1$, and repeated eigenvalues of $A_i$ are grouped together.
It follows that $A_1 = A_2$ since they have the same characteristic polynomial. 
We may express $\gamma_1 = \gamma_{1,1} \oplus \cdots \oplus \gamma_{1,m}$ as a direct sum of $q$-inversive
matrices of lower rank such that each corresponding $A_{1,j}$ is a scalar matrix.  In this way the
problem reduces to the case that $A_1 = A_2=\lambda.I_{n\times n}$ are scalar matrices, which we now suppose.  We may further assume that $C_1 = I_r$ consists of $r$ copies of $+1$ and $n-r$ copies of $-1$ along the
diagonal, and that $C_2 = I_s$.  This determines $B_1=dI_r$ and $B_2=dI_s$ where $d=\lambda^2-q$.  
Assuming that $\gamma_1,\gamma_2$ are $\Sp_{2n}(\RR)$-conjugate, we need to prove that $r=s$.

Suppose $h = \left(\begin{smallmatrix} X & Y \\ Z & W \end{smallmatrix} \right)\in \Sp_{2n}(\RR)$ and 
$\gamma_2 = h \gamma_1 h^{-1}$.  Subtracting $\lambda I_{2n \times 2n}$ from both sides 
of this equation leaves
\begin{equation}\label{eqn-tricky}
\left(\begin{matrix} X & Y \\ Z & W \end{matrix} \right)
\left(\begin{matrix} 0 & dI_r \\ I_r & 0 \end{matrix}\right) =
\left(\begin{matrix} 0 & dI_s \\ I_s & 0 \end{matrix} \right)
\left( \begin{matrix} X & Y \\ Z & W \end{matrix} \right) \end{equation}
or  $W = I_s X I_r$ and $Z = d^{-1} I_s Y I_r$.  Let 
$H = X + \frac{1}{\sqrt{d}} Y I_r \in \GL_{2n}(\CC)$.
Then  
\[H I_r \tr{\bar H} =  (X + {\textstyle{\frac{1}{\sqrt{d}}}} YI_r) I_r \tr{(}X - 
{\textstyle{\frac{1}{\sqrt{d}}}} YI_r) = I_s\]
for the real part of this equation comes from $X\tr{W}-Y\tr{Z}=I$ (\ref{eqn-symplectic-conditions3}) 
and the imaginary part follows similarly because $h \in \Sp_{2n}(\RR)$.  But $I_r$ and $I_s$ are Hermitian matrices so this equation implies that their signatures are equal, that is, $r=s$.  
\end{proof}

\subsection{}\label{subsec-delta}
Let $h(x)\in \ZZ[x]$ be a real, ordinary Weil $q$-polynomial,
(Appendix \ref{sec-Weil-polynomials}), that is, 
\begin{enumerate}
\item[(h1)] $h(0)$ is relatively prime to $q$
\item[(h2)] the roots $\beta_1,\beta_2,\cdots,\beta_n$ of $h$ are totally real and
$|\beta_i| <  2\sqrt{q}$ for $1 \le i \le n$. 
\end{enumerate}
Let $\mathcal S(h)$  be the algebraic variety, defined over $\QQ$, consisting of all pairs 
$(A_0,C)$ where $A_0,C \in \GL_n$, 
where $A_0$ is semisimple and its characteristic polynomial is equal to $h(2x)$,
where $C$ is symmetric and $\tr{\!A}_0C = CA_0$. As in Lemma \ref{lem-q-real} there is a natural mapping
\begin{equation}\label{eqn-theta}
\theta:\mathcal S(h)
\to \GSp_{2n},\quad (A_0,C) \mapsto \left( \begin{matrix} A_0 & B \\ C & \tr{A} 
\end{matrix} \right)\end{equation}
where $B = (A_0^2 -qI)C^{-1}$.  The image $\theta(\mathcal S(h)_{\QQ})$ of the set of rational 
elements consists of all $q$-inversive elements whose characteristic polynomial is the ordinary 
Weil $q$-polynomial $p(x) = x^nh(x+q/x)$  (see Appendix \ref{sec-Weil-polynomials}).
The image of $\theta$ is preserved by the action of $\GL_n$, which corresponds to the action
\[ X.(A_0,C) = (XA_0X^{-1}, \tr{\!X}^{-1} C X^{-1})\]
for $X \in \GL_n$.  In the notation of \S \ref{lem-q-real} above,
the orbits of $\GL_n(\RR)$ on $\mathcal S(h)_{\RR}$ are uniquely indexed by
the values $\left\{\sig(C_{\beta})\right\}$ of the signature of each of the 
quadratic forms $C_{\beta}$ on the eigenspace
$V_{\beta}$, as $\beta $ varies over the distinct roots of $h(x)$.  By abuse of
terminology we shall refer to the rational elements in the $\GL_n(\RR)$ orbit of
$(A_0,C) \in \mathcal S(h)_{\QQ}$ as the ``$\GL_n(\RR)$-orbit containing $(A_0,C)$".

\subsection{}\label{subsec-K1}
Let $(A_0,C_0) \in \mathcal S(h)_{\QQ}$ and set $\gamma = \theta(A_0)$ as in equation (\ref{eqn-theta}).  
The algebra $K=\QQ[\gamma]$ is isomorphic to a product of distinct CM fields corresponding to
the distinct irreducible factors of the characteristic polynomial of $\gamma$.  If 
$K_1$ is a CM field corresponding to a simple factor, say $K_1=\QQ[\gamma_1]$
then each eigenvalue  $\alpha\in\CC$ of $\gamma_1$ determines a homomorphism 
$\phi_{\alpha}:\QQ[\gamma_1]\to\CC$ by $\gamma_1 \mapsto\alpha$.  
Therefore a CM type $\Phi$ on $\QQ[\gamma]$ is determined by choosing a sign in 
equation (\ref{eqn-eigenvalues}) for each of the distinct eigenvalues $\beta$ of $A_0$ 
or equivalently, for each of the distinct roots of the polynomial $h(x)$.  Let us fix such a
choice and by abuse of notation, denote it also by $\Phi=\{\alpha_1,\cdots,\alpha_r\}$.  

 Recall from Appendix \ref{appendix-polarizations} that in
order for the pair $(\gamma, \B_0)$ (resp.~the triple $(\gamma, \B_0, \tau_0)$)
to give rise to a $\Phi$-polarized Deligne module (resp.~$\Phi$-polarized Deligne 
module with real structure), it is necessary and sufficient that $\gamma$ should be $\Phi$-viable.

\begin{prop}\label{prop-viable-pair}  Fix $h(x)$ and $\Phi$ as in \S \ref{subsec-delta}
and \S \ref{subsec-K1} above.
For any semisimple matrix $A_0\in\GL_n(\QQ)$ with characteristic polynomial equal to $h(2x)$
there exists a symmetric nonsingular element $C_0 \in \GL_n(\QQ)$ so that 
$(A_0,C_0) \in\mathcal S(h)_{\QQ}$ and so that $\gamma_0 = \theta(A_0,C_0) \in \GSp_{2n}(\QQ)$ 
is $\Phi$-viable.  For
every $(A,C) \in \mathcal S(h)_{\QQ}$ the corresponding element $\gamma = \theta(A,C)$ is $\Phi$-viable
if and only if it is $\delta(\GL_n(\RR))$-conjugate to $\gamma_0$.
\end{prop}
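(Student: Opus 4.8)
The plan is to reduce the statement to a computation about signatures of the symmetric forms $C_\beta$ on the eigenspaces of $A$, using the classification of $q$-inversive elements up to $\delta(\GL_n(\RR))$-conjugacy from Proposition \ref{prop-q-conjugacy}, together with the characterization of $\Phi$-viability in terms of a positivity condition. First I would recall (from Appendix \ref{appendix-polarizations}, which we may invoke) exactly what $\Phi$-viability means for an element $\gamma = \theta(A,C)$: it is the requirement that the symmetric bilinear form $R(x,y) = \B_0(x,\iota y)$ be positive definite, where $\iota$ is a totally $\Phi$-positive imaginary element of $\QQ[\gamma]$. The point is that this positivity condition, once $\gamma$ is written in the standard form of \S\ref{subsec-q-inversive-conjugacy} (so that $A$, $B$, $C$ are block-diagonal along eigenspaces of $A$, with $C$ having $\pm 1$ entries), decouples into a separate condition on each eigenvalue-block $V_\beta$, and on the $\beta$-block it becomes a condition purely on the signature $\sig(C_\beta)$ — indeed it forces $\sig(C_\beta)$ to take one specific value determined by the sign chosen for $\beta$ in (\ref{eqn-eigenvalues}), i.e.\ by $\Phi$.

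The existence half is then essentially a construction: given a semisimple $A_0$ with characteristic polynomial $h(2x)$, I would decompose $\QQ^n$ into the (rational) generalized eigenspaces of $A_0$, and on each one choose a symmetric nonsingular form whose signature over $\RR$, block by block over the real eigenvalues $\beta$, equals the value prescribed by $\Phi$. Piecing these together gives $C_0$ with $(A_0,C_0)\in\mathcal S(h)_\QQ$; by the decoupling just described, $\gamma_0 = \theta(A_0,C_0)$ is $\Phi$-viable. One must check that such a rational $C_0$ exists with the prescribed real signature on each block — this is a standard fact about quadratic forms over $\QQ$ (the real signature is the only local obstruction one needs to control, and one has complete freedom to adjust it), but it must be done $Z(A_0)$-equivariantly, i.e.\ respecting the constraint $\tr{\!A}_0 C_0 = C_0 A_0$; using the eigenspace decomposition of $A_0$ over $\QQ$ makes this transparent since the constraint just says $C_0$ is block-diagonal.

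For the equivalence half: if $\gamma = \theta(A,C)$ is $\delta(\GL_n(\RR))$-conjugate to $\gamma_0$, then by (\ref{eqn-Gln-action}) the conjugation carries the pair $(A,C)$ to $(A_0, C_0)$ up to the action $X.(A,C) = (XAX^{-1}, \tr{\!X}^{-1}CX^{-1})$ with $X\in\GL_n(\RR)$, hence $\sig(A;C) = \sig(A_0;C_0)$; since $\Phi$-viability depends only on this collection of signatures, $\gamma$ is $\Phi$-viable. Conversely, if $\gamma$ is $\Phi$-viable, then — by the decoupling — $\sig(C_\beta)$ is forced to the same prescribed value for each $\beta$ as for $\gamma_0$, so $\sig(A;C) = \sig(A_0;C_0)$; since both have characteristic polynomial $p(x) = x^n h(x+q/x)$, hence $A, A_0$ have characteristic polynomial $h(2x)$ and are $\GL_n(\QQ)$-conjugate, Proposition \ref{prop-q-conjugacy} (the $\Sp_{2n}(\RR)$ row) gives that $\gamma$ and $\gamma_0$ are $\delta(\GL_n(\RR))$-conjugate.

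The main obstacle I anticipate is the careful unwinding of the positivity condition (3) of \S\ref{subsec-polarizations}: one has to verify that, for $\gamma$ in standard form, the totally $\Phi$-positive imaginary element $\iota\in\QQ[\gamma]$ acts block-diagonally with respect to the eigenspace decomposition of $A$ and that $R(x,y) = \B_0(x,\iota y)$ restricted to the $\beta$-block is definite precisely when $\sig(C_\beta)$ equals the value dictated by the sign in (\ref{eqn-eigenvalues}) corresponding to $\Phi$. Concretely, on the two-dimensional-over-$\QQ[\beta]$ block spanned by $w_\beta^+, w_\beta^-$ from the proof of Lemma \ref{lem-q-real}, one computes $\B_0$ and the action of $\iota$ explicitly and reads off the sign; this is a finite but somewhat delicate linear-algebra calculation, and getting the correspondence between "sign in $\alpha_r = \beta_r\pm\sqrt{\beta_r^2-q}$" and "sign of $\sig(C_\beta)$" pinned down correctly is where care is required. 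Everything else is bookkeeping on top of Propositions \ref{prop-q-conjugacy} and the quadratic-forms existence statement.
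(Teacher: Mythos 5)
Your plan is correct in outline but follows a more computational route than the paper, and the step you flag as the "main obstacle" is precisely what the paper's earlier propositions are designed to avoid. You propose to characterize $\Phi$-viability of $\theta(A,C)$ explicitly in terms of the signatures $\sig(C_\beta)$ by a block-by-block positivity computation; the paper never does this. Instead it invokes Proposition \ref{prop-viable}, which has already established (via the Rosati-involution argument of Lemma \ref{lem-Rosati}) that $\Phi$-viability is an $\Sp_{2n}(\RR)$-conjugacy invariant holding on exactly one $\Sp_{2n}(\RR)$-conjugacy class in $\mathcal C(p)$; combined with Proposition \ref{prop-q-conjugacy}, which identifies $\Sp_{2n}(\RR)$-conjugacy of \qreal elements with $\delta(\GL_n(\RR))$-conjugacy (equivalently, with equality of $\sig(A;C)$), this yields your claim (a) and the whole ``if and only if'' half of the proposition in two lines, with no sign bookkeeping. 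For existence, the paper likewise sidesteps the need to know which signatures $\Phi$ prescribes: it takes a $\Phi$-positively polarized Deligne module with real structure of characteristic polynomial $p(x)$ (via Proposition \ref{prop-real-existence} and the CM-field trace-form construction underlying Lemma \ref{lem-unique-positive}), normalizes $(\B,\tau)$ to $(\B_0,\tau_0)$ by Proposition \ref{prop-classification} to obtain a viable \qreal $\gamma = \theta(A,C)$, and then conjugates by $\delta(X)$ for $X \in \GL_n(\QQ)$ taking $A$ to $A_0$ — viability is preserved because it is an $\Sp_{2n}(\RR)$-conjugacy invariant. Note that your existence construction is not really independent of the deferred computation: you cannot ``choose a symmetric nonsingular form whose signature equals the value prescribed by $\Phi$'' without first having computed which value $\Phi$ prescribes, so the hard step is load-bearing for both halves of your plan. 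If carried through, your approach has the modest advantage of making the dependence of $\sig(A_0;C_0)$ on $\Phi$ completely explicit; the paper's approach buys brevity by reusing Propositions \ref{prop-real-existence}, \ref{prop-classification}, \ref{prop-viable}, and \ref{prop-q-conjugacy} as black boxes.
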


\begin{proof}  
Given $A_0$ we need to prove the existence of $C_0 \in \GL_n(\QQ)$ 
such that $(A_0,C_0)\in \mathcal S(h)$ is $\Phi$-viable.  By Proposition
 \ref{prop-real-existence} there is a $\Phi$-polarized 
Deligne module with real structure,  $(T,F,\B,\tau)$ whose characteristic
polynomial is $p(x)$.  Use Proposition \ref{prop-classification} to choose a 
basis $h:T\otimes \QQ \overset{\sim}{\rightarrow} \QQ^{2n}$ so that
that $h(T) \subset \QQ^{2n}$ is a lattice, so that $h_*(\B) = \B_0$ and that 
$h_*(\tau) = \tau_0$ in which case 
the mapping $F$ becomes a matrix $\gamma = \left(\begin{smallmatrix} A & B \\ C & \tr{\!A} 
\end{smallmatrix} \right)$.  It follows that $\gamma$ is viable and that the characteristic
polynomial of $A$ is equal to that of $A_0$.  So there exists $X \in \GL_n(\QQ)$ 
satisfying $A_0 = X A X^{-1}$.  Define $C_0 = \tr{\!X}^{-1} C {X}^{-1}$ so that $(A_0,C_0) = 
X\cdot(A,C)$.  Then $\gamma_0 = \theta(A_0,B_0) = \delta( X) \gamma \delta (X)^{-1}$
is $q$-inversive, its characteristic polynomial is $p(x)$, and by Proposition
\ref{prop-viable} it is viable.

For the second statement, $\gamma$ is $\Phi$-viable iff it is $\Sp_{2n}(\RR)$-conjugate to
$\gamma_0$, by Proposition \ref{prop-viable}.  This holds iff it is $\delta(\GL_n(\RR))$-conjugate to
$\gamma_0$, by Proposition \ref{prop-q-conjugacy}.
\end{proof}

\subsection{Remark} In the notation of the preceding paragraph, 
$\gamma=\theta(A,C)$ is $\Phi$-viable iff  $\sig(A,C) = \sig(A_0,C_0)$.  
If the roots of $h(x)$ are distinct then the CM field $\QQ[\gamma]$ has
$2^n$ different CM types, corresponding to the $2^n$ possible values of $\sig(A,C)$ (that is, 
an ordered $n$-tuple of $\pm 1$).  However, if $h(x)$ has repeated roots then there exist
elements $(A,C) \in \mathcal S(h)_{\QQ}$ such that $\gamma = \theta(A,C)$ is not viable for
any choice $\Phi$ of CM type on $\QQ[\gamma]$.




\section{$\overline{\QQ}$-isogeny classes}
The first step in counting the number of (principally polarized) Deligne modules (with or without
real structure) is to identify the set of $\overline{\QQ}$ isogeny classes of such modules, 
following the method of Kottwitz \cite{Kottwitz}. Throughout this and subsequent chapters we
shall only consider polarizations that are positive with respect to the CM type $\Phi_{\varepsilon}$
as described in \S \ref{subsec-phi-varepsilon}.
\begin{lem} \label{prop-five-part}
For $i=1,2$ let $(T_i,F_i)$ be a Deligne module with  ($\Phi_{\varepsilon}$-positive)
polarization $\B_i$. Let $p_i$ be the characteristic polynomial of $F_i.$
Then the following statements are equivalent.
\begin{enumerate}

\item\label{Q1} The characteristic polynomials are equal:  $p_1(x) = p_2(x).$
\item\label{Q2} The Deligne modules $(T_1,F_1)$ and $(T_2,F_2)$ are $\QQ$-isogenous.
\item\label{QN} The Deligne modules $(T_1,F_1)$ and $(T_2,F_2)$ are $\overline{\QQ}$-isogenous.
\item\label{Q3} The polarized Deligne modules $(T_1,F_1,\B_1)$ and $(T_2,F_2,\B_2)$ are
$\overline{\QQ}$-isogenous.
\end{enumerate}
For $i=1,2$ suppose the polarized Deligne module $(T_i,F_i,\B_i)$ admits a real structure 
$\tau_i$.   Then {\rm(\ref{Q1}), (\ref{Q2}), (\ref{QN}), (\ref{Q3})} are also equivalent 
to the following statements
\begin{enumerate}
\setcounter{enumi}{4}
\item\label{Q4} The real Deligne modules $(T_1,F_1,\tau_1)$ and $(T_2,F_2,\tau_2)$ are
$\QQ$-isogenous
\item\label{Q5} The real Deligne modules $(T_1,F_1,\tau_1)$ and $(T_2,F_2,\tau_2)$ are
$\overline{\QQ}$-isogenous 
\item\label{Q6} The real polarized Deligne modules $(T_1,F_1,\B_1,\tau_1)$
and $(T_2,F_2,\B_2,\tau_2)$ are $\overline{\QQ}$-isogenous.
\end{enumerate}\end{lem}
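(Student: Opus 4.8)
The plan is to prove the chain of equivalences by establishing a cycle of implications, splitting the argument into a ``characteristic polynomial'' part and a ``passage to a real structure'' part. The backbone is the equivalence (\ref{Q1}) $\iff$ (\ref{Q2}) $\iff$ (\ref{QN}) $\iff$ (\ref{Q3}) for polarized Deligne modules without extra structure, which is essentially Tate-Honda theory adapted to Deligne modules: a $\QQ$-isogeny between $(T_1,F_1)$ and $(T_2,F_2)$ is the same as a $\QQ[x]$-module isomorphism $T_1\otimes\QQ \cong T_2\otimes\QQ$ where $x$ acts by $F_i$, and since each $F_i$ is semisimple this exists precisely when the characteristic polynomials agree. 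The implications (\ref{Q3}) $\implies$ (\ref{QN}) $\implies$ (\ref{Q2}) $\implies$ (\ref{Q1}) are immediate (forgetting structure, and noting characteristic polynomials are isogeny invariants), so the only content is (\ref{Q1}) $\implies$ (\ref{Q3}): given $p_1 = p_2$, first get a $\QQ$-isogeny of the underlying Deligne modules, then show the polarizations can be matched over $\overline{\QQ}$. Here I would invoke the Remark following \S\ref{sec-isogeny}: because both polarizations are $\Phi_\varepsilon$-positive, the induced isomorphism $\QQ[F_1]\cong\QQ[F_2]$ respects the canonical CM types, and then the classification of polarizations on a Deligne module with fixed CM type (Howe's results, Appendix \ref{appendix-polarizations}) shows that any two become proportional after a suitable $\overline{\QQ}$-isogeny — indeed over $\overline{\QQ}$ all nondegenerate $\Phi_\varepsilon$-positive symplectic forms on a given CM module are equivalent up to scalar.

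For the second block, the key point is that the real structure is essentially rigid once the characteristic polynomial is fixed. Starting from (\ref{Q1}) (equivalently (\ref{Q2})), I would use Lemma \ref{subsec-real-standard-form} to put each $(T_i,F_i,\B_i,\tau_i)$ into the standard form $(L_i,\gamma_i,\B_0,\tau_0)$ with $\gamma_i$ a $q$-inversive element of $\GSp_{2n}(\QQ)$ having characteristic polynomial $p_i = p$. By Proposition \ref{prop-q-conjugacy}, two $q$-inversive elements with the same characteristic polynomial are $\GL_n^*(\overline{\QQ})$-conjugate — equivalently their ``$A$-blocks'' are $\GL_n(\QQ)$-conjugate, which follows from equality of characteristic polynomials and semisimplicity (Lemma \ref{lem-q-real}). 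Conjugating $\gamma_1$ to $\gamma_2$ by an element of $\GL_n^*(\overline{\QQ})$ preserves $\tau_0$ (by definition of $\GL_n^*$ as the fixed group of conjugation by $\tau_0$) and scales $\B_0$ by the multiplier, hence yields a $\overline{\QQ}$-isogeny $(T_1,F_1,\tau_1)\to(T_2,F_2,\tau_2)$ compatible with the involutions, giving (\ref{Q5}); upgrading to a polarized $\overline{\QQ}$-isogeny (\ref{Q6}) again uses the $\Phi_\varepsilon$-positivity to arrange the multiplier/scaling compatibly, exactly as in the non-real case. Finally (\ref{Q4}) $\implies$ (\ref{Q5}) $\implies$ (\ref{Q2}) is trivial, and (\ref{Q6}) $\implies$ (\ref{QN}) is trivial, closing the cycle; the only genuinely new implication is $p_1 = p_2 \implies$ (\ref{Q4}), which is the standard-form plus $\GL_n(\QQ)$-conjugacy argument just sketched.

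I expect the main obstacle to be the implication (\ref{Q1}) $\implies$ (\ref{Q4}): one must produce a $\QQ$-isogeny (not merely $\overline{\QQ}$) of real Deligne modules from the bare equality of characteristic polynomials. The subtlety is that Proposition \ref{prop-q-conjugacy} only gives $\GL_n^*(\overline{\QQ})$-conjugacy of the $\gamma_i$ in general, whereas for a $\QQ$-isogeny I need the conjugating element over $\QQ$, or at least a $\QQ$-rational isomorphism of the underlying $\QQ[F]$-modules that intertwines the $\tau_i$. The way around this is to observe that a $\QQ$-isogeny of real Deligne modules is exactly a $\QQ[F]$-module isomorphism $T_1\otimes\QQ \to T_2\otimes\QQ$ carrying $\tau_1$ to $\tau_2$, and both sides, after identifying $\QQ[F_1]\cong\QQ[F_2]=:K$, are free rank-one $K$-modules on which $\tau_i$ acts as the complex-conjugation involution of the CM algebra $K$ (this is the content of the real-structure axiom $\tau F\tau^{-1}=V$ together with the fact that $V$ corresponds to complex conjugation); any $K$-linear isomorphism then automatically intertwines the two conjugations, so the isogeny exists over $\QQ$. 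Making this identification precise — in particular checking that $\tau_i$ really induces the canonical conjugation on $K$ rather than some twisted form of it, which is where Lemma \ref{lem-fixed-iso-class}'s computation ($\psi^2 = I$) and Proposition \ref{lem-tau-switches} come in — is the delicate step, but it is a finite verification rather than a deep difficulty.
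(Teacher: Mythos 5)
Your proposal correctly identifies the skeleton of the argument and zeroes in on the right bottleneck, namely producing a $\QQ$-isogeny (not merely a $\overline{\QQ}$-isogeny) of real Deligne modules from $p_1=p_2$. But the resolution you offer for that bottleneck does not actually close it, for two reasons.

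First, you assert that after identifying $\QQ[F_1]\cong\QQ[F_2]=:K$, each $T_i\otimes\QQ$ is a free rank-one $K$-module. This is only true when the characteristic polynomial is irreducible (so that $K$ is a CM field of degree $2n$); if $p(x)$ has repeated irreducible factors, $K$ has smaller dimension and $T_i\otimes\QQ$ is not free of rank one over $K$. You need to reduce to the irreducible case explicitly, which is precisely what the paper does by decomposing $T_i\otimes\QQ$ into cyclic subspaces and observing (via $\tau F\tau^{-1}=qF^{-1}$) that the decomposition is $\tau$-stable.

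Second, and more seriously: in the irreducible case, identifying $T_i\otimes\QQ\cong K$ by a choice of generator makes $\tau_i$ into a conjugate-$K$-semilinear involution, hence $\tau_i(x)=a_i\bar x$ for some $a_i\in K^\times$ with $a_i\bar a_i=1$, but there is no reason for $a_i=1$. You acknowledge that $\tau_i$ might be a ``twisted form'' of complex conjugation, but the verification you point to (the $\psi^2=I$ computation in Lemma \ref{lem-fixed-iso-class}, or Proposition \ref{lem-tau-switches}) does not establish $a_i=1$, and it cannot, since the statement is false in general. To fix your route you would need to invoke Hilbert 90 (write $a_i=b_i/\bar b_i$ and change the generator by $b_i$) to normalize each $\tau_i$ to the canonical conjugation before composing; this step is essential and is absent from your sketch. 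The paper instead sidesteps the whole issue with a self-contained cyclic-vector induction: pick nonzero cyclic generators $v\in V_1^+$, $w\in V_2^+$ in the $+1$-eigenspaces of $\tau_1,\tau_2$, define $\psi(F_1^rv)=F_2^rw$, and verify $\psi\tau_1=\tau_2\psi$ by induction on $r$ using $\tau_iF_i\tau_i^{-1}=qF_i^{-1}$. That argument is elementary and never needs to identify $\tau_i$ with the algebra conjugation, so it is cleaner than your $K$-module approach even after the Hilbert 90 repair. Similarly, for $(2)\Rightarrow(3)$ the paper does not appeal vaguely to equivalence of $\Phi_\varepsilon$-positive forms over $\overline{\QQ}$ but carries out the Kottwitz Rosati-involution argument (define $\beta$ by $\B_1(\beta x,y)=\B_2(\phi x,\phi y)$, show it is Rosati-fixed and positive, write $\beta=\alpha'\alpha$ by Lemma \ref{lem-Rosati}, and replace $\phi$ by $\phi\alpha^{-1}$); your appeal to the Remark after \S\ref{sec-isogeny} gestures at the right fact but would need to be made precise.
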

\begin{proof} 
Clearly, (\ref{Q3})$\implies$(\ref{QN})$\implies$(\ref{Q1}) and (\ref{Q2})$\implies$(\ref{Q1}).
The implication (\ref{Q1})$\implies$(\ref{Q2}) is a special case of a theorem of Tate, but in our
case it follows immediately from the existence of rational canonical form
(see, for example, \cite{Knapp1} p. 443) that is, by decomposing $T_i \otimes \QQ$ into
$F_i$-cyclic subspaces ($i=1,2$) and mapping cyclic generators in $T_1$ to corresponding
cyclic generators in $T_2$.

The proof that (\ref{Q2})$\implies$(\ref{Q3}) is a special
case of Kottwitz \cite{Kottwitz} p. 206, which proceeds as follows.
Given $\phi:(T_1\otimes\QQ,F_1) \to (T_2\otimes\QQ,F_2)$ define
$\beta  \in \End(T_1,F_1)\otimes\QQ$ by $\B_1(\beta x, y) = \B_2(\phi(x),\phi(y))$.
The {\em Rosati involution} ($\beta \mapsto \beta'$)
is the adjoint with respect to $\B_1$ and it fixes $\beta$ since
\[ \B_1(\beta' x, y) = \B_1(x, \beta y) = -\B_1(\beta y, x) = -\B_2(\phi(y),\phi(x)) = \B_1(\beta x, y).\]
By Lemma  \ref{lem-Rosati} there exists $\alpha \in
\End(T_1,F_1) \otimes \overline{\QQ}$ such that
$\beta = \alpha'\alpha$ which gives
\[\B_1(\alpha' \alpha x, y) = \B_1(\alpha x, \alpha y) = \B_2 (\phi(x), \phi(y)).\]
Thus $\phi \circ \alpha^{-1}:(T_1\otimes\overline{\QQ}, F_1) \to
(T_2 \otimes \overline{\QQ},F_2)$ is a
$\overline{\QQ}$-isogeny that preserves the polarizations.

Now suppose that real structures $\tau_1,\tau_2$ are provided.
It is clear that (\ref{Q6})$\implies$(\ref{Q5}) and (\ref{Q3}); also that
 (\ref{Q4})$\implies$(\ref{Q5})$\implies$(\ref{QN}).
  Now let us show (in the presence of $\tau_1,\tau_2)$) that (\ref{Q3})$\implies$(\ref{Q6}).  
The involution $\tau_i\in\GSp(T_i,\B_i)$ has multiplier $-1.$
So by Lemma \ref{lem-Darboux} and Proposition
\ref{prop-classification}  there exist
$\psi_i:T_i \otimes\QQ \to  \QQ^{2n}$ which takes the symplectic form $\B_i$ to the standard
symplectic form $\B_0$, and which takes the involution $\tau_i$ to the standard involution
$\tau_0.$
It therefore takes $F_i$ to some  $\gamma_i \in \GSp_{2n}(\QQ)$
which is \qreal with respect to the standard involution $\tau_0.$

 By part (\ref{Q3}) there is
a $\overline{\QQ}$ isogeny $\phi:(T_1,F_1,\lambda_1) \to (T_2,F_2,\lambda_2).$  This
translates into an element $\Phi \in \GSp_{2n}(\overline{\QQ})$ such that
$\gamma _2 = \Phi^{-1} \gamma_1 \Phi.$
\quash{
\begin{diagram}[size=2em]
T_1\otimes\overline{\QQ} & \rTo^{\phi} & T_2 \otimes \overline{\QQ}\\
\dTo^{\psi_1} &&\dTo_{\psi_2} \\
\overline{\QQ}^{2n} & \rTo_{\Phi} & \overline{\QQ}^{2n}
\end{diagram}
}
By Proposition \ref{prop-q-conjugacy} there
exists an element $\Psi \in \GL_n(\overline{\QQ})$ such that $\gamma_2 =
\Psi^{-1} \gamma_1 \Psi.$  In other words, $\Psi$ corresponds to a $\overline{\QQ}$-isogeny
$(T_1,F_1,\lambda_1,\tau_1) \to (T_2,F_2,\lambda_2,\tau_2).$ 

Now let us show that (\ref{Q5})$\implies$(\ref{Q4}).    
Let us suppose that $(T_1,F_1,\tau_1)$ and $(T_2,F_2,\tau_2)$ are
$\overline{\QQ}$-isogenous.  This implies that the characteristic polynomials $p_1(x)$
and $p_2(x)$ of $F_1$ and $F_2$ (respectively) are equal.  Moreover, Lemma
\ref{lem-tau-switches} implies that the $\pm 1$ eigenspaces of $\tau_1$ have the same
dimension (and that the same holds for $\tau_2$).  Set $V_1=T_1\otimes\QQ$ and
$V_2 = T_2\otimes\QQ$ and denote these eigenspace decompositions as follows,
\[ V_1 \cong V_1^{+} \oplus V_1^{-}\ \text{ and }\
V_2 \cong V_2^{+} \oplus V_2^{-}.\]
First let us consider the case that the characteristic polynomial $p_1(x)$ of $F_1$ is
irreducible.  In this case every non-zero vector in $V_1$ is a cyclic
generator of $V_1.$  Choose nonzero cyclic generators $v\in V_1^{+}$ and
$w \in V_2^{+},$  and define $\psi:V\to V_2 $ by
\[ \psi(F_1^r v) = F_2^r w\]
for $1 \le r \le \dim(T).$  This mapping is well defined because $F_1$ and $F_2$ satisfy
the same characteristic polynomial.  Clearly, $\psi \circ F_1 = F_2 \circ \psi.$  However
we also claim that $\psi \circ \tau_1 = \tau_2 \circ \psi.$  It suffices to check this on
the cyclic basis which we do by induction.  By construction we have that
$\psi \tau_1 v = \tau_2 \psi v = \tau_2 w$ so suppose we have proven that
$\psi \tau_1 F_1^{m}v =  \tau_2 \psi F_1^{m}v = \tau_2 F_2^mw$ for all $m \le r-1.$  Then
\begin{align*}
\psi \tau_1 F_1^r v &= \psi \tau_1 F_1 \tau_1^{-1} \tau_1 F_1^{r-1}v = 
q \psi F_1^{-1}\tau_1 F_1^{r-1} v\\
&= q F_2^{-1}\psi \tau_1 F_1^{r-1} v
= q F_2^{-1}\tau_2 \psi F_1^{r-1}v\\
&= \tau_2 F_2 \psi F_1^{r-1} v = \tau _2 \psi F_1^r v.
\end{align*}
Thus we have constructed a $\QQ$ isogeny between these two real Deligne modules.

Now let us consider the case that the characteristic polynomial $p_1(x)$ is reducible.
In this case, $F_1$ is semisimple so its minimum polynomial
 is a product of distinct irreducible polynomials.  Then there are decompositions
\[ V_1 \cong \underset{i=1}{\overset{k}{\oplus}}V_{1,i} \ \text{ and }\
V_2 \cong \underset{i=1}{\overset{k}{\oplus}}V_{2,i}\]
into $F_1$-cyclic (resp. $F_2$-cyclic) subspaces with irreducible characteristic polynomials.
Moreover, the involution $\tau_1$ (resp. $\tau_2$) preserves this decomposition:  this
follows from the fact that $\tau_1 F_1 \tau_1^{-1} = qF_1^{-1}$ satisfies the same characteristic
polynomial as $F_1$.  Thus we may consider the factors one at a time and this reduces
us to the case that the characteristic polynomial $p_1(x)$ is irreducible.
 \end{proof}

\begin{prop}\label{prop-Qbar-conjugacy}
  Associating the characteristic polynomial to each Deligne module induces a
canonical one to one correspondence between the following objects
\begin{enumerate}[label={\rm(\alph*)}]
\item The set of ordinary Weil $q$-polynomials $p(x)\in\ZZ[x]$ of degree $2n$ (see Appendix 
\ref{sec-Weil-polynomials})
\item The set of $\GSp_{2n}(\overline{\QQ})$-conjugacy classes of 
semisimple elements $\gamma \in \GSp_{2n}(\QQ)$ 
whose characteristic polynomial is an ordinary Weil $q$-polynomial
\item The set of $\QQ$-isogeny classes of Deligne modules $(T,F)$
\item The set of $\overline{\QQ}$-isogeny classes of ($\Phi_{\varepsilon}$-positively)
polarized Deligne modules $(T,F,\lambda)$
\end{enumerate}
and a one to one correspondence between the following objects
\begin{enumerate}[label={\rm($\rm\alph*^{\prime}$)}]
\item \label{S1} The set of ordinary real Weil $q$-polynomials (see Appendix 
\ref{sec-Weil-polynomials}) of degree $n$ 
\item \label{S2} The set of $\GL_n(\QQ)$-conjugacy classes of semisimple elements $A_0 \in \GL_n(\QQ)$
whose characteristic polynomial is $h(2x)$ where $h$ is an ordinary real Weil $q$-polynomial. 
\item \label{S4} The set of $\QQ$-isogeny classes of Deligne modules $(T,F,\tau)$ with real structure
\item \label{S5} The set of $\overline{\QQ}$-isogeny classes of ($\Phi_{\varepsilon}$-positively) 
polarized Deligne modules $(T,F,\B,\tau)$ with real structure.
\end{enumerate}
\end{prop}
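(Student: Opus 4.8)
The plan is to establish the two chains of one-to-one correspondences separately, and within each chain to reduce almost everything to already-proven results, so that the only genuinely new content is the bijection between objects of type (a') and the others.

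\textbf{The first chain (a)--(d).} For the unprimed statement, the equivalence of (c) and (d) is exactly the chain (\ref{Q1})$\iff$(\ref{Q2})$\iff$(\ref{Q3}) of Lemma \ref{prop-five-part} (via $p(x)$), so I would cite that directly. The correspondence between (a) and (c) is a combination of Theorem \ref{thm-Deligne} (the equivalence of categories, which shows every ordinary Weil $q$-polynomial of degree $2n$ actually occurs as the characteristic polynomial of a Deligne module --- existence) together with the rational canonical form argument from the proof of Lemma \ref{prop-five-part} (equality of characteristic polynomials $\implies$ $\QQ$-isogeny --- injectivity), plus the elementary observation that the characteristic polynomial of a Deligne module is an ordinary Weil $q$-polynomial (well-definedness). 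For the correspondence between (a) and (b): given $\gamma \in \GSp_{2n}(\QQ)$ semisimple with characteristic polynomial $p(x)$, two such elements are $\GSp_{2n}(\overline\QQ)$-conjugate iff they have the same characteristic polynomial (semisimplicity plus the fact that the characteristic polynomial of an element of $\GSp_{2n}$ is automatically "self-dual" under $x \mapsto q/x$, so that $\overline\QQ$-conjugacy in $\GL_{2n}$ already forces $\GSp_{2n}(\overline\QQ)$-conjugacy --- this last point uses that for a self-dual semisimple conjugacy class the symplectic form is unique up to scaling on each isotypic piece); and existence of a rational $\gamma$ with a given $p(x)$ again follows from realizing $p(x)$ by a polarized Deligne module and using Lemma \ref{subsec-real-standard-form}.

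\textbf{The second chain (a')--(d').} Here I would run entirely parallel. The equivalence (\ref{Q4})$\iff$(\ref{Q5})$\iff$(\ref{Q6}) from the latter half of Lemma \ref{prop-five-part}, together with the equivalence of those with equality of characteristic polynomials, gives \ref{S4}$\iff$\ref{S5} and shows both are detected by $p(x)$, equivalently (since $p(x) = x^n h(x + q/x)$ sets up a bijection between ordinary Weil $q$-polynomials that are "$q$-real" and ordinary real Weil $q$-polynomials $h$, cf.\ \S\ref{subsec-real-counterpart} and Appendix \ref{sec-Weil-polynomials}) by $h(x)$, which is \ref{S1}. So \ref{S1}$\iff$\ref{S4}$\iff$\ref{S5} follows once I note that every real Deligne module with real structure has such an $h(x)$ as the characteristic polynomial of $A = F+V$ restricted to $T^\tau$ (the Proposition relating Deligne modules with real structure to totally real lattice modules, combined with Lemma \ref{lem-q-real}, gives that the eigenvalues of $A$ are totally real of absolute value $< 2\sqrt q$ and $\det A$ prime to $p$), and conversely every such $h(x)$ is realized --- existence of a real Deligne module with prescribed $h$ comes from Proposition \ref{prop-real-existence} together with Proposition \ref{prop-viable-pair}. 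Finally \ref{S1}$\iff$\ref{S2}: by Lemma \ref{lem-q-real}, a semisimple $A_0 \in \GL_n(\QQ)$ has characteristic polynomial $h(2x)$ for an ordinary real Weil $q$-polynomial $h$ precisely when its eigenvalues are totally real of absolute value $<\sqrt q$ with $h(0)$ prime to $q$; and $\GL_n(\QQ)$-conjugacy of semisimple matrices is equivalent to equality of characteristic polynomials (rational canonical form).

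\textbf{The main obstacle.} The step I expect to require the most care is the passage between the $\overline\QQ$-isogeny classes of \emph{polarized} objects (items (d) and \ref{S5}) and the purely polynomial/conjugacy data: one must be sure that the CM type $\Phi_\varepsilon$ is genuinely canonical along $\overline\QQ$-isogenies, so that a $\overline\QQ$-isogeny of the underlying $(T,F)$ (or $(T,F,\tau)$) really can be upgraded to one respecting the polarizations. This is precisely the content of the Remark in \S\ref{sec-isogeny} and the implications (\ref{Q2})$\implies$(\ref{Q3}) and (\ref{Q3})$\implies$(\ref{Q6}) in Lemma \ref{prop-five-part}, so the work has been done; but assembling it correctly --- in particular checking that the map "characteristic polynomial" is well-defined on $\overline\QQ$-isogeny classes of polarized objects and that it is surjective onto ordinary Weil $q$-polynomials, using Howe's theorem and Proposition \ref{prop-real-existence} for existence --- is where I would be most careful. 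A secondary subtlety, already flagged in the Remark following Proposition \ref{prop-viable-pair}, is that when $h(x)$ has repeated roots not every pair $(A_0, C)$ is $\Phi_\varepsilon$-viable; but this does not affect the stated correspondences, since (a')--(d') are stated at the level of isogeny classes and $\Phi_\varepsilon$-viability is automatic once one starts from an actual polarized Deligne module with real structure.
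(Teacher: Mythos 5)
Your overall strategy---reduce almost everything to Lemma \ref{prop-five-part} and handle ``existence'' separately---is the same as the paper's, and your treatment of the injectivity/uniqueness direction (rational canonical form, and the remark that for a self-dual semisimple class the invariant symplectic form is essentially unique on each isotypic piece) is fine and even a bit more explicit than what the paper writes. The gap is in the existence/surjectivity arguments, where your citations do not support the claims.

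For the first chain you attribute the fact that \emph{every} ordinary Weil $q$-polynomial of degree $2n$ is realized by a Deligne module to Theorem \ref{thm-Deligne}. That theorem is only an equivalence of categories; it says nothing about which characteristic polynomials occur, and cannot produce a Deligne module for you. The paper's surjectivity argument does not pass through that theorem at all: it goes $(a)\to(b)$ via the symplectic companion-matrix construction (Proposition \ref{prop-companion}, which builds a $q$-inversive semisimple $\gamma\in\GSp_{2n}(\QQ)$ with any prescribed $q$-palindromic characteristic polynomial), and then $(b)\to(c)$ by writing down a $\gamma$-stable lattice directly---$T$ spanned by $\{\gamma^m v_0\}$ and $\{(q\gamma^{-1})^m v_0\}$ for a cyclic vector $v_0$, one cyclic block at a time. (Alternatively the paper notes Honda--Tate would suffice.) Nothing in what you cite supplies either step, so as written the map ``characteristic polynomial'' has not been shown to be surjective onto ordinary Weil $q$-polynomials. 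Your appeal to Lemma \ref{subsec-real-standard-form} for $(a)\leftrightarrow(b)$ is then circular, since it presupposes the very Deligne module you were trying to produce.

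For the second chain you claim existence of a real Deligne module with prescribed $h(x)$ from ``Proposition \ref{prop-real-existence} together with Proposition \ref{prop-viable-pair}.'' But Proposition \ref{prop-real-existence} is a complete-reducibility statement for the isogeny category; it does not assert existence. And Proposition \ref{prop-viable-pair} is exactly the result whose proof begins by invoking such an existence claim, so it cannot be used to supply it. The paper instead gives a short direct construction for \ref{S1}$\to$\ref{S4}: reduce to $p(x)$ irreducible, let $\pi$ be a root, $K=\QQ[\pi]$ the CM field, and take $(T,F,\tau)=(\mathcal O_K,\ \pi\cdot,\ \text{complex conjugation})$, which is a Deligne module with real structure because $\pi\bar\pi=q$. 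You should either reproduce this construction or import it explicitly; without it your $(a')\to(c')$ step is not established.

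In short, the framework is right and the uniqueness direction is sound, but both surjectivity claims need to be grounded in Proposition \ref{prop-companion} and the explicit lattice/CM constructions (or Honda--Tate), not in Theorem \ref{thm-Deligne} or Proposition \ref{prop-real-existence}.
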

\begin{proof}  The correspondence (a)$\rightarrow$(b) is given by Proposition \ref{prop-companion}
(companion matrix for the symplectic group) while (b)$\rightarrow$(a) associates to $\gamma$ its
characteristic polynomial.  This correspondence is one-to-one because semisimple elements in $\GSp_{2n}(\overline{\QQ})$ are conjugate iff their characteristic polynomials are equal.  
Items (b) and (c) are identified by the Honda-Tate theorem (\cite{Tate}), but can also be seen directly.
Given $\gamma$ one constructs a lattice $T\subset\QQ^{2n}$ that is preserved by $\gamma$
and by $q\gamma^{-1}$ by considering one cyclic subspace at a time (cf. Proposition 
\ref{prop-real-existence}) and taking $T$ to be the lattice spanned by $\{\gamma^mv_0\}$ 
and by $\{(q\gamma)^mv_0\}$ where $v_0$ is a cyclic vector. Lemma \ref{prop-five-part} 
may be used to complete the proof that the correspondence is one to one.  
Items (c) and (d) correspond by Proposition \ref{prop-real-existence} 
(existence of a polarization) and by Lemma \ref{prop-five-part}.  

The correspondence \ref{S1}$\leftrightarrow$\ref{S2} is standard.  For the correspondence 
\ref{S1}$\rightarrow$\ref{S4}, each ordinary real Weil $q$-polynomial $h(x)$ is the real counterpart of
an ordinary Weil $q$-polynomial $p(x)$ by Appendix \ref{sec-Weil-polynomials}.  It suffices to consider the
case that $p(x)$ is irreducible.  Let $\pi$ be a root of $p(x)$ so that $K = \QQ[\pi]$ is a CM field.
Set $T = \mathcal O_K$ (the full ring of integers), let $F = \pi:T \to T$ be multiplication by $\pi$ and
let $\tau$ denote complex conjugation.  Then $\tau$ preserves $\mathcal O_K$ and $\tau F \tau =
q F^{-1}$ because $\pi \bar\pi = q$.  Hence $(T,F,\pi)$ is a Deligne module with real structure whose
characteristic polynomial is $p(x)$. Lemma \ref{prop-five-part} says that this association
\ref{S1}$\rightarrow$\ref{S4} is one to one and onto.   A mapping \ref{S4}$\rightarrow$\ref{S5} is
given by Proposition \ref{prop-real-existence} (existence of a polarization) and this mapping is
one to one and onto by Lemma \ref{prop-five-part}. 
\end{proof}



\section{$\QQ$-isogeny classes within a $\overline{\QQ}$ isogeny class}  
\subsection{}  Let us fix a ($\Phi_{\varepsilon}$-positively) polarized Deligne module, 
which (by Lemma \ref{subsec-real-standard-form}) we may take to be
of the form $(L_0, \gamma_0, \B_0)$ (where $L_0 \subset \QQ^{2n}$ is a lattice and
where $\gamma_0 \in \GSp_{2n}(\QQ)$), to be used as a basepoint within its
$\overline{\QQ}$-isogeny class.  

\begin{prop}\label{prop-Q-Qbar} The choice of basepoint $(L_0,\gamma_0, \B_0)$ 
determines a canonical one to one correspondence  between the following (possibly infinite) sets.
\begin{enumerate}
\item The set of $\QQ$-isogeny classes of polarized Deligne modules $(T,F,\lambda)$
within the $\overline{\QQ}$-isogeny class of $(L_0,\gamma_0,\B_0)$
\item the set of $\Gsp_{2n}(\QQ)$-conjugacy classes of elements $\gamma \in \GSp_{2n}(\QQ)$
such that $\gamma, \gamma_0$ are conjugate by an element in $\GSp_{2n}(\RR)$
\item the elements of
$\ker\left(H^1(\Gal(\overline{\QQ}/\QQ), Z(\gamma_0))
\to H^1(\Gal(\CC/\RR), Z(\gamma_0)\right)$
\end{enumerate}
where $Z(\gamma_0)$ denotes the centralizer in $\GSp_{2n}$ of $\gamma_0$.  Moreover, $\GSp_{2n}(\RR)$
may be replaced by $\Sp_{2n}(\RR)$ in statement (2).
\end{prop}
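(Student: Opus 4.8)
The plan is to prove the two bijections $(1)\leftrightarrow(2)$ and $(2)\leftrightarrow(3)$ in turn --- the first by putting every polarized Deligne module into the standard form of Lemma~\ref{subsec-real-standard-form}, the second by the usual Galois-descent/twisting argument --- and then to deduce the last sentence from the fact that the standard involution $\tau_0$ lies in $\GSp_{2n}(\QQ)$ with multiplier $-1$. For $(1)\leftrightarrow(2)$ I would argue as follows. Given a $\Phi_\varepsilon$-positively polarized Deligne module $(T,F,\lambda)$ that is $\overline\QQ$-isogenous to $(L_0,\gamma_0,\B_0)$, apply Lemma~\ref{lem-Darboux} and Proposition~\ref{prop-classification} to choose an isomorphism $\psi\colon T\otimes\QQ\to\QQ^{2n}$ carrying $\lambda$ to $\B_0$; then $\gamma:=\psi F\psi^{-1}\in\GSp_{2n}(\QQ)$ has multiplier $q$, $\psi(T)$ is a $\gamma$-stable lattice on which $\B_0$ is integral, and $(T,F,\lambda)\cong(\psi(T),\gamma,\B_0)$. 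Any two choices of $\psi$ differ by an element of $\Sp_{2n}(\QQ)$, so $\gamma$ is well defined up to $\GSp_{2n}(\QQ)$-conjugacy; by Lemma~\ref{prop-five-part} it has the same characteristic polynomial as $\gamma_0$, hence is $\GSp_{2n}(\overline\QQ)$-conjugate to $\gamma_0$, and being $\Phi_\varepsilon$-positively polarized it is $\Phi_\varepsilon$-viable, hence $\Sp_{2n}(\RR)$-conjugate, a fortiori $\GSp_{2n}(\RR)$-conjugate, to $\gamma_0$ by Proposition~\ref{prop-viable}. A $\QQ$-isogeny between two such modules transports, via the $\psi$'s, to a $\GSp_{2n}(\QQ)$-conjugacy of the $\gamma$'s, so this is a well-defined map $(1)\to(2)$.

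I would then check the map is bijective onto the stated set. For injectivity: if $\gamma'=g\gamma g^{-1}$ with $g\in\GSp_{2n}(\QQ)$ and $\gamma,\gamma'$ both $\Phi_\varepsilon$-viable, then the identity $R_{\gamma'}(gx,gx)=c(g)\,R_\gamma(x,x)$ together with positive-definiteness of both Rosati forms (Lemma~\ref{subsec-real-standard-form}) forces $c(g)>0$, and such a $g$ is a $\QQ$-isogeny $(\psi(T),\gamma,\B_0)\to(\psi'(T'),\gamma',\B_0)$, the lattices being irrelevant up to $\QQ$-isogeny. Surjectivity I would deduce together with the last sentence: starting from $\gamma\sim_{\GSp_{2n}(\RR)}\gamma_0$, I first replace $\gamma$ by a $\GSp_{2n}(\QQ)$-conjugate that is $\Sp_{2n}(\RR)$-conjugate to $\gamma_0$ (see below), hence $\Phi_\varepsilon$-viable, and then attach to it a lattice stable under $\gamma$ and $q\gamma^{-1}$ on which $\B_0$ is integral (as in the proof of Proposition~\ref{prop-Qbar-conjugacy}); the resulting polarized Deligne module is $\overline\QQ$-isogenous to the basepoint and represents the required class in (1).

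For $(2)\leftrightarrow(3)$ I would use the standard twisting argument: to each $\gamma\in\GSp_{2n}(\QQ)$ that is $\GSp_{2n}(\overline\QQ)$-conjugate to $\gamma_0$, written $\gamma=g\gamma_0 g^{-1}$ with $g\in\GSp_{2n}(\overline\QQ)$, attach the class of the cocycle $\sigma\mapsto g^{-1}\sigma(g)$. Rationality of $\gamma,\gamma_0$ puts this cocycle in $Z(\gamma_0)(\overline\QQ)$, it is independent of $g$ up to coboundary, the restriction of its class to $H^1(\Gal(\CC/\RR),Z(\gamma_0))$ is trivial precisely when $\gamma\sim_{\GSp_{2n}(\RR)}\gamma_0$, and $\gamma,\gamma'$ have equal classes iff they are $\GSp_{2n}(\QQ)$-conjugate. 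Every class in the kernel is attained: its image in $H^1(\QQ,\GSp_{2n})$ vanishes --- using $H^1(\QQ,\Sp_{2n})=1$ ($\Sp_{2n}$ is special), $H^1(\QQ,\GG_m)=1$ (Hilbert 90), and the exact sequence $1\to\Sp_{2n}\to\GSp_{2n}\to\GG_m\to1$ --- so the cocycle has the form $g^{-1}\sigma(g)$ for some $g\in\GSp_{2n}(\overline\QQ)$, and $\gamma:=g\gamma_0 g^{-1}$ is then rational with the prescribed class, $\GSp_{2n}(\RR)$-conjugate to $\gamma_0$ by triviality of the restriction.

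For the last sentence: every positive-multiplier element of $\GSp_{2n}(\RR)$ is a real scalar times an element of $\Sp_{2n}(\RR)$, so a conjugation $\gamma=g\gamma_0g^{-1}$ with $c(g)>0$ is already an $\Sp_{2n}(\RR)$-conjugation; and if $c(g)<0$ then $\tau_0\gamma\tau_0^{-1}=(\tau_0g)\gamma_0(\tau_0g)^{-1}$ with $c(\tau_0g)=-c(g)>0$, so this $\GSp_{2n}(\QQ)$-conjugate of $\gamma$ is $\Sp_{2n}(\RR)$-conjugate to $\gamma_0$; hence every $\GSp_{2n}(\QQ)$-conjugacy class meeting $\{\gamma:\gamma\sim_{\GSp_{2n}(\RR)}\gamma_0\}$ already meets $\{\gamma:\gamma\sim_{\Sp_{2n}(\RR)}\gamma_0\}$. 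The step I expect to be the main obstacle is reconciling ``$\GSp$ versus $\Sp$'' with ``$\Phi_\varepsilon$-positive versus merely having the right characteristic polynomial'': positivity genuinely forces $\Sp_{2n}(\RR)$- rather than $\GSp_{2n}(\RR)$-conjugacy, so one must verify both that injectivity of $(1)\to(2)$ survives this (the Rosati-form sign computation) and that no $\GSp_{2n}(\QQ)$-conjugacy class is lost on restricting to $\Sp_{2n}(\RR)$ (the point of $\tau_0$).
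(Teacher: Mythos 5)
Your proposal is correct and takes essentially the same route as the paper's proof: standard symplectic coordinates together with Proposition~\ref{prop-viable} for $(1)\leftrightarrow(2)$, the cocycle $\sigma\mapsto g^{-1}\sigma(g)$ valued in $Z(\gamma_0)$ for $(2)\leftrightarrow(3)$, and conjugation by $\tau_0$ to flip the sign of the multiplier for the ``moreover''. You usefully spell out two points the paper leaves implicit --- the Rosati-positivity computation forcing $c(g)>0$, so that a rational $\GSp_{2n}$-conjugacy between viable elements is automatically a $\QQ$-isogeny of polarized modules, and the vanishing of $H^1(\QQ,\GSp_{2n})$ ensuring every class in the kernel is attained --- although Proposition~\ref{prop-classification} is not actually needed here (no involution is present) and the positivity argument you want is the one at the end of Lemma~\ref{subsec-real-standard-form}, whose real-structure hypothesis plays no role in that step.
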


\begin{proof}  For the correspondence (1)$\leftrightarrow$(2), given a polarized 
Deligne module $(T,F,\B)$ choose coordinates on $T\otimes\QQ \cong \QQ^{2n}$ so 
that $\B$ becomes the standard symplectic form $\B_0$.  Then  the resulting
element $\gamma \in \GSp_{2n}(\QQ)$ is well defined up to $\Sp_{2n}(\QQ)$-conjugacy and
by Proposition \ref{prop-q-conjugacy} it will be $\GSp_{2n}(\RR)$-conjugate to 
$\gamma_0$.  Moreover, $\QQ$-isogenous Deligne modules correspond to 
$\GSp_{2n}(\QQ)$-conjugate elements. Conversely if $\gamma \in \GSp_{2n}(\QQ)$ is 
$\GSp_{2n}(\RR)$-conjugate to $\gamma_0$ then by Proposition \ref{prop-viable}
it is viable so it comes from a polarized Deligne module.  For the 
``moreover'' part of the proposition,
if $\gamma = h \gamma_0 h^{-1}$ with $h \in \GSp_{2n}(\RR)$ then replacing $\gamma$ by the
$\Sp_{2n}(\QQ)$-conjugate element, $\tilde\gamma = \tau_0 \gamma \tau_0^{-1}$ if necessary, we may assume
that the multiplier $c(h)>0$ is positive, hence $\gamma = h' \gamma_0 (h')^{-1}$ where $h' = c(h)^{-1/2}h \in \Sp_{2n}(\RR)$. 
The correspondence between (2) and (3) is standard:  the set $H^1(\Gal(\overline{\QQ}/\QQ, Z(\gamma_0))$ indexes
$\GSp_{2n}(\QQ)$-conjugacy classes of elements $\gamma$ that are $\GSp_{2n}(\overline{\QQ})$-conjugate to
$\gamma_0$, and such a class becomes trivial in $H^1(\Gal(\CC/RR), Z(\gamma_0))$ if and only if $\gamma$ is
$\GSp_{2n}(\RR)$-conjugate to $\gamma_0$.
\end{proof}

\subsection{}
\label{sec-real-Q-isogeny}
This section is parallel to the preceding section but it incorporates a real structure.
Let us fix a polarized Deligne module with real structure, $(L_0,\gamma_0,\B_0,\tau_0)$ where 
\[
\gamma_0 = \left(\begin{matrix} A_0 & B_0 \\ C_0 & \tr{\!A}_0
\end{matrix}\right) \]
is \qreal, $\B_0$ is the standard symplectic form, $\tau_0$ is the standard involution, and
$L_0 \subset \QQ^{2n}$ is a lattice preserved by $\tau_0$, by $\gamma_0$ and by $q \gamma_0^{-1}$,
on which $\B_0$ takes integer values.
\quash{
where $A_0, B_0, C_0$ are nonsingular, where 
$B_0,C_0$ are symmetric and $A_0^2-B_0C_0 = qI.$  Since $(L_0,\gamma_0)$ is a
Deligne module corresponding to an ordinary Abelian variety it also follows from
Lemma \ref{lem-q-real} and Proposition \ref{prop-h(x)} that
the matrix $A_0 \in \GL_{n}(\QQ)$ is semisimple, its characteristic polynomial $h(x)$ is totally 
real, and the constant term of $h(x)$ is coprime to $q.$  
}
Let $Z_{\scriptstyle\rm{GL_n}(\QQ)}(A_0)$
denote the set of elements in $\GL_n(\QQ)$ that commute with $A_0$.

\begin{prop}\label{prop-real-Q-isogeny}  The association $B \mapsto \gamma = \left(
\begin{smallmatrix} A_0 & B \\ C & \tr{\!A}_0 \end{smallmatrix} \right)$, where
$C = B^{-1}(A_0 - qI)$, determines a one to one correspondence between the following sets,
\begin{enumerate}
\item elements $B \in \GL_n(\QQ)$,  one from
each $Z_{\scriptstyle\rm{GL_n}(\QQ)}(A_0)$-congruence class of matrices such that \begin{enumerate}
\item $B$ is symmetric and nonsingular
\item $A_0B=B\tr{\!A}_0$
\item $\sig(B;A_0) = \sig(B_0;A_0)$.
\end{enumerate}
\item  The set of $\QQ$ isogeny classes of real polarized Deligne modules
 $(T,F,\lambda,\tau)$ within the
$\overline{\QQ}$ isogeny class of $(L_0,\gamma_0,\B_0,\tau_0)$
\item The set of $\GL^*_{n}(\QQ)$-conjugacy classes of \qreal elements $\gamma \in
\GSp_{2n}(\QQ)$ such that
\begin{enumerate}
\item $\gamma, \gamma_0$ are conjugate by some element in  $\GL^*_{n}(\overline{\QQ})$
\item $\gamma, \gamma_0$ are conjugate by some element in $\GSp_{2n}(\RR)$
\end{enumerate}
\item The set of  $\GL^*_{n}(\QQ)$-conjugacy classes of \qreal elements $\gamma \in
\GSp_{2n}(\QQ)$ such that
\begin{enumerate}
\item $\gamma,\gamma_0$ are conjugate by some element in $\GL_{n}^*(\RR)\subset \Sp_{2n}(\RR)$
\end{enumerate}
\item the elements of $\ker\left(
H^1(\Gal(\overline{\QQ}/\QQ), I_0) \to H^1(\Gal(\CC/\RR), I_0)
\right)$
\end{enumerate}
where $I_0$ denotes the group of self isogenies of $(L_0,\gamma_0,\B_0,\tau_0)$,
that is,
\begin{equation}\label{eqn-identify-centralizer}
I_0 = Z_{\sGL_n^*}(\gamma_0) \cong Z_{\sGL_n}(A_0) \cap \GO(B_0)
\end{equation} where
\[ \GO(B_0) = \left\{ X \in \GL_n|\ X B_0 \tr{\!X} = \mu B_0
\text{ for some constant } \mu\ne 0\right\} \]
denotes the general orthogonal group defined by the symmetric matrix $B_0.$
\end{prop}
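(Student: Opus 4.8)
The plan is to run the argument of Proposition~\ref{prop-Q-Qbar} while dragging the standard involution $\tau_0$ along, establishing the chain of bijections (2)$\leftrightarrow$(3)$\leftrightarrow$(4)$\leftrightarrow$(5) and then (1)$\leftrightarrow$(4). For the basic dictionary: given a $\Phi_{\varepsilon}$-positively polarized Deligne module with real structure $(T,F,\lambda,\tau)$ in the $\overline{\QQ}$-isogeny class of $(L_0,\gamma_0,\B_0,\tau_0)$, I would use Lemma~\ref{lem-Darboux} and Proposition~\ref{prop-classification} to pick an identification $T\otimes\QQ\cong\QQ^{2n}$ carrying $\lambda$ to the standard symplectic form $\B_0$ and $\tau$ to $\tau_0$; then $F$ goes to a semisimple $\gamma\in\GSp_{2n}(\QQ)$ which is \qreal (the relation $\tau F\tau^{-1}=V=qF^{-1}$ becomes $\tau_0\gamma\tau_0^{-1}=q\gamma^{-1}$) with characteristic polynomial the ordinary Weil $q$-polynomial $p(x)=x^nh(x+q/x)$. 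This $\gamma$ is well defined up to conjugacy by the subgroup of $\GSp_{2n}(\QQ)$ commuting with $\tau_0$, namely $\GL_n^*(\QQ)$ (the similitude factor is supplied by the rational multiplier of a $\QQ$-isogeny of polarized modules); hence the $\QQ$-isogeny classes in (2) correspond to $\GL_n^*(\QQ)$-conjugacy classes of \qreal elements $\gamma\in\GSp_{2n}(\QQ)$ that are viable and have characteristic polynomial $p(x)$. By Lemma~\ref{prop-five-part} and Proposition~\ref{prop-q-conjugacy} the last condition is exactly (3a); by Propositions~\ref{prop-viable} and~\ref{prop-viable-pair}, the statement that $\gamma$ is $\Phi_{\varepsilon}$-viable coincides with (3b), and also coincides with the statement that $\gamma$ is $\delta(\GL_n(\RR))$-conjugate to $\gamma_0$, which is (4a); a lattice $L$ is supplied as in Proposition~\ref{prop-real-existence}. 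This identifies (2) with the class sets in (3) and (4), once one checks that (3)$=$(4).

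To check (3)$=$(4): Proposition~\ref{prop-q-conjugacy} identifies, for \qreal elements, $\Sp_{2n}(\RR)$-conjugacy with $\delta(\GL_n(\RR))$-conjugacy, and (3a) merely fixes the characteristic polynomial. The passage from $\GSp_{2n}(\RR)$ in (3b) to $\Sp_{2n}(\RR)$ uses that a \qreal $\gamma$ satisfies $q\gamma^{-1}=\tau_0\gamma\tau_0^{-1}$ with $\tau_0=\delta(-1,I)\in\GL_n^*(\QQ)$: since $\gamma$ and $q\gamma^{-1}$ lie in a single $\GL_n^*(\QQ)$-conjugacy class, we may, after replacing $\gamma$ by $q\gamma^{-1}$ if necessary, take the $\GSp_{2n}(\RR)$-conjugating element to have positive multiplier, hence (rescaling by a square root) to lie in $\Sp_{2n}(\RR)$. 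For (4)$\leftrightarrow$(5) I would invoke the standard Galois descent: the $\GL_n^*(\QQ)$-conjugacy classes inside the stable class of $\gamma_0$ are classified by the kernel of $H^1(\Gal(\overline{\QQ}/\QQ),I_0)\to H^1(\Gal(\overline{\QQ}/\QQ),\GL_n^*)$ with $I_0=Z_{\GL_n^*}(\gamma_0)$, and since $\GL_n^*=\GL_1\times\GL_n$, Hilbert~90 makes the latter group trivial, so the classes form all of $H^1(\QQ,I_0)$; the extra requirement (4a)---equivalently $\Sp_{2n}(\RR)$-conjugacy with $\gamma_0$---is precisely the vanishing of the image in $H^1(\Gal(\CC/\RR),I_0)$. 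The identification $I_0\cong Z_{\GL_n}(A_0)\cap\GO(B_0)$ of~(\ref{eqn-identify-centralizer}) is read off directly from~(\ref{eqn-Gln-action}): $\delta(\lambda,X)$ centralizes $\gamma_0$ iff $X\in Z_{\GL_n}(A_0)$ and $\lambda XB_0\tr{X}=B_0$, the relation on the lower-left block being then automatic, and $(\lambda,X)\mapsto X$ is the asserted isomorphism.

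For (1)$\leftrightarrow$(4): given $\gamma$ as in (4), conjugate it by a suitable $\delta(1,X_0)$ so that its upper-left block becomes exactly $A_0$---possible because, by Lemma~\ref{lem-q-real}, that block is semisimple with characteristic polynomial $h(2x)$, hence $\GL_n(\QQ)$-conjugate to $A_0$. The remaining freedom is conjugation by $\delta(\lambda,X)$ with $X\in Z_{\GL_n(\QQ)}(A_0)$ and $\lambda\in\QQ^{\times}$, which by~(\ref{eqn-Gln-action}) acts on the upper-right block $B$ by $B\mapsto\lambda XB\tr{X}$; here $B$ is symmetric, nonsingular, and satisfies $A_0B=B\tr{\!A}_0$ since $\gamma$ is \qreal. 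By Proposition~\ref{prop-q-conjugacy}, $\gamma$ is $\delta(\GL_n(\RR))$-conjugate to $\gamma_0$ precisely when the signature data $\sig(A_0;C)$ and $\sig(A_0;C_0)$ agree; and since $B=(A_0^2-qI)C^{-1}$ while every eigenvalue $\beta$ of $A_0$ satisfies $\beta^2-q<0$ by~(\ref{eqn-beta}), passing from $C$ to $B$ reverses each signature, so this is equivalent to $\sig(A_0;B)=\sig(A_0;B_0)$, i.e.\ to condition (1c), which also forces the scalar $\lambda$ above to be positive. Hence the matrices $B$ subject to (1a)--(1c), taken one from each class under the indicated equivalence, form a complete set of representatives for the class set in (4).

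I expect the one genuinely delicate point to be this bookkeeping around the $\GL_1$-factor of $\GL_n^*$: one must verify that the scaling $B\mapsto\lambda B$ it contributes interacts correctly with the real signature condition (1c) and with the chosen notion of congruence, so that the classes in (1), the $\delta(\GL_n(\RR))$-conjugacy classes in (4), and the cohomology classes in (5) really are counted the same way (this is the step where a careless count goes astray, notably in small rank). Everything else---Lemmas~\ref{lem-q-real} and~\ref{prop-five-part}, and Propositions~\ref{prop-q-conjugacy}, \ref{prop-viable}, \ref{prop-viable-pair}, \ref{prop-real-existence}---is applied essentially verbatim, so apart from that point the argument is a translation of the $\GSp$-results of \S\ref{subsec-q-inversive-conjugacy} through the involution $\tau_0$, entirely parallel to Proposition~\ref{prop-Q-Qbar}.
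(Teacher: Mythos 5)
Your plan is sound and, at the level of structure, matches the paper's own proof: push $\tau_0$ through the argument of Proposition \ref{prop-Q-Qbar}, identify $\QQ$-isogeny classes with $\GL_n^*(\QQ)$-conjugacy classes of \qreal elements, invoke Proposition \ref{prop-q-conjugacy} and Hilbert 90, and finish with the block calculation from (\ref{eqn-Gln-action}). Your observation that $\sig(B;A_0)=\sig(B_0;A_0)$ is equivalent to $\sig(A_0;C)=\sig(A_0;C_0)$ --- because $B=(A_0^2-qI)C^{-1}$ and $A_0^2-qI$ is negative definite on each eigenspace by (\ref{eqn-beta}) --- is a genuine clarification: the paper's proof applies Proposition \ref{prop-q-conjugacy} (stated for $C$) but asserts the conclusion for $B$ without comment, so you have supplied a step the authors left implicit.

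Two points need fixing, though. First, for the direction (3) $\Rightarrow$ (2) you write that "a lattice $L$ is supplied as in Proposition \ref{prop-real-existence}"; that proposition is about semisimplicity of the isogeny category and does not supply a lattice. The issue is that starting from $\gamma$ satisfying (3a)--(3b) you must produce a lattice in $\QQ^{2n}$ that is simultaneously preserved by $\gamma$, by $q\gamma^{-1}$, \emph{and} by $\tau_0$, on which $\B_0$ is integer-valued. The paper does this concretely: write $\gamma=t\gamma_0t^{-1}$ with $t\in\GL_{2n}(\QQ)$, set $L'=(tL_0)\cap(\tau_0 tL_0)$ --- automatically $\tau_0$-stable since $\tau_0\in\GL_{2n}(\ZZ)$ --- and rescale by an integer $m$ so that $\B_0$ is integral on $L=mL'$; one then checks $\gamma$ and $q\gamma^{-1}$ preserve $L$. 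You should replace your citation with this construction (or an equivalent one, such as intersecting an $\mathcal{O}_{\QQ[\gamma]}$-stable lattice with its $\tau_0$-image). Second, the $\GL_1$-bookkeeping that you rightly flag as delicate is not actually resolved by your proposal: conjugation by $\delta(\lambda,X)$ acts on $B$ by $B\mapsto\lambda XB\tr{X}$, so the equivalence induced on $B$ is $Z_{\GL_n(\QQ)}(A_0)$-congruence together with rescaling by $\lambda\in\QQ^\times$, and condition (1c) only forces $\lambda>0$; it does not by itself absorb the positive non-square rationals into the congruence relation (note that scalar matrices in $Z_{\GL_n(\QQ)}(A_0)$ only yield rescaling by squares). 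The paper's own proof of (1)$\leftrightarrow$(4) is equally terse here and does not address this, so you are not worse off than the source --- but since you identified the issue, you should either show why the extra scaling is redundant in the presence of (1c) and the signature data, or state the congruence in (1) so as to include the positive scalar action explicitly.
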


\begin{proof}
The isomorphism of equation \eqref{eqn-identify-centralizer}
follows immediately from equation (\ref{eqn-Gln-action}).
The equivalence of (3) and (4) follows from Proposition \ref{prop-q-conjugacy}.
The equivalence of (4) and (5) is similar to that in the proof of Proposition \ref{prop-Q-Qbar}
and it is standard.

To describe the correspondence (1)$\rightarrow$(4),  given $B$ set $\gamma =
\left(\begin{smallmatrix} A_0 & B \\ C & \tr{\!A}_0 \end{smallmatrix}\right)$ where
$C = B^{-1}(A^2-qI)$.  Since  $\sig(B;A_0) = \sig(B_0;A_0)$,  Proposition \ref{prop-q-conjugacy} 
implies that $\gamma, \gamma_0$ are conjugate by an element of $\delta(\GL_n(\RR))\subset \GL_n^*(\RR)$.

 Conversely, let $\gamma
= \left(\begin{smallmatrix} A & B \\ C & \tr{\!A} \end{smallmatrix} \right) \in \GSp_{2n}(\QQ)$
be $q$-inversive and $\GL_n^*(\RR)$-conjugate to $\gamma_0$.  
Then $A,A_0$ are conjugate by an element of
$\GL_n(\RR)$ so they are also conjugate by some element $Y \in \GL_n(\QQ)$.  Replacing
$\gamma$ with $\delta(Y) \gamma \delta(Y)^{1}$ (which is in the same $\delta(\GL_n(\QQ))$-conjugacy
class), we may therefore assume that $A = A_0$.  Proposition \ref{prop-q-conjugacy} then
says that $\sig(B;A_0) = \sig(B_0;A_0)$.  So we have a one to one correspondence
(1)$\leftrightarrow$(4).

We now consider the correspondence (2)$\rightarrow$(3).  Let $(L,\gamma, \B_0,\tau_0)$ 
be a polarized Deligne module with real structure that is
$\overline{\QQ}$-isogenous to $(L_0,\gamma_0,\B_0,\tau_0)$.
A choice of $\overline{\QQ}$ isogeny $\phi:(L,\gamma,\B_0,\tau_0) \to
(L_0,\gamma_0,\B_0,\tau_0)$ is an element $X \in \GSp_{2n}(\overline{\QQ})$
such that $\tau_0 X \tau_0^{-1} = X$ and such that $\gamma = X \gamma_0 X^{-1}.$
This proves part (3a), that $\gamma, \gamma_0$ are conjugate by an element
$X \in \GL^*_{n}(\overline{\QQ}).$ Proposition \ref{prop-Q-Qbar} 
 says that $\gamma,\gamma_0$ are also conjugate by an element
of $\GSp_{2n}(\RR).$   Moreover, the isogeny $\phi$ is a $\QQ$-isogeny
if and only if $X \in \GL^*_{n}( \QQ).$  Thus we have described a mapping
from the elements in (2) to the elements in (3).

To describe the inverse of this correspondence we start with the basepoint
$(L_0,\gamma_0,\B_0,\tau_0)$ and choose an element $\gamma \in \GSp_{2n}(\QQ)$ from
the set of elements that satisfy conditions (3a) and (3b), that is,
\begin{equation}\label{eqn-help1}
 \gamma = g \gamma_0 g^{-1} = t \gamma_0 t^{-1} = h \gamma_0 h^{-1}\end{equation}
for some $g \in \GL^*_{n}(\overline{\QQ})$, some $t \in \GL_{2n}(\QQ)$ and some
$h \in \GSp_{2n}(\RR).$ As in the proof of Proposition \ref{prop-Q-Qbar},
 by replacing $\gamma$ with $\tau_0 \gamma \tau_0^{-1}$ if
necessary, {\em we may assume that  $h \in \Sp_{2n}(\RR)$}.
 Since $\tau_0 \in \GL_{2n}(\ZZ)$ the set
\[ L':= \left( tL_0 \right) \cap \left( \tau_0 t L_0\right)\subset \QQ^{2n}\]
is a lattice, so there exists an integer $m$ such that $\B_0$ takes integer values on
$L := mL'$. We claim that $(L = mL', \gamma, \B_0,\tau_0)$ is a RPDM. 
 The verification is the same as that
in the proof of Proposition \ref{prop-viable} except that we also need to verify that
$\gamma L \subset L$ and that $q\gamma^{-1}L \subset L$, which follows from
equation (\ref{eqn-help1}).  \end{proof}
\quash{
If $x \in tL_0$ then
$\gamma x \in tL_0$ and $q\gamma^{-1}x \in tL_0$ as in equation (\ref{eqn-L-preserved}).  If $x \in \tau_0 tL_0$ then
\[ \gamma x \in \gamma \tau_0 tL_0 = \tau_0 q \gamma^{-1}tL_0 \subset
\tau_0 tL_0 \]
which shows that $\gamma$ preserves $(tL_0) \cap (\tau_0tL_0)$.  The proof
that $q \gamma^{-1}$ preserves $L$ is similar.}

There may be infinitely many $\QQ$-isogeny classes of polarized
Deligne modules with real structure within a given $\overline{\QQ}$-isogeny class.  However, it 
follows from Theorem \ref{prop-finite-isomorphism} that only finitely many of these 
$\QQ$-isogeny classes contain principally polarized modules.

\section{Isomorphism classes within a $\QQ$-isogeny class}\label{sec-isomorphism-classes}
\subsection{The category $\mathcal P_N$}  In this chapter and in all subsequent chapters we
fix $N \ge 1$, not divisible by $p$. Throughout this chapter we fix a 
($\Phi_{\varepsilon}$-positively) polarized Deligne 
module (over $\FF_q$) with real structure, which (by Lemma \ref{subsec-real-standard-form}) we may assume 
to be of the form $(T_0, \gamma_0, \B_0, \tau_0)$ where $T_0 \subset \QQ^{2n}$ is a lattice, 
$\gamma_0 \in \GSp_{2n}(\QQ)$ is a semisimple element whose characteristic polynomial 
is an ordinary Weil $q$-polynomial, and where $\B_0$ is the standard symplectic form and 
$\tau_0$ is the standard involution.  Following the method of  \cite{Kottwitz} we consider the
category $\mathcal P_N(T_0, \gamma_0, \B_0, \tau_0)$ for which an object is a collection
$(T,F, \B, \beta, \tau, \phi)$ where $(T,F,\B,\beta, \tau)$ is a {\em principally} 
polarized Deligne module with real structure $\tau$ and with principal level $N$
structure $\beta: T/NT \to (\ZZ/N\ZZ)^{2n}$ (that is compatible with $F\mod N$, with the symplectic 
structure $\overline{\B}=\B \mod N$ and with the real structure $\tau$, see \S \ref{def-real-structure}), 
and where $\phi:(T,F,\B,\tau) \to (T_0,\gamma_0, \B_0, \tau_0)$ is a  $\QQ$-isogeny of polarized
Deligne modules with real structure, meaning that:
\[\phi:T\otimes \QQ \overset{\sim}{\to} \QQ^{2n},\ 
\phi F = \gamma_0 \phi,\ 
\phi^*(\B_0) = c\B \text{ for some } c\in \QQ^{\times}, \text{ and } 
\phi \tau = \tau_0 \phi.\]
A morphism $\psi:(T,F,\B, \beta, \tau, \phi) \to 
(T', F', \B', \beta',\tau', \phi')$  is a group homomorphism
$\psi:T \hookrightarrow T'$ such that 
\[ \phi = \phi' \psi \text{ (hence } \psi  F = F' \psi),\
\B = \psi^*(\B'),\
\beta = \beta' \circ \psi, \text{ and }
\psi \tau = \tau' \psi.\]
Let $X$ denote the set of isomorphism classes in this category.  We obtain a natural 
one to one correspondence between the set of isomorphism classes of principally polarized 
Deligne modules with level structure and real structure within the 
$\QQ$-isogeny class of $(T_0, \gamma_0, \B_0, \tau_0)$, and the quotient
\begin{equation}\label{eqn-main-quotient}
 I_{\QQ}\backslash X\end{equation}
where $I_{\QQ}=I_{\QQ}(T_0,\gamma_0,\B_0,\tau_0)$ denotes the group of self $\QQ$-isogenies of $(T_0, \gamma_0,
\B_0,\tau_0)$.

\subsection{The category $\mathcal L_N$}\label{subsec-LN}
  Let $\mathcal L_N(\QQ^{2n}, \gamma_0,\B_0, \tau_0)$ be the 
category for which an object is a pair $(L,\alpha)$ where $L\subset T_0 \otimes\QQ$ is a lattice
that is symplectic (up to homothety), is preserved by $\gamma_0$, by $q \gamma_0^{-1}$ and by $\tau_0$,
and $\alpha:L/NL \to (\ZZ/N\ZZ)^{2n}$ is a compatible level structure, that is:
\[\tau_0L = L,\ \bar\tau_0 \alpha = \alpha \bar\tau_0,\ \gamma_0L \subset L,\
q \gamma_0^{-1}(L) \subset L,\ \alpha \gamma_0 = \alpha,\]
and there exists $c \in \QQ^{\times}$ so that
\[ L^{\vee} = cL\  \text{ and }\ \alpha_*(c\B_0) = \overline{\B}_0.\]

A morphism $(L,\alpha) \to (L',\alpha')$ is an inclusion $L \subset L'$ such
that $\alpha'|(L/NL) = \alpha$.  (Since $L \to L'$ is an inclusion it also commutes with $\gamma_0$
and $\tau_0$, and it preserves the symplectic form $\B_0$.)  In this
category every isomorphism class contains a unique object.

\subsection{The category $\widehat{\mathcal L}_N$}\label{subsec-adelic-category}
Given $\gamma_0 \in \GSp_{2n}(\QQ)$ as above, let
$\widehat{\mathcal L}_N(\AA_f^{2n}, \gamma_0, \B_0, \tau_0)$ be the category for which an object
is a pair $(\widehat{L}, \alpha)$ consisting of a lattice $\widehat{L} \subset \AA_f^{2n}$ that is
symplectic (up to homothety) and is preserved by $\gamma_0$, by $q \gamma_0^{-1}$ and by $\tau_0$,
and a compatible level $N$ structure $\alpha$, that is:\setcounter{equation}{-1}
\begin{equation}\label{eqn-9.3.0}
\tau_0 \widehat{L} = \widehat{L},\
\alpha \bar\tau_0 = \bar\tau_0\alpha,\end{equation}
\begin{equation}\label{eqn-9.3.1}
\gamma_0 \widehat{L} \subset \widehat{L},\
q\gamma_0^{-1} \widehat{L} \subset \widehat{L},\
\alpha \circ \gamma_0 = \alpha
\end{equation}
and there exists 
$c \in \QQ^{\times}$ such that
\begin{equation}\label{eqn-9.3.2}
\widehat{L}^{\vee} =  c \widehat{L}\ \text{ and }\
\alpha_*(c\B_0) = \overline{\B}_0.\end{equation}

A morphism in $\widehat{\mathcal L}_N$ is an inclusion $\widehat{L} \subset \widehat{M}$
that is compatible with the level structures. As in \cite{Kottwitz} we have the following:

\begin{prop}\label{prop-iso-in-isogeny}
The association 
\[(T,F,\B, \beta,\tau,\phi) \mapsto  \left(L=\phi(T), \alpha = \beta\circ\phi^{-1}\right)
\mapsto (\widehat{L} = {\textstyle\prod_v} L \otimes \ZZ_v, \alpha) \] 
determines  equivalences of categories
\[\mathcal P_N(T_0,\gamma_0,\B_0,\tau_0) \to \mathcal L_N(\QQ^{2n},\gamma_0,\B_0,\tau_0)
\to \widehat{\mathcal L}_N(\AA_f^{2n}, \gamma_0, \B_0, \tau_0).\]
\end{prop}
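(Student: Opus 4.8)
The argument is the standard Kottwitz dictionary (cf.\ \cite{Kottwitz}); both functors are equivalences essentially by unwinding the definitions, and the plan is to organize the verification into three steps.

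First I would check that the functor $\mathcal P_N\to\mathcal L_N$ is well defined. Given an object $(T,F,\B,\beta,\tau,\phi)$, the $\QQ$-isogeny $\phi:T\otimes\QQ\overset{\sim}{\rightarrow}\QQ^{2n}$ carries $F$, $V=qF^{-1}$ and $\tau$ to $\gamma_0$, $q\gamma_0^{-1}$ and $\tau_0$, and carries $\B$ to a $\QQ^{\times}$-multiple of $\B_0$; hence $L=\phi(T)$ is a lattice in $\QQ^{2n}$ stable under $\gamma_0$, under $q\gamma_0^{-1}$ and under $\tau_0$, and principality of $\B$ becomes exactly the condition that $L^{\vee}=c'L$ for a suitable $c'\in\QQ^{\times}$ with $\alpha_*(c'\B_0)=\overline{\B}_0$, where $\alpha=\beta\circ\phi^{-1}$. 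The remaining compatibilities of $\beta$ with $F\bmod N$ and with $\tau$ (\S\ref{def-real-structure}) become the relations $\alpha\circ\gamma_0=\alpha$ and $\bar\tau_0\alpha=\alpha\bar\tau_0$. Thus $(L,\alpha)\in\mathcal L_N$, and inclusions of objects are visibly respected.

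Second I would show this functor is an equivalence. Faithfulness and fullness: a morphism $\psi:(T,\dots,\phi)\to(T',\dots,\phi')$ is forced to equal $(\phi')^{-1}\circ\phi$ on rational points, so it exists if and only if $\phi(T)\subseteq\phi'(T')$ with matching level structures, which is precisely the datum of a morphism $(L,\alpha)\to(L',\alpha')$ in $\mathcal L_N$ (where, moreover, each isomorphism class contains a unique object). For essential surjectivity, given $(L,\alpha)$ I would take $T=L$, $F=\gamma_0|_L$, $\B=c'\B_0|_{L\times L}$, $\tau=\tau_0|_L$, $\beta=\alpha$ and $\phi$ the inclusion: the Deligne-module axioms for $F$ hold because $\gamma_0$ is semisimple with ordinary Weil $q$-polynomial characteristic polynomial and $L$ is $\gamma_0$- and $q\gamma_0^{-1}$-stable, while the $\Phi_{\varepsilon}$-positivity and $q$-inversivity conditions depend only on the pair $(\gamma_0,\B_0)$ and are therefore inherited from the basepoint $(T_0,\gamma_0,\B_0,\tau_0)$ (cf.\ Lemma \ref{subsec-real-standard-form} and Proposition \ref{prop-viable}).

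Third, the functor $(L,\alpha)\mapsto(\widehat L=\prod_v L\otimes\ZZ_v,\alpha)$ is an equivalence via the usual bijection between $\ZZ$-lattices in $\QQ^{2n}$ and $\widehat{\ZZ}$-lattices in $\AA_f^{2n}$, with inverse $\widehat L\mapsto\widehat L\cap\QQ^{2n}$: stability under $\gamma_0$, $q\gamma_0^{-1}$, $\tau_0$ and inclusions of lattices are all detected place by place, the self-duality condition transfers because $(L\otimes\widehat{\ZZ})^{\vee}=L^{\vee}\otimes\widehat{\ZZ}$ and a $\ZZ$-lattice is determined by its completions, and the level structure is literally unchanged since $\widehat L/N\widehat L=L/NL$ canonically. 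I anticipate no real obstacle here; the only point needing care is the bookkeeping of the homothety constant (i.e.\ checking that ``principal polarization'' matches ``self-dual up to the normalized homothety $\alpha_*(c'\B_0)=\overline{\B}_0$''), together with confirming that a bare $\gamma_0$-stable lattice genuinely yields a $\Phi_{\varepsilon}$-positively polarized real Deligne module — and both reduce to the observation that positivity and viability are intrinsic to $(\gamma_0,\B_0)$.
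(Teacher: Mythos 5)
Your proposal matches the paper's proof in all essentials: the forward functor is the same (pushing the lattice and level structure through $\phi$, with the principality of $\B$ translating into $L^\vee=cL$), essential surjectivity is obtained by the same device ($T=L$, $F=\gamma_0|_L$, $\B=c\B_0$, $\phi=\mathrm{id}$), and the final equivalence $\mathcal L_N\to\widehat{\mathcal L}_N$ is exactly the paper's appeal to Lemma \ref{lem-Platonov}. The only cosmetic difference is that you spell out fullness/faithfulness via the observation that a morphism is forced to be $(\phi')^{-1}\circ\phi$, which the paper leaves implicit by just noting that morphisms on both sides are inclusions of lattices.
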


\begin{proof}  Given $(T,F,\B,\beta,\phi)$ let $L = \phi(T)$ and $\alpha = \beta \phi^{-1}$.
Then $\gamma_0L = \gamma_0\phi(T)
= \phi(FT) \subset \phi(T) = L$ and similarly $q \gamma_0^{-1}L\subset L.$
 Since $\B$ is a principal polarization we obtain
\[T = T^{\vee} = \left\{ u \in T \otimes \QQ|\ \B(u,v)\in \ZZ \
\text{ for all }\ v \in T\right\}.\]
Since $\phi$ is a $\QQ$-isogeny with multiplier $c \in \QQ^{\times}$ we have
that $\B_0(\phi(x),\phi(y)) =c \B(x,y)$ for all $x, y \in T \otimes\QQ$ so
\begin{align*}
L^{\vee} &= 
\left\{u \in L \otimes \QQ|\ \B_0(u,v)\in\ZZ \text{ for all } v \in L   \right\} \\
&= \left(\phi(T)\right)^{\vee} = c\phi(T^{\vee}) = c\phi(T) = cL.
\end{align*}
This implies that $c\B_0$ is integral valued on $L$ and hence
\[ \alpha_*(c\B_0) = \beta_*\phi^*(c\B_0) = \beta_*(\B) = \overline{\B}_0.\]
Hence the pair $(L,\alpha)$ is an object in $\mathcal L_N(\QQ^{2n}, \gamma_0,\B_0, \tau_0)$.
If $\psi:(T,F,\B,\phi) \to
(T',F',\B',\phi')$ is a morphism in $\mathcal P_N(T_0, \gamma_0, \B_0, \tau_0)$
then $\psi(T) \subset T'$ so $L = \phi(T) \subset L' = \phi(T')$ is a morphism
in $\mathcal L_N(\QQ^{2n}, \gamma_0,\B_0, \tau_0)$.

Conversely, given an object $(L,\alpha)$ in $\mathcal L_N$, that is, a
lattice $L \subset \QQ^{2n}$ preserved by $\gamma_0$ and $q\gamma_0^{-1}$ such
that $L^{\vee} = cL$, and a principal level structure $\alpha:L/NL \to (\ZZ/N\ZZ)^{2n}$
such that $\alpha_*(c\B_0) = \overline{\B}_0$, we obtain an object in $\mathcal P_N$,
\[ \left(T = L,\ F = \gamma_0|L,\ \B = c \B_0,\
\beta=\alpha,\ \phi=id\right)\]
such that $T^{\vee} = {\textstyle{\frac{1}{c}}}L^{\vee} = L = T$ and
such that $\beta_*(\B) = \alpha_*(c\B_0)= \overline{\B}_0$.  It follows that $\mathcal P_N \to
\mathcal L_N$ is an equivalence of categories.  

Finally, the functor $\mathcal L_N \to  \widehat{\mathcal L}_N$ is an equivalence of categories 
by Lemma \ref{lem-Platonov}.
\end{proof}

\subsection{Lattices at $p$}\label{subsec-lattices-at-p}
Let $\gamma_0 \in \GSp_{2n}(\QQ)$ be a semisimple element whose characteristic polynomial
is an ordinary Weil $q$-polynomial.  It induces a decomposition $\QQ_p^{2n} \cong W' \oplus
W^{\pp}$ that is preserved by $\gamma_0$ but exchanged by $\tau_0$, 
where the eigenvalues of $\gamma_0|W'$ are
$p$-adic units and the eigenvalues of $\gamma_0|W^{\pp}$ are non-units.  Define
\begin{equation}\label{eqn-alphaq}
\alpha_q = \alpha_{\gamma,q} = I' \oplus qI^{\pp}\end{equation} 
to be the identity on $W'$ and multiplication by $q$ on $W^{\pp}$.
The following lemma, which is implicit in \cite{Kottwitz} will be used 
in \S \ref{sec-counting-lattices} when we count the number of lattices in the category
$\widehat{\mathcal L}_N$.

\begin{lem}\label{lem-Smith-form}  Let $\gamma_0 \in \GSp_{2n}(\QQ)$ be a semisimple element
with multiplier equal to $q$.
Let $L_{0,p}= \ZZ_p^{2n}$ be the standard lattice in $\QQ_p^{2n}$.
Let $g \in G(\QQ_p)$ and let $c$ denote its multiplier.   Let $L = g(L_{0,p})$.  
Then $L^{\vee} = c^{-1}L$ and the following statements are equivalent:
\begin{enumerate}[label={\rm(\alph*)}]
\item The lattice $L$ is preserved by $\gamma_0$ and by $q\gamma_0^{-1}$.
\item  The lattice $L$ satisfies $qL \subset \gamma_0 L \subset L$.
\item $g^{-1} \gamma_0 g \in K_p A_q K_p$
\end{enumerate}
where $K_p = G(\ZZ_p)$ and $A_q = \left(\begin{smallmatrix} I & 0 \\ 0 & qI
\end{smallmatrix} \right)$. If the characteristic polynomial of $\gamma_0$ is an ordinary
Weil $q$-polynomial then conditions (a),(b),(c) above are also equivalent to:  \begin{enumerate}
\item[{\rm(d)}] $g^{-1} \alpha_q^{-1} \gamma_0 g \in K_p$
\end{enumerate}

\end{lem}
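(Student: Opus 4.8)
The plan is to prove the chain $(a)\Leftrightarrow(b)\Leftrightarrow(c)$ first, then establish $(c)\Leftrightarrow(d)$ under the ordinariness hypothesis. For $(a)\Leftrightarrow(b)$: if $\gamma_0 L\subset L$ and $q\gamma_0^{-1}L\subset L$ then applying $\gamma_0$ to the second inclusion gives $qL\subset\gamma_0 L$, so $qL\subset\gamma_0 L\subset L$, which is $(b)$. Conversely $(b)$ gives $\gamma_0 L\subset L$ directly, and $qL\subset\gamma_0 L$ means $q\gamma_0^{-1}L\subset L$ after applying $\gamma_0^{-1}$; this uses only that $\gamma_0$ is invertible over $\QQ_p$ (true since its multiplier is $q\ne 0$). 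For the statement $L^\vee=c^{-1}L$, recall $L=g(\ZZ_p^{2n})$ and $(\ZZ_p^{2n})^\vee=\ZZ_p^{2n}$ with respect to $\B_0$; since $g^*\B_0=c\B_0$ (up to the usual identification for $g\in G(\QQ_p)$ with multiplier $c$), a direct computation with the definition of the dual lattice gives $L^\vee=c^{-1}L$.

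For $(b)\Leftrightarrow(c)$: the condition $qL\subset\gamma_0 L\subset L$ says that the elementary divisors of $\gamma_0$ acting on $L$ (relative to itself) all lie in $\{1,q\}$ — more precisely, writing $\gamma_0 L = g\gamma_0 g^{-1}\cdot g(\ZZ_p^{2n})$ and pulling back by $g^{-1}$, the conditions $(b)$ become $q\ZZ_p^{2n}\subset (g^{-1}\gamma_0 g)\ZZ_p^{2n}\subset\ZZ_p^{2n}$. By the theory of elementary divisors (Smith normal form over the PID $\ZZ_p$), a matrix $M\in\GL_{2n}(\QQ_p)$ satisfies $q\ZZ_p^{2n}\subset M\ZZ_p^{2n}\subset\ZZ_p^{2n}$ if and only if its elementary divisors are among $\{1,q\}$, i.e. $M\in K_p\,\mathrm{diag}(q^{a_1},\dots,q^{a_{2n}})\,K_p$ with each $a_i\in\{0,1\}$. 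The multiplier of $g^{-1}\gamma_0 g$ equals that of $\gamma_0$, namely $q$; since the multiplier is multiplicative and equals $\prod q^{a_i}$ on such a diagonal element (up to a unit, and the unit is forced to be $1$ here), exactly half the $a_i$ are $1$ — but in fact for the double coset to be a single one we need the diagonal matrix to be $\GSp$-conjugate (via $K_p=G(\ZZ_p)$) to the standard $A_q=\mathrm{diag}(I,qI)$. This last point uses that $g^{-1}\gamma_0 g\in\GSp_{2n}(\QQ_p)$, so the symplectic form constrains the elementary divisors to pair up as $\{q^{a},q^{1-a}\}$, forcing the Smith form to be exactly $\mathrm{diag}(I,qI)$ after reordering; hence $g^{-1}\gamma_0 g\in K_p A_q K_p$, giving $(c)$.

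For $(c)\Leftrightarrow(d)$ under the ordinariness hypothesis: here we use the decomposition $\QQ_p^{2n}\cong W'\oplus W''$ from the preceding paragraph in the text, with $\gamma_0|W'$ having unit eigenvalues and $\gamma_0|W''$ having eigenvalues divisible by $q$ (this is where ordinariness enters: exactly $n$ of the $2n$ eigenvalues are $p$-adic units, and the other $n$ have $p$-adic valuation equal to $\mathrm{val}(q)$ since their product with a unit eigenvalue is $q$). Then $\alpha_q=\alpha_{\gamma,q}$ acts as $1$ on $W'$ and $q$ on $W''$, so $\alpha_q^{-1}\gamma_0$ has all eigenvalues that are $p$-adic units, i.e. $\alpha_q^{-1}\gamma_0\in\GL_{2n}(\ZZ_p)$-conjugacy class of integral matrices; more to the point, $g^{-1}\alpha_q^{-1}\gamma_0 g\in K_p$ iff $g^{-1}\gamma_0 g$ lies in the coset $K_p\alpha_q'$ where $\alpha_q'=g^{-1}\alpha_q g$... — rather, the cleanest route: condition $(d)$ says $\gamma_0 L = \alpha_q L$ as lattices (since $g^{-1}\alpha_q^{-1}\gamma_0 g$ preserves $\ZZ_p^{2n}$ and, being of multiplier $1$, actually lies in $K_p$ iff it is a $\ZZ_p$-isomorphism), and one checks directly from $qL\subset\gamma_0 L\subset L$ together with the $\gamma_0$-stable splitting $L = L'\oplus L''$ (which exists for ordinariness-adapted lattices, since the idempotent cutting out $W'$ versus $W''$ has $p$-integral entries) that $\gamma_0 L = L'\oplus qL''=\alpha_q L$; conversely $\gamma_0 L=\alpha_q L$ immediately gives $(b)$. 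The main obstacle I anticipate is the bookkeeping in $(b)\Leftrightarrow(c)$ — specifically, verifying that the symplectic constraint forces the Smith normal form of $g^{-1}\gamma_0 g$ to be exactly $\mathrm{diag}(I,qI)$ rather than some other diagonal matrix with entries in $\{1,q\}$ and the right determinant; this requires invoking the structure of $\GSp$-double cosets (the Cartan-type decomposition for $\GSp_{2n}(\QQ_p)$) rather than just $\GL_{2n}$ elementary divisors, and it is exactly the point that is "implicit in Kottwitz" and needs to be made explicit here.
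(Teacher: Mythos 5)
Your overall plan mirrors the paper's: elementary lattice manipulations for $(a)\Leftrightarrow(b)$, the Cartan (Smith normal form) decomposition for $\GSp_{2n}(\QQ_p)$ for the equivalence with $(c)$, and the Deligne splitting $T\otimes\ZZ_p\cong T'\oplus T^{\prime\prime}$ for $(d)$. Your treatment of $(a)\Leftrightarrow(b)$ and of $L^{\vee}=c^{-1}L$ agrees with the paper.

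The step $(b)\Leftrightarrow(c)$, however, contains a concrete error. You assert that $q\ZZ_p^{2n}\subset M\ZZ_p^{2n}\subset\ZZ_p^{2n}$ holds if and only if the elementary divisors of $M$ lie in $\{1,q\}$. That is correct only when $q=p$ is prime. For $q=p^a$ with $a>1$, those inclusions merely bound the elementary divisors $p^{r_i}$ by $0\le r_i\le a$, so they lie in $\{1,p,\ldots,p^a\}$; and the symplectic pairing constraint $r_i+r_{2n+1-i}=a$ that you invoke next does not exclude intermediate values. (For $a$ even, the central element $p^{a/2}I\in\GSp_{2n}(\QQ)$ is semisimple of multiplier $q$ and satisfies $(b)$ with $g=I$, yet its Cartan double coset is that of $p^{a/2}I$, not $K_pA_qK_p$.) So the obstacle you flag at the end of your write-up --- reordering entries in $\{1,q\}$ with the correct determinant --- is not the real difficulty; what must be excluded are intermediate elementary divisors $p^{r}$ with $0<r<a$. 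The paper handles this point differently: it derives from $(b')$ that $L/\gamma_0L$ is a free $\ZZ/q\ZZ$-module, and combined with the count $|L/\gamma_0L|=|\det\gamma_0|^{-1}=q^n$ this gives $L/\gamma_0L\cong(\ZZ/q\ZZ)^n$, after which the symplectic Smith normal form (Spence, Andrianov) pins down the double coset as $K_pA_qK_p$. That identification of $L/\gamma_0L$ with $(\ZZ/q\ZZ)^n$ is exactly the content your argument skips. In the paper's applications $\gamma_0$ is always ordinary, and your $(c)\Leftrightarrow(d)$ sketch already contains the idea that would actually close the gap there: the Deligne splitting $L=L'\oplus L^{\prime\prime}$ gives $\gamma_0L'=L'$ and, by a cardinality count, $\gamma_0L^{\prime\prime}=qL^{\prime\prime}$; and since $L'$, $L^{\prime\prime}$ are Lagrangian, an adapted symplectic basis puts $\gamma_0$ into $K_pA_qK_p$ directly.
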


\begin{proof}  Clearly (a) and (b) are equivalent, and also to:  $qL \subset
q\gamma_0^{-1}L \subset L$. Hence\begin{enumerate}
\item[${\rm(b^{\prime})}$] $L/\gamma_0 L \cong \gamma_0^{-1}L/L \subset L/qL\cong (\ZZ/q\ZZ)^{2n}$.
\end{enumerate}
We now show that (b)$\implies$(c).  Since $\det(\gamma_0)^2 = q^{2n}$ we know that
$|\det(\gamma_0)| = \left| L/\gamma_0 L\right| = q^n$.  Condition (b) implies that $L/\gamma_0 L$ consists of
elements that are killed by multiplication by $q$.  Condition $(b^{\prime})$ implies that $L/\gamma_0L$
is free over $\ZZ/q\ZZ$.  Therefore 
\begin{equation}\label{eqn-ZqZn}
L_{0,p}/(g^{-1} \gamma_0 g)L_{0,p} \cong L/\gamma_0 L \cong (\ZZ/q\ZZ)^n.\end{equation}  
By the theory of Smith normal form for the symplectic group
(see \cite{Spence} or \cite{Andrianov} Lemma 3.3.6) we may
write $g^{-1} \gamma_0 g = u D v$ where $u,v \in G(\ZZ_p)$ and $D = \diag
(p^{r_1}, p^{r_2},\cdots, p^{r_{2n}})$ where $r_1 \le r_2 \le \cdots \le r_{2n}$.  This, together
with equation (\ref{eqn-ZqZn}) implies that $r_1=\cdots=r_n=0$ and $r_{n+1} = \cdots = r_{2n}=a$,
that is, $D = A_q$.  This proves that (b) implies (c). 

Now let us show that (c) implies (a).  Since $K_pA_q K_p\subset
M_{2n \times 2n}(\ZZ_p)$, condition (c) implies that $\gamma_0 g L_{0,p} \subset gL_{0,p}$.
Taking the inverse of condition (c) and multiplying by $q$ gives
\[ qg^{-1} \gamma_0^{-1} g \in K_pqA_q^{-1}K_p \subset M_{2n\times 2n}(\ZZ_p)\]
which implies that $q \gamma_0^{-1}L \subset L$.

Finally, if the characteristic polynomial of $\gamma_0$ is an ordinary Weil 
$q$-polynomial then the lattice $L$ decomposes (\cite{Deligne}) into 
$\gamma_0$-invariant sublattices, $L=L' \oplus L^{\pp}$ such that
$\gamma_0|L'$ is invertible and $\gamma_0|L^{\pp}$ is divisible by $q$, 
or $\gamma_0 L' = L'$ and $\gamma_0 L^{\pp} \subset q L^{\pp}$ which, 
in light of (d) implies that $\gamma_0L^{\pp} = qL^{\pp}$.
In summary, $\alpha_q^{-1}\gamma_0L = L$, which is equivalent to (d).
\end{proof}

\section{Counting lattices}\label{sec-counting-lattices}\subsection{}\label{subsec-KpKN}
Throughout this and subsequent chapters all polarizations are assumed to be $\Phi_{\varepsilon}$-positive.
In this section we fix a ($\Phi_{\varepsilon}$-positively) polarized Deligne module, which 
we may assume (cf.~Lemma \ref{subsec-real-standard-form}) to be of the form
 $(T, \gamma, \B_0)$ where $T\subset \QQ^{2n}$ is a lattice,  where $\B_0$ is the
standard symplectic form and where $\gamma \in \GSp_{2n}(\QQ)$ is an element whose characteristic
polynomial is an ordinary Weil $q$-polynomial.  If we remove all mention of the real structure 
($\tau, \tau_0$, etc.) in the preceding section, \S \ref{sec-isomorphism-classes}, then Proposition 
\ref{prop-iso-in-isogeny} becomes an equivalence of categories between the category
(which we will denote $\mathcal P_N(T, \gamma, \B_0)$) of principally polarized Deligne modules
with (prime to $q$) level $N$ structure together with a $\QQ$-isogeny to $(T, \gamma, \B_0)$, and the
category (which we will denote $\widehat{\mathcal L}_N(\AA_f^{2n}, \gamma, \B_0)$).  
Thus, the number  of isomorphism 
classes of principally polarized Deligne modules with level $N$ structure that are 
$\QQ$-isogenous to $(T,\gamma, \B_0)$ is 
\begin{equation}\label{eqn-WY}
 \left| Z_{\gamma}(\QQ) \backslash Y\right|\end{equation}
where $Y$ denotes the set of objects $(\widehat{L},\alpha)$
in the category $\widehat{\mathcal L}_N(\AA_f^{2n}, \gamma, \B_0)$, cf. equation
(\ref{eqn-main-quotient}).    We recall
from \cite{Kottwitz} the argument that is used to count the number of such lattices modulo the
group of $\QQ$-self isogenies of the basepoint $(T, \gamma, \B_0)$.

Let $G = \GSp_{2n}$ and let $Z_{\gamma}(\QQ)=Z_{G(\QQ)}(\gamma)$ be the centralizer of
$\gamma$ in $\GSp_{2n}(\QQ)$.  
Let $\AA_f^p = \prod'_{v \ne p, \infty} \QQ_p$ denote the ad\`eles away from $p$, let
$\widehat{\ZZ}^p =  \prod_{v \ne p, \infty} \ZZ_v$ so that $\widehat{\ZZ} =
\ZZ_p.\widehat{\ZZ}^p$.  Then 
\[\widehat{K}_N  =  \ker(\GSp_{2n}(\widehat{\ZZ}) \to \GSp_{2n}(\ZZ/N\ZZ))= \widehat{K}^p_NK_p\] 
where
\begin{equation}\label{eqn-Khat}
\widehat{K}^p_N = G(\widehat{\ZZ}^p)  \cap \widehat{K}_N\ \text{ and }\
K_p = G(\ZZ_p).\end{equation}
Let  $f^p$ be the characteristic function of
$\widehat{K}^p_N$,  let $\chi_p$ be the characteristic function of $K_p$ and
let $f_p$ be the characteristic function of
$ K_p  \left(\begin{smallmatrix} I&0\\0 & qI \end{smallmatrix}\right) K_p$.
Choose the Haar measure on $G(\AA_f)$ that gives measure one to the
compact group $\widehat{K}_N$.

\begin{prop}\label{lem-Y}
The number  of isomorphism classes of principally polarized Deligne modules 
with level $N$ structure that are $\QQ$-isogenous to $(T,\gamma,\B_0)$ is
\[
\vol(Z_{\gamma}(\QQ) \backslash Z_{\gamma}(\AA_f))
\cdot\mathcal O^p_{\gamma}\cdot \mathcal O_{\gamma, p}\]
where
\[ \mathcal O_{\gamma}^p = \int_{Z_{\gamma}(\AA^p_f)\backslash G(\AA^p_f)} f^p(g^{-1}\gamma g)dg\]
and
\[ \mathcal O_{\gamma,p} = \int_{Z_{\gamma}(\QQ_p)\backslash G(\QQ_p)} f_p(g^{-1} \gamma g)dg
= \int_{Z_{\gamma}(\QQ_p) \backslash G(\QQ_p)} \chi_p(g^{-1} \alpha_q^{-1} \gamma g)dg.\]
\end{prop}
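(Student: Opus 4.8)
The plan is to reduce the counting of objects in $\widehat{\mathcal L}_N(\AA_f^{2n},\gamma,\B_0)$ modulo $Z_\gamma(\QQ)$ to a product of local orbital integrals, exactly as in Kottwitz. First I would exploit the factorization $\widehat{K}_N = \widehat{K}^p_N K_p$ and the fact that the conditions defining an object $(\widehat L,\alpha)$ are local: the symplectic-up-to-homothety condition, the condition $\gamma\widehat L\subset\widehat L$ and $q\gamma^{-1}\widehat L\subset\widehat L$, and the level-$N$ condition $\alpha\circ\gamma = \alpha$ all decompose over the places $v$. So I would identify $Y$, the set of such $(\widehat L,\alpha)$, with a subset of $G(\AA_f)/\widehat K_N$: writing $\widehat L = g(\widehat\ZZ^{2n})$ for $g\in G(\AA_f)$ (using that all symplectic lattices up to homothety are in one $G(\AA_f)$-orbit, since $G(\QQ_p)$ acts transitively on them with stabilizer $\QQ_p^\times K_p$ and the multiplier takes care of the homothety), the lattice conditions at $v\ne p$ become $g_v^{-1}\gamma g_v\in \widehat K^p_N$ (here using that $\gamma\in G(\ZZ_v)$ with unit multiplier away from $p$, so "$\gamma$ preserves $\widehat L_v$" plus "compatible with level structure" is exactly $g_v^{-1}\gamma g_v$ lying in the level-$N$ subgroup — note away from $p$ the condition $q\gamma^{-1}\widehat L_v\subset\widehat L_v$ is automatic), while at $p$ the condition becomes $g_p^{-1}\gamma g_p\in K_p A_q K_p$ by Lemma \ref{lem-Smith-form}(c), equivalently $g_p^{-1}\alpha_q^{-1}\gamma g_p\in K_p$ by Lemma \ref{lem-Smith-form}(d) (using that the characteristic polynomial of $\gamma$ is an ordinary Weil $q$-polynomial).

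Next I would translate the counting into a measure computation. The set $Y$ is identified with $\{g\in G(\AA_f) : f^p(g^{-1}\gamma g) = 1,\ f_p(g^{-1}\gamma g) = 1\}/\widehat K_N$, and $Z_\gamma(\QQ)$ acts on this by left translation (it commutes with $\gamma$, hence preserves the conditions). So $|Z_\gamma(\QQ)\backslash Y|$ equals the volume, with respect to the Haar measure giving $\widehat K_N$ measure one, of $Z_\gamma(\QQ)\backslash\{g : f^p(g^{-1}\gamma g)f_p(g^{-1}\gamma g) = 1\}$. I would then unfold this over the orbit: the map $g\mapsto g^{-1}\gamma g$ has fibers that are cosets of $Z_\gamma(\AA_f)$, so the integral $\int_{Z_\gamma(\QQ)\backslash G(\AA_f)} (f^p f_p)(g^{-1}\gamma g)\,dg$ splits — after choosing compatible Haar measures on $Z_\gamma(\QQ)\backslash Z_\gamma(\AA_f)$ and on $Z_\gamma(\AA_f)\backslash G(\AA_f)$ — into $\vol(Z_\gamma(\QQ)\backslash Z_\gamma(\AA_f))$ times $\int_{Z_\gamma(\AA_f)\backslash G(\AA_f)} (f^pf_p)(g^{-1}\gamma g)\,dg$, and the latter factors as a product over $p$ and away from $p$ since $f = f^p\otimes f_p$. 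This gives $\vol(Z_\gamma(\QQ)\backslash Z_\gamma(\AA_f))\cdot\mathcal O^p_\gamma\cdot\mathcal O_{\gamma,p}$, and the alternative expression for $\mathcal O_{\gamma,p}$ in terms of $\chi_p$ and $\alpha_q^{-1}\gamma$ is immediate from the equivalence (c)$\iff$(d) in Lemma \ref{lem-Smith-form} together with the fact that $\alpha_q$ commutes with $\gamma$ (it is a function of $\gamma$), so $g^{-1}\gamma g\in K_pA_qK_p \iff g^{-1}\alpha_q^{-1}\gamma g\in K_p$.

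The step I expect to be the main obstacle is the careful bookkeeping of the measure normalizations and the identification of the "set of objects" $Y$ with a clean adelic coset space. In particular one must check that a single $g\in G(\AA_f)$ really does parametrize the pair $(\widehat L,\alpha)$ up to $\widehat K_N$ — i.e. that the level structure $\alpha$ is recovered from $g$ modulo $\widehat K_N$, using that $\widehat K_N$ is exactly the kernel of reduction mod $N$ — and that the homothety ambiguity in "$\widehat L^\vee = c\widehat L$" is correctly absorbed, which is where $G=\GSp_{2n}$ rather than $\Sp_{2n}$ matters and where the multiplier $c$ of $g$ enters. Once the dictionary $(\widehat L,\alpha)\leftrightarrow g\widehat K_N$ with the stated defining inequalities is nailed down, the rest is the standard unfolding of an orbital integral, and I would simply cite \cite{Kottwitz} for the fact that these manipulations are legitimate (convergence of $\mathcal O^p_\gamma$, etc.), since $\gamma$ being a Weil $q$-number forces $Z_\gamma$ to be a torus-like group with the requisite properties.
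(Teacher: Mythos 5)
Your proposal is correct and follows essentially the same approach as the paper: identify the set $Y$ of pairs $(\widehat{L},\alpha)$ with a subset of $G(\AA_f)/\widehat{K}_N$, split into components at $p$ and away from $p$ using $\widehat{K}_N=\widehat{K}^p_NK_p$, translate the lattice conditions into the conditions defining $Y^p$ and $Y_p$ (with Lemma \ref{lem-Smith-form} at $p$), and unfold the orbital integral by inserting $\vol(Z_\gamma(\QQ)\backslash Z_\gamma(\AA_f))$. One small imprecision: the stabilizer of the standard lattice $\ZZ_p^{2n}$ among all lattices symplectic up to homothety is $K_p$ itself (not $\QQ_p^\times K_p$ — the latter would be the stabilizer of the homothety class), but this does not affect the argument.
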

\begin{proof}  
According to equation (\ref{eqn-WY}) we need to determine the number
$|Z_{\gamma}(\QQ) \backslash Y|$.
First we will show that there is a natural identification  
$Y \cong Y^p \times Y_p$ where
\begin{equation}\label{eqn-YpYp}\begin{split}
Y^p &= \left\{ g \in G(\AA_f^p)/\widehat{K}_N^p|\ g^{-1} \gamma g \in \widehat{K}^p_N\right\}\\
Y_p &= \left\{ g \in G(\QQ_p)/K_p|\ g^{-1} \gamma g\in K_p
\left(\begin{smallmatrix} I & 0 \\ 0 & qI \end{smallmatrix}\right)K_p \right\}\\
&=\left\{ g \in G(\QQ_p)/K_p|\ g^{-1} \alpha_{q}^{-1} \gamma g \in K_p \right\}
\end{split}\end{equation}
(the last equality by Lemma \ref{lem-Smith-form}) with $\alpha_q = \alpha_{\gamma,q}$ as in equation
\ref{eqn-alphaq}.
In \S \ref{subsec-adelic-lattices} the quotient 
\[G(\AA_f)/\widehat{K}_N = (G(\AA_f^p)/\widehat{K}^p_N) \times (G(\QQ_p)/K_p)\]
is identified with the set of pairs $(\widehat{L},\alpha)$ where $\widehat{L}
\subset \AA_f^{2n}$ is a symplectic (up to homothety) lattice and $\alpha$ is a level $N$ structure.
To conclude that $Y = Y^p \times Y_p$ we need to examine the effect of the conditions 
(\ref{eqn-9.3.1}) (stability under $\gamma$ and $q \gamma^{-1}$).
 Write $\widehat{L}_0 = L^p_0 \times L_{0,p}$ for the
components away from $p$ and at $p$ respectively, where $L_{0,p} ={\ZZ}_p^{2n}$.
Given a lattice $\widehat{L}\subset\AA_f^{2n}$ 
write $\widehat{L} = L^p \times {L}_p$ for its components in
$(\AA_f^p)^{2n}\times (\QQ_p)^{2n}.$  If $\widehat{L} = g \widehat{L}_0$ then
${L}^p = g^p {L}_0^p$ and $L_p = g_p (\ZZ_p^{2n})$ where
\[ g = (g^p,g_p) \in G(\AA_f^p) \times G(\QQ_p) = G(\AA_f).\] 
Stability under $\gamma$ implies
\[ \gamma g^p {L}_0^p \subset g^p {L}_0^p\]
while stability under $q \gamma^{-1}$ implies the reverse inclusion.  
Compatibility of $\gamma$ with the level structure (\ref{eqn-9.3.1}) implies that
\[ (g^p)^{-1} \gamma g^p \in  G(\widehat{\ZZ}^p)\cap \widehat{K}_N = \widehat{K}^p_N\]
hence $g^p \in Y^p$. At $p$, stability under $\gamma, q\gamma^{-1}$ becomes
\[\gamma g_p (L_{0,p}) \subset L_{0,p}\ \text{ and }\
q \gamma^{-1} g_p(L_{0,p}) \subset L_{0,p}\]
or
\[ qL_{0,p} \subset g_p^{-1} \gamma g_p L_{0,p} \subset L_{0,p}.\]
For any $k_1,k_2 \in K_p$ this is the same
\[ qL_{0,p} \subset k_1 (g_p^{-1} \gamma g_p) k_2 L_{0,p} \subset L_{0,p}\]
so this condition depends only on the double coset $K_p h_p K_p$ where
$h_p = g_p^{-1} \gamma g_p.$  Using Lemma \ref{lem-Smith-form} we conclude that
\[ h_p = k_1 \left(\begin{smallmatrix} I & 0 \\ 0 & qI \end{smallmatrix} \right) k_2\]
for some $k_1,k_2 \in K_p.$  
Thus, $g_p \in Y_p$.  This proves that $Y \cong Y^p \times Y_p$.  Consequently 
\begin{align*}
\left| Z_{\gamma}(\QQ) \backslash Y \right| &= 
 \int_{Z_{\gamma}(\QQ) \backslash G(\AA_f)}
f^p(g^{-1} \gamma g)f_p(g^{-1} \gamma g)\\
&={\vol}(Z_{\gamma}(\QQ) \backslash Z_{\gamma}(\AA_f)) 
\int_{Z_{\gamma}(\AA_f) \backslash G(\AA_f)} 
f^p(g^{-1} \gamma g)f_p(g^{-1}\gamma g)dg\\
&={\vol}(Z_{\gamma}(\QQ) \backslash Z_{\gamma}(\AA_f) ) \cdot \mathcal O^p_{\gamma}\cdot \mathcal O_{\gamma, p}.
\qedhere
\end{align*} 
\end{proof}

\quash{
\subsection{Real lattices at $p$}\label{subsec-Rlattices-at-p}
In this section we assume that $p \ne 2$. Then 
$H^1(\langle \tau_0 \rangle, \Sp_{2n}(\ZZ_p))$ is trivial by 
Proposition \ref{prop-trivial-cohomology}.  It follows from Proposition
\ref{prop-cohomology-lattice} that the group $\GL_n^*(\QQ_p)$ acts transitively
on the set $X_p$ of $\ZZ_p$-lattices $L_p \subset \QQ_p^{2n}$ such that
\[ \tau_0L_p = L_p \ \text{ and }\
L_p^{\vee} = c L_p\ \text{ for some }\ c \in \QQ_p.\]
Let $L_{0,p} = \ZZ_p^{2n}$ be the standard $p$-adic lattice. Let $H = \GL_n^*$.
We omit the proof of the following lemma, which
is similar to that of Lemma \ref{lem-Smith-form}.

\begin{lem}\label{lem-real-Smith-form}  Let $\gamma_0 \in \GSp_{2n}(\QQ)$ be a semisimple element.
Let $x \in H(\QQ_p)$ and let $L_p = x.L_{0,p}$.  The following statements are equivalent:
\begin{enumerate}[label={\rm(\alph*)}]
\item The lattice $L_p$ is preserved by $\gamma_0$ and by $q\gamma_0^{-1}$.
\item  The lattice $L_p$ satisfies $qL_p \subset \gamma_0 L_p \subset L_p$.
\item $x^{-1} \gamma_0 x \in K_p A_q K_p$
\end{enumerate}
where $K_p = G(\ZZ_p)$ and $A_q = \left(\begin{smallmatrix} I & 0 \\ 0 & qI
\end{smallmatrix} \right)$. If the characteristic polynomial of $\gamma_0$ is an ordinary
Weil $q$-polynomial then conditions (a),(b),(c) above are also equivalent to:  \begin{enumerate}
\item[{\rm(d)}] $x^{-1} \alpha_q^{-1} \gamma_0 x \in K_p$ 
\end{enumerate}
where $\alpha_q = \gamma_0 A_q \gamma_0^{-1}$.\hfill{\qed}
\end{lem}
We remark that the splitting into Lagrangian subspaces of $T\otimes\QQ_p\cong \QQ_p^{2n}$ induced by $\tau_0$ 
does not necessarily agree with the splitting into Lagrangian subspaces, $T\otimes \ZZ_p = T' \oplus 
T^{\pp}$ induced by $\gamma_0$, see Lemma \ref{lem-two-splittings}.

}

\subsection{Counting real lattices}\label{sec-counting-real}  
In this section we fix a ($\Phi_{\varepsilon}$-positively) polarized Deligne
module $(T, \gamma, \B_0, \tau_0)$ with real structure (where the lattice $T$
is contained in $\QQ^{2n}$ and where $\gamma \in \GSp_{2n}(\QQ)$ is $q$-inversive), and a level $N$ that
is prime to $p$.  We wish to count the number 
of isomorphism classes of principally polarized Deligne
modules with level $N$ structure and with real structure that are $\QQ$-isogenous
to $(T, \gamma, \B_0, \tau_0)$.  By equation (\ref{eqn-main-quotient}) and Proposition
\ref{prop-iso-in-isogeny} this number is
\[ \left| S(\QQ) \backslash X\right| \]
where $X$ denotes the set of objects $(\widehat{L},\alpha)$ in the category
$\widehat{\mathcal L}_N(\AA_f^{2n}, \gamma, \B_0, \tau_0)$
 of \S \ref{subsec-adelic-category}
and where $S(\QQ)$ denotes the group of (involution preserving) $\QQ$-self isogenies of  
$(T, \gamma, \B_0, \tau_0)$.  It may be identified with the centralizer,
\[S_{\gamma}(\QQ) = \left\{ x \in \GL_n^*(\QQ) \left| \gamma x = x \gamma \right.\right\}.\]
(Note that $\gamma \notin \GL_n^*(\QQ)$.)
Following Proposition \ref{prop-cohomology-lattice} 
the $\GL_n^*(\AA_f)$-orbit containing a given object $(\widehat{L},\alpha)$ is determined by its
cohomology class  $[\widehat{L},\alpha] \in {H}^1=H^1(\langle \tau_0 \rangle, \widehat{K}^0_N)$ 
of equation (\ref{eqn-hatH}).  For simplicity we will now also assume that
$N$ is even and that $p \ne 2$: this implies that the contributions from
different cohomology classes are independent of the cohomology class, as explained in
the following paragraph.

Fix such a class $[t] \in {H}^1$, corresponding to some element $t \in \widehat{K}_N^0$
with $t \tilde t = 1$.  Let
\[ X_{[t]}= \left\{(\widehat{L},\alpha)\in X\left|\ [(\widehat{L},\alpha)] = [t] \in H^1\right. \right\} \]
denote the set of objects $(\widehat{L},\alpha)$ whose associated cohomology class is $[t]$.
It consists of a single $\GL_n^*(\AA_f)$-orbit.
We wish to count the number of elements in the set $ S_{\gamma}(\QQ) \backslash X_{[t]}$.
  Since $N$ is even, the cohomology class $[t]$ vanishes in the cohomology of 
$\Sp_{2n}(\widehat{\ZZ})$, by Proposition \ref{prop-trivial-cohomology}.
  This means that $t = g^{-1} \widetilde{g}$ for some $g \in \Sp_{2n}(\widehat{\ZZ})$. 

Let $\widehat{L}_0 = \widehat{Z}^{2n}$
and $\alpha_0:\widehat{L}_0/N\widehat{L}_0 \to (\ZZ/N\ZZ)^{2n}$ denote the standard lattice and the
standard level $N$ structure.  Then $(g\widehat{L}_0, \alpha_0 \circ g^{-1})$ is
a lattice with real structure and level $N$ structure, whose cohomology class equals
$[t] \in H^1$.  Its isotropy group under the action of $\GL_n^*(\AA_f)$ is
\[ 
\widehat{\Gamma}_N = \GL_n^*(\AA_f) \cap g\widehat{K}_N g^{-1} = \GL_n^*(\AA_f) \cap \widehat{K}_N\]
(since $\widehat{K}_N$ is a normal subgroup of $\Sp_{2n}(\widehat{\ZZ})$) 
and is therefore independent of the cohomology class $[t]$.  Hence
\[ X_{[t]} \cong \widehat{\Gamma}_N \backslash \GL_n^*(\AA_f)\]
is a finite-ad\`elic analog of the space $X_{\CC}$ described in \S \ref{subsec-intro1}.
Choose the Haar measure on $\GL_n^*(\AA_f)$ that  gives measure one to this group. 

As in equation (\ref{eqn-Khat}) write $\widehat{K}_N = \widehat{K}^p_NK_p$. Define $\chi^p$ to be the 
characteristic function on $\GSp_{2n}(\AA^p_f)$ of the subgroup $\widehat{K}_N^p$
and define $\chi_p$ to be the characteristic function on $\GSp_{2n}(\QQ_p)$ of 
$K_p = \GSp_{2n}(\ZZ_p)$.  Let $H = \GL_n^*$.

\begin{prop}\label{prop-number-real-lattices} Suppose that $N$ is even and $p\nmid N$.
Then 
\[|S_{\gamma}(\QQ)\backslash X_{[t]}|= \vol(S_{\gamma}(\QQ) \backslash S_{\gamma}(\AA_f))
\cdot I^p_{\gamma} \cdot I_{\gamma, p}\]
where 
\[ I^p_{\gamma} = \int_{S_{\gamma}(\AA^p_f)\backslash H(\AA^p_f)}\chi^p(x^{-1} \gamma x) dx
\]
and
\[ I_{\gamma, p} = \int_{S_{\gamma}(\QQ_p) \backslash H(\QQ_p)} \chi_p(x^{-1} \alpha_q^{-1} \gamma x)dx.\]
\end{prop}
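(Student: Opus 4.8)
The plan is to imitate the proof of Proposition~\ref{lem-Y} essentially line by line, with $\GSp_{2n}$ replaced by $H=\GL_n^*$, the centralizer $Z_\gamma$ replaced by $S_\gamma$, the compact open group $\widehat{K}_N$ replaced by $\widehat{\Gamma}_N=H(\AA_f)\cap\widehat{K}_N$, the standard lattice replaced by the basepoint $(g\widehat{L}_0,\alpha_0\circ g^{-1})$ (with $g\in\Sp_{2n}(\widehat{\ZZ})$ and $g^{-1}\widetilde{g}=t$), and the set $Y$ of symplectic lattices replaced by $X_{[t]}$. Since $X_{[t]}$ has already been identified with $\widehat{\Gamma}_N\backslash H(\AA_f)$, the only genuinely new ingredient is the local analysis of the $\gamma$-stability conditions (\ref{eqn-9.3.1}).

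First I would split $\widehat{\Gamma}_N=\widehat{\Gamma}_N^p\cdot H(\ZZ_p)$ along $\widehat{K}_N=\widehat{K}_N^p K_p$, with $\widehat{\Gamma}_N^p=H(\AA_f^p)\cap\widehat{K}_N^p$ and $H(\ZZ_p)=H(\QQ_p)\cap K_p$, and then, exactly as in the decomposition $Y\cong Y^p\times Y_p$ of Proposition~\ref{lem-Y}, check that imposing on the lattice $x\widehat{L}_0$ attached to $x\in H(\AA_f)$ (with level structure $\alpha_0\circ(xg)^{-1}$; note $g\widehat{L}_0=\widehat{L}_0$ since $g\in\GL_{2n}(\widehat{\ZZ})$) the requirements that it be stable under $\gamma$ and $q\gamma^{-1}$ and compatible with $\gamma$ yields $X_{[t]}\cong X^p\times X_p$ with
\[ X^p=\{x\in H(\AA_f^p)/\widehat{\Gamma}_N^p \mid x^{-1}\gamma x\in\widehat{K}_N^p\},\qquad
X_p=\{x\in H(\QQ_p)/H(\ZZ_p) \mid x^{-1}\alpha_q^{-1}\gamma x\in K_p\}. \]
Away from $p$ this is immediate: $q$ is a unit there, so stability under $\gamma$ and $q\gamma^{-1}$ forces $x^{-1}\gamma x\in\GSp_{2n}(\widehat{\ZZ}^p)$ and compatibility with the level structure forces $x^{-1}\gamma x\equiv I\bmod N$ (the integral factor $g$ washes out since it normalizes $\widehat{K}_N^p$). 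At $p$ there is no level condition, and I would apply Lemma~\ref{lem-Smith-form} with its ``$g$'' taken to be $x\in H(\QQ_p)\subseteq\GSp_{2n}(\QQ_p)$ --- legitimate because the characteristic polynomial of $\gamma$ is an ordinary Weil $q$-polynomial, so the lattice $x\ZZ_p^{2n}$ still splits compatibly with $\gamma$ and parts (c), (d) of that lemma apply --- to see that stability under $\gamma$ and $q\gamma^{-1}$ is equivalent to $x^{-1}\gamma x\in K_pA_qK_p$, hence to $x^{-1}\alpha_q^{-1}\gamma x\in K_p$, with $\alpha_q=\alpha_{\gamma,q}$ as in (\ref{eqn-alphaq}).

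Granting this decomposition, the rest is the formal orbital-integral manipulation copied from the end of the proof of Proposition~\ref{lem-Y}. The function $x\mapsto\chi^p(x^{-1}\gamma x)\,\chi_p(x^{-1}\alpha_q^{-1}\gamma x)$ on $H(\AA_f)$ is the characteristic function of $X^p\times X_p$ inside $H(\AA_f)/\widehat{\Gamma}_N$; it is right $\widehat{\Gamma}_N$-invariant, and left $S_\gamma(\QQ)$-invariant because $S_\gamma$ centralizes $\gamma$. Since $\widehat{\Gamma}_N$ has measure one and, as $N$ is even and $p\nmid N$, the congruence conditions make the action of $S_\gamma(\QQ)$ on $X_{[t]}$ free (as in Proposition~\ref{lem-Y}), integrating this function over $S_\gamma(\QQ)\backslash H(\AA_f)$ computes $|S_\gamma(\QQ)\backslash X_{[t]}|$. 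I would then pull out $\vol(S_\gamma(\QQ)\backslash S_\gamma(\AA_f))$ --- legitimate because $S_\gamma$ is unimodular: conjugation by $\tau_0$ is an involution of the reductive group $Z_{\GSp_{2n}}(\gamma)$ (it preserves it since $\tau_0\gamma\tau_0^{-1}=q\gamma^{-1}$ and $Z_{\GSp_{2n}}(\gamma)=Z_{\GSp_{2n}}(q\gamma^{-1})$), and $S_\gamma$ is its fixed-point subgroup, hence reductive --- and finally split the remaining integral over $S_\gamma(\AA_f)\backslash H(\AA_f)=\bigl(S_\gamma(\AA_f^p)\backslash H(\AA_f^p)\bigr)\times\bigl(S_\gamma(\QQ_p)\backslash H(\QQ_p)\bigr)$ into $I^p_\gamma\cdot I_{\gamma,p}$.

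The only real point of care should be the $p$-adic half of the decomposition $X_{[t]}\cong X^p\times X_p$: the Lagrangian splitting of $\QQ_p^{2n}$ defined by $\tau_0$ need not coincide with the $\gamma$-stable splitting $T'\oplus T''$ of Proposition~\ref{lem-tau-switches}, so one must invoke Lemma~\ref{lem-Smith-form} for $x$ viewed as a full element of $\GSp_{2n}(\QQ_p)$ rather than trying to work inside $\GL_n^*$, keeping track of which $\alpha_q$ appears. Once that is arranged, the argument runs parallel to Proposition~\ref{lem-Y} throughout.
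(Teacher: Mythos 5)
Your argument is essentially the paper's proof: decompose $X_{[t]}$ using Proposition~\ref{prop-cohomology-lattice} and $t=g^{-1}\tilde g$ with $g\in\Sp_{2n}(\widehat\ZZ)$, check that conditions (\ref{eqn-9.3.1}) split into the away-from-$p$ condition $\chi^p((x^p)^{-1}\gamma x^p)=1$ (using normality of $\widehat K^p_N$) and the at-$p$ condition $x_p^{-1}\alpha_q^{-1}\gamma x_p\in K_p$ (via Lemma~\ref{lem-Smith-form}), and then carry out the same orbital-integral manipulation as in Proposition~\ref{lem-Y}. The extra remarks you supply --- reductivity of $S_\gamma$ as the fixed points of an involution, and the caveat that the $\tau_0$-Lagrangian splitting need not match the $\gamma$-stable splitting $T'\oplus T''$ (hence the need to view $x$ as an element of $\GSp_{2n}(\QQ_p)$ when invoking Lemma~\ref{lem-Smith-form}) --- are correct and fill in points the paper leaves implicit, but they do not change the route.
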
\noindent
Here, $\alpha_q = \alpha_{\gamma,q}$ is defined in equation (\ref{eqn-alphaq}). 

\begin{proof}[Proof of Proposition {\rm \ref{prop-number-real-lattices}}]
By Proposition \ref{prop-cohomology-lattice} each $(\widehat{L},\alpha)\in X_{[t]}$  
has the form  $xg.(\widehat{L}_0,\alpha_0)$ for some 
\[x = (x^p,x_p) \in \GL_n^*(\AA^p_f) \times \GL_n^*(\QQ_p)=  \GL^*_{n}(\AA_f)\]
where $t = g^{-1}\tilde{g}$ as above, with $g \in \Sp_{2n}(\widehat{\ZZ})$.
Write $\widehat{L} = L^p \times L_p$ for its component away from $p$ and component at $p$ respectively.
The conditions (\ref{eqn-9.3.1}) give $ \gamma x^pgL^p_0  = x^pgL^p_0$.  Hence
\[g^{-1}(x^p)^{-1} \gamma x^pg \in  \widehat{K}^p_N \]
or  $\chi^p((x^p)^{-1} \gamma x^p) = 1$ (since $\widehat{K}^p_N$ is
normal in $\GSp_{2n}(\widehat{\ZZ}^p)$). 
Similarly, from Lemma \ref{lem-Smith-form} (and since $g_p \in K_p$),
\[x_p^{-1} \alpha_q^{-1}\gamma x_p \in K_p. \]
In this way we have identified $\widehat{X}_{[t]}$ with the product $X_{[t]}^p \times X_p$ where
\begin{align*}
X^p_{[t]} &= \left\{x\in \GL_n^*(\AA_f^p)/\widehat{\Gamma}^p_N\left|\ 
x^{-1} \gamma x \in \widehat{K}^p_N  \right.\right\}\\
X_{p} &= \left\{x \in \GL^*_n(\QQ_p)/\GL_n^*(\ZZ_p)\left|\ x^{-1} \alpha_q^{-1}\gamma x 
\in K_p \right.\right\}.
\end{align*}
 In summary,
\begin{align*}
|S_{\gamma}(\QQ) \backslash X_{[t]}| &= \int_{S_{\gamma}(\QQ)\backslash \rm{GL}_n^*(\AA_f)}
\chi^p(x^{-1} \gamma x) \chi_p(x^{-1}\alpha_q^{-1}\gamma x) dx\\
&= \vol(S_{\gamma}(\QQ) \backslash S_{\gamma}(\AA_f)) 
\cdot I^p_{\gamma}\cdot I_{\gamma,p}.  \qedhere\end{align*}
\end{proof} 
(If $N$ is odd the same analysis gives a
similar formula but the integrand will depend on the cohomology class.)
The results of the preceding sections are summarized in Theorem \ref{thm-statement1} 
equation (\ref{eqn-statement1}) and Theorem \ref{thm-statement2} equation 
(\ref{eqn-statement2a}), which involve ``ordinary" orbital integrals.  In the 
next section we analyze the Dieudonn\'e module of $(T,F)$ in order to obtain 
the twisted orbital integrals that appear in equations (\ref{eqn-statement1b}) and 
(\ref{eqn-statement2b}).


\section{The Tate and Dieudonn\'e modules}\label{sec-Tate-Dieudonne}
\subsection{}
As in Section \ref{sec-ordinary} let $A$ be an ordinary Abelian variety of dimension $n$, with
Deligne module $(T,F)=(T_A,F_A)$, defined over the finite field
 $k = \FF_q$, where $q = p^a.$ Let $\pi=\sigma^a$ be the topological generator for the
Galois group $\Gal = \Gal(\bar k/k)$ where $\sigma(x) = x^p$ for $x \in \bar k$.
 Let $\ell \ne p$ be a (rational) prime.
There is a canonical isomorphism of the Tate module of $A$
\[ T_{\ell}A = \underset{\longleftarrow}{\lim}A[\ell^r]
\cong T_A \otimes \ZZ_{\ell}\]
such that the action of $\pi\in\Gal(\bar k/k)$ is given by the action of
$F_A\otimes 1.$  A polarization and/or a real structure on $(T,F)$ induces
one on $T_A \otimes \ZZ_{\ell}$ in a canonical way.

 Let $W$ denote the Witt ring functor, so that
 $W(k)$, $W(\bar k)$ are the rings of (infinite) Witt
vectors over $k, \bar k$ respectively, with fraction fields $K(k)= W(k)\otimes\QQ_p$ and
$K(\bar k)= W(\bar k)\otimes\QQ_p$ respectively.  Let $W_0(\bar k)$ denote the maximal
unramified extension of $W(k)$.  We may identify $W(\bar k)$ with the completion of
$W_0(\bar k)$.

Let $\mathcal E= \mathcal E(k)$ denote the Cartier-Dieudonn\'e ring, that is, the ring of
noncommutative $W(k)$-polynomials in two variables $\mathcal F, \mathcal V$, subject to the relations
$\mathcal F(wx) = \sigma(w)\mathcal F(x)$, $\mathcal V(wx) = \sigma^{-1}(w) \mathcal V(x)$,
and $\mathcal F \mathcal V = \mathcal V \mathcal F = p$, where $w \in W(k)$ and
$x\in \mathcal E.$

Let $M$ denote the covariant Dieudonn\'e functor (see, for example, \cite{OortChai} \S B.3.5.6 or
 \cite{Goren} p. 245 or \cite{Pink})
which assigns to each $p$-divisible group
\[G = \ldots \ToFrom G_r \ToFrom G_{r+1} \ToFrom\ldots\]
 the
corresponding module $M(G) = \underset{\longleftarrow}{\lim}M(G_r)$
over the Dieudonn\'e ring $\mathcal E.$  For simplicity,
write $M(T):= M(A[p^{\infty}]).$  Recall the canonical decomposition
$T \otimes \ZZ_p = T' \oplus T^{\prime\prime}$ where $F$ is invertible on $T'$ and
is divisible by $q$ on $T^{\prime\prime}$.  The proof of the following proposition will be given
in Appendix \ref{appendix-Dieudonne}.

\begin{prop}\label{prop-Dieudonne-module}
The covariant Dieudonn\'e module $M(T) = M(T')\oplus M(T^{\prime\prime})$ 
is given by the following canonical isomorphisms,
\begin{align}
M(T') &\cong \left(T' \otimes W(\bar k) \right)^{\Gal}\label{eqn-MAprime} \\
M(T^{\prime\prime}) &\cong (T^{\prime\prime}\otimes W(\bar k))^{\Gal}
\label{eqn-MAprimeprime}\end{align}
where the action of $\Gal(\bar k/k)$ on these modules is given by
\begin{equation}\begin{split}\label{eqn-Galois-action}
\pi.(t'\otimes w) &= F(t') \otimes \sigma^a(w) \\
\pi(t^{\prime\prime}\otimes w) &= V^{-1}(t^{\prime\prime})\otimes \sigma^{a}(w)
= q^{-1}F(t^{\pp})\otimes \sigma^a(w).
\end{split}\end{equation}
for any $t'\in T'$, $t''\in T''$. Both $M(T')$ and $M(T^{\prime\prime})$ are free modules
over $W(k)$ of dimension $n$.  The actions of $F$ and $V$ preserve the
submodules $M(T')$ and $M(T^{\pp})$ of $T \otimes W(\bar k)$.  The actions of $\mathcal F$
and $\mathcal V$ on the Dieudonn\'e modules are induced from the following:
\begin{align}\label{eqn-curlyF}
  &\begin{split}
\mathcal F( t'\otimes w) &= pt'\otimes \sigma(w) \\
\mathcal F( t^{\prime\prime}\otimes w) &=  t^{\prime\prime} \otimes \sigma(w)
  \end{split}
    &
  \begin{split}
\mathcal V(t'\otimes w) &=t'\otimes \sigma^{-1}(w)\\
\mathcal V(t^{\prime\prime}\otimes w) &= pt^{\prime\prime}\otimes \sigma^{-1}(w).
  \end{split}
\end{align}

\end{prop}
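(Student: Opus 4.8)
The plan is to reduce the statement to its two summands $M(T')$ and $M(T'')$, to compute each of these over the algebraic closure $\bar k$ using the structure theory of ordinary $p$-divisible groups, and then to descend back to $k$ by Galois descent along $W(k)\hookrightarrow W(\bar k)$. The $\ell\ne p$ analogue of the proposition is essentially automatic (it is recorded before the statement: $T_\ell A\cong T\otimes\ZZ_\ell$ with $F$ acting as the Frobenius), so the real content is the interaction between Deligne's complex-analytic definition of $(T,F)$ and covariant Dieudonn\'e theory at $p$; I would work throughout with the conventions of the references cited in the statement for the functor $M$.

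First I would observe that, since $A$ is ordinary, the canonical decomposition $T\otimes\ZZ_p\cong T'\oplus T''$ of Proposition \ref{lem-tau-switches} is the one induced by the connected--\'etale decomposition of the $p$-divisible group $A[p^\infty]$: the connected part is of multiplicative type (ordinariness), and over $\bar k$ one has $A[p^\infty]_{\bar k}\cong\mu_{p^\infty}^{\,n}\times(\QQ_p/\ZZ_p)^{n}$. Correspondingly the covariant Dieudonn\'e module $M_{\bar k}:=M(A[p^\infty]_{\bar k})$ is $W(\bar k)$-free of rank $2n$ and splits as $M_{\bar k}=M_{\bar k}^{\mathrm m}\oplus M_{\bar k}^{\mathrm e}$ into two free rank-$n$ summands, on which the operators are given by the classical formulas: in a suitable $W(\bar k)$-basis $\mathcal F=p\sigma$, $\mathcal V=\sigma^{-1}$ on the multiplicative summand, and $\mathcal F=\sigma$, $\mathcal V=p\sigma^{-1}$ on the \'etale summand. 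This is the content of (\ref{eqn-curlyF}) once the underlying modules have been identified, and it already shows that each of $M(T')$, $M(T'')$ will be $W(k)$-free of rank $n$ after descent, since the two parts of $A[p^\infty]$ have height $n$.

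The crux is to identify $M_{\bar k}$ with $T\otimes W(\bar k)$ so that $M_{\bar k}^{\mathrm m}$ and $M_{\bar k}^{\mathrm e}$ correspond to $T'\otimes W(\bar k)$ and $T''\otimes W(\bar k)$ (in the appropriate order), and to check that the semilinear action of the topological generator $\pi=\sigma^a$ of $\Gal(\bar k/k)$ comes out exactly as in (\ref{eqn-Galois-action}). Here one must return to the construction of the equivalence of Theorem \ref{thm-Deligne}: $T=H_1(A_\CC,\ZZ)$ is the Betti homology of the canonical lift $\bar A/W(k)$ base-changed to $\CC$ through $\varepsilon$ of (\ref{eqn-epsilon}), and by Serre--Tate the canonical lift is precisely the one whose connected--\'etale filtration lifts that of $A[p^\infty]$, with formal part of multiplicative type. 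One then compares the $p$-adic realization $T\otimes\ZZ_p\cong T_p(\bar A_{\bar K})$ of the generic fibre (over $\bar K$ an algebraic closure of $K(k)$) with the crystalline (Dieudonn\'e) realization of the special fibre: on the \'etale part this comparison is the unramified one, while on the multiplicative part it passes through $T_p(\mu_{p^\infty})\cong\ZZ_p(1)$, and this Tate twist is the source of the asymmetry $\pi\mapsto F\otimes\sigma^a$ on $M(T')$ versus $\pi\mapsto V^{-1}\otimes\sigma^a=q^{-1}F\otimes\sigma^a$ on $M(T'')$ (both expressions make sense: by Proposition \ref{lem-tau-switches}, $F$ is invertible and $V=qF^{-1}$ topologically nilpotent on $T'$, and the reverse holds on $T''$). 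Tracking through the embedding $\varepsilon$ then pins down the normalization in (\ref{eqn-MAprime})--(\ref{eqn-MAprimeprime}).

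Finally I would conclude by Galois descent: covariant Dieudonn\'e theory gives $M(T)=M_{\bar k}^{\,\Gal(\bar k/k)}$ together with a canonical isomorphism $M(T)\otimes_{W(k)}W(\bar k)\cong M_{\bar k}$, so the splitting of $M_{\bar k}$ and the semilinear $\Gal$-action computed above descend to give $M(T')\cong(T'\otimes W(\bar k))^{\Gal}$ and $M(T'')\cong(T''\otimes W(\bar k))^{\Gal}$ with the stated Galois actions, each $W(k)$-free of rank $n$; the operators $\mathcal F,\mathcal V$ on these are the descents of those above; and since $F=F_A$ and $V=V_A$ preserve $T'$ and $T''$ (Proposition \ref{lem-tau-switches}) they preserve $M(T')$ and $M(T'')$. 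The subtlety that $W(\bar k)$ is the completion of the maximal unramified extension $W_0(\bar k)$ is harmless here, as the $\Gal$-action is pro-finite and everything is assembled from $p^r$-torsion; one can in fact perform the descent over $W_0(\bar k)$ and then complete. The main obstacle is the third paragraph: aligning all of the conventions (covariant versus contravariant Dieudonn\'e module, geometric versus arithmetic Frobenius, the direction of $\varepsilon$, and the Tate twist on the multiplicative part) so that the operators come out exactly as in (\ref{eqn-Galois-action}) and (\ref{eqn-curlyF}); the remaining steps use only standard structure theory and descent.
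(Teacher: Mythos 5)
Your strategy --- split along the connected--\'etale decomposition of $A[p^\infty]$, trivialize over $\bar k$, and descend by taking $\Gal(\bar k/k)$-invariants --- is essentially the approach of the paper's Appendix \ref{appendix-Dieudonne}. However, there is a concrete error in the operator assignments on the two summands that, if carried through, would produce a formula contradicting (\ref{eqn-curlyF}). You write $\mathcal F = p\sigma$, $\mathcal V = \sigma^{-1}$ on the \emph{multiplicative} summand and $\mathcal F = \sigma$, $\mathcal V = p\sigma^{-1}$ on the \emph{\'etale} summand. In the covariant normalization used here these are the other way around: $M(\QQ_p/\ZZ_p)=W(\bar k)$ has $\mathcal F = p\sigma$, $\mathcal V = \sigma^{-1}$, while $M(\mu_{p^\infty})=W(\bar k)$ has $\mathcal F = \sigma$, $\mathcal V = p\sigma^{-1}$. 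You can check this internally: \S\ref{subsec-Ffactors} records that $F=\mathcal V^a$ on $M(T)$, and Proposition \ref{lem-tau-switches} gives that $F$ is invertible on $T'$, which you correctly identify with the \'etale part; these two facts force $\mathcal V$ to be invertible on $M(T')$, hence $\mathcal F = p\sigma$ there. With your assignment $\mathcal V = p\sigma^{-1}$ on $M(T')$, one would get $F = \mathcal V^a$ divisible by $q$ on $T'$, a contradiction. So before anything else, swap the formulas for the two summands.

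Beyond that, the proposal is a roadmap rather than a proof, and in two places it substitutes heavy unproved input for an elementary argument. First, the identification $M_{\bar k}\cong T\otimes W(\bar k)$ is attributed to a ``crystalline comparison'' of realizations of the canonical lift; the paper gets it directly from the observation that $A'[p^r]$ becomes a constant \'etale group scheme over $\bar k$, canonically $\cong T'/p^rT'$, and the covariant Dieudonn\'e module of a constant \'etale scheme over $W(\bar k)$ is $(T'/p^rT')\otimes W(\bar k)$ with $\mathcal F = p\sigma$ (Demazure p.~68); taking inverse limits and $\Gal$-invariants then gives (\ref{eqn-MAprime}) with the stated Galois action. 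Second, for the multiplicative part your Tate-twist heuristic $T_p\mu_{p^\infty}\cong\ZZ_p(1)$ is morally the right explanation of the asymmetry, but it leaves the semilinear $\Gal$-action uncomputed; the paper instead passes to the dual abelian variety $B=A^\vee$, uses $A'' = (B')^\vee$, and unwinds $\overline M(A'')=\hHom_{W(\bar k)}(\overline M(B'),W(\bar k))$, which produces $\pi.(t''\otimes w)=V^{-1}(t'')\otimes\sigma^a(w)$ explicitly. Finally, the freeness of $(T'\otimes W(\bar k))^{\Gal}$ over $W(k)$ is not quite automatic from the height count: the paper devotes Lemma \ref{lem-justification1} to constructing a $W(k)$-basis by a Lang-torsor-style limiting argument over the tower of finite extensions of $k$, and some such argument is needed to make the descent step rigorous.
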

\subsection{}\label{subsec-Ffactors}
In particular, the operator $\mathcal F$ is $\sigma$-linear; it is invertible on $M(T^{\pp})$ 
and it is divisible by $p$ on $M(T')$.  If $\alpha\in M(T)$ then it is invariant under
$\Gal$ hence  $F(\alpha) = \mathcal V^a (\alpha)$, that is, the mapping $F$ has been factored
as $F = \mathcal V^a$. On the {\em{contravariant}} Dieudonn\'e module $N(T) = N(T')\oplus
N(T^{\pp})$ the mapping $\mathcal F$ is invertible on $N(T')$, divisible by $p$ on
$N(T^{\pp})$ and one has $F = \mathcal F^a$.  Despite this confusion we use the covariant
Dieudonn\'e module because the equations are a bit simpler.

\subsection{}\label{subsec-normalized} 
The mapping $F\otimes \ZZ_p:T \otimes \ZZ_p \to T \otimes\ZZ_p$ preserves the submodules $T'$ and
$T^{\prime\prime}$.  Define $A_q(t' + t^{\prime\prime}) = t' + qt^{\prime\prime}$ and 
$A_p(t' + t^{\prime\prime})=t'+pt^{\prime\prime}$.  These commute with $F$ and $\mathcal F$.
 If $\tau:(T,F) \to (T,F)$ is
a real structure then $\tau$ exchanges $T'$ with $T^{\prime\prime}$ hence
\begin{equation}\label{eqn-tau-A}
\tau A_q \tau^{-1} = q A_q^{-1}. \end{equation}

\begin{cor}\label{cor-module-invariants}  Let $(T,F)$ be a Deligne module.  Then its covariant
Dieudonn\'e module $M(T)$ is 
\begin{equation}\label{eqn-M(T)}
 M(T) = \left\{ x \in T \otimes W(\bar k)\left|\ F(x) = A_q\sigma^{-a}(x) \right.\right\}.\end{equation}
The action of $\mathcal F$ on $M(T)$ is given by 
$\mathcal F(t\otimes w) = pA_p^{-1}(t) \otimes \sigma(w)$. 
\end{cor}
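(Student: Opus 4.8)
The plan is to deduce the corollary by unwinding Proposition \ref{prop-Dieudonne-module}, i.e.\ by translating the two Galois actions of equation (\ref{eqn-Galois-action}) into a single semilinear equation for $F$. First I would decompose an arbitrary $x\in T\otimes W(\bar k)$ as $x = x' + x^{\pp}$ along $T\otimes W(\bar k) = (T'\otimes W(\bar k))\oplus(T^{\pp}\otimes W(\bar k))$. This splitting is stable under $F\otimes 1$ (by the defining property of $T',T^{\pp}$, cf.\ Proposition \ref{lem-tau-switches}), under the operator $1\otimes\sigma$ on the Witt coordinate, and under $A_q$ and $A_p$, which by construction act on the two summands as the identity and as multiplication by $q$ (resp.\ by $p$). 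By Proposition \ref{prop-Dieudonne-module}, $x\in M(T)$ if and only if $x'\in(T'\otimes W(\bar k))^{\Gal}$ and $x^{\pp}\in(T^{\pp}\otimes W(\bar k))^{\Gal}$.

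Next I would rewrite these two fixed-point conditions. By (\ref{eqn-Galois-action}) the topological generator $\pi=\sigma^a$ acts on $T'\otimes W(\bar k)$ by the $\sigma^a$-semilinear operator $F\otimes\sigma^a$ and on $T^{\pp}\otimes W(\bar k)$ by $q^{-1}F\otimes\sigma^a$. Applying $1\otimes\sigma^{-a}$, which commutes with $F\otimes 1$, to the equations $(F\otimes\sigma^a)(x') = x'$ and $(q^{-1}F\otimes\sigma^a)(x^{\pp}) = x^{\pp}$ gives $F(x') = \sigma^{-a}(x')$ and $F(x^{\pp}) = q\,\sigma^{-a}(x^{\pp})$, where here $\sigma^{-a}$ abbreviates the operator $1\otimes\sigma^{-a}$ on $T\otimes W(\bar k)$. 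Since $F$ preserves the decomposition and $A_q$ is the identity on the $T'$-part and multiplication by $q$ on the $T^{\pp}$-part, this pair of equations is equivalent to the single equation $F(x) = A_q\sigma^{-a}(x)$, which is (\ref{eqn-M(T)}). For the last assertion, equation (\ref{eqn-curlyF}) gives $\mathcal F(t'\otimes w) = pt'\otimes\sigma(w)$ and $\mathcal F(t^{\pp}\otimes w) = t^{\pp}\otimes\sigma(w)$; the operator $pA_p^{-1}$ preserves $T\otimes W(\bar k)$, acting as multiplication by $p$ on the $T'$-component and as the identity on the $T^{\pp}$-component, so $pA_p^{-1}(t)\otimes\sigma(w)$ agrees with $\mathcal F(t\otimes w)$ on each summand, hence on all of $T\otimes W(\bar k)$ and in particular on $M(T)$.

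I expect no serious obstacle here: the statement is essentially bookkeeping on top of Proposition \ref{prop-Dieudonne-module}. The only points that need a moment's care are the direction of the exponent of $\sigma$ — namely that a fixed point of $F\otimes\sigma^a$ satisfies $F(x')=\sigma^{-a}(x')$ rather than $\sigma^a(x')$ — and the observation that it is precisely the directness of $T'\oplus T^{\pp}$ together with the $F$-stability of this splitting that permits the two semilinear equations to be repackaged as the one equation $F(x)=A_q\sigma^{-a}(x)$.
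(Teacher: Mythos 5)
Your proof is correct and is exactly the intended derivation: the paper states the corollary without a written proof because it is precisely this bookkeeping on top of Proposition \ref{prop-Dieudonne-module}, namely applying $1\otimes\sigma^{-a}$ to the fixed-point conditions on each summand $T'\otimes W(\bar k)$ and $T^{\pp}\otimes W(\bar k)$ and then observing that $A_q$ (resp.\ $pA_p^{-1}$) packages the resulting pair of equations for $F$ (resp.\ the two formulas for $\mathcal F$) into a single identity compatible with the direct-sum decomposition.
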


\section{Polarization of the Dieudonn\'e module}\label{sec-polarization-dieudonne}
\subsection{} Let $(T,F)$ be a Deligne module. In this section we describe the effect 
on the Dieudonn\'e module of a polarization of  $(T,F)$.
Let\ $\B:T \times T \to \ZZ$ be an alternating form such that
\[ \B(Fx,y) = \B(x,Vy)\]
for all $x,y\in T$.
Extending scalars to $W(\bar k)$ gives an alternating form, which we still denote by $\B$,
\[ \B: (T \otimes_{\ZZ}W(\bar k)) \times (T \otimes_{\ZZ} W(\bar k)) \to W(\bar k).\]

\begin{prop}\label{prop-Dieudonne-polarization}  
The restriction of $\B$ to the Dieudonn\'e module
\[M(T) \subset T \otimes W(\bar k)\] 
takes values in $W(k)$ and the resulting pairing
\[ \B_p:M(T) \times M(T) \to W(k)\]
satisfies  $\B_p(\mathcal Fx,y) = \sigma \B_p (x,\mathcal Vy)$.
If the original alternating form $\B$ is nondegenerate over $\QQ$ or $\QQ_p$ then the resulting
form $\B_p$ is nondegenerate over $K(k)$.  If the original form $\B$ is nondegenerate over
$\ZZ$ or $\ZZ_p$ then the form $\B_p$ is nondegenerate over $W(k)$.
\end{prop}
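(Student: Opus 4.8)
The claim is about the pairing $\B_p = \B|_{M(T) \times M(T)}$ induced by an alternating form $\B$ satisfying $\B(Fx,y) = \B(x,Vy)$. I need to verify three things: (i) $\B_p$ takes values in $W(k)$; (ii) it satisfies $\B_p(\mathcal Fx,y) = \sigma\B_p(x,\mathcal Vy)$; (iii) nondegeneracy descends. I'd organize the proof around the explicit description of $M(T)$ from Corollary~\ref{cor-module-invariants}, namely $M(T) = \{x \in T\otimes W(\bar k) : F(x) = A_q\sigma^{-a}(x)\}$, together with the formula $\mathcal F(t\otimes w) = pA_p^{-1}(t)\otimes\sigma(w)$.

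**Values in $W(k)$.** The subring $W(k)\subset W(\bar k)$ is exactly the $\sigma^a$-fixed subring (since $k = \FF_q$, $q = p^a$). So it suffices to show $\sigma^a(\B_p(x,y)) = \B_p(x,y)$ for $x,y\in M(T)$. Write $\B$ on $T\otimes W(\bar k)$ as $W(\bar k)$-bilinearly extending the $\ZZ$-form, so that $\sigma^a$ acting on scalars satisfies $\sigma^a(\B(u\otimes v, u'\otimes v')) = \B(u\otimes\sigma^a v, u'\otimes\sigma^a v')$ — i.e. $\sigma^a$ commutes past $\B$ onto the $W(\bar k)$-factors. Using $F(x) = A_q\sigma^{-a}(x)$, hence $\sigma^a(x) = F(x')$... actually cleaner: for $x\in M(T)$ one has $\sigma^{-a}(x) = A_q^{-1}F(x)$; iterating, $\sigma^a(x) = q A_q^{-1}V(x)$ since $A_q^{-1}$ scales $T''$ by $q^{-1}$ and $VF = q$. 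Let me instead just compute $\sigma^a\B_p(x,y)$ directly: it equals $\B(\sigma^a x,\sigma^a y)$. Now $\sigma^a(x) = (q A_q^{-1})V(x)$ isn't quite it either because $A_q$ and $V$ don't obviously commute nicely — but $A_q$ and $F$ do commute (both preserve $T'$, $T''$), so $A_q$ and $V$ commute too. Then $\B(\sigma^a x, \sigma^a y) = \B(A_q^{-1}F(x)\cdot(\text{scalar }q\text{ as needed}), \dots)$. The key identity to exploit is $\B(Fx,y) = \B(x,Vy)$ and its consequence $\B(A_q u, v) = \B(u, A_q v)$ would be FALSE — rather $A_q$ is self-adjoint only up to the split, so I should check $\B(A_q u, v) = \B(u, A_q^{-1}v)\cdot$(something). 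Actually since $\B$ is nondegenerate and $T', T''$ are the two Lagrangians (they are isotropic: $\B(Fx,y)=\B(x,Vy)$ with $F$ invertible on $T'$ forces $\B|_{T'\times T'}=0$ by a standard argument), $A_q$ acts as $1$ on $T'$ and $q$ on $T''$, so $\B(A_q u, v) = q\,\B(u, A_q^{-1}v)$ for $u,v$ — wait, with $T', T''$ dually paired: if $u = u'+u''$, $v = v'+v''$ then $\B(A_q u, v) = \B(u' + qu'', v'+v'') = \B(u',v'') + q\B(u'',v')$ while $q\B(u, A_q^{-1}v) = q\B(u'+u'', v' + q^{-1}v'') = q\B(u',v') \cdots$ — these don't match directly, so I'd need to track this carefully. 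The clean route: use $\sigma^a x = A_q^{-1}F x$ doesn't hold; rather $F x = A_q \sigma^{-a} x$ gives $\sigma^a F x = A_q x$, i.e. $\sigma^a(x) = F^{-1}(\text{on }T')\cdots$ — this is where care is needed. I expect the honest computation uses: for $x \in M(T)$, $\sigma^a(x) = A_q^{-1}F(x) \cdot$ reading the equation the other way: from $F(x) = A_q \sigma^{-a}(x)$, apply $\sigma^a$: $\sigma^a F(x) = A_q x$; but $\sigma^a$ and $F$ — here $F$ is $\ZZ$-linear on $T$ so $\sigma^a$ commutes with $F\otimes 1$, giving $F\sigma^a(x) = A_q x$, so $\sigma^a(x) = F^{-1}A_q(x)$. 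Wait $F$ need not be invertible on all of $T\otimes W(\bar k)$, but $F^{-1}A_q$ makes sense: $A_q$ kills... no. On $T''$, $F$ is divisible by $q$ and $A_q$ is mult by $q$, so $F^{-1}A_q = (F/q)^{-1}$ which is invertible since $F/q$ is; on $T'$, $F$ is invertible. So $\sigma^a(x) = F^{-1}A_q(x)$ is well-defined. Then
\begin{align}
\sigma^a\B_p(x,y) &= \B(\sigma^a x, \sigma^a y) = \B(F^{-1}A_q x, F^{-1}A_q y)\notag\\
&= \B(A_q x, V F^{-1} A_q y) = \B(A_q x, q^{-1} F \cdot F^{-1} A_q\, y)\cdots\notag
\end{align}
hmm, $VF^{-1} = (FV)^{-1}F = q^{-1}F$? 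No: $V F^{-1}$, and $FV = q$ so $V = qF^{-1}$, thus $VF^{-1} = qF^{-2}$. This is getting tangled; the right move is to iterate $\B(Fx,y)=\B(x,Vy)$ to get $\B(F^{-1}x, y) = \B(x, V^{-1}y)$ (using nondegeneracy/invertibility after extending), then $\B(F^{-1}A_q x, F^{-1}A_q y) = \B(A_q x, V^{-1}F^{-1}A_q y)$, and $V^{-1}F^{-1} = (FV)^{-1} = q^{-1}$, so this is $q^{-1}\B(A_q x, A_q y)$. Then since on $T'\oplus T''$ the form pairs the two factors, $\B(A_q x, A_q y) = \B(A_q x', A_q y'') + \B(A_q x'', A_q y') = \B(x', q y'') + \B(q x'', y') = q[\B(x',y'') + \B(x'',y')] = q\B(x,y)$. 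So $\sigma^a\B_p(x,y) = q^{-1}\cdot q\,\B(x,y) = \B(x,y)$, proving values lie in $W(k)$.

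**The $\sigma$-twisted adjunction and nondegeneracy.** For $\B_p(\mathcal Fx,y) = \sigma\B_p(x,\mathcal Vy)$: use $\mathcal F(t\otimes w) = pA_p^{-1}(t)\otimes\sigma(w)$ and the analogous $\mathcal V(t\otimes w) = A_p(t)\otimes\sigma^{-1}(w)$ (from Prop.~\ref{prop-Dieudonne-module}: $\mathcal V$ is $1$ on $T'$-part scalars-shifted, $p$ on $T''$; checking, $\mathcal V(t'\otimes w)=t'\otimes\sigma^{-1}w$, $\mathcal V(t''\otimes w)=pt''\otimes\sigma^{-1}w$, so $\mathcal V = A_p\circ(\text{id}\otimes\sigma^{-1})$). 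Then $\B_p(\mathcal Fx,y)$, after pulling the $\sigma$ on scalars out front (picking up $\sigma(\B(\cdot,\cdot))$ with $\sigma^{-1}$ applied to the other slot) and using $A_p$ self-adjointness up to scalar $p$ on the pairing between $T',T''$ analogous to the $A_q$ computation, should match $\sigma\B_p(x,\mathcal Vy)$ — this is a direct, if fiddly, computation. Finally, nondegeneracy: $M(T)$ is a $W(k)$-lattice in $T\otimes K(\bar k)$ — more precisely, $M(T)\otimes_{W(k)}W(\bar k) \cong T\otimes W(\bar k)$ (this is the content of the Dieudonné equivalence / Prop.~\ref{prop-Dieudonne-module}), and $\B_p$ is just $\B$ restricted; since $\B$ is nondegenerate over $W(\bar k)$ (resp. $K(\bar k)$) and $\B_p\otimes W(\bar k) = \B$, nondegeneracy of $\B_p$ over $W(k)$ (resp. $K(k)$) follows by faithfully flat descent: a $W(k)$-bilinear form on a finite free module is nondegenerate (unimodular, resp. nondegenerate over the fraction field) iff it becomes so after the faithfully flat base change $W(k)\to W(\bar k)$.

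**Main obstacle.** The conceptual content is light; the real work — and where I'd be most careful — is the bookkeeping in the two bilinear-form computations, specifically tracking how $A_q$ (resp. $A_p$) interacts with $\B$ via the Lagrangian decomposition $T\otimes\ZZ_p = T'\oplus T''$ and how the scalar twists $\sigma^{\pm a}$ (resp. $\sigma^{\pm 1}$) on $W(\bar k)$ pass through the $W(\bar k)$-bilinear extension of $\B$. The one genuinely substantive input I'd want to have pinned down first is that $T'$ and $T''$ are isotropic for $\B$ — this follows since $F$ invertible on $T'$ plus $\B(Fx,y)=\B(x,Vy)$ and $V = q F^{-1}$ force $\B(Fx,y) = \B(x, qF^{-1}y)$, and replacing $x$ by $F^{-1}x$, $\B(x,y) = \B(F^{-1}x, qF^{-1}y)$ iterates to give $\B|_{T'\times T'}$ divisible by arbitrarily high powers of $q$, hence zero; symmetrically on $T''$ — and this is what makes $A_q$, $A_p$ "anti-self-adjoint up to the obvious scalar," which drives both identities.
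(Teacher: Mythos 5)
Your approach is correct and genuinely different from the paper's.  Where the paper chooses explicit $\ZZ_p$-bases of the Lagrangian summands $T'$ and $T''$, writes elements of $M(T')$ and $M(T'')$ in coordinates, and checks $\sigma^a$-invariance of $\B_p(x',x'')$ componentwise using the Galois action formula (\ref{eqn-Galois-action}), you work coordinate-free from the characterization $M(T)=\{x : Fx = A_q\sigma^{-a}x\}$ of Corollary \ref{cor-module-invariants}.  Your reformulation $\sigma^a(x)=F^{-1}A_q(x)$ for $x\in M(T)$ (legitimate, since $F^{-1}A_q$ acts as $F^{-1}$ on $T'$ and as $V$ on $T''$, hence integrally) reduces $\sigma^a$-invariance of $\B_p$ to the one-line identity $\B(A_q u, A_q v)=q\B(u,v)$, which rests on the same fact the paper uses --- isotropy of $T'$ and $T''$, proved by the same divisibility trick --- together with $\B(Fu,Fv)=q\B(u,v)$.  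This is a cleaner, basis-free alternative; the paper's bases buy transparency at the cost of notation.  Your nondegeneracy argument (the Gram matrix of a $W(k)$-basis has entries in $W(k)$ by the first part and lies in $\GL_{2n}(W(\bar k))$ because $M(T)\otimes_{W(k)}W(\bar k)\cong T\otimes W(\bar k)$, hence lies in $\GL_{2n}(W(k))$) is the same point the paper makes by writing that Gram matrix out explicitly in block form.

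One thing you must actually carry out rather than assert: the identity $\B_p(\mathcal Fx,y)=\sigma\B_p(x,\mathcal Vy)$.  In your framework it collapses to the single relation $\B\bigl(pA_p^{-1}t,s\bigr)=\B(t,A_ps)$ for $t,s\in T\otimes\ZZ_p$, a direct check on the Lagrangian decomposition in the same spirit as your $A_q$-computation: with $t=t'+t''$, $s=s'+s''$, both sides equal $p\,\B(t',s'')+\B(t'',s')$.  Given that, and the formulas $\mathcal F(t\otimes w)=pA_p^{-1}(t)\otimes\sigma(w)$, $\mathcal V(s\otimes u)=A_p(s)\otimes\sigma^{-1}(u)$, one computes $\B_p(\mathcal Fx,y)=\sum_{i,j}\B(pA_p^{-1}t_i,s_j)\,\sigma(w_i)u_j=\sum_{i,j}\B(t_i,A_ps_j)\,\sigma(w_i)u_j=\sigma\B_p(x,\mathcal Vy)$, exactly as you anticipated.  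Finally, prune the false starts: the abandoned attempt via $\B(A_qu,v)=q\B(u,A_q^{-1}v)$ (which, as you correctly note, is false) and the lines ending in ``getting tangled'' have no place in a final writeup --- present only the route that works.
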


\begin{proof} 
Extending scalars to $T \otimes \ZZ_p = T' \oplus T^{\prime\prime}$, we see that if $x,y\in T^{\prime}$ then $\B(x,y)$ is divisible by $q^m$ for all 
$m$ so it is zero, and similarly $\B$ vanishes on $T^{\prime\prime}$. Hence $T'$ and $T^{\pp}$ are
Lagrangian subspaces of $T\otimes\ZZ_p$.
Let $e'_1,e'_2,\cdots,e'_n$ be a basis for $T'$ and let 
$e_1^{\prime\prime}, \cdots, e_n^{\prime\prime}$ be a basis for $T^{\prime\prime}$ and let
$\B_{ij} = \B(e'_i,e^{\prime\prime}_j)\in \ZZ_p$.  
Let $x'=\sum_{i=1}^n e'_i\otimes u_i \in M(T') \subset T'\otimes W(\bar k)$ and
$x^{\pp}=\sum_{j=1}^n e^{\pp}_j\otimes v_j \in M(T^{\pp})$ be Galois invariant,
where $u_i,v_j \in W(\bar k)$.  Then
\begin{align}
\B_p(x', x^{\pp}) &= \sum_{i=1}^n \sum_{j=1}^n\B_{ij}u_iv_i \label{eqn-BiCi} \\
&= \sum_{i=1}^n \sum_{j=1}^n \B\left(F(e'_i)\otimes \sigma^{a}(u_i), 
V^{-1}(e^{\pp}_j) \otimes \sigma^{a}(v_j)\right)\notag \\
&= \sigma^a\left(\sum_{i,j} \B(e'_i, e^{\prime\prime}_j)u_iv_j\right) =\sigma^a\B_p(x',x^{\prime\prime})
\label{eqn-sigmaBiCi} \end{align}
using equation (\ref{eqn-Galois-action}).  Hence the bilinear form $\B_p$ takes values in $W(k)$.
Moreover,
\[
\B_p(\mathcal F x', x^{\pp}) = \sum_{i=1}^n \sum_{j=1}^n \B(pe'_i \otimes \sigma(u_i), e^{\pp}_j \otimes v_j)
= p\sum_{i,j} \sigma(u_i)v_j\B_{ij} \]
but
\[\B_p(x', \mathcal V(x^{\pp})) = \sum_{i=1}^n \sum_{j=1}^n \B(e'_i \otimes u_i, 
pe^{\pp}_j \otimes \sigma^{-1}(v_j))
= p\sum_{i,j} u_i \sigma^{-1}(v_j)\B_{ij}\]
which implies that $\B_p(\mathcal F x', x^{\pp}) = \sigma \B_p(x', \mathcal V(x^{\pp})).$
Using $\mathcal F \mathcal V = p$ and the antisymmetry of $\B_p$, this formula also implies
 that $\B_p(\mathcal F y^{\pp}, y') = \sigma \B_p(y^{\prime\prime},
\mathcal V(y^{\prime}))$ for all $y' \in M(T')$ and $y^{\pp} \in M(T^{\pp})$.

Let us now verify the last statement in Proposition \ref{prop-Dieudonne-polarization}
and assume that the symplectic form $\B$ is strongly nondegenerate (over $\ZZ_p$). 
Then it induces an isomorphism 
\[  T' \oplus T^{\prime\prime} \to
\hHom(T',\ZZ_p) \oplus \hHom(T^{\prime\prime},\ZZ_p)\]
which therefore gives isomorphisms
\begin{equation}\label{eqn-duality}
 T'\otimes\ZZ_p \cong \hHom(T^{\prime\prime},\ZZ_p)\ \text{ and }
T^{\prime\prime}\otimes\ZZ_p \cong \hHom(T',\ZZ_p).\end{equation}
Given a $\ZZ_p$-basis $\{e'_1,\cdots,e'_n\}$ for $T'$ there is a  
unique dual basis 
$e^{\pp}_1,\cdots,e^{\pp}_n$ of $T^{\pp}$ so that $\B(e'_i, e^{\pp}_j) = \delta_{ij}$. 

Using Proposition \ref{prop-Dieudonne-module} the module $M(T')$ is free over
$W(k)$ of dimension $n$ so it has a basis, say $\{ b'_1,\cdots,b'_n\}$ with
$b'_i = \sum_{s=1}^n e'_s \otimes U_{si}$ and $M(T^{\prime\prime})$ has a
$W(k)$ basis consisting of vectors
$b^{\prime\prime}_j = \sum_{t=1}^n e^{\prime\prime}_j \otimes V_{jt}$ with 
$U=(U_{si})$ and $V=(V_{tj})$ in $\GL_n(W(\bar k))$.  The calculation
\[ \B_p(b'_i,b^{\prime\prime}_j) 
=  \sum_s \sum_t \delta_{st} U_{si}V_{tj} = \sum_s U_{si}V_{sj} =
(\tr{U}V)_{ij}\]
implies that the matrix of $\B_p$ with respect to this basis is
$\left(\begin{smallmatrix} 0 & \tr{U}V \\ -\tr{V}U & 0 \end{smallmatrix}\right)$.
This matrix is in $\GL(2n, W(\bar k))$ so its determinant is a $p$-adic unit.  Moreover, 
equation (\ref{eqn-sigmaBiCi}) says
that the entries in this matrix are in $W(k)$.  It follows that the matrix is in $\GL(2n, W(k))$, 
meaning that $\omega_p$ is strongly nondegenerate.  

If the original symplectic form $\B$ is nondegenerate over $\QQ_p$ then the same argument
may be used to show that the resulting pairing on $M(T)\otimes K(k)$ is nondegenerate
over $K(k)$.\end{proof}

\section{Real structures on the Dieudonn\'e module}\label{sec-real-Dieudonne}
\subsection{}
Let $(T,F)$ be a Deligne module with a polarization $\B:T \times T \to \ZZ$.  Let
$\B_p$ denote the resulting symplectic form on the covariant Dieudonn\'e module $M(T)$. 
Let $\tau:T \to T$ be a real structure on $(T,F)$ that is compatible with the polarization.
Unfortunately, the mapping $\tau$ does not induce an involution on the Dieudonn\'e module
$M(T)$ without making a further choice.
Following Appendix \ref{sec-Witt-involution}, we choose and fix, once and for all, a continuous
$K(k)$-linear  involution $\taub:K(\bar k) \to K(\bar k)$ that preserves $W(\bar k)$, so that 
$\taub \sigma^a(w) = \sigma^{-a} \taub(w)$.
\begin{prop}\label{prop-conjugation-Dieudonne}
The following mapping $\tau_p: T \otimes W(\bar k) \to T \otimes W(\bar k)$ defined by
$\tau_p(x \otimes w) = \tau(x) \otimes \taub(w)$ is continuous and $W(k)$-linear.  It 
preserves the Dieudonn\'e module $M(T)$ and it satisfies $\tau_p \mathcal F^a = 
\mathcal V^a \tau_p$ and 
\begin{equation}\label{eqn-Bptaup}
\B_p(\tau_p x, \tau_py)= -\B_p(x,y) \text{ for all } x, y \in M(T).\end{equation} 
\end{prop}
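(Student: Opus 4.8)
The plan is to verify each of the three asserted properties of $\tau_p$ separately, working on the two summands $M(T')$ and $M(T'')$ of the Dieudonné module, since Proposition \ref{prop-Dieudonne-module} gives explicit descriptions of these via Galois invariants in $T'\otimes W(\bar k)$ and $T''\otimes W(\bar k)$, and Proposition \ref{lem-tau-switches} tells us $\tau(T')=T''$ and $\tau(T'')=T'$.

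\textbf{Continuity and $W(k)$-linearity.} The map $\tau_p$ is by construction $\tau\otimes\bar\tau$. Since $\tau:T\to T$ is $\ZZ$-linear and $\bar\tau:K(\bar k)\to K(\bar k)$ is continuous and $K(k)$-linear (hence $W(k)$-linear and preserves $W(\bar k)$) by our fixed choice from Appendix \ref{sec-Witt-involution}, the tensor product $\tau_p$ is continuous and $W(k)$-linear on $T\otimes W(\bar k)$. This step is immediate.

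\textbf{Preservation of $M(T)$.} Use Corollary \ref{cor-module-invariants}: $M(T)=\{x\in T\otimes W(\bar k):F(x)=A_q\sigma^{-a}(x)\}$. Given $x\in M(T)$, I would check $F(\tau_p x)=A_q\sigma^{-a}(\tau_p x)$. Write $\tau_p x = (\tau\otimes\bar\tau)(x)$; since $\tau F\tau^{-1}=V$ and $\tau A_q\tau^{-1}=qA_q^{-1}$ (equation (\ref{eqn-tau-A})), and $\bar\tau\sigma^a=\sigma^{-a}\bar\tau$, I can push $F$ through $\tau_p$: $F(\tau_p x)=\tau_p(\tau^{-1}F\tau\otimes \bar\tau^{-1}\cdot\bar\tau)(x)$, and $\tau^{-1}F\tau=V=qF^{-1}$. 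Applying the defining relation $F(x)=A_q\sigma^{-a}(x)$ (so $qF^{-1}(x)=qA_q^{-1}\sigma^{a}(x)=\tau A_q\tau^{-1}\sigma^a(x)$ after using (\ref{eqn-tau-A})) and the twisting relation $\bar\tau\sigma^a=\sigma^{-a}\bar\tau$, the two sides match. Alternatively, and perhaps more cleanly, work separately on $M(T')$ and $M(T'')$ using the Galois-action formulas (\ref{eqn-Galois-action}): a Galois-invariant element of $T'\otimes W(\bar k)$ is sent by $\tau_p$ to an element of $T''\otimes W(\bar k)$ which one checks is Galois-invariant for the twisted action, because $\tau$ swaps the role of $F$ (acting on $T'$) with $V^{-1}=q^{-1}F$ (acting on $T''$) and $\bar\tau$ swaps $\sigma^a$ with $\sigma^{-a}$. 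The relation $\tau_p\mathcal F^a=\mathcal V^a\tau_p$ then follows from the identity $\mathcal F^a=$ (the operator inducing $F$) composed appropriately — concretely, on $M(T)$ one has $F=\mathcal V^a$ (see \S\ref{subsec-Ffactors}) and $V=\mathcal F^a$, so $\tau_p\mathcal F^a\tau_p^{-1}$ should be computed from $\tau V\tau^{-1}=F=\mathcal V^a$, giving $\tau_p\mathcal F^a=\mathcal V^a\tau_p$; the $\sigma$-semilinearity bookkeeping is handled by the $\bar\tau\sigma^a=\sigma^{-a}\bar\tau$ relation.

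\textbf{The polarization identity (\ref{eqn-Bptaup}).} This I expect to be the main obstacle, because it requires tracking the interaction between $\bar\tau$ and the $W(\bar k)$-bilinear extension of $\B$. The form $\B_p$ on $M(T)$ is the restriction of the $W(\bar k)$-bilinear extension of $\B$ (Proposition \ref{prop-Dieudonne-polarization}), and $\B_p$ is supported on the pairing between $M(T')$ and $M(T'')$. Take $x'\in M(T')$, $x''\in M(T'')$; then $\tau_p x'\in M(T'')$, $\tau_p x''\in M(T')$, so $\B_p(\tau_p x',\tau_p x'')=-\B_p(\tau_p x'',\tau_p x')$ by antisymmetry, and I want this to equal $-\B_p(x',x'')$. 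Writing $x'=\sum e_i'\otimes u_i$, $x''=\sum e_j''\otimes v_j$ with $\B_{ij}=\B(e_i',e_j'')$, we have $\B_p(x',x'')=\sum_{i,j}\B_{ij}u_i v_j$ (by formula (\ref{eqn-BiCi})); applying $\tau_p$ replaces $e_i'\mapsto\tau(e_i')\in T''$, $e_j''\mapsto\tau(e_j'')\in T'$, $u_i\mapsto\bar\tau(u_i)$, $v_j\mapsto\bar\tau(v_j)$, and the compatibility $\B(\tau x,\tau y)=-\B(x,y)$ gives $\B(\tau e_j'',\tau e_i')=-\B(e_j'',e_i')=\B(e_i',e_j'')=\B_{ij}$ (using antisymmetry of $\B$ on $T$). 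So $\B_p(\tau_p x',\tau_p x'')=\sum_{i,j}\B(\tau e_i',\tau e_j'')\bar\tau(u_i)\bar\tau(v_j)=\sum_{i,j}(-\B_{ij})\bar\tau(u_i v_j)=-\bar\tau\!\left(\sum_{i,j}\B_{ij}u_i v_j\right)=-\bar\tau(\B_p(x',x''))$. But $\B_p(x',x'')\in W(k)$, and $\bar\tau$ is $K(k)$-linear hence fixes $W(k)$, so $-\bar\tau(\B_p(x',x''))=-\B_p(x',x'')$, as desired. The remaining cases ($x,y$ both in $M(T')$ or both in $M(T'')$) are trivial since $\B_p$ vanishes there. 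This completes the verification. The one subtlety to state carefully is that $\bar\tau$ acts as the identity on the value $\B_p(x',x'')$ precisely because that value lies in the $\bar\tau$-fixed subring $W(k)\subset K(k)$, which is why the sign in (\ref{eqn-Bptaup}) comes out as $-1$ rather than involving a conjugate.
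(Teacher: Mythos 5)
Your treatment of continuity, $W(k)$-linearity, preservation of $M(T)$, and the commutation relation $\tau_p\mathcal F^a = \mathcal V^a\tau_p$ is essentially sound and matches the paper's route of checking directly on the summands $M(T')$ and $M(T'')$ via the Galois action of (\ref{eqn-Galois-action}). The gap is in your verification of (\ref{eqn-Bptaup}). The step $\sum_{i,j}(-\B_{ij})\bar\tau(u_i)\bar\tau(v_j) = -\bar\tau\bigl(\sum_{i,j}\B_{ij}u_iv_j\bigr)$ silently uses that $\bar\tau$ is multiplicative, but the involution constructed in Appendix \ref{sec-Witt-involution} (via normal basis generators, $\tau_E(\sum a_i\pi^i\theta_E)=\sum a_i\pi^{-i}\theta_E$) is only $W(k)$-\emph{linear}, not a ring homomorphism. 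It cannot be one: a $W(k)$-algebra automorphism of $W(E)$ would have to lie in $\Gal(E/k)$, which is abelian, and hence could not satisfy $\bar\tau\sigma^a = \sigma^{-a}\bar\tau$ once $[E:k]>2$. So $\bar\tau(u_i)\bar\tau(v_j)\ne\bar\tau(u_iv_j)$ in general, and your chain of equalities breaks precisely where you pull $\bar\tau$ across the sum.

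The identity you want — that $\sum_{i,j}\B_{ij}\bar\tau(u_i)\bar\tau(v_j)=\sum_{i,j}\B_{ij}u_iv_j$ for Galois-invariant $x',x''$ — is true, but it \emph{is} essentially the content of (\ref{eqn-Bptaup}); it cannot be extracted from a multiplicativity that $\bar\tau$ does not possess. The paper avoids the issue entirely by a different route: reduce (via Proposition \ref{prop-real-existence}) to $\QQ$-simple $\QQ[F]$-modules, choose a $\tau_p$-fixed $\mathcal F^a$-cyclic vector $w\in M(V)$, and evaluate $\B_p(\tau_p w,\tau_p\mathcal F^{aj}w)$ using only $\tau_p w=w$, the already-established relation $\tau_p\mathcal F^{aj}\tau_p=\mathcal V^{aj}$, the identity $\B_p(\mathcal F x,y)=\sigma\B_p(x,\mathcal V y)$, and antisymmetry — so that $\bar\tau$ is never applied to a product of scalars. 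You would need to import an argument of that kind (or an equivalent) to close the gap.
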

\begin{proof}
As indicated in Lemma \ref{lem-tau-switches}, the mapping $\tau$ takes $T'$ to 
$T^{\prime\prime}$ and vice versa.  If $x'\otimes w \in T'\otimes W(\bar k)$ then
\begin{align*}
 \tau_p \pi.(x' \otimes w) &= \tau_p(F(x') \otimes \sigma^a(w))\\
&= V \tau(x') \otimes \sigma^{-a}\taub(w)\\
&= \pi^{-1}(\tau(x')\otimes\taub(w))\\
&= \pi^{-1} \tau_p(x' \otimes w)\end{align*}
which shows that $\tau_p$ takes $M(T')$ to $M(T^{\prime\prime})$ (and vice versa).
Similarly,
\begin{align*}
\tau_p \mathcal F^a(x' \otimes w) &= \tau_p(x' \otimes q\sigma^a(w))\\
&= \tau(x')\otimes q\sigma^{-a}\taub(w)\\
&= \mathcal V^a(\tau(x') \otimes \taub(w))\\
&= \mathcal V^a \tau_p(x' \otimes w). 
\end{align*}  
Similar calculations apply to any element $x^{\pp}\otimes w \in T^{\pp}\otimes W(\bar k)$.

We now wish to verify equation (\ref{eqn-Bptaup}).  
Let $V = T\otimes\QQ$.  Use Proposition \ref{prop-real-existence} to decompose 
$V = V_1 \oplus \cdots \oplus V_r$ into an orthogonal direct sum of simple $\QQ[F]$ modules that
are preserved by $\tau$.  This induces a similar $\B_p$-orthogonal decomposition of
\[ M_{\QQ}(T) = M(V) = M(T)\otimes_{W(k)}K(k)\]
into submodules $M_i = M(V_i)$ over the rational Dieudonn\'e ring 
\[\mathcal A_{\QQ} = \mathcal A \otimes K(k) = K(k)[F,V]/(\text{ relations}),\]
each of which is preserved by $\tau_p$.  Since this is an orthogonal direct sum, 
it suffices to consider a single factor, that is, we may assume that $(V,F)$ 
is a simple $\QQ[F]$-module.

As in [Deligne] set $V \otimes_{\QQ}\QQ_p = V' \oplus V^{\pp}$ where the eigenvalues of $F|V'$
are $p$-adic units and the eigenvalues of $F|V^{\pp}$ are divisible by $p$.  Then the same holds
for the eigenvalues of $\mathcal F^a$ on each of the factors of
\[ M(V) = M(V') \oplus M(V^{\pp}).\]
Moreover, these factors are cyclic $\mathcal F^a$-modules and $\tau_p$ switches the two factors.
It is possible to find a nonzero vector $v \in M(V')$ so that $v$ is $\mathcal F^a$-cyclic in
$M(V')$ and so that $\tau_p(v)$ is $\mathcal F^a$-cyclic in $M(V^{\pp})$.  It follows that
$w=v \oplus \tau_p(v)$ is a cyclic vector for $M(V)$ which is fixed under $\tau_p$, that is,
$\tau_p(w)=w$.  We obtain a basis of $M(V)$:
\[ w, \mathcal F^a w, \cdots, \mathcal F^{a(2n-1)}w.\]
The symplectic form $\B_p$ is determined by its values $\B_p(w, \mathcal F^{aj}w)$
for $1 \le j \le 2n-1$.  But
\begin{align*}
\B_p(\tau_pw, \tau_p\mathcal F^{aj}w) &=\B_p(w, \tau_p \mathcal F^{aj}\tau_p w)\\
&=q^{j}\B_p(w, \mathcal F^{-aj} w)\\
&=q^jq^{-j}\B_p(\mathcal F^{aj}w, w) = -\B_p(w, \mathcal F^{aj}w).
\qedhere\end{align*}
\end{proof}

\subsection{Remark}
For non-negative integers $r,s$ the {\em Manin module}
\[ M_{r,s} = \mathcal E(k)/(\mathcal F^r - \mathcal V^s)\]
does not admit a real structure unless $r=s$.  However the sum
$M_{r,s} \oplus M_{s,r}$ admits a real structure $\tau_p$ which
swaps the two factors and exchanges $\mathcal F$ (in one factor) with
$\mathcal V$ (in the other factor).  One verifies that $\tau_p \mathcal F^a
\tau_p^{-1}(x,y) = \mathcal V^a(x,y)$ for any $(x,y) \in M_{r,s}\oplus
M_{s,r}$ (where $a = [k:\FF_p]$) but it is generally
not true that $\tau_p \mathcal F \tau_p^{-1}(x,y) = \mathcal V(x,y)$ because
$\mathcal F$ is not $W(k)$-linear.

\section{Basis of the Dieudonn\'e module}

\subsection{}\label{subsec-tilde}
In \S \ref{sec-real-Dieudonne} we fixed a continuous $K(k)$-linear involution 
$\taub: K(\bar k) \to K(\bar k)$ that preserves $W(\bar k)$ and is the identity on $K(k)$.  
Let $\bar\tau_0 = \tau_0\otimes\taub:K(\bar k)^{2n} \to K(\bar k)^{2n}$.  For any
$ B = \left(\begin{smallmatrix} X & Y \\ Z & W \end{smallmatrix}\right) \in \GSp(2n, K(\bar k))$ let
\[ \widetilde{B} = \bar\tau_0(B)\tau_0=\left( \begin{matrix} \bar\tau(X) & -\bar\tau(Y) \\ 
-\bar\tau(Z) & \bar\tau(W)\end{matrix}\right).\]
Then  
$\widetilde{ABC} = \widetilde{A} \widetilde{B}\widetilde{C} \
\text{ whenever }\ A, C \in \GSp(2n, K(k)).$

\subsection{}\label{subsec-coordinates}
Let $(T,F,\B)$ be a polarized Deligne module with its corresponding  Dieudonn\'e module
\[ M(T)=M(T,F) = (T \otimes W(\bar k))^{\Gal}.\]
Let $M_{\QQ}(T)= M(T)\otimes K(k) = (T \otimes K(\bar k))^{\Gal}$.
As in \S \ref{subsec-normalized}
we have a decomposition \cite{Deligne} $T \otimes \ZZ_p = T' \oplus T^{\pp}$, preserved
by $F$, such that $F':=F|T'$ is invertible and $F^{\prime\prime}:=F|T^{\pp}$ is divisible by $q$.
Let $A_q(x' + x^{\pp}) = x' + q x^{\pp}$ (and similarly for $A_p$) where $x' \in T'\otimes\QQ_p$
and $x^{\pp} \in T^{\pp}\otimes\QQ_p$.
In the following proposition we will construct a diagram
\begin{equation}\label{diag-JZ}
\begin{diagram}[size=1.5em]
T\otimes_{\ZZ} K(\bar k) &\rTo^{\Phi\otimes K(\bar k)}& K(\bar k)^{2n} 
&\lTo^{\quad\quad B\quad\quad }& K(\bar k)^{2n}\\
 \uTo && \uTo && \uTo \\
  M_{\QQ}(T) &\rTo^{\cong}& \mathcal J_{\QQ}(\gamma) & \lTo^{\cong} & K(k)^{2n}
\end{diagram}\end{equation}

\begin{prop}\label{prop-dieudonne-basis}
There exist isomorphisms
$\Phi:T\otimes_{\ZZ_p}\QQ_p \to \QQ_p^{2n}$ and $B:K(\bar k)^{2n} \to
K(\bar k)^{2n}$  with the following properties.

\begin{enumerate}[leftmargin=0cm,labelsep=.5em,labelwidth=.5cm,itemsep=.5em,itemindent=\labelwidth+\labelsep]
\item[$(1)$]  The mapping $\Phi \otimes K(\bar k)$ takes the symplectic form $\B$ on
$T \otimes K(\bar k)$ to the standard symplectic form $\B_0$ on $K(\bar k)^{2n}$.  The
mapping $B$ preserves the standard symplectic form on $K(\bar k)^{2n}$ so we may also
consider $B$ to be a matrix in $\Sp_{2n}(K(\bar k))$.
\item[$(2)$]
Let $\gamma = \Phi F \Phi^{-1}$, let $\alpha_q = \Phi A_q \Phi^{-1}$ and
$\alpha_p = \Phi A_p \Phi^{-1}$.  Then $\gamma \in  \GSp_{2n}(\QQ_p)$ and the mapping 
\[\Phi\otimes K(\bar k):T\otimes K(\bar k) \to K(\bar k)^{2n}\] restricts to a symplectic
isomorphism between $M_{\QQ}(T)=M_{\QQ}(T')\oplus M_{\QQ}(T^{\prime\prime})$ and 
\begin{equation} \label{eqn-Jgamma}
\mathcal J_{\QQ}(\gamma) = \left\{ b \in K(\bar k)^{2n} \left| \
\gamma b = \alpha_q\sigma^{-a}b \right. \right\}.\end{equation}
\item[$(3)$]
The mapping $B$ restricts to an isomorphism $K(k)^{2n} \cong \mathcal J_{\QQ}(\gamma)$,
hence $\gamma B = \alpha_q\sigma^{-a}(B)$.
\item[$(4)$]
The action of $\mathcal F$ on $M(T)$ equals the action of $p\alpha_p^{-1}\sigma$ on
$\mathcal J_{\QQ}(\gamma)$ so it becomes an action 
\[ x \mapsto B^{-1}p\alpha_p^{-1} \sigma(Bx) = \delta \sigma(x)\]
for any $x \in K(k)^{2n}$, where 
\begin{equation}\label{eqn-delta}
\delta = B^{-1}p\alpha_p^{-1}\sigma(B)\end{equation}
is in $\GSp_{2n}(K(k))$ and its norm,
\[ {\sf{N}}(\delta) = \delta \sigma(\delta)\cdots \sigma^{a-1}(\delta) = B^{-1} q \alpha_q^{-1}\sigma^a(B)
 = B^{-1} q\gamma^{-1}B\]
is $G(K(\bar k))$-conjugate to $q\gamma^{-1}$.
\item[$(4')$]  The action of $\mathcal V$ on $M(T)$ becomes the action of $\eta\sigma^{-1}$
where $\eta = B^{-1} \alpha_p \sigma^{-1}(B)$.

\item[$(5)$] If, in addition to the assumptions above, $\tau: T\to T$ is a real
structure on $(T,F,\B)$ then the mapping
$\Phi$ may be chosen so that $\Phi_*(\B) = \B_0$ (hence $\gamma$ is $q$-inversive),
and so that $\Phi \tau \Phi^{-1} = \tau_0$ is the standard involution
(hence $\Phi$ takes the involution $\tau_p = \tau \otimes \bar\tau$ to the involution
$\bar\tau_0 = \tau_0 \otimes \bar\tau$), and the matrix $B\in \Sp_{2n}(K(\bar k))$ may
be chosen so that $\widetilde{B} = B$. 
\end{enumerate}
\end{prop}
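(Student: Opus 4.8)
The plan is to establish (5) by re-running the constructions of parts (1)--(4), replacing ordinary symplectic normal form by the \emph{simultaneous} normal form for a symplectic form together with an involution of multiplier $-1$ (Lemma~\ref{lem-Darboux} and Proposition~\ref{prop-classification}), applied twice: once to build $\Phi$ over $\QQ_p$, and once to build $B$ over the field $K(k)$. Since the formulas for $\delta$ and $\eta$ in $(4)$ and $(4')$ express these matrices directly in terms of $B$, imposing $\widetilde B = B$ does not interfere with them; so it suffices to exhibit a $\Phi$ and a $B$ meeting all the requirements simultaneously.

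\emph{Construction of $\Phi$.} Since $\B(\tau x,\tau y)=-\B(x,y)$ and $\tau^2=I$, the map $\tau$ is an involution of $T\otimes\QQ_p$ lying in $\GSp(T\otimes\QQ_p,\B)$ with multiplier $-1$. By Lemma~\ref{lem-Darboux} and Proposition~\ref{prop-classification} there is an isomorphism $\Phi\colon T\otimes_{\ZZ_p}\QQ_p\to\QQ_p^{2n}$ with $\Phi_*(\B)=\B_0$ and $\Phi\tau\Phi^{-1}=\tau_0$. Putting $\gamma=\Phi F\Phi^{-1}$, we get from $\tau F\tau^{-1}=V$ and $FV=q$ that
\[
\tau_0\gamma\tau_0^{-1}=\Phi(\tau F\tau^{-1})\Phi^{-1}=\Phi V\Phi^{-1}=q\,\Phi F^{-1}\Phi^{-1}=q\gamma^{-1},
\]
while $\gamma$ is semisimple with multiplier $q$ (as $F$ is, using $\B(Fx,Fy)=\B(x,VFy)=q\B(x,y)$); hence $\gamma$ is $q$-inversive. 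Extending scalars, $\Phi\otimes K(\bar k)$ conjugates $\tau_p=\tau\otimes\taub$ into $(\Phi\tau\Phi^{-1})\otimes\taub=\tau_0\otimes\taub=\bar\tau_0$. Parts (1)--(4) go through verbatim with this $\Phi$.

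\emph{Construction of $B$.} By Proposition~\ref{prop-conjugation-Dieudonne}, $\tau_p$ stabilizes $M_{\QQ}(T)$ and satisfies $\B_p(\tau_p x,\tau_p y)=-\B_p(x,y)$. Transporting along $\Phi\otimes K(\bar k)$, the semilinear involution $\bar\tau_0=\tau_0\otimes\taub$ stabilizes $\mathcal J_{\QQ}(\gamma)$, and its restriction there is $K(k)$-\emph{linear} because $\taub$ fixes $K(k)$ pointwise (Appendix~\ref{sec-Witt-involution}); moreover it is an involution of multiplier $-1$ for the restriction of $\B_0$ to $\mathcal J_{\QQ}(\gamma)$, which is a nondegenerate $K(k)$-valued symplectic form by Proposition~\ref{prop-Dieudonne-polarization}. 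Applying Lemma~\ref{lem-Darboux} and Proposition~\ref{prop-classification} over $K(k)$ to the pair $(\B_0|_{\mathcal J_{\QQ}(\gamma)},\, \bar\tau_0|_{\mathcal J_{\QQ}(\gamma)})$, we obtain a $K(k)$-basis $b_1,\dots,b_{2n}$ of $\mathcal J_{\QQ}(\gamma)$ that is symplectic for $\B_0$ and in which $\bar\tau_0$ is represented by $\tau_0$. Let $B\in\GL_{2n}(K(\bar k))$ be the matrix with columns $b_1,\dots,b_{2n}$. Then $B\in\Sp_{2n}(K(\bar k))$ and $B$ carries $K(k)^{2n}$ isomorphically onto $\mathcal J_{\QQ}(\gamma)$, so properties (1)--(4) hold for this $B$ as well.

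\emph{The identity $\widetilde B=B$, and where the difficulty lies.} By construction $B$ carries the standard involution $\tau_0$ on $K(k)^{2n}$ to $\bar\tau_0$ on $\mathcal J_{\QQ}(\gamma)$, i.e.\ $\bar\tau_0(Bx)=B\tau_0 x$ for all $x\in K(k)^{2n}$. Since $\bar\tau_0=\tau_0\otimes\taub$ acts on column vectors by $v\mapsto\tau_0\,\taub(v)$ (applying $\taub$ entrywise), the left side is $\tau_0\,\taub(B)\,x$ when $x\in K(k)^{2n}$ (where $\taub(x)=x$); comparing on the standard basis gives $\tau_0\,\taub(B)=B\tau_0$, hence $\taub(B)=\tau_0 B\tau_0$ and
\[
\widetilde B=\tau_0\,\taub(B)\,\tau_0=\tau_0(\tau_0 B\tau_0)\tau_0=B,
\]
using $\tau_0^2=I$. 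The one point that really has to be checked, and the place where the argument could break, is that $\bar\tau_0$ descends to a $K(k)$-linear involution of $\mathcal J_{\QQ}(\gamma)$ at all: this is exactly the content packaged into the choice of the Witt involution $\taub$ (continuous, $K(k)$-linear, $\taub\sigma^a=\sigma^{-a}\taub$) together with Proposition~\ref{prop-conjugation-Dieudonne}, which guarantees that $\tau\otimes\taub$ commutes with the Galois action on $T\otimes K(\bar k)$ and so preserves $M_{\QQ}(T)$. The only other caveat is that the simultaneous normal form must be invoked over $K(k)$ rather than $\QQ$, which is harmless since it is a purely linear-algebraic statement over a field of characteristic $\neq 2$. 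Granting these, the proof of (5) reduces to the two normal-form invocations above and the bookkeeping identity $\widetilde B=B$.
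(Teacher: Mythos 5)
Your proof is correct, and it arrives at $\widetilde B = B$ by essentially the same pair of ingredients the paper uses, but packaged differently.

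Where the paper takes an arbitrary symplectic $K(k)$-basis $B_1$ of $\mathcal J_{\QQ}(\gamma)$, forms the 1-cocycle $t=B_1^{-1}\widetilde B_1$, invokes the triviality of $H^1(\langle\tau_p\rangle,\Sp_{2n}(K(k)))$ (Proposition~\ref{prop-trivial-cohomology}) to write $t=\beta^{-1}\widetilde\beta$, and sets $B=B_1\beta^{-1}$, you instead invoke Proposition~\ref{prop-classification} directly over $K(k)$ to produce a symplectic basis of $\mathcal J_{\QQ}(\gamma)$ that \emph{simultaneously} puts $\bar\tau_0$ into the standard form $\tau_0$, and then read off $\widetilde B = B$ by a bookkeeping computation with column vectors. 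Since the paper's Proposition~\ref{prop-trivial-cohomology} is itself proved by reduction to Proposition~\ref{prop-classification}, the two arguments are substantively the same; yours simply bypasses the nonabelian-cohomology dressing. Your identification of the genuine content---that $\bar\tau_0 = \tau_0\otimes\taub$ really does give a $K(k)$-linear involution of multiplier $-1$ on $\mathcal J_{\QQ}(\gamma)$ (via Proposition~\ref{prop-conjugation-Dieudonne} and the $W(k)$-linearity of $\taub$), and that Proposition~\ref{prop-Dieudonne-polarization} supplies the nondegenerate $K(k)$-valued symplectic form needed to run Darboux over $K(k)$---is exactly right. The only stylistic cost of your route is that it is less visibly compatible with the paper's later use of the cohomology set $H^1(\langle\tau_0\rangle,\cdot)$ as an organizing principle; but for the purpose of proving part (5) of the proposition it is a perfectly valid shortcut.
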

These properties may be summarized in the following table, where $u_p = B^{-1} \alpha_p B$.\\

\newcommand\Tstrut{\rule{0pt}{2.5ex}}  
\centerline{
\begin{tabular}{|c|c|c|c|}
\hline
{$T\otimes\ZZ_p$}\Tstrut & \multicolumn{3}{|c|}{
$T \otimes W(\bar k) \rightarrow W(\bar k)^{2n} \leftarrow W(\bar k)^{2n}$} \\ \hline 
 \Tstrut & $M_{\QQ}(T)$& \ $ \mathcal J_{\QQ}(\gamma)$\  &\ $K(k)^{2n}$ \\ \hline\hline
\Tstrut$F$ &$F$ & $\gamma$ & $B^{-1} \gamma B$ \\ \hline
\Tstrut$A_p$ & $A_p$ & $\alpha_p$ & $u_p$ \\ \hline
&\Tstrut $\mathcal F$   &   $p\alpha_p^{-1}\sigma$    & $\delta\sigma$    \\ \hline
&\Tstrut $\mathcal V$   &   $\alpha_p\sigma^{-1} $    & $p \sigma^{-1}\delta^{-1} $ \\ \hline
$\B$ & $\B_p$ & $\B_0$ & $\B_0$ \\  \hline
$\tau$ & $\tau_p=\tau\otimes\bar\tau$ & $\bar\tau_0=\tau_0\otimes\bar\tau$& $\bar\tau_0$ \\ \hline
\end{tabular}
} 
\smallskip

\begin{proof}  
Use Lemma \ref{lem-Darboux} (taking $R = \QQ_p$) to obtain an isomorphism
$\Phi:T\otimes\QQ_p \to \QQ_p^{2n}$ which takes $\B$ to $\B_0$.  By Proposition
\ref{prop-Dieudonne-module} (which describes the Galois action), such a
mapping $\Phi$ automatically restricts to an isomorphism $\Phi:M_{\QQ}(T) \to \mathcal J_{\QQ}(\gamma)$. 
The Dieudonn\'e module $M(T)$
is a free module of rank $2n$ over the ring $W(k)$  (see Proposition \ref{prop-Dieudonne-module}).  Consequently the vector space $\mathcal J_{\QQ}(\gamma)$ has dimension $2n$ over the field $K(k)$.  

By Proposition \ref{prop-Dieudonne-polarization} the symplectic form $\B$ on $T\otimes K(\bar k)$ 
restricts to a nondegenerate symplectic form $\B_p$ on $M_{\QQ}(T)$ and hence the 
symplectic form $\B_0$ on $K(\bar k)^{2n}$ restricts to a nondegenerate symplectic
form on $\mathcal J_{\QQ}(\gamma)$.  Now use Lemma \ref{lem-Darboux} again (with $R = K(k)$) to
obtain a symplectic basis $B:K(k)^{2n} \to \mathcal J_{\QQ}(\gamma)$.  Such a
mapping $B$ will automatically satisfy $\gamma B = \alpha_q\sigma^{-a}(B)$.  This proves
parts (1), (2) and (3).

For part (4) one checks that $\sigma^{-a}(\delta) = \delta$ hence $\delta \in \GSp_{2n}(K(k))$.

Now consider part (5) in which we are given a real structure $\tau:T \to T$.   By  Proposition \ref{prop-classification} the mapping $\Phi$  may be chosen so that
$\Phi_*(\B) = \B_0$ and so that $\Phi \tau \Phi = \tau_0$.  Proposition
 \ref{prop-conjugation-Dieudonne} guarantees that $\tau_p = \tau\otimes\bar\tau$ preserves $M_{\QQ}(T)$,
hence $\bar\tau_0 = \tau_0\otimes \taub$ preserves $\mathcal J_{\QQ}(\gamma)$.
Lemma \ref{lem-Darboux} gives a matrix $B_1 \in \Sp(2n, K(\bar k))$ whose columns form a symplectic
$K(k)$-basis of $\mathcal J_{\QQ}(\gamma)$, hence  $\gamma B_1 = \alpha_q \sigma^{-a}(B_1)$.  
Set $t = B_1^{-1} \widetilde{B}_1 \in \Sp(2n, K(\bar k))$.   
Using \S  \ref{subsec-tilde} and equation (\ref{eqn-tau-A}),
\begin{align*}
\sigma^{-a}(t) &=  \sigma^{-a}(B_1^{-1}) \widetilde{\sigma^aB}_1\\
&= B_1^{-1} \gamma^{-1} \alpha_q \tilde{\gamma}^{-1}\tilde{\alpha}_q \widetilde{B}_1\\
&= B_1^{-1} \widetilde{B}_1 =t.
\end{align*}
Therefore $t \in \Sp(2n, K(k))$ and $ t \tilde{t} = I$. 
So $t$ defines a 1-cocycle in $H^1\left(\langle\tau_p\rangle,
\Sp(2n, K(k))\right)$, which is trivial, by Proposition \ref{prop-trivial-cohomology}
in Appendix \ref{appendix-cohomology}.  Thus, there
exists $\beta \in \Sp(2n, K(k))$ so that $t = \beta^{-1}
\widetilde{\beta}$.  Set $B = B_1 \beta^{-1}\in \Sp(2n, K(\bar k))$.  
Then the columns of the matrix $B$ form a symplectic $K(k)$-basis of
$\mathcal J_{\QQ}(\gamma)$ and $\widetilde{B} = B$.  This completes
the proof of Proposition \ref{prop-dieudonne-basis}.
\quash{
To verify (\ref{eqn-tilde-delta}), use $\alpha_pB\delta = p \sigma(B)$  and $\widetilde{B} = B$ 
to find:
\[\tilde{\alpha}_p {B} \tilde{\delta} = p \sigma^{-1}(B).\]
Choose a basis of $T\otimes \ZZ_p$ that is compatible with the decomposition 
$T\otimes \ZZ_p = T' \oplus T^{\pp}$.  With respect to this basis, the matrix of $A_p$ is
$\left(\begin{smallmatrix} I & 0 \\ 0 & pI \end{smallmatrix}\right)$ and the matrix of $\tau$
is $\left(\begin{smallmatrix} 0 & T \\ T^{-1} & 0 \end{smallmatrix} \right)$ for some matrix
$T \in \GL_n(\ZZ_p)$.  It follows that $\tau A_p \tau^{-1} = p A_p^{-1}$ hence
$\tilde{\alpha}_p = p \alpha_p^{-1}$.  It follows that 
\[\tilde{\delta} = B^{-1} \alpha_p \sigma^{-1}(B) = p \sigma^{-1}(\delta^{-1}).\]
Finally, $\tau_p\mathcal F(x) = \tau_p \delta \sigma(x) = p \sigma^{-1}(\delta^{-1})
 \sigma^{-1}(\tau_p x) = \mathcal V \tau_p(x)$ for any $x \in K(k)^{2n}$. 
{\bf\color{red} We need to check this last paragraph.}
}
\end{proof}
We remark that Lemma \ref{lem-Darboux} actually gives the following slightly stronger statement:
if $\B:T\otimes T \to \ZZ$ is a strongly nondegenerate
symplectic form such that $\B(Fx,y) = \B(x,Vy)$ then $\Phi$ may be chosen so that
$\Phi_*(\B) = \B_0$ and so that $\gamma = \Phi F \Phi^{-1} \in \GSp_{2n}(\ZZ_p)$, and the
matrix $B$ may be chosen to lie in $\Sp_{2n}(W(\bar k))$.  

\begin{lem}\label{lem-conj-B}
Let $(T,F,\B)$ be a polarized Deligne module with Dieudonn\'e module $M(T)$, and let 
$\Phi, B, \gamma, \delta$ be as described in Proposition \ref{prop-dieudonne-basis} 
above.  Then conjugation by $B$ determines an injective homomorphism
\begin{equation}\label{eqn-conj-B}
\mathfrak c_B:Z_{\gamma}(\QQ_p) \to Z'_{\delta}(K(k))\end{equation}
from the centralizer of $\gamma$ in $\GSp_{2n}(\QQ_p)$ to the twisted centralizer
\begin{equation}\label{eqn-twisted-delta}
 Z'_{\delta}(K(k)) =\left\{ u \in \GSp_{2n}(K(k))|\
u^{-1} \delta \sigma(u) = \delta \right\}.\end{equation}
\end{lem}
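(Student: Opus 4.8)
The plan is to verify that conjugation by $B$ sends an element of $Z_\gamma(\QQ_p)$ into the twisted centralizer $Z'_\delta(K(k))$, and that the resulting map is a group homomorphism and is injective. Injectivity and the homomorphism property are automatic: conjugation $u \mapsto B^{-1}uB$ is an injective group homomorphism $\GSp_{2n}(K(\bar k)) \to \GSp_{2n}(K(\bar k))$ on the level of abstract groups, so the only real content is (i) that $\mathfrak c_B(z) := B^{-1}zB$ actually lands in $\GSp_{2n}(K(k))$ when $z \in Z_\gamma(\QQ_p)$ (as opposed to merely $\GSp_{2n}(K(\bar k))$), and (ii) that it satisfies the twisted-centralizer relation $u^{-1}\delta\sigma(u) = \delta$.

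For (ii): let $z \in Z_\gamma(\QQ_p)$, so $z\gamma = \gamma z$ and hence also $z$ commutes with $\alpha_p$ (since $\alpha_p = \Phi A_p \Phi^{-1}$ is a polynomial-in-$\gamma$ type operator — more precisely $A_p$ is determined by the $T' \oplus T''$ decomposition which is determined by $F$, so $z$ centralizing $\gamma$ forces it to preserve that decomposition and commute with $\alpha_p$; one should cite Proposition~\ref{lem-tau-switches} / the remarks in \S\ref{subsec-normalized} here). Also $z \in \GSp_{2n}(\QQ_p)$ is fixed by $\sigma$. Now set $u = B^{-1}zB$. Using $\delta = B^{-1}p\alpha_p^{-1}\sigma(B)$ from equation~(\ref{eqn-delta}), compute
\[
u^{-1}\delta\sigma(u) = B^{-1}z^{-1}B \cdot B^{-1}p\alpha_p^{-1}\sigma(B) \cdot \sigma(B^{-1}zB) = B^{-1} z^{-1} p\alpha_p^{-1} \sigma(B)\sigma(B)^{-1}\sigma(z)B = B^{-1}z^{-1}p\alpha_p^{-1}\sigma(z)B.
\]
Since $\sigma(z) = z$ (as $z \in \GSp_{2n}(\QQ_p)$) and $z$ commutes with $\alpha_p$, the middle factor is $z^{-1}p\alpha_p^{-1}z = p\alpha_p^{-1}$, so $u^{-1}\delta\sigma(u) = B^{-1}p\alpha_p^{-1}B$. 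This is not yet visibly $\delta$; but $\delta = B^{-1}p\alpha_p^{-1}\sigma(B)$, and the point is that $\sigma$ acts trivially on $B$ \emph{modulo the subgroup we care about} — more carefully, one uses $\gamma B = \alpha_q\sigma^{-a}(B)$ from part~(3) of Proposition~\ref{prop-dieudonne-basis} together with $z\gamma=\gamma z$, $z\alpha_q=\alpha_q z$, and $\sigma(z)=z$ to deduce that $\sigma^{-a}(B^{-1}zB) = B^{-1}zB$, i.e. $u \in \GSp_{2n}(K(k))$ directly (this settles (i) as well), and then that $u$ commutes with $\delta$ in the twisted sense. The cleanest route is probably: first show $u \in \GSp_{2n}(K(k))$ using the $\gamma B = \alpha_q\sigma^{-a}(B)$ relation, then observe that the action of $\mathcal F$ on $M_\QQ(T)$ commutes with the action of $z$ (because $z$ is a $\QQ_p$-linear self-isogeny of the Deligne module, hence induces a Dieudonné-module endomorphism commuting with $\mathcal F$), and translate this commutation through the isomorphism $B$ into precisely the relation $u^{-1}\delta\sigma(u) = \delta$.

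The cleanest conceptual proof, and the one I would actually write, is the last remark above: an element $z \in Z_\gamma(\QQ_p)$ is exactly a $\QQ_p$-linear automorphism of $(T\otimes\QQ_p, F)$ commuting with $F$ (and the symplectic form, up to scalar), hence induces an automorphism of the rational Dieudonné module $M_\QQ(T)$ commuting with the $\sigma$-linear operator $\mathcal F$ and with $\B_p$ up to scalar; under the symplectic trivialization $B : K(k)^{2n} \xrightarrow{\sim} \mathcal J_\QQ(\gamma)$ of Proposition~\ref{prop-dieudonne-basis}, where $\mathcal F$ becomes $\delta\sigma$, the condition ``$u$ commutes with $\delta\sigma$'' is literally ``$u\delta\sigma(u)^{-1} = \delta$'', i.e. $u^{-1}\delta\sigma(u)=\delta$ after rearranging; and $u \in \GSp_{2n}(K(k))$ because $z$ was $K(k)$-linear on $M_\QQ(T) \cong K(k)^{2n}$. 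I expect the main obstacle to be purely bookkeeping: keeping straight the three columns of diagram~(\ref{diag-JZ}) and making sure the $\sigma$-semilinearity is transported correctly (in particular that $z$, being $\QQ_p$-rational, is genuinely $\sigma$-fixed and $K(k)$-linear after extension of scalars, so that conjugation by $B$ does not introduce any $\sigma$-twist beyond the one already present in $\delta$). There is no deep difficulty — the lemma is a formal consequence of Proposition~\ref{prop-dieudonne-basis} — so the write-up should be short.
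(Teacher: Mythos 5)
Your proposal is correct and ends up at essentially the same argument the paper gives, though you should be aware that your first attempted direct computation actually \emph{does} close — the reason you thought it ``is not yet visibly $\delta$'' is a symbol error in the middle step. You wrote $\sigma(B^{-1}zB) = \sigma(B)\sigma(B)^{-1}\sigma(z)B$, which has both an inversion in the wrong place and a stray $B$ in place of $\sigma(B)$; the correct expansion is $\sigma(B^{-1}zB) = \sigma(B)^{-1}z\,\sigma(B)$ (using $\sigma(z)=z$). Substituting this gives
\begin{equation*}
u^{-1}\delta\sigma(u) = B^{-1}z^{-1}B\cdot B^{-1}p\alpha_p^{-1}\sigma(B)\cdot\sigma(B)^{-1}z\,\sigma(B) = B^{-1}\bigl(z^{-1}p\alpha_p^{-1}z\bigr)\sigma(B) = B^{-1}p\alpha_p^{-1}\sigma(B) = \delta,
\end{equation*}
once you know $z$ commutes with $\alpha_p$. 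So the direct matrix computation terminates cleanly; there was no obstruction to work around. The ``cleanest conceptual proof'' you sketch at the end — that $z$ centralizes $F$, hence preserves the $T'\oplus T''$ decomposition and so commutes with $A_p$, hence descends to a $K(k)$-linear automorphism of $M_{\QQ}(T)\cong K(k)^{2n}$ commuting with $\mathcal F=\delta\sigma$, which is precisely the twisted-centralizer relation and simultaneously gives $u\in\GSp_{2n}(K(k))$ — is a valid reformulation and is what the paper does in matrix form: the paper first observes (via the eigenvalue/decomposition argument you also invoke) that $h\alpha_q=\alpha_q h$, then shows $\sigma^a(B^{-1}hB)=B^{-1}hB$ using $\gamma B=\alpha_q\sigma^{-a}(B)$, and then leaves the verification $u^{-1}\delta\sigma(u)=\delta$ as a direct check (which is exactly your computation above). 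The two routes are the same modulo the choice of whether to phrase things in terms of the matrix relation $\delta=B^{-1}p\alpha_p^{-1}\sigma(B)$ or in terms of the operator $\mathcal F$ on the Dieudonné module.
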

\begin{proof}
Let $h \in Z_{\gamma}(\QQ_p)$. Observe that $h \alpha_q = \alpha_q h$.  This follows from the fact that 
$\gamma$ may be expressed as $\gamma' \oplus \gamma^{\prime\prime}$ with respect to the decomposition
$\ZZ_p^{2n} = \Phi(T') \oplus \Phi(T^{\prime\prime})$.  Since $\gamma'$ and $\gamma^{\prime\prime}$ have
different eigenvalues, the mapping $h$ must preserve this decomposition, $h = h' \oplus h^{\prime\prime}$.
But $\alpha_q$ acts as a scalar on each of these factors so it commutes with $h$. 
Using this, we also conclude that $\mathfrak c_B(h):=B^{-1}hB \in G(W(k))$ because
\[ \sigma^{a}(B^{-1}hB) = B^{-1} \alpha_q^{-1} \gamma h \gamma^{-1} \alpha_q B = B^{-1} h B.\]
One then checks directly that $u=B^{-1}hB$ satisfies $u^{-1} \delta \sigma(u) = \delta$. \end{proof}


\section{Lattices in the Dieudonn\'e module}\label{sec-dieudonne-lattices}

\subsection{}
 Let $(T,F,\B)$ be a polarized Deligne module with Dieudonn\'e module $M(T)\subset T \otimes W(\bar k)$.
Using the decomposition $T \otimes \ZZ_p = T' \oplus T^{\pp}$ as in \S \ref{subsec-normalized} define
$A_p(x' + x^{\pp}) = x' + p x^{\pp}$.  We have the following analog of Lemma \ref{lem-Smith-form}.

\begin{lem}
Let $M \subset M_{\QQ}(T)$ be a $W(k)$-lattice. 
The following statements are equivalent.
\begin{enumerate}
\item The lattice $M$ is preserved by $\mathcal F$ and by $\mathcal V$.
\item $pM \subset \mathcal F M \subset M$.
\item $pM \subset \mathcal V M \subset M$.
\item $A_p^{-1}\mathcal V M = M$.
\end{enumerate}\end{lem}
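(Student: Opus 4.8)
The plan is to imitate the proof of Lemma~\ref{lem-Smith-form}, with the $\QQ_p$-linear element $\gamma_0$ there replaced by the $\sigma$-semilinear operator $\mathcal F$ on $M_{\QQ}(T)$. Since $\mathcal F\mathcal V=\mathcal V\mathcal F=p$ and $p$ is a unit of $K(k)$, both $\mathcal F$ and $\mathcal V$ are bijective on $M_{\QQ}(T)$. The equivalences (1)$\Leftrightarrow$(2)$\Leftrightarrow$(3) are then purely formal, exactly as in the first paragraph of that proof: $\mathcal F M\subset M$ gives $pM=\mathcal V\mathcal F M\subset \mathcal V M$; $pM\subset\mathcal V M$ gives $p\mathcal F M\subset\mathcal F\mathcal V M=pM$, hence $\mathcal F M\subset M$; and the same with $\mathcal F,\mathcal V$ interchanged. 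Chasing these four inclusions shows each of (1), (2), (3) holds once any one does.

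For the equivalence with (4) I would first analyse the operator $\Psi:=A_p^{-1}\mathcal V$. Using the decomposition $M_{\QQ}(T)=M_{\QQ}(T')\oplus M_{\QQ}(T'')$ and the formulas of Proposition~\ref{prop-Dieudonne-module} and Corollary~\ref{cor-module-invariants}, on $M_{\QQ}(T')$ one has $\mathcal V=\sigma^{-1}$, $A_p=\mathrm{id}$, while on $M_{\QQ}(T'')$ one has $\mathcal V=p\sigma^{-1}$, $A_p=p\cdot\mathrm{id}$; hence $\Psi$ is the $\sigma^{-1}$-semilinear automorphism of $M_{\QQ}(T)$ equal to $\sigma^{-1}$ on each summand. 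Thus $\Psi$ commutes with $A_p$, satisfies $\mathcal V=A_p\Psi$ and $\mathcal F=p\Psi^{-1}A_p^{-1}$, preserves the integral Dieudonn\'e module $M(T)=M(T')\oplus M(T'')$, and is bijective on it; writing $\Psi$ as $C\circ\sigma^{-1}$ with $C\in\GL_{2n}(W(k))$ in an integral basis and using that $\sigma^{-1}$ preserves $p$-adic valuations, $\Psi$ carries every $W(k)$-lattice to one of the same covolume. The point is now that any lattice satisfying (1)--(4) respects the decomposition, i.e. $M=M'\oplus M''$ with $M'=M\cap M_{\QQ}(T')$ and $M''=M\cap M_{\QQ}(T'')$; granting this, condition (1) forces $\mathcal F M'\subset M'$ and $\mathcal V M'\subset M'$, which by the formulas above read $pM'\subset\Psi M'\subset M'$, and likewise $M''\subset\Psi M''\subset p^{-1}M''$, and since $\Psi$ preserves covolume each sandwich collapses to $\Psi M'=M'$ and $\Psi M''=M''$, i.e. $\Psi M=M$, which is (4); conversely $\Psi M=M$ with $M$ split gives $\mathcal V M=A_pM\subset M$ and $\mathcal F M=pA_p^{-1}M\subset M$, which is (1).

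The main obstacle is precisely the assertion that such a lattice $M$ splits along $M_{\QQ}(T')\oplus M_{\QQ}(T'')$. For (1)--(3) this is standard Dieudonn\'e theory: $M$ is then the covariant Dieudonn\'e module of a $p$-divisible group $G/k$ whose isocrystal $M_{\QQ}(T)$ has slopes only $0$ and $1$, so $G$ is ordinary and, $k$ being perfect, splits as a product of its \'etale and multiplicative parts; equivalently, the decomposition of \cite{Deligne} applies to $M$. For (4) one must use the semilinear structure of $\Psi$ (not merely that of its linear power $\Psi^a=A_q^{-1}F$, which by itself need not force integral splitting): $\Psi M=M$ implies $\Psi$ stabilises $M'$ and $M''$, and a semilinear-descent argument then forces $M=M'\oplus M''$. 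I would present the result as the cycle (1)$\Rightarrow$(4)$\Rightarrow$(1) together with (1)$\Leftrightarrow$(2)$\Leftrightarrow$(3). Here the slope datum $0^{(n)},1^{(n)}$ of $M_{\QQ}(T)$ plays the role that the hypothesis ``ordinary Weil $q$-polynomial'' plays in part (d) of Lemma~\ref{lem-Smith-form}.
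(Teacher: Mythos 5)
Your equivalences (1)$\Leftrightarrow$(2)$\Leftrightarrow$(3), using $\mathcal F\mathcal V = \mathcal V\mathcal F = p$, are correct and match the paper. Your (1)$\Rightarrow$(4) is also correct and is essentially the paper's argument: under (1) the Deligne-type slope argument splits $M$ compatibly with $M_{\QQ}(T')\oplus M_{\QQ}(T'')$, and then each sandwich $pM'\subset\Psi M'\subset M'$, $M''\subset\Psi M''\subset p^{-1}M''$ collapses because $\Psi=A_p^{-1}\mathcal V$ is $\sigma^{-1}$-semilinear with unit matrix in an integral basis, hence preserves covolume.

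The gap is (4)$\Rightarrow$(1). You write that ``$\Psi M=M$ implies $\Psi$ stabilises $M'$ and $M''$, and a semilinear-descent argument then forces $M=M'\oplus M''$'' — but you never supply that argument, and I do not believe one exists. The operator $\Psi$ is a bijective $\sigma^{-1}$-semilinear endomorphism of $M(T)$, so $\Psi^{-1}$ is a unit-root ($\sigma$-semilinear) operator and everything in sight has slope $0$; the decomposition $M_{\QQ}(T)=M_{\QQ}(T')\oplus M_{\QQ}(T'')$ is a direct sum of two unit-root isocrystals of the \emph{same} slope. There is no more reason for a $\Psi$-stable lattice to respect that direct sum than for a Galois-stable $\ZZ_p$-lattice in a direct sum of two $p$-adic Galois representations to split, and in general it will not. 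Concretely, if the matrix of $\Psi$ in a $W(k)$-basis of $M(T)$ is congruent to the identity modulo $p$, then every $\FF_p$-subspace of the $2n$-dimensional $\FF_p$-vector space of $\sigma^{-1}$-fixed vectors in $M(T)/pM(T)$ lifts to a $\Psi$-stable lattice $M$ with $pM(T)\subset M\subset M(T)$; choosing a line not contained in either summand gives a non-split $M$ satisfying (4), for which $A_pM\not\subset M$ and hence $\mathcal V M=A_p\Psi M=A_pM\not\subset M$, so (1) fails. Thus (4) is strictly weaker than (1) for a general lattice, and your (4)$\Rightarrow$(1) cannot be repaired along these lines. (The point is that the linear operator $\Psi^a=A_q^{-1}F$ and the semilinear $\Psi$ behave the same way here: being isoclinic of slope $0$, neither forces an integral splitting.)

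You should also be aware that the paper's own proof contains exactly this hole: it reads off ``$\mathcal V M=A_pM\subset M$'' from (4) with no justification, and $A_pM\subset M$ is automatic only when $M$ already respects the decomposition. Your instinct that a splitting of $M$ is the crux of (4)$\Rightarrow$(1) is therefore exactly right; what is missing is an honest proof of the splitting, and the fix is to strengthen (4) — e.g. to ``$A_p^{-1}\mathcal V M=M$ and $A_pM\subset M$'', or equivalently to assume $M=(M\cap M_{\QQ}(T'))\oplus(M\cap M_{\QQ}(T''))$ — at which point the implication becomes immediate.
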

\begin{proof}
The equivalence of (1), (2) and (3) is straightforward.  
Using the same argument as in \cite{Deligne}, the lattice $M$ decomposes
as $M' \oplus M^{\pp}$ such that $\mathcal V$ is invertible on
$M'$ and is divisible by $p$ on $M^{\pp}$ from which it 
follows\footnote{recall that $F = \mathcal V^a$} that
$M ' = M \cap T'$ and $M^{\pp} = M \cap T^{\pp}$,
that $\mathcal F$ and $\mathcal V$ commute with $A_p$, and that
\[\mathcal V M  \subset A_pM\
\text{ and }\
\mathcal F M \subset pA_p^{-1}M.\]
Together these imply (4).  Conversely, suppose that $A_p^{-1}\mathcal V M = M$.
Then $\mathcal V M = A_pM \subset M$ and $\mathcal F M
= p \mathcal V^{-1}M = p A_p^{-1} M \subset M$.
\end{proof}

Let  $G=\GSp_{2n}, \Phi, B, \gamma, \delta,u_p$ be as described in 
Proposition \ref{prop-dieudonne-basis} and in the table that follows it.
Using the bases $\Phi$ and $B$, it follows from the above lemma (and Smith normal form,
as in Lemma \ref{lem-Smith-form}) that the set $\mathcal L_p(M_{\QQ}(T), \mathcal F, \B_p)$
of lattices $M \subset M_{\QQ}(T)$ that are preserved by $\mathcal F, \mathcal V$, 
and satisfy $M^{\perp} = cM$ (for some $c \in K(k))$ may in turn be identified with the set of elements 
\begin{align}
\mathcal Y_p &= \left\{ \beta \in G(K(k))/G(W(k))|\ 
\beta^{-1} \delta \sigma(\beta) \in G(W(k)) \left(\begin{smallmatrix} I & 0 \\ 0 & pI \end{smallmatrix}
\right) G(W(k)) \right\}\label{eqn-curly-Y1}\\
&= \left\{ \beta \in G(K(k))/G(W(k))|\ \beta^{-1} u_p \delta \sigma(\beta)  \in G(W(k))\right\}.
\label{eqn-curly-Y2}\end{align}
by the mapping $\mathcal S:\mathcal Y_p \to \mathcal L_p(M_{\QQ}(T), \mathcal F, \B_p)$ given by
$S(\beta) = \Phi^{-1}B\beta\Lambda_0$.
Similarly, as in Proposition \ref{lem-Y} the set $\mathcal L_p(T\otimes \QQ_p, F, \B)$ of lattices $L \subset
T \otimes \QQ_p$ that are preserved by $F,V$ and satisfy $L^{\perp} = cL$ (for some $c \in \QQ_p$), may be
identified with the set of elements $Y_p$ of equation (\ref{eqn-YpYp}) by the mapping
$S:Y_p \to \mathcal L_p(T\otimes\QQ_p, F, \B)$ given by $S(g) = \Phi^{-1}(gL_0)$.

  To each $\ZZ_p$-lattice $L \subset T\otimes\QQ_p$ that is preserved by $F$
and $V$ we obtain a $W(k)$-lattice
\[ M(L) = (L\otimes W(\bar k))^{\Gal} \subset M_{\QQ}(T)\]
where the Galois action is given, as in equation (\ref{eqn-Galois-action}), by 
$\pi.(t\otimes w) = FA_q^{-1}(t) \otimes \sigma^a(w)$ for $t \in L$ and $w \in W(\bar k)$
and where $\mathcal F$ is given by equation (\ref{eqn-curlyF}), that is,
$\mathcal F(t\otimes w) = pA_p^{-1}(t)\otimes \sigma(w)$.  
\begin{prop}\label{cor-group-description}{\rm(\cite{Kottwitz})}
The association $L \mapsto \Lambda= M(L)$ induces  one to one correspondences (the vertical
arrows in the following diagram),
\begin{diagram}[size=2em]
Y_p & \rTo^{\scriptstyle\cong}_{\scriptstyle S} &\mathcal L_p(T\otimes\QQ_p,F,\B)\\
\dTo && \dTo \\
\mathcal Y_p & \rTo^{\scriptstyle\cong}_{\scriptstyle\mathcal S} & \mathcal L_p(M_{\QQ}(T),\mathcal F, \B_p)
\end{diagram}
\quash{
\begin{diagram}[size=2em]
\mathcal L_p(T\otimes\QQ_p, F, \B) &\rTo& \mathcal L_p(\QQ_p^{2n},\gamma,\B_0) 
&\lTo^{\scriptstyle S}& Y_p\\
\dTo^{M} && \dTo && \dTo\\
\mathcal L_p(M_{\QQ}(T), \mathcal F, \B_p) &\rTo& \mathcal L_p(\mathcal J_{\QQ}(\gamma),\mathcal F, \B_0)
& \lTo^{\scriptstyle\mathcal S} & \mathcal Y_p
\end{diagram}
}
that are equivariant with respect to the action of $\mathfrak c_B: Z_{\gamma}(\QQ_p) \to
Z'_{\delta}(K(k))$ on $Y_p$ and $\mathcal Y_p$ respectively.

If $\tau:T \to T$ is a real structure on $(T,F,\B)$ with corresponding real structure
$\tau_p$ on $M_{\QQ}(T)$ then the lattices $L\subset T\otimes\QQ_p$  that are
preserved by $\tau$ correspond exactly to the lattices $\Lambda \subset M_{\QQ}(T)$ 
that are preserved by $\tau_p$, and to the lattices $\Lambda \subset \mathcal J_{\QQ}(\gamma)$
 that are preserved by $\bar\tau_0=\tau_0\otimes\bar\tau$. 
\end{prop}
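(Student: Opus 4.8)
The plan is to reduce everything to (twisted) Galois descent for the extension $W(\bar k)/W(k)$: the bijection and the equivariance go exactly as in \cite{Kottwitz}, so the real work is the real-structure clause. For the descent mechanism, given an $F$- and $V$-stable $\ZZ_p$-lattice $L\subset T\otimes\QQ_p$, write $L=L'\oplus L^{\pp}$ for its intersections with the Deligne decomposition (\cite{Deligne}); then $L\otimes_{\ZZ_p}W(\bar k)$ is stable under the twisted Galois action $\pi.x=FA_q^{-1}\sigma^a(x)$ of Corollary \ref{cor-module-invariants}, which is $\sigma^a$-semilinear over $W(\bar k)$ because $FA_q^{-1}$ is $W(\bar k)$-linear. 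Since $W(\bar k)/W(k)$ is pro-Galois with group $\Gal$, faithfully flat descent identifies $M(L)\otimes_{W(k)}W(\bar k)$ canonically with $L\otimes_{\ZZ_p}W(\bar k)$ inside $T\otimes K(\bar k)$; hence $M(L)=(L\otimes W(\bar k))\cap M(T)$, with inverse $L\otimes\QQ_p=(M(L)\otimes_{W(k)}K(\bar k))\cap(T\otimes\QQ_p)$, so $L\mapsto M(L)$ is injective.

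Reading the formulas of Corollary \ref{cor-module-invariants} (for $\mathcal F,\mathcal V$) and Proposition \ref{prop-Dieudonne-polarization} (for $\B_p$) through this identification shows $M(L)$ lies in $\mathcal L_p(M_{\QQ}(T),\mathcal F,\B_p)$ exactly when $L$ lies in $\mathcal L_p(T\otimes\QQ_p,F,\B)$, so the map is a bijection; transporting along the coordinatizations $\Phi$ and $B$ of Proposition \ref{prop-dieudonne-basis} realizes it as the vertical arrows of the diagram. For the equivariance, an element $h\in Z_{\gamma}(\QQ_p)$ acts on $Y_p$ by the automorphism $\phi=\Phi^{-1}h\Phi$ of $T\otimes\QQ_p$; this $\phi$ commutes with $F$, hence with $V=qF^{-1}$, preserves the Deligne decomposition, and so commutes with $A_q$ and with the twisted Galois action. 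Therefore $\phi\otimes 1$ preserves $M(T)$ and $(\phi\otimes 1)(M(L))=M(\phi L)$; since $u=\mathfrak c_B(h)=B^{-1}hB$ acts on $\mathcal Y_p$ by the same automorphism $\phi$ of the corresponding lattice set, this functoriality is precisely the asserted equivariance.

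It remains to treat the real structure, which I expect to be the main point. Given a real structure $\tau$ on $(T,F,\B)$, the induced $\tau_p=\tau\otimes\taub$ on $M_{\QQ}(T)$ of Proposition \ref{prop-conjugation-Dieudonne} is $\taub$-semilinear over $W(\bar k)$, is $W(k)$-linear because $\taub$ fixes $K(k)$, squares to the identity, and preserves $M(T)$ (this uses the relation $\tau_p\pi=\pi^{-1}\tau_p$ from the proof of Proposition \ref{prop-conjugation-Dieudonne}: $\Gal$ being procyclic, $\pi^{-1}$-invariance equals $\Gal$-invariance). If $\tau L=L$, then $\tau_p(L\otimes W(\bar k))=\tau(L)\otimes W(\bar k)=L\otimes W(\bar k)$, and intersecting with the $\tau_p$-stable lattice $M(T)$ gives $\tau_p M(L)=M(L)$. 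Conversely, if $\tau_p M(L)=M(L)$, then by semilinearity $\tau_p$ preserves $M(L)\otimes_{W(k)}W(\bar k)$, and the descent identification turns this into $\tau(L)\otimes_{\ZZ_p}W(\bar k)=L\otimes_{\ZZ_p}W(\bar k)$ as $W(\bar k)$-lattices in $T\otimes K(\bar k)$; faithful flatness of $W(\bar k)$ over $\ZZ_p$ then forces $\tau(L)=L$. Finally, part (5) of Proposition \ref{prop-dieudonne-basis} lets one choose $\Phi$ with $\Phi\tau\Phi^{-1}=\tau_0$, so $\Phi\otimes K(\bar k)$ carries $\tau_p$ to $\tau_0\otimes\taub=\bar\tau_0$; hence $M(L)$ is $\tau_p$-stable iff the associated $\Lambda\subset\mathcal J_{\QQ}(\gamma)$ is $\bar\tau_0$-stable.

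The hardest part will be bookkeeping rather than ideas: writing the (twisted) Galois action consistently on all three models $M_{\QQ}(T)$, $\mathcal J_{\QQ}(\gamma)$, $K(k)^{2n}$; checking that $A_q$ (and $A_p$) are scalar enough on $T'$ and $T^{\pp}$ to commute simultaneously with the Galois action and with every self-isogeny; and confirming that the single descent identification $M(L)\otimes_{W(k)}W(\bar k)=L\otimes_{\ZZ_p}W(\bar k)$ is compatible with $\mathcal F$, $\mathcal V$, $\B_p$, and $\tau_p$ all at once.
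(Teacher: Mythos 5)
Your approach matches the paper's in its essentials: pass through the intermediate $W(\bar k)$-lattice $L\otimes W(\bar k)$ sitting inside $T\otimes K(\bar k)$, use the twisted Galois action of Corollary \ref{cor-module-invariants} to characterize $M(L)$ as the invariants, and handle the real structure by observing that $\tau_p=\tau\otimes\bar\tau$ commutes with this picture. Your treatment of the real-structure clause is in fact more explicit than the paper's, which folds it in silently; that part is sound, including the observation that $\tau_p$-stability over $W(k)$ descends to $\tau$-stability over $\ZZ_p$ because the $W(\bar k)$-span of $M(L)$ is $L\otimes W(\bar k)$ and $\tau_p(L\otimes W(\bar k))=\tau(L)\otimes W(\bar k)$.

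The one place you are thin is surjectivity, which is the genuinely nontrivial direction. You establish injectivity (via the inverse $L=(M(L)\otimes_{W(k)}K(\bar k))\cap(T\otimes\QQ_p)$) and you establish that the \emph{conditions} cut out corresponding sets on both sides, but that does not yet show that every $\Lambda\in\mathcal L_p(M_{\QQ}(T),\mathcal F,\B_p)$ arises as $M(L)$. Concretely, given such a $\Lambda$, you need that $\Lambda\otimes_{W(k)}W(\bar k)$ is of the form $L\otimes_{\ZZ_p}W(\bar k)$ for a genuine $\ZZ_p$-lattice $L\subset T\otimes\QQ_p$, i.e.\ that the twisted descent datum on $\Lambda\otimes W(\bar k)$ is effective over $\ZZ_p$. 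The paper proves this by coordinatizing (via $\Phi$ and $B$), writing $\Phi(\bar\Lambda)=\beta\bar\Lambda_0$ for $\beta\in\GSp_{2n}(K(\bar k))$, noting that $\beta^{-1}\sigma(\beta)\in\Sp_{2n}(W(\bar k))$, and using triviality of the relevant cohomology of $\Sp_{2n}(W(\bar k))$ to split it, producing $h\in\GSp_{2n}(\QQ_p)$ with $\bar\Lambda=(hL_0)\otimes W(\bar k)$. Your invocation of ``faithfully flat descent'' for $W(\bar k)/W(k)$ also quietly glosses over the fact that $W(\bar k)$ is the completion of the maximal unramified extension, not a classical Galois extension; the paper's explicit basis/compactness argument (Lemma \ref{lem-justification1} and the basis computation in the proof) is what makes the invariants functor behave as if it were descent. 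Filling in the cohomology/cocycle step is what remains between your outline and a complete proof.
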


\begin{proof} 
\quash{
 The mapping $S$ 
is a one to one correspondences by Lemma \ref{lem-Smith-form}.
Similarly, the mapping $\mathcal S$ is a one to one correspondence.
Given $\Lambda \subset \mathcal J_{\QQ}(\gamma)\subset W(\bar k)^{2n}$ so that $\Lambda^{\vee}
= c\Lambda$, there exists
$\beta\in \GSp_{2n}(K(k))/GSp_{2n}(W(k))$, so that $\Lambda = B\beta \Lambda_0$ (where $\Lambda_0 = K(k)^{2n}$).
Using the same method as that of Lemma \ref{lem-Smith-form}, the conditions $p\Lambda \subset \mathcal F\Lambda
\subset \Lambda$ are seen to be equivalent to the statement that
\[ \beta^{-1} \delta \sigma(\beta) \in G(W(k))A_pG(W(k))\]
where $A_p = \left(\begin{smallmatrix} I & 0 \\ 0 & pI \end{smallmatrix}\right)$.  
}
The proof involves two steps, first associating $L \subset T\otimes\QQ_p$ to $\bar\Lambda := 
L \otimes W(\bar k)\subset T\otimes K(\bar k)$ 
and second, associating $\bar\Lambda \mapsto \Lambda = \Phi(\bar\Lambda) \cap \mathcal J_{\QQ}(\gamma)$.
If $L \subset T\otimes\QQ_p$ satisfies $FL\subset L$, $V L \subset L$ and $L^{\vee}
= cL$ (for some $c \in \QQ_p$) then the same is true for $\bar\Lambda$ as well as the additional fact that
$\sigma \bar\Lambda = \bar\Lambda$.  Conversely, suppose $\bar\Lambda\subset T\otimes K(\bar k)$ is a
$W(\bar k)$-lattice such that 
\begin{equation}\label{eqn-stuff}
\sigma\bar\Lambda = \bar\Lambda,\ \bar\Lambda^{\vee} = c \bar\Lambda,\
\gamma \bar\Lambda\subset\bar\Lambda,\  q\gamma^{-1}\bar\Lambda
\subset\bar\Lambda\end{equation}
(for some $c \in K(\bar k)$), then  $c$ may be chosen to lie in $\QQ_p$ and we may write
$\Phi(\bar\Lambda) = \beta \bar\Lambda_0$ where $\bar\Lambda_0 = W(\bar k)^{2n}$ and $\beta \in 
\GSp_{2n}(K(\bar k))$ has multiplier $c$.  It follows that 
$\beta^{-1}\sigma(\beta) \in \Sp_{2n}(W(\bar k))$ which has trivial Galois cohomology so
$\beta^{-1}\sigma(\beta) = s^{-1} \sigma(s)$ for some $s \in \Sp_{2n}(W(\bar k))$.  Therefore
$h := \beta s^{-1} \in \GSp_{2n}(\QQ_p)$ and $\Phi(\bar\Lambda) = h \bar\Lambda_0 = (hL_0)\otimes W(\bar k)$
where $L_0 = \ZZ_p^{2n}$.  In other words, $\bar\Lambda$ comes from $\Phi^{-1}hL_0\subset T\otimes\QQ_p$.

For the second step, given $\bar\Lambda\subset T\otimes K(\bar k)$ satisfying (\ref{eqn-stuff}) let
$\Lambda = \bar\Lambda^{\Gal}=\bar\Lambda \cap M_{\QQ}(T)$.  The same proof as that in
Proposition \ref{prop-Dieudonne-module} implies that $\Lambda$ is a $W(k)$-lattice that is
preserved by $\mathcal F$ and $\mathcal V$.  Conversely, if $\Lambda \subset M_{\QQ}(T)$
is a $W(k)$-lattice that is preserved by $\mathcal F$ and $\mathcal V$ then 
$\gamma \Lambda \subset \Lambda$ and $q\gamma^{-1}\Lambda \subset\Lambda$ so the same is true
of $\bar\Lambda = \Lambda \otimes W(\bar k)$.
We claim that $\bar\Lambda \cap (M_{\QQ}(T))) = \Lambda$.
Choose a $W(k)$-basis $b_1,b_2,\cdots, b_{2n} \in T\otimes K(\bar k)$ of $\Lambda$.   If
$v=\sum_i s_i b_i \in \bar\Lambda \cap (M_{\QQ}(T)))$ with $s_i \in 
W(\bar k)$ then 
\[
v=\sum_is_ib_i = \gamma^{-1} \sigma^{-a}A_q\sum_i s_ib_i = \sum_i \sigma^{-a}(s_i) \gamma^{-1} A_q
\sigma^{-a}(b_i) = \sum_i \sigma^{-a}(s_i)b_i
\]
which implies that $s_i \in W(k)$. Therefore $v \in \Lambda$.

For the equivariance statement, suppose that $L \in \mathcal L_p(\QQ_p^{2n},\gamma,\B_0)$ is a lattice, say,
$L = gL_0 \subset \QQ_p^{2n}$. Let $\Lambda\subset\mathcal J_{\QQ}(\gamma)$ 
be the corresponding Dieudonn\'e module, say, $\Lambda =  B\beta \Lambda_0$ where $\beta \in \GSp_{2n}(K(k))$.
To show that the mapping $g \mapsto \beta$ is equivariant we must verify the following:
  if $h \in Z_{\gamma}(\QQ_p)$ and if $L' = (hg)L_0=hL$ then the corresponding 
Dieudonn\'e module (or $W(k)$-lattice) is 
\[\Lambda' = B(\mathfrak c_B(h)\beta)\Lambda_0 = B B^{-1}hB\beta\Lambda_0 = h\Lambda.\]
This amounts to the straightforward verification that $h\Lambda = ((hL)\otimes W(\bar k))^{\Gal}$.
\end{proof}

\begin{cor}\label{cor-lattice-count-p}
As in Proposition \ref{lem-Y} let
\[ \mathcal O_{\gamma,p} = \int_{Z_{\gamma}(\QQ_p)\backslash G(\QQ_p)} f_p(y^{-1}\gamma y) dy
= \int_{Z_{\gamma}(\QQ_p)\backslash G(\QQ_p)} k_p(y^{-1} \alpha_p \gamma y) dy\]
where $f_p$ denotes the characteristic function on $G(\QQ_p)$ of 
$K_p \left(\begin{smallmatrix} I & 0 \\ 0 & qI
\end{smallmatrix}\right)K_p$ and $k_p$ denotes the characteristic function of $K_p$.  Let
\[T\mathcal O_{\delta} = \int_{Z'_{\delta}(K(k))\backslash G(K(k))}\phi_p(y^{-1} \delta \sigma(y))dy
= \int_{Z'_{\delta}(K(k))\backslash G(K(k))}\psi_p(y^{-1} u_p \delta \sigma(y)) dy
\] where $\phi_p$ denotes the characteristic function 
on $G(K(k))$ of  $G(W(k)) \left(\begin{smallmatrix} I & 0 \\ 0 & pI \end{smallmatrix}
\right) G(K(k))$ and where $\psi_p$ denotes the characteristic function of $G(W(k))$.  Then
the number of elements in $Z_{\gamma}(\QQ_p)\backslash Y_p$ is
\[ \left|Z_{\gamma}(\QQ_p)\backslash Y_p\right| = \mathcal O_{\gamma,p}
= \vol\left(\mathfrak c_B(Z_{\gamma}(\QQ_p))\backslash Z'_{\delta}(K(k)) \right) T\mathcal O_{\delta}.
\qedhere\]
\end{cor}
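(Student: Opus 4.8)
The plan is to obtain the first equality directly from Proposition~\ref{lem-Y} and Lemma~\ref{lem-Smith-form}, and the second equality by transporting the count through the Dieudonn\'e module by means of Proposition~\ref{cor-group-description} and Lemma~\ref{lem-conj-B}.

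For the equality $\left|Z_\gamma(\QQ_p)\backslash Y_p\right| = \mathcal O_{\gamma,p}$: by construction $Y_p$ is the set displayed in~(\ref{eqn-YpYp}), and with the Haar measure on $G(\QQ_p)$ normalized so that $K_p$ has volume one, counting $Z_\gamma(\QQ_p)$-orbits in $Y_p$ is precisely the computation of the local orbital integral carried out in the proof of Proposition~\ref{lem-Y}; the equality of the two displayed expressions for $\mathcal O_{\gamma,p}$ (with $f_p$, resp.\ with $k_p$) is the equivalence of conditions~(c) and~(d) in Lemma~\ref{lem-Smith-form}.

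For the second equality, Proposition~\ref{cor-group-description} supplies a bijection $Y_p \longrightarrow \mathcal Y_p$ (namely $L \mapsto M(L)$, expressed in the coordinates $S$ and $\mathcal S$ as $g \mapsto \beta$) which is equivariant for the homomorphism $\mathfrak c_B\colon Z_\gamma(\QQ_p) \to Z'_\delta(K(k))$ of~(\ref{eqn-conj-B}). By Lemma~\ref{lem-conj-B} this homomorphism is injective, so it identifies $Z_\gamma(\QQ_p)$ with its image $\mathfrak c_B(Z_\gamma(\QQ_p)) \subseteq Z'_\delta(K(k))$, and the bijection descends to
\[ Z_\gamma(\QQ_p)\backslash Y_p\ \cong\ \mathfrak c_B(Z_\gamma(\QQ_p))\backslash \mathcal Y_p. \]
Now describe $\mathcal Y_p$ by the second formula in~(\ref{eqn-curly-Y2}): it is the set of $\beta \in G(K(k))/G(W(k))$ with $\beta^{-1} u_p \delta \sigma(\beta) \in G(W(k))$. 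Normalizing the Haar measure on $G(K(k))$ so that $G(W(k))$ has volume one, the number of $\mathfrak c_B(Z_\gamma(\QQ_p))$-orbits in $\mathcal Y_p$ equals
\[ \int_{\mathfrak c_B(Z_\gamma(\QQ_p))\backslash G(K(k))} \psi_p\bigl(\beta^{-1} u_p \delta \sigma(\beta)\bigr)\, d\beta. \]
Since $\beta \mapsto \psi_p(\beta^{-1} u_p \delta \sigma(\beta))$ agrees with $\beta \mapsto \phi_p(\beta^{-1}\delta\sigma(\beta))$ — this is the equivalence of the two descriptions~(\ref{eqn-curly-Y1}) and~(\ref{eqn-curly-Y2}) of $\mathcal Y_p$ — the integrand is invariant under left translation of $\beta$ by $Z'_\delta(K(k))$, so the integral factors as $\vol\bigl(\mathfrak c_B(Z_\gamma(\QQ_p))\backslash Z'_\delta(K(k))\bigr)$ times the twisted orbital integral $T\mathcal O_\delta$, which is the asserted formula.

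The main obstacle is the simultaneous normalization of the Haar measures on $Z_\gamma(\QQ_p)$, $Z'_\delta(K(k))$, $G(\QQ_p)$ and $G(K(k))$: one must check that $\mathfrak c_B(Z_\gamma(\QQ_p))$ has finite covolume in $Z'_\delta(K(k))$ and that the measures can be chosen compatibly with the volume-one normalizations of $K_p$ and $G(W(k))$ and with the identification $\mathfrak c_B$, so that the volume factor in the statement is exactly this covolume. This is the $\GSp_{2n}$ analog of the measure comparison used by Kottwitz in relating the ordinary orbital integral at $p$ to a twisted orbital integral; once the measures are pinned down, the remaining ingredients — equivariance of the lattice bijection and the equality of the two forms of $T\mathcal O_\delta$ — are provided by Proposition~\ref{cor-group-description} and the equivalence of~(\ref{eqn-curly-Y1}) with~(\ref{eqn-curly-Y2}).
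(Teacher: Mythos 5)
Your proof is correct and follows exactly the route the paper intends (the paper states the Corollary with a \texttt{\textbackslash qedhere} and no separate proof, leaving it to be read off from Proposition \ref{lem-Y}, Lemma \ref{lem-Smith-form}, Proposition \ref{cor-group-description}, and Lemma \ref{lem-conj-B}, which is precisely how you assemble it). Your closing remark about the compatible normalization of Haar measures and the finite covolume of $\mathfrak c_B(Z_\gamma(\QQ_p))$ in $Z'_\delta(K(k))$ correctly identifies the one point the paper leaves implicit — it is the same measure-matching convention used by Kottwitz — but your argument as written is the paper's argument.
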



\subsection{}  
Continue with the same notation $T,F,\gamma, \B, \Phi, B, \delta$ as in the previous paragraph but
now suppose that $\tau$ is a real structure on the polarized Deligne module $(T,F,\B)$ and let
$\tau_p, \bar\tau_p$ be the corresponding involution on $T\otimes \QQ_p$ and on
$M_{\QQ}(T) \cong \mathcal J_{\QQ}(\gamma)$ as in Proposition \ref{prop-dieudonne-basis}. 
Let  $\mathcal L_p(T\otimes \QQ_p, F, \B,\tau)$ denote the set of $\ZZ_p$-lattices 
$L \subset T \otimes \QQ_p$ such that
\[ L^{\vee} = cL, FL \subset L, VL \subset L, \tau L =L\]
(for some $c\in\QQ_p$).  Using the mapping $\Phi$ we may identify this with the
set $\mathcal L_p(\QQ_p^{2n}, \gamma, \B_0,\tau_0)$ of lattices $L\subset \QQ_p^{2n}$ such
that $L^{\vee} = cL$, $\tau_0L=L$ and $qL \subset \gamma L \subset L$.
Lemma \ref{lem-Smith-form} and Proposition \ref{prop-number-real-lattices} 
give a one to one correspondence
\begin{equation*}\begin{CD}
X_p @>{\cong}>> \mathcal L_p(\QQ_p^{2n}, \gamma, \B_p, \tau_0).
\end{CD}\end{equation*}
The same argument gives a one to one correspondence
\begin{equation*}\begin{CD}
\mathcal X_p @>{\cong}>> \mathcal L_p(\mathcal J_{\QQ}(\gamma), \mathcal F, \B_p, \bar\tau_0)
\end{CD}\end{equation*}
between
\begin{equation}\label{eqn-Zp}\begin{split}
\mathcal X_p := &\left\{ x \in H(K(k))/H(W(k))|\ x^{-1} \delta \sigma(x) \in H(W(k)) A_p H(W(k))\right\}\\
= &\left\{x\in H(K(k))/H(W(k))|\
x^{-1} u_p^{-1} \delta \sigma(x) \in H(W(k))\right\}
\end{split}\end{equation}
and  the set
$\mathcal L_p(\mathcal J_{\QQ}(\gamma), \mathcal F, \B_0,\bar\tau_0)$ of $W(k)$-lattices $\Lambda
\subset \mathcal J_{\QQ}(\gamma)$ such that $\Lambda^{\vee} = c\Lambda$, $\bar\tau_0\Lambda=\Lambda$ 
and $p\Lambda \subset \mathcal F\Lambda \subset \Lambda$.  By Proposition \ref{cor-group-description} 
the vertical arrows in the following diagram are bijective.
\begin{diagram}[size=2em]
\mathcal L_p(T\otimes\QQ_p, F, \B,\tau) &\rTo& \mathcal L_p(\QQ_p^{2n},\gamma,\B_0,\tau_0) 
&\lTo& X_p
\\
\dTo && \dTo && \dTo  \\
\mathcal L_p(M_{\QQ}(T), \mathcal F, \B_p,\tau_p) &\rTo& 
\mathcal L_p(\mathcal J_{\QQ}(\gamma),\mathcal F, \B_0,\bar\tau_0)
& \lTo & \mathcal X_p
\end{diagram}

As in Lemma \ref{lem-conj-B}, conjugation by $B$ induces an injective homomorphism
\[ \mathfrak c_B:S_{\gamma}(\QQ_p) \to S'_{\delta}(K(k))\]
from the centralizer of $\gamma$ in $H(\QQ_p)$ to the twisted centralizer
\begin{equation}
\label{eqn-twisted-centralizer} S'_{\delta}(K(k)) = \left\{ x \in \GL_n^*(K(k))|\
x^{-1} \delta \sigma(x) = \delta \right\}.\end{equation}

\begin{cor}\label{cor-twisted}
Let
\[ I_{\gamma,p} = \int_{S_{\gamma}(\QQ_p)\backslash H(\QQ_p)}
\chi_p(y^{-1} \alpha_q^{-1} \gamma y) dy\]
where $\chi_p$ is the characteristic function of $K_p = G(\ZZ_p)$.  Let
\[ TI_{\delta} = \int_{S'_{\delta}(K(k))\backslash H(K(k))}\kappa_p(z^{-1} u_p \delta 
\sigma(z)) dz\]
where $\kappa_p$ is the characteristic function of $G(W(k))$.  Then the number of elements in
$S_{\gamma}(\QQ_p)\backslash \mathcal X_p$ is
\begin{align*}
\left| S_{\gamma}(\QQ_p) \backslash X_p\right| &= 
\left|S_{\gamma}(\QQ_p)\backslash \mathcal X_p\right|
= I_{\gamma,p} \\
&= \vol(S_{\gamma}(\QQ_p) \backslash S'_{\delta}(K(k)))\cdot TI_{\delta}.
\end{align*}
\end{cor}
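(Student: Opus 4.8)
The plan is to run the proof of Corollary~\ref{cor-lattice-count-p} essentially verbatim, replacing $\GSp_{2n}$ by $H=\GL_n^*$ throughout and the data $(Z_\gamma,Z'_\delta,\mathcal O_{\gamma,p},T\mathcal O_\delta,G(W(k)))$ by their real-structure counterparts $(S_\gamma,S'_\delta,I_{\gamma,p},TI_\delta,H(W(k)))$. The only genuinely new inputs are the $\tau$-equivariant correspondence between $\tau$-stable lattices in $T\otimes\QQ_p$ and $\bar\tau_0$-stable lattices in the Dieudonn\'e module (the last paragraph of Proposition~\ref{cor-group-description}) together with the injective homomorphism $\mathfrak c_B\colon S_\gamma(\QQ_p)\to S'_\delta(K(k))$ recorded just before the statement (the real analogue of Lemma~\ref{lem-conj-B}). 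I would prove the chain of equalities in three pieces.

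First, $|S_\gamma(\QQ_p)\backslash X_p|=|S_\gamma(\QQ_p)\backslash\mathcal X_p|$ is immediate from the rightmost vertical arrow of the diagram preceding the statement: by Proposition~\ref{cor-group-description} this is a bijection $X_p\xrightarrow{\sim}\mathcal X_p$ intertwining the left-translation action of $S_\gamma(\QQ_p)$ on $X_p$ with its action on $\mathcal X_p$ through $\mathfrak c_B$, hence it descends to a bijection of quotients. Second, $|S_\gamma(\QQ_p)\backslash X_p|=I_{\gamma,p}$ is the local-at-$p$ instance of the unfolding already performed in the proof of Proposition~\ref{prop-number-real-lattices}: the function $x\mapsto\chi_p(x^{-1}\alpha_q^{-1}\gamma x)$ on $H(\QQ_p)$ is right $H(\ZZ_p)$-invariant, its support modulo $H(\ZZ_p)$ is exactly $X_p\subset H(\QQ_p)/H(\ZZ_p)$, and integrating it over $S_\gamma(\QQ_p)\backslash H(\QQ_p)$ against the Haar measure giving $H(\ZZ_p)$ volume one counts the $S_\gamma(\QQ_p)$-orbits in $X_p$.

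The substantive step is $I_{\gamma,p}=\vol\!\bigl(\mathfrak c_B(S_\gamma(\QQ_p))\backslash S'_\delta(K(k))\bigr)\cdot TI_\delta$. For this I would use the twisted-conjugacy description of $\mathcal X_p$ in \eqref{eqn-Zp}, which presents $\mathcal X_p$ as the support, modulo $H(W(k))$, of $z\mapsto\kappa_p(z^{-1}u_p\delta\sigma(z))$ inside $H(K(k))/H(W(k))$, a set stable under the twisted centraliser $S'_\delta(K(k))$. Unfolding $TI_\delta$ against the $S'_\delta(K(k))$-action on this set expresses it as a weighted count of $S'_\delta(K(k))$-orbits in $\mathcal X_p$; unfolding instead along the subgroup $\mathfrak c_B(S_\gamma(\QQ_p))$ produces the analogous weighted count, which by the second piece (and the $\mathfrak c_B$-equivariant bijection $X_p\cong\mathcal X_p$) equals $I_{\gamma,p}$. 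Passing between the two weighted counts multiplies by the covolume $\vol\!\bigl(\mathfrak c_B(S_\gamma(\QQ_p))\backslash S'_\delta(K(k))\bigr)$, which is the asserted identity; this is the same manipulation that produces $\mathcal O_{\gamma,p}=\vol(\cdot)\,T\mathcal O_\delta$ in Corollary~\ref{cor-lattice-count-p} for $\GSp_{2n}$, and the bijection of the first piece guarantees that both unfoldings enumerate the same finite set of lattices in $M_\QQ(T)$.

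I do not expect an essentially new obstacle, since all the real-structure content has been isolated beforehand in Propositions~\ref{prop-dieudonne-basis}, \ref{prop-conjugation-Dieudonne} and \ref{cor-group-description}. The point that needs genuine care, identical to the non-real case, is the Haar-measure bookkeeping: fixing compatible measures on $S_\gamma(\QQ_p)$, $S'_\delta(K(k))$, $H(\QQ_p)$ and $H(K(k))$ (the last two normalised on $H(\ZZ_p)$ and $H(W(k))$) and checking that $\vol\!\bigl(\mathfrak c_B(S_\gamma(\QQ_p))\backslash S'_\delta(K(k))\bigr)$ is finite, i.e.\ that the $\QQ_p$-group underlying $S'_\delta$ is an inner form of $S_\gamma$. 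This is the measure comparison underpinning base change, which in the present ordinary situation can be read off directly from the lattice bijection rather than invoked as a black box. A last, harmless discrepancy to reconcile en route is that \eqref{eqn-Zp} is written with $u_p^{-1}$ whereas $TI_\delta$ and \eqref{eqn-curly-Y2} use $u_p$; this is merely the passage between $\mathcal F$-stability and $\mathcal V$-stability of lattices, i.e.\ the identity $A_p^{-1}\mathcal V M=M$ from the lemma preceding Proposition~\ref{cor-group-description}.
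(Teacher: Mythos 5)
Your proposal reconstructs the paper's (implicit) argument faithfully: the corollary is stated without an explicit proof, and the intended reasoning is precisely the real-structure transcription of Corollary~\ref{cor-lattice-count-p} that you carry out, with $\GSp_{2n}$ replaced by $H=\GL_n^*$, $Z_\gamma$ by $S_\gamma$, $Z'_\delta$ by $S'_\delta$, using the $\mathfrak c_B$-equivariant bijection $X_p\cong\mathcal X_p$ from Proposition~\ref{cor-group-description} and the local unfolding from Proposition~\ref{prop-number-real-lattices}. Your observation of the discrepancy between equation~(\ref{eqn-Zp}) (written with $u_p^{-1}$) and the corollary and equation~(\ref{eqn-curly-Y2}) (written with $u_p$) is a genuine inconsistency in the paper, not a gap in your proof; a similitude-multiplier check ($c(u_p)=c(\delta)=p$, so $c(u_p\delta)=p^2$ can never lie in $W(k)^\times$) shows the convention actually needs to be adjusted for the characteristic-function argument to be non-vacuous, so you were right to flag it rather than absorb it silently.
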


\section{The counting formula}\label{sec-statement-of-results}
\subsection{}  Throughout this chapter all polarizations are considered to be
$\Phi_{\varepsilon}$-positive, see \S \ref{subsec-polarizations}.
Fix a finite field $k = \FF_q$ with $q$ elements, and characteristic $p>0$.
Fix an embedding $\varepsilon:W(\overline{k}) \to \CC$.  As described in
\S \ref{sec-ordinary}  and Appendix \ref{appendix-polarizations} this provides an
equivalence of categories between the category of polarized Deligne modules and
the category of polarized ordinary Abelian varieties over $k$.  It also determines
a choice of CM type $\Phi_{\varepsilon}$ on the CM algebra $\QQ[F]$ for every
Deligne module $(T,F)$.  Let $\B_0$ denote
the {\em standard symplectic form} of equation (\ref{eqn-J}), that is,
$\B_0(x,y) = \tr{\!x}Jy\ \text{ where }\
J = \left( \begin{smallmatrix}
0&I\\-I&0\end{smallmatrix}\right)$ and let $G=\GSp_{2n}$ denote the algebraic
group of symplectic similitudes as in Appendix \ref{subsec-preliminaries}.
Let $\AA_f = \AA_f^p \QQ_p$ denote the finite ad\`eles of $\QQ$ as in
\S \ref{subsec-KpKN}.  Let $N$ be a positive integer relatively prime to $p$, or
let $N=1$. Let $\widehat{K}_N = \widehat{K}^p_NK_p$ be the principal congruence subgroup of
$\GSp_{2n}(\AA_f)$, see (\ref{eqn-Khat}). If $\gamma \in \GSp_{2n}(\QQ)$ is semisimple let
\[\mathcal O_{\gamma}^p = \int_{Z_{\gamma}({\AA^p_f})\backslash G(\AA^p_f)} f^p(x^{-1}\gamma x)dx
\ \text{ and }\
 \mathcal O_{\gamma,p} = \int_{Z_{\gamma}(\QQ_p)\backslash G(\QQ_p)} f_p(y^{-1}\gamma y) dy
\]
where  $f^p$ is the characteristic function on $G(\AA^p_f)$ of
$\widehat{K}^p_N$, and $f_p$ is the characteristic function on $G(\QQ_p)$ of
$K_p \left(\begin{smallmatrix} I & 0 \\ 0 & qI \end{smallmatrix} \right)K_p$,
see \S \ref{subsec-KpKN} and Proposition \ref{lem-Y}.

\begin{thm}\cite{Kottwitz, Kottwitz2}  \label{thm-statement1}
The number $A(q)$ of principally polarized ordinary Abelian
varieties with principal level $N$ structure, 
over the field $k = \FF_q$, is finite and is equal to
\begin{align}\label{eqn-statement1}
&\sum_{\gamma_0} \sum_{\gamma\in \mathcal C(\gamma_0)} 
\vol\left(Z_{\gamma}({\QQ})\backslash Z_{\gamma}({\AA_f})\right)\cdot
\mathcal O_{\gamma}^p \cdot \mathcal O_{\gamma,p}\\
\label{eqn-statement1b}
=&\sum_{\gamma_0}\sum_{\gamma \in \mathcal C(\gamma_0)} \vol
\left(Z_{\gamma}(\QQ)\backslash Z_{\gamma}(\AA^p_f)\times Z'_{\delta}(K(k))
\right)\cdot O^p_{\gamma}\cdot T\mathcal O_{\delta} 
\end{align}
\end{thm}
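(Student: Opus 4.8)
The plan is to follow Kottwitz's method, sorting the isomorphism classes first into $\overline{\QQ}$-isogeny classes and then, within each, into $\QQ$-isogeny classes. By Theorem \ref{thm-Deligne} and the theorem of \S\ref{subsec-Howe}, $A(q)$ counts isomorphism classes of principally $\Phi_{\varepsilon}$-positively polarized Deligne modules of rank $2n$ over $\FF_q$ equipped with a principal level $N$ structure. By Proposition \ref{prop-Qbar-conjugacy} these fall into $\overline{\QQ}$-isogeny classes indexed by the ordinary Weil $q$-polynomials of degree $2n$, equivalently by $\GSp_{2n}(\overline{\QQ})$-conjugacy classes of semisimple $\gamma_0\in\GSp_{2n}(\QQ)$ with such characteristic polynomial; fix a representative $\gamma_0$ of each. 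By Proposition \ref{prop-Q-Qbar} the $\QQ$-isogeny classes inside the $\overline{\QQ}$-isogeny class of $\gamma_0$ form the set $\mathcal C(\gamma_0)$ of $\GSp_{2n}(\QQ)$-conjugacy classes of $\gamma$ that are $\GSp_{2n}(\RR)$-conjugate (equivalently $\Sp_{2n}(\RR)$-conjugate) to $\gamma_0$. Finally, Proposition \ref{lem-Y} computes the number of isomorphism classes within the $\QQ$-isogeny class of $(T,\gamma,\B_0)$ as $\vol(Z_\gamma(\QQ)\backslash Z_\gamma(\AA_f))\cdot\mathcal O^p_\gamma\cdot\mathcal O_{\gamma,p}$. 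Summing over $\gamma_0$ and over $\gamma\in\mathcal C(\gamma_0)$ gives \eqref{eqn-statement1}. Finiteness holds because for all but finitely many pairs $(\gamma_0,\gamma)$ the integrand in $\mathcal O^p_\gamma$ or in $\mathcal O_{\gamma,p}$ vanishes identically (there is then no $\gamma$-stable lattice of the required type); this may also be deduced from the classical finiteness of the set of abelian varieties of fixed dimension over a fixed finite field.

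For \eqref{eqn-statement1b} only the factor at $p$ changes. Fix $\gamma\in\mathcal C(\gamma_0)$, take the associated polarized Deligne module $(T,F,\B)$, and let $\Phi,B,\gamma,\alpha_p,\delta,u_p$ be as furnished by Proposition \ref{prop-dieudonne-basis}; thus $\delta\in\GSp_{2n}(K(k))$ is determined up to $\sigma$-conjugacy, its norm $\mathsf N(\delta)=\delta\sigma(\delta)\cdots\sigma^{a-1}(\delta)$ is $\GSp_{2n}(K(\bar k))$-conjugate to $q\gamma^{-1}$, and conjugation by $B$ gives the injection $\mathfrak c_B:Z_\gamma(\QQ_p)\to Z'_\delta(K(k))$ of Lemma \ref{lem-conj-B}. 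Corollary \ref{cor-lattice-count-p} — whose content is the equivariant bijection between $\ZZ_p$-lattices in $T\otimes\QQ_p$ stable under $F,V$ and $W(k)$-lattices in the rational Dieudonné module stable under $\mathcal F,\mathcal V$ (Proposition \ref{cor-group-description}) — gives
\[\mathcal O_{\gamma,p}=\vol\!\left(\mathfrak c_B(Z_\gamma(\QQ_p))\backslash Z'_\delta(K(k))\right)\,T\mathcal O_\delta.\]
Substituting this into \eqref{eqn-statement1} and combining the two volume factors — using that $\mathfrak c_B$ identifies $Z_\gamma(\QQ_p)$ measure-preservingly with its image in $Z'_\delta(K(k))$ for the chosen Haar measures, so that $\vol(Z_\gamma(\QQ)\backslash Z_\gamma(\AA^p_f)\times Z'_\delta(K(k)))=\vol(Z_\gamma(\QQ)\backslash Z_\gamma(\AA_f))\cdot\vol(\mathfrak c_B(Z_\gamma(\QQ_p))\backslash Z'_\delta(K(k)))$ — yields \eqref{eqn-statement1b}.

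Since the substantive work has all been carried out in the preceding sections (Propositions \ref{prop-dieudonne-basis} and \ref{cor-group-description} and Lemma \ref{lem-conj-B}), the main obstacle here is bookkeeping: one must check that the twisted orbital integral $T\mathcal O_\delta$ depends only on the $\QQ$-isogeny class and not on the auxiliary choices of $\Phi$ and $B$ — equivalently that $\delta$ is well defined up to $\sigma$-conjugacy in $\GSp_{2n}(K(k))$ — and that the normalizations of the Haar measures on $Z_\gamma(\QQ_p)$, on $Z'_\delta(K(k))$, and on the ambient groups $G(\QQ_p)$ and $G(K(k))$ are mutually consistent, so that the volume identity above is literally correct. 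Both points are controlled by Proposition \ref{prop-dieudonne-basis} and Corollary \ref{cor-lattice-count-p}.
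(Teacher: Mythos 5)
Your proposal is correct and follows essentially the same route as the paper: Propositions \ref{prop-Qbar-conjugacy}, \ref{prop-Q-Qbar}, \ref{prop-iso-in-isogeny} and \ref{lem-Y} organize the count by $\overline{\QQ}$- and $\QQ$-isogeny classes and give \eqref{eqn-statement1}, while Proposition \ref{prop-dieudonne-basis}, Lemma \ref{lem-conj-B}, Proposition \ref{cor-group-description} and Corollary \ref{cor-lattice-count-p} supply the conversion at $p$ and the volume factor for \eqref{eqn-statement1b}. The one small divergence is the finiteness of nonzero terms in the (a priori infinite) inner sum, which the paper attributes to \cite{StableTrace} Prop.~8.2; your alternative deduction from the classical finiteness of the set of abelian varieties over $\FF_q$ is also valid, since each nonzero summand is a positive integer.
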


\subsection{Explanation and proof} \label{subsec-EandP}
The proof of equation (\ref{eqn-statement1})
follows five remarkable pages (pp.~203-207) in \cite{Kottwitz}.
Roughly speaking the first sum indexes the $\overline{\QQ}$-isogeny classes,
the second sum indexes the $\QQ$-isogeny classes within a given
$\overline{\QQ}$-isogeny class, and the orbital integrals count
the number of isomorphism classes within a given $\QQ$-isogeny class.

 The first sum is over rational representatives $\gamma_0\in \GSp_{2n}(\QQ)$, 
one from each $\GSp_{2n}(\overline{\QQ})$-conjugacy
classes of semisimple elements such that the characteristic polynomial
of $\gamma_0$ is an ordinary  Weil $q$-polynomial (see Appendix \ref{sec-Weil-polynomials}).  
By Proposition \ref{prop-viable}, each element $\gamma_0$ may be chosen to be 
{\em viable} with respect to the CM type $\Phi_{\varepsilon}$ (see \S \ref{subsec-viable}).  

Now let us fix such an element $\gamma_0$.
The second sum is over representatives $\gamma \in \GSp_{2n}(\QQ)$, one from each
$\GSp_{2n}(\QQ)$-conjugacy class, such that \begin{enumerate}
\item $\gamma, \gamma_0$ are $\GSp_{2n}(\overline{\QQ})$-conjugate and
\item $\gamma, \gamma_0$ are $\GSp_{2n}(\RR)$-conjugate. \end{enumerate}
Fix such an element $\gamma$.
As in Proposition \ref{prop-viable}, by conjugating by $\tau_0=\left(
\begin{smallmatrix} -I & 0 \\ 0 & I \end{smallmatrix}\right)$ if necessary, we may assume
that $\gamma, \gamma_0$ are $\GSp_{2n}(\RR)^+$-conjugate, which implies that $\gamma$
is also $\Phi_{\varepsilon}$-viable.

By Proposition \ref{prop-Qbar-conjugacy}, the terms in the first sum correspond to 
$\overline{\QQ}$-isogeny classes of polarized Deligne modules (see \S \ref{sec-isogeny}).
The chosen element $\gamma_0$ corresponds   to 
 some polarized Deligne module.  According to Lemma \ref{subsec-real-standard-form},
we may assume that it has the form $(L_0,\gamma_0, \B_0)$ where $L_0 \subset \QQ^{2n}$
is a lattice such that \begin{enumerate}
\item[(a)] $L_0$ is preserved by $\gamma_0$ and by $q \gamma_0^{-1}$ and
\item[(b)] the standard symplectic form $\B_0$ takes integral values on $L_0$.  
\end{enumerate}
According to Proposition \ref{prop-Q-Qbar} the terms in the second sum correspond to the
$\QQ$-isogeny classes of polarized Deligne modules within the $\overline{\QQ}$-isogeny
class of $(L_0, \gamma_0, \B_0)$.  The chosen element $\gamma$ arises from
some polarized Deligne module, say, $(T, \gamma, \B_0)$ where $T\subset \QQ^{2n}$ is a lattice
that also satisfies (a) and (b) above.  

The set of isomorphism classes of
principally polarized Deligne modules within the $\QQ$-isogeny class of $(T,\gamma,\B_0)$ is
identified, using Proposition \ref{prop-iso-in-isogeny}, with the quotient 
$Z_{\gamma}(\QQ)\backslash Y$ (see also equation \ref{eqn-WY}), where $Z_{\gamma}(\QQ)$ 
is the centralizer of $\gamma$ in $\GSp_{2n}(\QQ)$
and where $Y$ denotes the set of pairs $(\widehat{L},\alpha)$ consisting of a lattice
$\widehat{L} \subset \AA_f^{2n}$ and a level $N$ structure $\alpha$, 
satisfying (\ref{eqn-9.3.1}) and (\ref{eqn-9.3.2}), that is, $\widehat{L}$ 
is a lattice that is {\em symplectic up to homothety} (see \S \ref{subsec-adelic-lattices}) and is preserved by 
$\gamma$ and by $q\gamma^{-1}$, and the
level structure is compatible with $\gamma$ and with the symplectic structure.  
Decomposing the lattice $\widehat{L}$ into its ad\`elic components gives a product 
decomposition $Y \cong Y^p \times Y_p$ as described in Proposition \ref{lem-Y}.
This in turn leads to the product of orbital integrals in equation (\ref{eqn-statement1}).

Although the second sum in (\ref{eqn-statement1}) may have infinitely many terms, only finitely many of the
orbital integrals are non-zero.  This is a consequence of \cite{StableTrace} Prop. 8.2.

The integral $\mathcal O_{\gamma,p}= |Z_{\gamma}(\QQ_p)\backslash Y_p|$ can be expressed 
in terms of the (covariant) Dieudonn\'e module, 
\[M(T) = (T \otimes W(\bar k))^{\Gal},
\] 
(described in Proposition \ref{prop-Dieudonne-module} and Appendix \ref{appendix-Dieudonne}).  
Let $W(k)$ be the ring of Witt vectors of $k$ and let $K(k)$ be its fraction
field.  Recall that $M(T)$ is a free module of rank
$2n$ over $W(k)$, which carries a $\sigma$-linear operator $\mathcal F$ and a 
$\sigma^{-1}$-linear
operator $\mathcal V$ such that $\mathcal F \mathcal V = \mathcal V \mathcal F = p$
where $\sigma \in \Gal(K(k)/K(\FF_p))$ is the canonical lift of the Frobenius
mapping $x \mapsto x^p$ for $x\in k$.  When restricted to $M(T)$ the mapping $F$
factors as $F =\mathcal V^a$ (see \S \ref{subsec-Ffactors}).
Let $M_{\QQ}(T) = M(T)\otimes\QQ_p$ be the associated vector space.  
A choice of basis of $M_{\QQ}(T) \cong K(k)^{2n}$ (see Proposition 
\ref{prop-dieudonne-basis}) determines an element $\delta \in \GSp_{2n}(K(k))$, 
see equation (\ref{eqn-delta}), whose norm $\sf N(\delta)$ is
$G(W(\bar k))$-conjugate to $q\gamma^{-1}$, such that the action of $\mathcal F$
becomes the action of $\delta\sigma$ (cf. \cite{Kottwitz}).

Proposition \ref{cor-group-description} says that the mapping
\[ L_p \mapsto \Lambda = (L_p \otimes W(\bar k))^{\Gal} \subset M_{\QQ}(T)\]
defines a one to one correspondence between the set  of
symplectic (up to homothety) lattices $L_p \subset T\otimes \QQ_p$ that are preserved by
$F$ and $V$ and the set of symplectic (up to homothety) lattices
$\Lambda \subset M_{\QQ}(T)$ that are preserved by 
 $\mathcal F$ and $\mathcal V$.  This defines an isomorphism $Y_p \cong \mathcal Y_p$ where,
as in equation (\ref{eqn-curly-Y1}),
\[\mathcal Y_p = \left\{ \beta \in G(K(k))/G(W(k))|\ 
\beta^{-1} \delta \sigma(\beta) \in G(W(k)) \left(\begin{smallmatrix} I & 0 \\ 0 & pI \end{smallmatrix}
\right) G(W(k)) \right\}.\]
Let $\phi_p$ be the characteristic function on $G(K(k))$ of $G(W(k))
\left(\begin{smallmatrix} I & 0 \\ 0 & pI \end{smallmatrix}\right)
G(W(k))$.  We define
\[T\mathcal O_{\delta} =  | Z'_{\delta} \backslash \mathcal Y_p| = 
\int_{Z'_{\delta}(K(k))\backslash G(K(k))}\phi_p(y^{-1} \delta \sigma(y))dy
\]
where $Z'_{\delta}(K(k))$ denotes the $\sigma$-twisted centralizer of $\delta$ in $G(K(k))$, see
equation (\ref{eqn-twisted-delta}).   Corollary \ref{cor-lattice-count-p}
provides the factor that is needed to convert the orbital integral $\mathcal O_{\gamma,p}$ in
(\ref{eqn-statement1}) into the twisted orbital integral $T\mathcal{O}_{\delta}$ in 
(\ref{eqn-statement1b}), that is,
\[ \left|Z_{\gamma}(\QQ_p)\backslash Y_p\right| = \mathcal O_{\gamma,p}
= \vol\left(\mathfrak c_B(Z_{\gamma}(\QQ_p))\backslash Z'_{\delta}(K(k)) \right) T\mathcal O_{\delta}.
\]

This completes the proof of Theorem \ref{thm-statement1}.  \qed

\subsection{Counting real structures}  
Let $\tau_0$ be the standard involution on $\QQ^n \oplus \QQ^n$ 
(see Appendix \ref{sec-involutions}).
For $g \in \GSp_{2n}$ let $\tilde g = \tau_0 g \tau_0^{-1}$.  Define
$H = \GL_n^*\cong \GL_1 \times\GL_n$ to be the fixed point subgroup of this action, 
as in \S \ref{subsec-q-inversive-conjugacy}.  If $\gamma \in \GSp_{2n}$ let 
\[ S_{\gamma} = \left\{ x \in \GL_n^*|\ x \gamma = \gamma x \right\}.\]
Assume that $p \ne 2$, that the level $N$ is even and not divisible by $p$.
Let $\chi^p$ denote the characteristic function of
$\widehat{K}^p_N$ (see \S \ref{subsec-KpKN}) and let $\chi_p$ denote the characteristic function
of $K_p = \Gsp_{2n}(\ZZ_p)$.

\quash{
Let $I^p_{\gamma,t} = 
\int_{S_{\gamma}(\AA^p_f)\backslash H(\AA^p_f)}f^p_t(x^{-1} \gamma x) dx$
where $f^p_t$ is the characteristic function on $G(\AA^p_f)$ of the coset 
$g \widehat{K}^p_N g^{-1}$.  Definition of $t = g^{-1}\tilde{g}$ and $K^p_N$ etc.
are needed.
}

\begin{thm}  \label{thm-statement2}
The number of isomorphism classes of principally polarized
ordinary Abelian varieties with real structure is finite and is equal to:
\begin{align}\label{eqn-statement2a}
 &\sum_{A_0} \sum_{C} \left|\widehat{H}^1\right|
\vol\left( S_{\gamma}(\QQ) \backslash S_{\gamma}(\AA_f) \right)
\int_{S_{\gamma}(\AA_f)\backslash H(\AA_f)}
\chi^p(x^{-1} \gamma x)
\chi_p(x^{-1} \alpha_q^{-1}\gamma x) dx \\  \label{eqn-statement2b}
=&\sum_{A_0} \sum_{C}  \left|\widehat{H}^1\right|\vol(S_{\gamma}(\QQ) \backslash
(S_{\gamma}(\AA^p_f) \times S'_{\delta}(K(k)) )\cdot I^p_{\gamma} \cdot TI_{\delta}.
\end{align}
\end{thm}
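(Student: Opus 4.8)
\subsection{Explanation and proof}
The plan is to imitate, layer by layer, the three-part argument that proves Theorem~\ref{thm-statement1}, systematically replacing $\GSp_{2n}$ by $H=\GL_n^*\cong\GL_1\times\GL_n$ and the $\GSp_{2n}$-conjugacy statements by their \qreal counterparts. The outer sum will enumerate $\overline{\QQ}$-isogeny classes of $\Phi_{\varepsilon}$-positively polarized Deligne modules with real structure, the inner sum will enumerate the $\QQ$-isogeny classes inside a fixed $\overline{\QQ}$-isogeny class, and the two orbital integrals will count isomorphism classes inside a fixed $\QQ$-isogeny class; the factor $|\widehat{H}^1|$ will record the number of nonabelian cohomology classes that each contribute equally.

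First I would set up the two sums. By Proposition~\ref{prop-Qbar-conjugacy} (the equivalence \ref{S5}$\leftrightarrow$\ref{S2}), the $\overline{\QQ}$-isogeny classes in question correspond bijectively to $\GL_n(\QQ)$-conjugacy classes of semisimple $A_0\in\GL_n(\QQ)$ whose characteristic polynomial is $h(2x)$ for an ordinary real Weil $q$-polynomial $h$ of degree $n$; this is the outer sum $\sum_{A_0}$. Fixing such an $A_0$, Proposition~\ref{prop-viable-pair} supplies a symmetric nonsingular $C_0\in\GL_n(\QQ)$ for which $\gamma_0=\theta(A_0,C_0)$ is $\Phi_{\varepsilon}$-viable, so by Lemma~\ref{subsec-real-standard-form} the basepoint of this $\overline{\QQ}$-isogeny class may be taken to be $(L_0,\gamma_0,\B_0,\tau_0)$ with $L_0\subset\QQ^{2n}$ a lattice preserved by $\tau_0$, $\gamma_0$ and $q\gamma_0^{-1}$ on which $\B_0$ is integral. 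Proposition~\ref{prop-real-Q-isogeny} then identifies the $\QQ$-isogeny classes inside this $\overline{\QQ}$-isogeny class with a set of congruence classes of matrices $C$ (equivalently $B$, via $BC=A_0^2-qI$) that are symmetric, nonsingular, satisfy $A_0C=C\,{}^{t}A_0$, and have $\sig(C;A_0)=\sig(C_0;A_0)$; this is the inner sum $\sum_C$. For each such $C$ the element $\gamma=\theta(A_0,C)\in\GSp_{2n}(\QQ)$ is \qreal and I set $S_\gamma:=Z_{\GL_n^*}(\gamma)$, as in the statement.

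Next I would count isomorphism classes inside the $\QQ$-isogeny class of $(L_0,\gamma,\B_0,\tau_0)$. By equation~(\ref{eqn-main-quotient}) together with Proposition~\ref{prop-iso-in-isogeny} this count is $|S_\gamma(\QQ)\backslash X|$, where $X$ is the set of objects of $\widehat{\mathcal L}_N(\AA_f^{2n},\gamma,\B_0,\tau_0)$. Because $N$ is even and $p\ne2$, Proposition~\ref{prop-cohomology-lattice} breaks $X$ into $|\widehat{H}^1|$ orbits $X_{[t]}$ of $H(\AA_f)$, and Proposition~\ref{prop-trivial-cohomology} shows these orbits contribute equally; Proposition~\ref{prop-number-real-lattices} evaluates the common count as $\vol(S_\gamma(\QQ)\backslash S_\gamma(\AA_f))\cdot I^p_\gamma\cdot I_{\gamma,p}$. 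Summing over the $|\widehat{H}^1|$ cohomology classes, then over $C$, then over $A_0$, and noting that $H(\AA_f)=H(\AA_f^p)\times H(\QQ_p)$ so the integral over $S_\gamma(\AA_f)\backslash H(\AA_f)$ of $\chi^p(x^{-1}\gamma x)\chi_p(x^{-1}\alpha_q^{-1}\gamma x)$ factors as $I^p_\gamma\cdot I_{\gamma,p}$, yields equation~(\ref{eqn-statement2a}). To pass to~(\ref{eqn-statement2b}) I would apply Corollary~\ref{cor-twisted}, which rewrites $I_{\gamma,p}=\vol(S_\gamma(\QQ_p)\backslash S'_\delta(K(k)))\cdot TI_\delta$; absorbing this local volume into the global one via $S_\gamma(\AA_f)=S_\gamma(\AA_f^p)\times S_\gamma(\QQ_p)$ produces the twisted form. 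Finiteness of the whole expression is Theorem~\ref{prop-finite-isomorphism}; concretely the outer sum is finite since ordinary real Weil $q$-polynomials of degree $n$ have bounded coefficients, and only finitely many inner terms are nonzero because $I^p_\gamma$ vanishes for all but finitely many $C$, by the analogue for $H=\GL_n^*$ of \cite{StableTrace} Prop.~8.2.

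The step I expect to be the main obstacle is verifying that the combinatorial index set $\{(A_0,C)\}$ enumerates \emph{exactly} the $\Phi_{\varepsilon}$-positively polarized $\QQ$-isogeny classes, with no extra or missing classes: one must check that the basepoint produced by Proposition~\ref{prop-viable-pair} is genuinely $\Phi_{\varepsilon}$-positive and that the signature condition $\sig(C;A_0)=\sig(C_0;A_0)$ is both necessary and sufficient for $\theta(A_0,C)$ to be $\Phi_{\varepsilon}$-viable — i.e.\ the careful interplay between the real-structure constraint, the positivity condition~(3) of \S\ref{subsec-polarizations}, and the $\GL_n^*$- versus $\Sp_{2n}$-conjugacy comparison of Proposition~\ref{prop-q-conjugacy}. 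A secondary technical point, handled inside Corollary~\ref{cor-twisted} rather than here, is the convergence of the twisted orbital integral $TI_\delta$ over $H(K(k))$ together with the matching of Haar-measure normalizations across the adelic factorization.
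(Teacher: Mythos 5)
Your proposal follows the paper's own argument essentially step-for-step: the same parametrization of the two sums via Propositions~\ref{prop-Qbar-conjugacy}, \ref{prop-viable-pair}, and \ref{prop-real-Q-isogeny}; the same reduction to counting pairs $(\widehat L,\alpha)$ by equation~(\ref{eqn-main-quotient}) and Proposition~\ref{prop-iso-in-isogeny}; the same splitting into $|\widehat{H}^1|$ equal contributions via Propositions~\ref{prop-cohomology-lattice} and~\ref{prop-trivial-cohomology} and evaluation via Proposition~\ref{prop-number-real-lattices}; and the same passage to~(\ref{eqn-statement2b}) through Corollary~\ref{cor-twisted}. The only minor departure is your justification that the inner sum has finitely many nonzero terms: you invoke a $\GL_n^*$-analogue of the boundedness estimate from~\cite{StableTrace} Prop.~8.2, whereas the paper appeals directly to Theorem~\ref{prop-finite-isomorphism} (itself proved via Borel's finiteness theorem in Appendix~\ref{sec-finiteness}); both routes are valid.
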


\subsection{Explanation and proof}
As in Theorem \ref{thm-statement1} the first sum indexes the $\overline{\QQ}$-isogeny
classes, the second sum indexes $\QQ$-isogeny classes within a given 
$\overline{\QQ}$-isogeny class, and the orbital integrals count the  number of
isomorphism classes within a $\QQ$-isogeny class.

 The first sum is over representatives, one from each
$\GL_{n}({\QQ})$-conjugacy class (which is the same as the $\GL_{n}(\overline\QQ)$ conjugacy class)
of semisimple elements $A_0 \in \GL_{n}(\QQ)$ whose
characteristic polynomial $h(x)=b_0 + b_1x + \cdots + x^n\in \ZZ[x]$ satisfies
(see Appendix \ref{sec-Weil-polynomials})
\begin{enumerate}
\item[(h1)] $b_0\ne 0$ and $p \nmid b_0$ 
\item[(h2)] the roots $\beta_1,\beta_2,\cdots,\beta_n$ of $h$ are totally real and
$|\beta_i| <  \sqrt{q}$ for $1 \le i \le n$. 
\end{enumerate}
By Proposition \ref{prop-Qbar-conjugacy} the terms in this sum correspond to $\overline{\QQ}$-isogeny classes
of polarized Deligne modules with real structure.



Fix such an element $A_0 \in \GL_n(\QQ)$.  By Proposition \ref{prop-viable-pair}
there exist $B_0,C_0$ so that the element 
\[\gamma_0 = \left( \begin{smallmatrix} A_0 & B_0 \\ C_0 & \tr{\!A}_0 \end{smallmatrix}
\right) \in \GSp_{2n}(\QQ)\] 
is $q$-inversive (\S \ref{sec-q-real})   and viable (\S \ref{subsec-viable}) with
respect to the CM type $\Phi_{\varepsilon}$.  It corresponds to some polarized Deligne module
with real structure which (by Lemma \ref{subsec-real-standard-form}) may be taken to be of the form
 $(T_0, \gamma_0, \B_0, \tau_0)$
where $T_0 \subset \QQ^{2n}$ is a lattice that is preserved by $\tau_0$ and by $\gamma_0$. 
The second sum in (\ref{eqn-statement2a}) is over representatives $C\in \GL_n(\QQ)$, one from
each $Z_{{\rm GL_n}(\QQ)}(A_0)$-congruence class (\S \ref{subsec-q-inversive-conjugacy})
of matrices such that \begin{enumerate}
\item $C$ is symmetric and nonsingular
\item $A_0C=C\tr{\!A}_0$
\item  $\sig(A_0;C) = \sig(A_0;C_0)$ (cf.~\S \ref{subsec-q-inversive-conjugacy}).
\end{enumerate}
According to Proposition \ref{prop-real-Q-isogeny}, the elements in this sum 
correspond to $\QQ$-isogeny classes
of polarized Deligne modules with real structure that are in the same $\overline{\QQ}$-isogeny
class as $(T_0, \gamma_0, \B_0, \tau_0)$.  Let us fix such an element $C$ and let
 $\gamma= \left(\begin{smallmatrix}
A_0 & B \\ C & \tr{\!A} \end{smallmatrix} \right)$ be the corresponding element from Proposition 
\ref{prop-real-Q-isogeny} (where $B = (A_0^2-qI)C^{-1}$).  Then $\gamma$ is $q$-inversive and viable
and it corresponds to some polarized Deligne module with real structure,
say $(T,\gamma,\B_0,\tau_0)$ which we will use as a ``basepoint" in the 
${\QQ}$-isogeny class determined by $A_0,B$. 

(In fact, the first two sums may be replaced by a single sum over $\GL_n(\QQ)$-conjugacy
classes of semisimple elements $\gamma \in \GSp_{2n}(\QQ)$ that are $q$-inversive, whose characteristic
polynomial is an ordinary Weil $q$-polynomial, and that are $\Phi_{\varepsilon}$-viable.)

According to Proposition \ref{prop-iso-in-isogeny} the isomorphism classes of principally
polarized Deligne modules with real structure and level $N$ structure that are $\QQ$-isogenous
to $(T, \gamma, \B_0, \tau_0)$ correspond to isomorphism classes of pairs $(\widehat{L},\alpha)$ 
(consisting of a lattice $\widehat{L}\subset \AA_f^{2n}$ and a level structure) that 
satisfy (\ref{eqn-9.3.0}), (\ref{eqn-9.3.1}) and (\ref{eqn-9.3.2}).  
In Proposition \ref{prop-cohomology-lattice} these lattices are divided into
cohomology classes 
\[ [t] \in \widehat{H}^1 = 
H^1(\langle\tau_0\rangle, \widehat{K}_N^0)-.\]
Each cohomology class provides the same contribution, which accounts for the
factor of $|\widehat{H}^1|$.   The number of isomorphism
classes of pairs $(\widehat{L},\alpha)$ corresponding to each cohomology class is proven,
in Proposition \ref{prop-number-real-lattices}, to equal the value of the orbital integral in 
equation (\ref{eqn-statement2a}).
This completes the proof of equation (\ref{eqn-statement2a}).

The second sum in equation (\ref{eqn-statement2a}) (that is, the sum over $B$) may have
infinitely many terms.  However it follows from Theorem \ref{prop-finite-isomorphism} that only finitely
many of those terms are non-zero.

For equation (\ref{eqn-statement2b}) a few more words of explanation are needed.
As in the previous paragraph we fix a polarized Deligne module with real structure
and level structure, $(T,\gamma, \B_0,\tau_0)$, which we consider to be a ``basepoint"
within its $\QQ$-isogeny class. As in the previous section, each cohomology class
$t \in \widehat{H}^1$ provides the same contribution, which is given in
Proposition  \ref{prop-number-real-lattices}, namely, the integral
in (\ref{eqn-statement2a}) is described as a product of an integral away from $p$,
\[ I^p_{\gamma} = \int_{S_{\gamma}(\AA^p_f)\backslash H(\AA^p_f)} \chi^p(x^{-1} \gamma x) dx\]
and an integral at $p$,
\[ I_{\gamma,p} = \int_{S_{\gamma}(\QQ_p)\backslash H(\QQ_p)}
\chi_p(y^{-1} \alpha_q^{-1} \gamma y) dy.\]
The integral $I_{\gamma,p}$ ``counts" lattices in $\QQ_p^{2n}$ that are preserved by $\tau_0$,
and by $\gamma$ and $q \gamma^{-1}$.  We wish to express this count in terms of lattices in
the Dieudonn\`e module $M(T)$. 
 
 The involution $\tau_0$ does not automatically induce
an involution on the Dieudonn\`e module $M(T)$.  Rather, it is first necessary to
make a universal choice of continuous involution $\bar\tau$ on $W(\bar k)$  so that 
$\bar\tau \sigma^a(w) = \sigma^{-a}(\bar\tau w)$ (see Proposition \ref{prop-tau-Witt}).
The resulting involution $\tau_p = \tau_0 \otimes \bar\tau$ preserves the Dieudonn\`e module
and it exchanges the actions of $\mathcal F^a$ and $\mathcal V^a$ (see 
Proposition \ref{prop-dieudonne-basis}). Proposition \ref{cor-group-description} states
that $\ZZ_p$-Lattices in $\QQ_p^{2n}$ that are preserved 
by $\tau_0$, by $\gamma$ and by $q\gamma^{-1}$ correspond to $W(k)$-lattices in 
$K(k)^{2n}$ that are preserved by $\bar\tau\otimes\tau_0$, by $\mathcal F$ and by $\mathcal V$. 

As in \S \ref{subsec-EandP} the action of $\mathcal F$ is given by 
$\delta\sigma$ where $\delta \in \GSp_{2n}(K(k))$.
Then, in Corollary \ref{cor-twisted}, it is shown that 
\[I_{\gamma,p}= \vol(S_{\gamma}(\QQ_p) \backslash S'_{\delta}(K(k)))\cdot TI_{\delta} \]
where $TI_{\delta}$ is the twisted orbital integral
\[ TI_{\delta} = \int_{S'_{\delta}(K(k))\backslash H(K(k))}
\kappa_p(z^{-1} u_p \delta \sigma(z))dz.\]
Here, $S'_{\delta}$ is the twisted centralizer of $\delta$, cf. (\ref{eqn-twisted-centralizer}),
 and $\kappa_p$ is the characteristic function of $G(W(k))$, see Corollary \ref{cor-twisted}.  
This completes the proof of equation (\ref{eqn-statement2b}).

\subsection{Counting totally real lattice modules}  Let $q = p^r$ and fix $n \ge 1$.
Fix $N \ge 1$ not divisible by $p$.
Let $f$ be the characteristic function of the principal congruence subgroup
$\widehat{K}_N$ in $\GL_n(\AA_f)$.  Using arguments that are similar (but simpler)
than those in the preceding sections, we find that
{\em the number of isomorphism classes of totally real lattice modules of rank $n$ and
characteristic $q$ and level $N$ is equal to
\[ \sum_{\{A\}} \int_{Z_A(\AA_f)\backslash GL_n(\AA_f)} f(x^{-1} A x) dx\]
where the sum is taken over all $\GL_n(\QQ)$-conjugacy classes of semisimple
elements $A$ whose characteristic polynomial is a real Weil $q$-polynomial.}
For $n=1$ and $N=1$ if $\sqrt{q}$ is an integer, this number is $2(\sqrt{q}-1)$, so
the resulting zeta function is rational and is equal to the following,
\[Z(T) = \left(\frac{1-T}{1-\sqrt{q}T}\right)^2.\]

\section{Further questions}
\subsection{}
We do not know whether the count of the number of ``real" polarized Deligne modules has
a rational zeta-function interpretation.

\subsection{}
We do not know whether the notion of an anti-holomorphic involution makes sense for
general Abelian varieties over $\FF_q$.  It is not clear what to do, even in the case
of supersingular elliptic curves.

\subsection{}
In \cite{GT1} the authors showed that certain arithmetic hyperbolic 3-manifolds (and more
generally, certain arithmetic quotients of quaternionic Siegel space) can be viewed as
paramet\-rizing Abelian varieties with anti-holomorphic multiplication by the integers 
$\mathcal O_d$ in a quadratic imaginary number field.  It should be possible to 
mimic these constructions using Deligne
modules.  Define an anti-holomorphic multiplication on a Deligne module $(T,F)$
by an order $\mathcal O$ in a CM field $E$  to be a homomorphism
$\mathcal O \to \End(T)$ such that each purely imaginary element $u \in \mathcal O$ acts
in an anti-holomorphic manner, that is, $uF = Vu$.
One could then attempt to count the number of isomorphism classes of principally
polarized Deligne modules with level structure and with
anti-holomorphic multiplication by $\mathcal O$.

\appendix{}

\section{Weil polynomials and a real counterpart}\label{sec-Weil-polynomials}
\subsection{}\label{subsec-realpoly}
Let $\pi$ be an algebraic integer.  It is {\em totally real} if $\rho(\pi) \in \RR$
for every embedding $\rho:\QQ(\pi) \to \CC.$  It is a {\em Weil $q$-integer} if
$|\rho(\pi)|^2 = q$ for every embedding $\rho:\QQ(\pi) \to \CC.$  (In this case
the field $\QQ(\pi)$ is either a CM field, which is the usual case,
or it is $\QQ(\sqrt{q}),$ the latter case occurring iff $\pi = \pm \sqrt{q}.$)
A monic polynomial $p(x) \in \ZZ[x]$ is {\em totally real} if all of its roots are totally real
algebraic integers.  A {\em Weil $q$-polynomial} is a monic polynomial $p(x) \in \ZZ[x]$
of even degree, all of whose roots are Weil $q$-integers.  Let us say that a Weil $q$-polynomial
$p(x) = \sum_{i=0}^{2n}a_ix^i$ is {\em ordinary} if the middle coefficient $a_n$ is nonzero and is
coprime to $q.$  This implies that half of its roots are $p$-adic units and half of its roots
are divisible by $p$; also that $x^2 \pm q$ is not a factor of $p(x)$, hence $p(x)$ has
no roots in the set $\left\{ \pm \sqrt{q}, \pm \sqrt{-q}\right\}$.
A {\em real} (resp.~{\em real ordinary}) Weil $q$-polynomial of degree $n$ is a 
monic polynomial $h(x)$ such that
the polynomial $p(x) = x^n h(x+q/x)$ is a Weil $q$-polynomial (resp.~an ordinary Weil $q$-polynomial) 
(see also \cite{Howe_Lauter1, Howe_Lauter2}).
  The characteristic polynomial of Frobenius associated to an
Abelian variety $B$ of dimension $n$ defined over the field $\FF_q$ is a Weil
$q$-polynomial.  It is ordinary if and only if the variety $B$ is ordinary,
cf. \S \ref{sec-ordinary}.

\subsection{Real counterpart}\label{subsec-real-counterpart}
Let $q\in \QQ.$  Let us say that a monic polynomial $p(x) = x^{2n} + a_{2n-1}x^{2n-1} +
\cdots + a_0\in \CC[x]$ is {\em $q$-palindromic} if it has even degree and
if $a_{n-r} = q^ra_{n+r}$ for $1 \le r \le n,$ or equivalently if
\[ q^{-n} x^{2n} p\left(\frac{q}{x}\right) = p(x).\]
Thus $p(x)$ is $q$-palindromic iff the following holds:  for every root $\pi$ of $p(x)$
the number $q\pi^{-1}$ is also a root of $p(x)$, and if $\pi$ is a real root of $p(x)$
then $\pi = \pm \sqrt{q}$ and its multiplicity is even.
It is easy to see that every Weil $q$-polynomial is $q$-palindromic but the converse is not
generally true. Let
\[p(x) = \prod_{j=1}^n\left(x-\alpha_j\right)\left(x-\frac{q}{\alpha_j}\right)
= \sum_{i=0}^{2n}a_ix^i\]
be a $q$-palindromic polynomial with no real roots.  Define the {\em associated real
counterpart} 
\[ h(x) = \prod_{j=1}^n\left(x-\left(\alpha_j+\frac{q}{\alpha_j}\right)\right)
= \sum_{i=0}^n b_ix^i\]
or equivalently, $p(x) = x^nh(x + q/x)$.  If $h(x)\in\ZZ[x]$ has integer coefficients then the 
same is true of $p(x)$.  The following proposition gives a converse to this statement.

\begin{prop}\label{prop-h(x)}  Fix $n,q \in \ZZ$ with $n>0$ and $q>0.$
There exists a universal $(n+1) \times (2n+1)$ integer matrix $A$ and a
universal $(n+1) \times (n+1)$ integer matrix $B$ with the following
property. For every
$q$-palindromic polynomial $p(x) = \sum_{t=0}^{2n} a_tx^t\in \CC[x]$ with no real roots,
if $h(x) = \sum_{k=0}^n b_kx^k$
is the associated real counterpart, then for all $i,$ $0 \le i \le 2n$ we have:
\[a_t =  \sum_{k=0}^n A_{tk}b_k \text{ and }\
b_k = \sum_{s=0}^{n}B_{ks}a_{n+s}.\]
In particular, $p(x)$ has integer coefficients iff $h(x)$ has integer coefficients.  Moreover,
\begin{enumerate}
\item A $q$-palindromic polynomial $p(x)\in \ZZ[x]$ of even degree
is a Weil $q$-polynomial if and only if
the corresponding polynomial $h(x)$ is totally real.
\item A totally real polynomial $h(x) \in \ZZ[x]$ is the real counterpart to a Weil $q$-polynomial
$p(x)$ with no real roots if and only if the roots $\beta_1,\beta_2,\cdots, \beta_n \in \RR$ of
$h(x)$ satisfy $|\beta_i| < 2\sqrt{q}$ for $i = 1,2,\cdots,n.$
\item A Weil $q$-polynomial $p(x)\in \ZZ[x]$ is
ordinary if and only if the constant coefficient $h(0)=b_0$ of the real counterpart
is nonzero and is coprime to $q$.  In this case, $p(x)$ is irreducible over $\QQ$ if and only
if $h(x)$ is irreducible over $\QQ$.
\end{enumerate}
\end{prop}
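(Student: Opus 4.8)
The plan is to derive the whole proposition from the single substitution identity $p(x)=x^nh(x+q/x)$, read first on coefficients and then on roots. First I would prove the two matrix relations. Expanding
\[ p(x)=\sum_{k=0}^n b_k\,x^{n-k}(x^2+q)^k=\sum_{k=0}^n b_k\sum_{j=0}^k\binom{k}{j}q^{k-j}x^{\,n-k+2j} \]
and using that $q\in\ZZ$, so each $x^{n-k}(x^2+q)^k$ lies in $\ZZ[x]$, one reads off $a_t=\sum_k A_{tk}b_k$ with $A_{tk}=\binom{k}{(k+t-n)/2}q^{(k+n-t)/2}$ whenever that index is an integer with $n-k\le t\le n+k$, and $A_{tk}=0$ otherwise; this $A$ depends only on $n,q$. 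For the reverse direction I would look only at the ``top half'' coefficients: the same expansion gives $a_{n+s}=b_s+\sum_{k>s}\binom{k}{(k+s)/2}q^{(k-s)/2}b_k$, so the $\ZZ$-linear map $(b_0,\dots,b_n)\mapsto(a_n,\dots,a_{2n})$ is upper unitriangular, hence invertible over $\ZZ$; its inverse is the universal integer matrix $B$ with $b_k=\sum_{s=0}^n B_{ks}a_{n+s}$. The equivalence ``$p\in\ZZ[x]\iff h\in\ZZ[x]$'' is then immediate from $A$ and $B$.

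For items (1) and (2) I would use the root correspondence $\{\alpha_j,q/\alpha_j\}\leftrightarrow\beta_j=\alpha_j+q/\alpha_j$ together with two elementary facts: (i) for $\alpha\in\CC^\times$ not real, $\alpha+q/\alpha\in\RR$ iff $|\alpha|=\sqrt q$ (equivalently $q/\alpha=\bar\alpha$); and (ii) for real $x\ne0$, $x+q/x$ ranges over exactly $(-\infty,-2\sqrt q]\cup[2\sqrt q,\infty)$. Since $p,h\in\ZZ[x]$ their complex roots come in full Galois orbits, so for $h$ the property ``totally real'' coincides with ``all complex roots real''. Under the standing hypothesis that $p$ has no real roots, fact (ii) forces $|\beta_j|<2\sqrt q$ for all $j$, and then fact (i), applied to the non-real $\alpha_j$, gives at once: $p$ is a Weil $q$-polynomial $\iff$ every $\beta_j$ is real $\iff$ $h$ is totally real, which is (1). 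For (2), starting from a totally real $h\in\ZZ[x]$ I would set $p(x)=x^nh(x+q/x)\in\ZZ[x]$ (integrality by the first step), note it is $q$-palindromic of degree $2n$, and observe via (ii) that $p$ has no real root iff $|\beta_i|<2\sqrt q$ for all $i$; together with the argument for (1) this is exactly the assertion that $h$ is the real counterpart of a Weil $q$-polynomial with no real roots precisely when every $|\beta_i|<2\sqrt q$.

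For (3), reducing the expansion above modulo $q$ kills every term with $k\ge2$ (each carries a factor $q^{k/2}$), leaving $a_n\equiv b_0\pmod q$; hence $\gcd(a_n,q)=\gcd(b_0,q)$, and since $q>1$ the condition ``$a_n\ne0$ and coprime to $q$'' is equivalent to ``$b_0\ne0$ and coprime to $q$'', which is the ordinariness statement. For the irreducibility claim (now $p$ is ordinary, so it has no roots $\pm\sqrt q$ and each $\alpha_j\ne q/\alpha_j$ is non-real) I would show that, inside the Galois group of a splitting field of $p$ over $\QQ$, transitivity on $\{\alpha_j,q/\alpha_j\}_j$ and transitivity on $\{\beta_j\}_j$ are equivalent: one implication is trivial since $\sigma\mapsto\sigma(\alpha)+q/\sigma(\alpha)$ carries roots of $p$ to roots of $h$; for the converse, transitivity on the $\beta_j$ moves any $\alpha_i$ into the pair over a prescribed $\beta_j$, and complex conjugation — which lies in the Galois group and sends $\alpha_j$ to $\bar\alpha_j=q/\alpha_j$ — then interchanges the two members of that pair.

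I expect the bookkeeping for $A$ and $B$, and the check that they are genuinely universal, to be routine. The delicate points are: invoking the no-real-root hypothesis in (1)–(2) at the right moment, so that ``$h$ totally real'' really does come paired with the bound $|\beta_i|<2\sqrt q$ (this is what keeps (1) and (2) from failing for totally real $h$ with a large root); and in (3), the use of complex conjugation for the reverse transfer of irreducibility, which relies on ordinariness to ensure each $\alpha_j$ is non-real so that conjugation is nontrivial on the pair.
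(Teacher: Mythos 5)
Your proof is correct. For the universal matrices and items (1)--(2) you follow essentially the same path as the paper: expand $p(x)=x^n h(x+q/x)$ coefficientwise to read off $A$, observe that $a_{n+s}=b_s+(\text{terms in }b_k,\,k>s)$ makes the top half of the relation unitriangular with integer inverse $B$, and trade the root pair $\{\alpha_j,q/\alpha_j\}$ for $\beta_j=\alpha_j+q/\alpha_j$ via the discriminant of $x^2-\beta_j x+q$. Where you genuinely diverge from the paper is in item (3). For ordinariness, you note $a_n\equiv b_0\pmod q$ (for $t=n$ only even $k$ contribute and each $k\ge 2$ carries a factor $q^{k/2}$), whence $\gcd(a_n,q)=\gcd(b_0,q)$; this is a short, self-contained congruence argument. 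The paper instead re-expresses ordinariness as ``exactly one of each pair $\alpha_i, q/\alpha_i$ is a $p$-adic unit,'' passes to $p$-adic absolute values of the $\beta_i$ and of $b_0=\pm\prod\beta_i$, and has to cite Howe and Deligne for the converse direction (that ``at least one unit in each pair'' forces ``exactly one''). Your route avoids that citation entirely. For the irreducibility claim, the paper offers only the one-liner ``follows from the formula $p(x)=x^n h(x+q/x)$''; you spell out the actual argument: transitivity of the Galois group on $\{\beta_j\}$ is equivalent to transitivity on $\{\alpha_j,q/\alpha_j\}$, with complex conjugation $\alpha_j\mapsto q/\alpha_j$ (valid because $|\alpha_j|=\sqrt q$) bridging the two roots above a given $\beta_j$. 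That is precisely the observation the one-liner suppresses, and your version makes explicit that it is the Weil condition, via complex conjugation, which forces the irreducible factors of $p$ over $\QQ$ to be $q$-palindromic.
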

\begin{proof}
Let $p(x)=\sum_{k=0}^{2n} a_kx^k\in \CC[x]$ be a $q$-palindromic polynomial with roots
\[\left\{\alpha_1, \frac{q}{\alpha_1},\cdots, \alpha_n, \frac{q}{\alpha_n}\right\}.\]
The real counterpart is
$ h(x)=\sum_{j=0}^n b_jx^j = \prod_{i=1}^n(x - \beta_i)$ where
$\beta_i = \alpha_i + \frac{q}{\alpha_i},$  hence
\begin{align*} p(x) &= x^nh\left(x + \frac{q}{x}\right)\\
&= \sum_{j=0}^nb_j\ \sum_{t=0}^j\binom{j}{t}q^{j-t}x^{n-j+2t}.
\end{align*}
Set $r = n-j+2t.$  Then $n-j \le r \le n+j$ and $r-(n-j)$ is even, hence
\[ p(x)=\sum_{r=0}^{2n}a_rx^r = \sum_{j=0}^n \ 
\sum_{r=n-j}^{n+j}A_{rj}b_jx^r
\]
where
\[ A_{rj} = \binom{j}{\frac{r+j-n}{2}}q^{\frac{n-r+j}{2}}\]
provided that $r+j-n$ is even and that $n-j \le r \le n+j$, and
$A_{rj}= 0$ otherwise.

In particular, $A_{n+s,s} = 1$ for all $0 \le s \le n,$ and the lower half of the
matrix $A$ is nonsingular with determinant equal to 1.  We may take $B$ to be the
inverse of the lower half of $A.$
For $n=4$ we have: \renewcommand{\t}{{\scriptscriptstyle{\cdot}}}
\[A =\left[\begin{matrix}
\t&\t&\t&\t&q^4\\
\t&\t&\t&q^3&\t\\
\t&\t&q^2&\t&4q^3\\
\t&q&\t&3q^2&\t&\\
1&\t&2q&\t&6q^2\\
\t&1&\t&3q&\t\\
\t&\t&1&\t&4q\\
\t&\t&\t&1&\t\\
\t&\t&\t&\t&1\end{matrix}\right]\]
This proves the first part of the Proposition.

To verify statement (1) let $p(x)$ be a Weil $q$-polynomial.  If it has any real roots
then they must be of the form $\alpha = \pm \sqrt{q}$ so $\alpha + q/\alpha = \pm 2\sqrt{q}$
which is real.  Every pair $\{\alpha, q/\alpha\}$ of complex roots are necessarily
complex conjugate hence  $\beta=\alpha + q/\alpha$ is real.  Since
$h(x)$ has integer coefficients this implies that every Galois conjugate of $\beta$ is
also real, hence $h(x)$ is a totally real polynomial.  Conversely, given $p(x)$, if the
associated polynomial $h(x)$ is totally real then for each root $\beta = \alpha + q/\alpha$
of $h(x)$, the corresponding pair of roots $\{\alpha, q/\alpha\}$ are both real or else they
are complex conjugate, and if they are real then they are both equal to $\pm \sqrt{q}$.  This
implies that $p(x)$ is a Weil $q$-polynomial.

For part (2) of the proposition, each root $\beta_i\in \RR$ of $h(x)$ is a sum $\beta_i = \alpha_i +
q/\alpha_i$ of complex conjugate roots of $p(x).$  Hence $\alpha_i$ and $q/\alpha_i$ are the
two roots of the quadratic equation
\[ x^2 - \beta_ix + q = 0\]
which has real solutions if and only if ${\beta_i^2-4q}\ge 0.$  Thus, $p(x)$ has no real
roots if and only if $|\beta_i| < 2 \sqrt{q}$ for $i = 1, 2, \cdots, n.$

For part (3), the polynomial $p(x)$ is ordinary if and
only if exactly one of each pair of roots $\alpha_i, q/\alpha_i$ is
a $p$-adic unit, from which
it follows that each $\beta_i = \alpha_i + q/\alpha_i$ is a $p$-adic unit, hence the
product $b_0 = \prod_{i=1}^n \beta_i$ is a $p$-adic unit (and it is nonzero).
Conversely, if $b_0$ is a $p$-adic
unit then so is each $\beta_i$ so at least one of the elements in each pair
$\alpha_i, q/\alpha_i$ is a unit.  But in \cite{Howe,Deligne} it is shown that this implies
that
exactly one of each pair of roots is a $p$-adic unit, so $p(x)$ is ordinary.  The
irreducibility statement follows from the formula $p(x) = x^n h(x+q/x)$.
\end{proof}

\begin{lem}\label{lem-palindromic}
Let $\gamma \in \Gsp_{2n}(\QQ)$ with multiplier $q \in \QQ.$  Then the characteristic
polynomial $p(x)$ of $\gamma$ is $q$-palindromic.
\end{lem}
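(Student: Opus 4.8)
The plan is to read off the relation between $p(x)$ and $p(q/x)$ directly from the defining identity for a symplectic similitude. Since $\gamma\in\GSp_{2n}(\QQ)$ has multiplier $q$, we have $\tr{\gamma}\,J\gamma = qJ$ with $J = \left(\begin{smallmatrix} 0 & I\\ -I & 0\end{smallmatrix}\right)$; hence $J\gamma = q\,\tr{\gamma}^{-1}J$, so $\gamma = J^{-1}\bigl(q\,\tr{\gamma}^{-1}\bigr)J$ is $\GL_{2n}(\QQ)$-conjugate to $q\,\tr{\gamma}^{-1}$. Because a matrix and its transpose have equal characteristic polynomials, this means that $p(x)=\det(xI-\gamma)$ is also the characteristic polynomial of $q\gamma^{-1}$, i.e. $p(x) = \det(xI - q\gamma^{-1})$. (Equivalently, over $\overline{\QQ}$ the conjugacy $\gamma\sim q\gamma^{-1}$ shows that $\lambda\mapsto q/\lambda$ permutes the multiset of eigenvalues of $\gamma$, which is morally why $p$ should be $q$-palindromic.)

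Next I would convert this into the desired condition by a one-line determinant manipulation. Writing $xI - q\gamma^{-1} = \gamma^{-1}(x\gamma - qI)$ and using $(-1)^{2n}=1$,
\[
p(x) = \det(xI - q\gamma^{-1}) = (\det\gamma)^{-1}\det(x\gamma - qI) = (\det\gamma)^{-1}\,x^{2n}\det\!\bigl((q/x)I-\gamma\bigr) = (\det\gamma)^{-1}\,x^{2n}\,p(q/x),
\]
so that $(\det\gamma)\,p(x) = x^{2n}p(q/x)$.

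Finally I would invoke the classical fact that the determinant of an element of $\GSp_{2n}$ equals the $n$th power of its multiplier (see Appendix \ref{appendix-symplectic}), so $\det\gamma = q^{n}$; substituting gives $q^{-n}x^{2n}p(q/x) = p(x)$, which, together with the evident fact that $p$ is monic of even degree $2n$, is exactly the statement that $p(x)$ is $q$-palindromic. The argument is routine; the only ingredient beyond elementary linear algebra is the ``determinant $=$ multiplier$^{n}$'' identity for symplectic similitudes, so the main (very minor) obstacle is simply to cite that identity in the precise form provided by the appendix.
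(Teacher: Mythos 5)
Your proof is correct, and it takes a genuinely different (and cleaner) route than the paper's. The paper argues eigenvalue by eigenvalue: it first diagonalizes a semisimple $\gamma$ over $\overline{\QQ}$ into blocks $\Diag(D,D')$ with $DD'=qI$, so the eigenvalues pair off as $d_i$ and $q/d_i$, giving a product of palindromic quadratics; then for general $\gamma$ it passes to the Jordan decomposition $\gamma=\gamma_s\gamma_u$ and invokes the fact that $\gamma_s$ lies in the closure of the conjugacy class of $\gamma$ to conclude they share a characteristic polynomial. Your argument sidesteps both the semisimple reduction and the Jordan/closure step entirely: from $\tr{\gamma}J\gamma=qJ$ you read off $\gamma\sim q\,\tr{\gamma}^{-1}$, hence $p(x)=\det(xI-q\gamma^{-1})$, and a one-line determinant identity together with $\det\gamma=q^n$ gives $q^{-n}x^{2n}p(q/x)=p(x)$ directly. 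This is more elementary and works uniformly over any base ring where $\det\gamma=c(\gamma)^n$ holds; the one small caveat is that Appendix \ref{appendix-symplectic} does not actually spell out that identity, so you should either cite it from elsewhere or include the short Pfaffian argument (from $\tr{\gamma}J\gamma=qJ$ take Pfaffians to get $\det(\gamma)\,{\rm Pf}(J)=q^n\,{\rm Pf}(J)$). In exchange, the paper's approach makes visible the eigenvalue pairing $\lambda\leftrightarrow q/\lambda$, which is exploited repeatedly in later sections, whereas your computation hides it; for the purposes of this lemma alone, yours is the shorter path.
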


\begin{proof}
First consider the case that $\gamma$ is semisimple.  Then over
$\overline{\QQ}$ it can be diagonalized,
$\gamma = \left(\begin{smallmatrix} D & 0 \\ 0 & D' \end{smallmatrix}\right)$ where
$D = \Diag(d_1,d_2,\cdots,d_n)$ and $D'=\Diag(d'_1,d'_2,\cdots,d'_n)$ are diagonal
matrices with $DD'=qI.$  Consequently $d'_i = q/d_i$ so
\[ p(x) = \prod_{i=1}^n(x-d_i)(x-d'_i) = \prod_{i=1}^n (x^2 -2\alpha_ix+q)\]
where $\alpha_i = \frac{1}{2}\left(d_i + {q}/{d_i}\right).$  If $d_i =
\pm \sqrt{q}$ then $d'_i = \pm\sqrt{q}$ so the polynomial $x^2-2\alpha_ix+q$ is
$q$-palindromic.  But a product of $q$-palindromic polynomials is also
$q$-palindromic, hence $p(x)$ is $q$-palindromic.

For the general case, use the Jordan decomposition to write $\gamma$ as a commuting
product,  $\gamma = \gamma_s \gamma_u$ where  $\gamma_s\in \gsp_{2n}( \overline{\QQ})$
is semisimple and $\gamma_u\in \gsp_{2n}( \overline{\QQ})$
is unipotent.  Since $\gamma_u$ has multiplier $1$, the
semisimple element $\gamma_s$ has multiplier $q.$  Moreover,
$\gamma_s$ lies in the closure of the conjugacy class of $\gamma$
(see for example, \cite{Humphreys} \S 1.7), so the characteristic polynomial of
$\gamma_s$ coincides with that of $\gamma.$
\end{proof}

The following proposition is a converse to Lemma \ref{lem-palindromic}.  It provides a 
{\em companion matrix} for the symplectic group.  Companion matrices for the symplectic
group were described in \cite{Kirby} but the following construction provides a matrix
representative $\gamma$ that is also $q$-inversive (see \S \ref{subsec-q-real}), 
meaning that $\tau_0 \gamma \tau_0^{-1}
= q \gamma^{-1}$ where $\tau_0$ is the {\em standard involution} (\S \ref{sec-involutions}).
\begin{prop}\label{prop-companion}
Let $p(x) = \sum_{i=0}^{2n} a_ix^i\in \QQ[x]$ be a $q$-palindromic polynomial 
of degree $2n.$  Then there exists a \qreal element $\gamma \in \Gsp_{2n}(\QQ)$ with 
multiplier $q,$ whose characteristic polynomial is $p(x).$  Moreover,
$\gamma$ may be chosen to be semisimple, in which case it is uniquely determined
up to conjugacy in $\GSp_{2n}(\overline{\QQ})$ by its characteristic polynomial $p(x)$.
\end{prop}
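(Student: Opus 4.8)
The plan is to realize $p(x)$ by a block direct sum. A block-diagonal sum of \qreal matrices, taken with respect to the direct sum of standard symplectic forms and the direct sum of standard involutions, again has multiplier $q$ and satisfies $\tau_0\gamma\tau_0^{-1}=q\gamma^{-1}$; direct sums of semisimple matrices are semisimple and characteristic polynomials multiply; and by Lemma \ref{lem-Darboux} together with Proposition \ref{prop-classification} a base change over $\QQ$ returns the form and the involution to standard position, conjugating the sum to an element that is \qreal for the standard data. So it suffices to realize, by a semisimple \qreal element, each of the $q$-palindromic factors into which $p$ splits over $\QQ$: namely $p_c(x)$, the product of the irreducible factors of $p$ whose roots are $\neq\pm\sqrt q$, and the factors $p_+(x),p_-(x)$ collecting the roots $+\sqrt q$ and $-\sqrt q$ (which, when $q$ is not a perfect square, combine over $\QQ$ into a single power $(x^2-q)^e$). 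By the description of $q$-palindromic polynomials recalled in \S \ref{subsec-real-counterpart}, the only real roots $p$ can have are $\pm\sqrt q$ and each occurs with even multiplicity; hence $p_c$ has no real roots and the exponents appearing in $p_\pm$ are even.

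To realize $p_c$, let $h(x)$ be its real counterpart, so $p_c(x)=x^dh(x+q/x)$ with $d=\deg h$. By Proposition \ref{prop-h(x)}, $h\in\QQ[x]$ is monic of degree $d$, and since $p_c$ has no root equal to $\pm\sqrt q$, the polynomial $h$ has no root equal to $\pm 2\sqrt q$. Pick a semisimple $A_0\in M_d(\QQ)$ with characteristic polynomial $2^{-d}h(2x)$ — e.g.\ the block sum, with appropriate multiplicities, of the companion matrices of the distinct monic irreducible factors of $2^{-d}h(2x)$. By the Taussky--Zassenhaus theorem there is a symmetric nonsingular $C\in\GL_d(\QQ)$ with $\tr{A_0}\,C=CA_0$; set $B=(A_0^2-qI)C^{-1}$, which is symmetric because $\tr{A_0}=CA_0C^{-1}$. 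A direct check (using $\tr{A_0}\,C=CA_0$ and $A_0^2-BC=qI$) shows that $\gamma_c=\left(\begin{smallmatrix}A_0&B\\ C&\tr{A_0}\end{smallmatrix}\right)$ lies in $\GSp_{2d}(\QQ)$, has multiplier $q$, and satisfies $\tau_0\gamma_c\tau_0^{-1}=q\gamma_c^{-1}$. The eigenvector computation in the proof of Lemma \ref{lem-q-real} — which, apart from its concluding remark identifying eigenvalues as Weil numbers, uses only that $A_0$ is semisimple and $C$ symmetric nonsingular with $\tr{A_0}\,C=CA_0$ — then shows that $\gamma_c$ has characteristic polynomial $p_c$ and is semisimple; for semisimplicity one needs each number $\theta_r=\sqrt{\mu_r^2-q}$ attached to an eigenvalue $\mu_r$ of $A_0$ to be nonzero, which holds since $\mu_r$ is half a root of $h$ and $\mu_r=\pm\sqrt q$ would make $\pm 2\sqrt q$ a root of $h$.

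For the real-root factors: if $q=c^2$ with $c\in\QQ$, take $\gamma_+=cI$ on $\QQ^{e_+}$ ($e_+$ even) with the standard form and involution; being scalar it is semisimple, commutes with $\tau_0$, has multiplier $q$, and $q\gamma_+^{-1}=cI=\gamma_+$, so it is \qreal with characteristic polynomial $(x-c)^{e_+}$, and likewise $\gamma_-=-cI$. If $q$ is not a perfect square, put $E=\QQ(\sqrt q)$, choose a nondegenerate alternating $E$-bilinear form $\langle\cdot,\cdot\rangle$ on $E^{e}$ ($e$ even) with complementary Lagrangian $E$-subspaces $E^e=W_+\oplus W_-$, and set $\omega=\mathrm{tr}_{E/\QQ}\circ\langle\cdot,\cdot\rangle$ (a nondegenerate alternating $\QQ$-form on $E^e\cong\QQ^{2e}$), let $\tau$ be $-1$ on $W_+$ and $+1$ on $W_-$, and let $\gamma$ be multiplication by $\sqrt q$. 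Then $\tau$ has multiplier $-1$ with Lagrangian eigenspaces, $\gamma$ is $E$-linear and so commutes with $\tau$, $\omega(\gamma u,\gamma v)=\mathrm{tr}_{E/\QQ}(q\langle u,v\rangle)=q\,\omega(u,v)$ gives multiplier $q$, and $q\gamma^{-1}=\gamma$; hence $\gamma$ is semisimple and \qreal for $(\omega,\tau)$ with characteristic polynomial $(x^2-q)^{e}$, and Proposition \ref{prop-classification} brings $(\omega,\tau)$ to standard form. The block direct sum of $\gamma_c$ with the re-standardized $\gamma_\pm$ now gives the required semisimple \qreal element of $\GSp_{2n}(\QQ)$ with characteristic polynomial $p$.

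For uniqueness, let $\gamma,\gamma'\in\GSp_{2n}(\overline{\QQ})$ be semisimple with characteristic polynomial $p$; being \qreal, both have multiplier $q$. After diagonalization the eigenvalue multiplicities are those of $p$ and, $p$ being $q$-palindromic, are invariant under $\lambda\mapsto q/\lambda$ and even at $\lambda=\pm\sqrt q$. A symplectic form of multiplier $q$ for a diagonalizable element must annihilate $E_\lambda\times E_\mu$ unless $\mu=q/\lambda$, hence is a perfect pairing of $E_\lambda$ with $E_{q/\lambda}$ for $\lambda^2\neq q$ and a nondegenerate alternating form on each $(\pm\sqrt q)$-eigenspace; over $\overline{\QQ}$ any two such are equivalent, so $\gamma$ and $\gamma'$ are $\GSp_{2n}(\overline{\QQ})$-conjugate. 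The main obstacle is the real-root factors: there the real-counterpart construction degenerates (the quantity $\mu_r^2-q$ vanishes, yielding Jordan blocks rather than a semisimple element), so those blocks must be built directly; this succeeds precisely because $q$-palindromicity forces their multiplicity to be even — so the needed even-dimensional symplectic and Lagrangian data exist over $\QQ$ — and because on such a block $q\gamma^{-1}=\gamma$, reducing $q$-inversivity to commutation with $\tau_0$.
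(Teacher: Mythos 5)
Your proof is correct and follows the paper's overall strategy --- split off the factors with roots at $\pm\sqrt q$ and realize the remainder via the real counterpart --- but with two genuinely different implementations. For the main part $p_c$, where the paper writes down an explicit anti-triangular matrix $B$ built from the coefficients of $h$ and then sets $C=B^{-1}(A^2-qI)$, you instead invoke Taussky--Zassenhaus to obtain the symmetric intertwiner $C$ abstractly and set $B=(A_0^2-qI)C^{-1}$; this avoids verifying that the explicit $B$ works, at the cost of constructivity. For the $\pm\sqrt q$-root part with $q$ not a perfect square, your trace-form construction over $E=\QQ(\sqrt q)$ is a careful version of what the paper only gestures at (its ``product of symplectic matrices $\left(\begin{smallmatrix}0&q\\1&0\end{smallmatrix}\right)$'' should be read as $\left(\begin{smallmatrix}A&0\\0&\tr{A}\end{smallmatrix}\right)\in\GSp_4(\QQ)$ with $A=\left(\begin{smallmatrix}0&q\\1&0\end{smallmatrix}\right)$, i.e.\ $B=C=0$); your version makes the multiplier, the involution, and the semisimplicity transparent. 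You also supply an eigenspace argument for the uniqueness statement, which the paper treats as standard. One minor remark: both you and the paper lean on the assertion in \S\ref{subsec-real-counterpart} that a $q$-palindromic polynomial's only possible real roots are $\pm\sqrt q$, which is not literally true as stated (e.g.\ $(x-2)(x-q/2)$ is $q$-palindromic with real roots $2,q/2$); but this is harmless here, since your construction of $\gamma_c$ only uses that $p_c$ has no root equal to $\pm\sqrt q$ (so the $\theta_r$ are nonzero), not that $p_c$ has no real roots at all.
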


\begin{proof}  There are an even number of real roots.  Write $p(x) = r(x)p'(x)$ where
$r(x)$ has only real roots and $p'(x)$ has only complex roots.  Then 
\[ r(x) = \prod_{i=1}^{a}(x-\sqrt{q})^2\prod_{j=1}^{b}(x+\sqrt{q})^2
= \prod_{k=1}^{\min(a,b)}(x^2-q)^2\prod_{\ell=1}^{|a-b|}(x^2 \pm 2\sqrt{q}x +q)^2.\]
Since $p(x) \in \QQ[x]$, either $a = b$ or else $\sqrt{q}$ is rational (in which
case it is an integer).  The first factor corresponds to a product of symplectic
matrices $\left(\begin{smallmatrix} 0 & q \\ 1 & 0 \end{smallmatrix} \right)$ while the
second factor corresponds to the symplectic matrix $\pm\left( \begin{smallmatrix}
\sqrt{q}I & 0 \\ 0 & \sqrt{q}I \end{smallmatrix} \right)$, both of which have multiplier $q$.

Thus we may assume the polynomial $p(x)$ has only complex roots, so it
can be factored as follows:
\[ p(x) = \prod_{i=1}^n (x-\lambda_i)(x-q/\lambda_i) =
\prod_{i=1}^n(x^2 - (\lambda_i + q/\lambda_i)x + q)\]
where $\lambda_1,\cdots,\lambda_n, q/\lambda_1,\cdots,q/\lambda_n$ are the
roots of $p(x).$  Set $\alpha_i = \frac{1}{2}(\lambda_i + q/\lambda_i$) and
define
\[ h(x) = \prod_{i=1}^n(x - \alpha_i)
= -h_0 -h_1x -\cdots -h_{n-1}x^{n-1}
+ x^n.\]
(For convenience in this section, the signs of the coefficients of $h(x)$ have been
modified from that of the preceding section.)
The desired element $\gamma$ is $\gamma = \left(
\begin{smallmatrix} A & B \\ C & {}^t{A} \end{smallmatrix} \right)$  where the matrices
$A,B,C$ are defined as follows.  The matrix $A$ is the companion matrix
for the polynomial $h(x)$, that is,
\[ A = \left( \begin{array}{cccccc}
0&0&0&\cdots& 0 & h_0\\
1&0&0&\cdots&0&h_1\\
0&1&0&\cdots&0&h_2\\
0&0&1&\cdots&0&h_3\\
&&&\cdots&&\\
0&0&0&\cdots&1&h_{n-1}
\end{array} \right).\]
It is nonsingular (but not necessarily semisimple unless the roots of $h(x)$ are
distinct).  Now define
\[ B = \left( \begin{array}{cccccc}
&&&&h_0&0\\
&&&h_0&h_1&0\\
&&h_0&h_1&h_2&0\\
&&&\cdots&&\\
h_0&h_1&h_2&\cdots&h_{n-1}&0\\
0&0&0&\cdots&0&1
\end{array} \right).\]
Then $B$ is symmetric and nonsingular, and one checks directly that $AB = B\tr{\!A}.$
Define $C = B^{-1}\left(A^2-qI\right)$   so that $A^2-BC = qI.$  These conditions
guarantee that $\gamma \in \GSp_{2n}(\QQ),$ its multiplier is $q$, and it is \qreal.
Since the characteristic polynomial of $A$ is $h(x)$, Lemma \ref{lem-q-real} implies
that the characteristic polynomial of $\gamma$ is $p(x).$  

If the roots of $p(x)$ are distinct then this element $\gamma$ is semisimple.  However
if $p(x)$ has repeated roots it is necessary to proceed as follows.  Factor
$h(x) = \prod_{j=1}^r h_j^{m_j}(x)$ into its irreducible factors over $\QQ.$  This
corresponds to a factorization $p(x) = \prod_{j=1}^r p_j^{m_j}(x)$ into $q$-palindromic
factors.  Take $A = \Diag(A_1^{\times m_1},\cdots,A_r^{\times m_r})$ to be a 
block-diagonal matrix with $m_j$ copies of the matrix $A_j.$  Then $B,C$ will
also be block-diagonal matrices, and $\gamma$ will be the corresponding product
of \qreal symplectic matrices $\gamma_j.$    It suffices to show that each nonzero
$\gamma_j$ is semisimple.    Since $h_j(x)$ is irreducible over $\QQ$, its
roots are distinct, and the roots of $p_j(x)$ are the solutions to $x^2-2\alpha x +q=0$
where $h_j(\alpha)=0.$  If $\pm\sqrt{q}$ is not a root of $h_j(x)$ then the roots of
$p_j(x)$ are distinct, hence $\gamma_j$ is semisimple.  If $\pm\sqrt{q}$ is a root of
$h_j(x)$ then $p(x) = (x-\sqrt{q})^2$ or $p(x) = (x^2-q)^2$  depending on whether or not
$\sqrt{q} \in \QQ.$  In the first case we may take $A_j=\sqrt{q}$; $B_j=C_j=0$ and in
the second case we may take $A_j = \left(\begin{smallmatrix} 0&1\\q&0\end{smallmatrix}\right)$;
$B_j = C_j = 0.$
\end{proof}

\quash{
\begin{cor}\label{cor-companion}
Let $A_0 \in \GL_n(\QQ)$ be semisimple and suppose that its characteristic polynomial is an
ordinary Weil $q$-polynomial.  Then there exist $B_0,C_0$ so that the matrix $\gamma_0 = 
\left(\begin{smallmatrix} A_0 & B_0 \\ C_0 & \tr{A}_0 \end{smallmatrix} \right)$ is in
$\GSp_{2n}(\QQ)$ and is $q$-inversive and semisimple.
\end{cor}
\begin{proof}
The matrix $A_0$ is $\GL_n(\QQ)$-conjugate to some block diagonal matrix 
\[A =
\Diag(A_1^{\times m_1} , \cdots, A_r^{\times m_r})\] 
as above,  where each $A_i$ is a semisimple companion matrix.  There exists $u \in \GL_n(\QQ)$
such that $A_0 = u A u^{-1}$.   The above procedure produces a $q$-inversive
matrix $\gamma = \left( \begin{smallmatrix} A & B \\ C & \tr{A} \end{smallmatrix}\right)$.  Then
$\gamma_0 = U\gamma U^{-1}$ has the desired properties, where $U = \left(
\begin{smallmatrix} u & 0 \\ 0 & {}^t{u}^{-1} \end{smallmatrix}\right) \in \Sp_{2n}(\QQ)$ has
$\widetilde U = U$. 
\end{proof}
}

\section{Symplectic group}\label{appendix-symplectic}
\subsection{}\label{subsec-preliminaries}
Let $R$ be an integral domain and let $T$ be a free, finite dimensional $R$ module.
Let us say that an alternating form $\B:T \times T \to R$
 is {\em strongly non-degenerate}, if the induced mapping
$\B^{\sharp}:T \to \Hom_R(T,R)$ is an isomorphism, and it is {\em weakly non-degenerate}
if $\B^{\sharp} \otimes K$ is an isomorphism, where $K$ is the fraction field of $R$.  
Denote by $\GSp(T,\B)$ the set of $g \in \GL(T)$
such that $\B(gx,gy) = \lambda \B(x,y)$ for some $\lambda = \lambda(g) \in
R^{\times}.$  Then $\lambda$ is a character of $\GSp(T,\B)$
and we say that $g \in \GSp(T,\B)$ has {\em multiplier} $\lambda(g).$  The
{\em standard symplectic form} on $T = R^{2n}$ is
\begin{equation}\label{eqn-J}
\B_0(x,y) = \tr{\!x}Jy\ \text{ where }\
J = \left( \begin{matrix}
0&I\\-I&0\end{matrix}\right).\end{equation}
If $\B:T\times T \to R$ is a symplectic form then a {\em symplectic basis} of
$T$ is an isomorphism $\Phi:T \to R^{2n}$ which takes $\B$ to the standard symplectic
form $\B_0$.
\quash{
or equivalently, it is  a basis  $\left\{ e_1,
\cdots, e_n, f_1,\cdots, f_n\right\}$ such that
\[ \B(e_i,e_j) = \B(f_i,f_j) = 0\ \text{ and }\ \B(e_i,f_j) = \delta_{ij}.\]
}
By abuse of notation we will write
\[\GSp_{2n}(R)=\GSp_{2n}(R)=\GSp(R^{2n},\B_0) = \GSp(R^{2n}, J)\]
for the group of automorphisms of $R^{2n}$ that preserve the standard symplectic form.
If $\gamma \in
\GSp_{2n}(R)$ then so is $\tr\gamma^{-1},$ hence $\tr{\gamma}$ is also.  In
this case, expressing $\gamma$ as a block matrix,
\[ \gamma = \left( \begin{matrix} A & B \\ C & D \end{matrix} \right)\]
the symplectic condition $\tr{\gamma} J {\gamma} = qJ$ is equivalent to:
\begin{equation}\label{eqn-symplectic-conditions}
 \tr{\!A}C, \tr{B}D\ \text{ are symmetric, and }
\tr{\!A}D-\tr{C}B = qI\end{equation}
or equivalently,
\begin{equation}\label{eqn-symplectic-conditions2}
q\gamma^{-1} =  \left( \begin{matrix}
\tr{D} & -\tr{B} \\ -\tr{C} & \tr {A} \end{matrix} \right)
\end{equation}
or
\begin{equation}\label{eqn-symplectic-conditions3}
A \tr{B}, C \tr{D} \text{ are symmetric, and } A\tr{D}-B\tr{C} = qI.
\end{equation}

The following standard fact is an integral version of Darboux' theorem on symplectic coordinates.
\begin{lem}\label{lem-Darboux}  Let $R$ be an integral domain and 
let $\B:T\times T {\longrightarrow}  R$ be a strongly non-degenerate symplectic form.  
Then $T$ admits a  symplectic basis.  If $L', L^{\prime\prime}\subset T$ are Lagrangian
submodules such that $T = L' \oplus L^{\prime\prime}$ then the basis may be chosen
so that $L'$ and $L^{\prime\prime}$ are spanned by basis elements.  
\end{lem}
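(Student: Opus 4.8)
The plan is to prove the existence of a symplectic basis by induction on the rank of $T$ over $R$, exploiting strong non-degeneracy at each step to split off a hyperbolic plane. Since $\B$ is strongly non-degenerate, the map $\B^{\sharp}: T \to \Hom_R(T,R)$ is an isomorphism; in particular $T \ne 0$ forces the existence of a vector $e_1$ that is part of a basis of $T$ (a unimodular vector). Then $\B^{\sharp}(e_1) \in \Hom_R(T,R)$ is a surjection onto $R$ (it is again unimodular, being the image of a unimodular vector under an isomorphism), so there exists $f_1 \in T$ with $\B(e_1, f_1) = 1$. The span $P = Re_1 \oplus Rf_1$ is then a free rank-$2$ direct summand of $T$ on which $\B$ restricts to the standard hyperbolic form, and one checks that $T = P \oplus P^{\perp}$ where $P^{\perp} = \{x : \B(x,e_1) = \B(x,f_1) = 0\}$: the projection $x \mapsto x - \B(x,f_1)e_1 + \B(x,e_1)f_1$ onto $P^{\perp}$ is well-defined over $R$ precisely because $\B(e_1,f_1)$ is a unit, and $\B$ restricted to $P^{\perp}$ is again strongly non-degenerate (its Gram pairing on $P^{\perp}$ is a direct summand of that of $T$). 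By induction $P^{\perp}$ has a symplectic basis $\{e_2,\dots,e_n,f_2,\dots,f_n\}$, and adjoining $e_1,f_1$ produces a symplectic basis of $T$ after reordering, i.e. an isomorphism $\Phi: T \to R^{2n}$ carrying $\B$ to $\B_0$.

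For the refinement involving a Lagrangian decomposition $T = L' \oplus L''$, I would run essentially the same induction but choose the hyperbolic pair compatibly with the splitting. Pick a unimodular $e_1 \in L'$. Strong non-degeneracy of $\B$ implies the restriction of $\B^{\sharp}(e_1)$ to $L''$ is surjective onto $R$: indeed $L'$ is isotropic, so $\B(e_1,-)$ vanishes on $L'$, hence $\B^{\sharp}(e_1)$ factors through the projection $T \to L''$; since $\B^{\sharp}(e_1)$ is unimodular on $T$, and $L' \oplus L''$ is a direct sum decomposition, the induced functional on $L''$ is unimodular. So there is $f_1 \in L''$ with $\B(e_1,f_1) = 1$. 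Now set $P = Re_1 \oplus Rf_1$ as before; one verifies that $P^{\perp}$ inherits a Lagrangian decomposition $P^{\perp} = (L' \cap P^{\perp}) \oplus (L'' \cap P^{\perp})$, using that $e_1 \in L'$, $f_1 \in L''$, and both $L', L''$ are Lagrangian (so that, for instance, $L' \cap P^{\perp} = \{x \in L' : \B(x,f_1) = 0\}$ is a direct summand of $L'$ of corank one, because $\B(-,f_1)|_{L'}$ is unimodular — it is the pairing of $L'$ against $f_1 \in L'' \cong \Hom_R(L',R)$ coming from the perfect pairing $L' \times L'' \to R$). Then the inductive hypothesis gives a symplectic basis of $P^{\perp}$ adapted to $L' \cap P^{\perp}$ and $L'' \cap P^{\perp}$, and together with $e_1, f_1$ this is a symplectic basis of $T$ with $L'$ spanned by the $e_i$ and $L''$ by the $f_i$.

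The main obstacle, and the point deserving the most care, is verifying that the various submodules ($P^{\perp}$, $L' \cap P^{\perp}$, $L'' \cap P^{\perp}$) are genuinely \emph{direct summands} of $T$ over the ring $R$ — not merely submodules — and that strong non-degeneracy is preserved under the restriction; over a general integral domain this is exactly where unimodularity of the chosen vectors and functionals is essential, and it is what makes the hyperbolic-splitting step work integrally rather than only over the fraction field. Once one is careful that every projection written down has coefficients in $R$ (because the relevant pairing value is a unit of $R$), the induction goes through cleanly. I would state the perfect-pairing fact $L'' \cong \Hom_R(L',R)$ as an immediate consequence of strong non-degeneracy plus the decomposition $T = L' \oplus L''$ into Lagrangians, and use it repeatedly; this is the structural input that organizes the whole argument.
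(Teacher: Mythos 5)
Your hyperbolic-splitting induction for the first assertion matches the paper's proof: both produce $e_1, f_1$ with $\B(e_1,f_1)=1$ directly from strong non-degeneracy, split $T = P \oplus P^{\perp}$ via an explicit $R$-integral projection (your formula $x \mapsto x - \B(x,f_1)e_1 + \B(x,e_1)f_1$ is the correctly signed one), observe that $\B^{\sharp}$ decomposes block-diagonally so $\B|_{P^{\perp}}$ stays strongly non-degenerate, and induct. For the Lagrangian refinement, though, you take a genuinely different route. The paper bypasses any further induction: since $L'$ and $L^{\prime\prime}$ are isotropic direct summands, $\B^{\sharp}$ swaps the factors of $T = L'\oplus L^{\prime\prime}$ and gives an isomorphism $L^{\prime\prime} \cong \Hom_R(L',R)$, whence an \emph{arbitrary} basis of $L'$ together with its dual basis inside $L^{\prime\prime}$ is already a symplectic basis of $T$ spanning both summands — one line, no recursion. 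You instead rerun the hyperbolic induction, choosing $e_1 \in L'$ and $f_1 \in L^{\prime\prime}$ at each stage and checking that $P^{\perp}$ inherits a Lagrangian decomposition $(L'\cap P^{\perp})\oplus(L^{\prime\prime}\cap P^{\perp})$. Both are correct; yours is more uniform, one inductive mechanism covering both statements, while the paper's is shorter and isolates the structural fact $L^{\prime\prime}\cong\Hom_R(L',R)$ as doing all the work. Notably, you yourself formulate this perfect-pairing fact at the end, but deploy it only as a step inside the induction rather than noticing it replaces the induction for the refined statement. One quiet hypothesis shared by both arguments: the inductive step uses that the complement $P^{\perp}$ (or, in the paper's version, that $L'$ itself) is \emph{free}, not merely a projective direct summand; this is automatic over the principal ideal domains where the lemma is actually applied in the paper ($\ZZ$, $\ZZ_p$, $\QQ$, $\QQ_p$, $W(k)$, $K(k)$) but is not a formal consequence for an arbitrary integral domain.
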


\begin{proof} We prove the first statement by induction.  Since $\B$ is strongly non-degenerate
there exist $x_1,y_1 \in T_1$ so that $\B(x_1,y_1) = 1.$  Let $T_1$ be the span
of $\left\{x_1,y_1\right\}.$  Then the restriction of $\B$ to $T_1$ is a strongly
non-degenerate symplectic form.  If $\dim(T) = 2$ then we are done.  Otherwise
we claim that $T = T_1 \oplus T_1^{\perp}.$  Clearly, $T_1 \cap (T_1)^{\perp} = 0.$
Moreover, given any $v \in T$ let
$u = v - \B(v,y_1)x_1 - \B(v,x_1)y_1.$  Then $u \in (T_1)^{\perp}$ hence
$T = T_1 \oplus T_1^{\perp}.$  Since $\B$ is strongly nondegenerate this implies
that $\B$ induces an isomorphism
\[ T_1 \oplus T_1^{\perp} \cong \Hom(T,R) \cong \Hom(T_1,R)\oplus \Hom(T_1^{\perp},R)\]
hence the restriction of $\B$ to $T_1^{\perp}$ is also strongly nondegenerate.  So by
induction there is a symplectic basis of $T_1^{\perp}.$  Combining this with
$x_1,y_1$ gives a symplectic basis for $T.$

If $T = L' \oplus L^{\prime\prime}$ is a decomposition into Lagrangian submodules, then
the symplectic form induces an isomorphism $L^{\prime\prime} \cong \Hom_R(L',R)$.  Therefore
an arbitrary basis of $L'$ together with the dual basis of $L^{\prime\prime}$ will constitute
a symplectic basis for $T$.
\end{proof}
\quash{
The last statement is proven in \cite{Freitag} Satz A.5.4, but here is an outline.  We use the standard
symplectic form on $R^{2n}$.  It suffices to prove there
exists $g \in \Sp_{2n}(R)$ so that $gv = \tr{(}1,0,\cdots,0)$ for then $g^{-1}$ takes the standard 
basis to the desired basis. Let $v = (a_1,\cdots,a_n, b_1,\cdots, b_n)$.  Acting by permutations
we can arrange that $|a_1|<|a_j|$ for $2 \le j \le n$.  Acting by
$\left(\begin{smallmatrix} A & 0 \\ 0 & \tr{A}^{-1}\end{smallmatrix}\right)$ we can subtract 
multiplies of $a_1$ from
the other $a_i$ and continuing in this way (by the Euclidean algorithm) we can eventually arrange
that $a_j = 0$ for $2 \le j \le n$.  Similarly, acting by $\left(\begin{smallmatrix}
I & 0 \\ S & I \end{smallmatrix} \right)$ we can arrange that $|b_i|<|a_1|$ for $1 \le i \le n$.
Then using $\left( \begin{smallmatrix}0 & I \\ -I & 0\end{smallmatrix} \right)$ we can switch the
$a's$ and the $b's$.  Continuing in this way we can arrange that $b_j = 0$ for all $j$.  This
implies that $a_1$ is a unit, so we can adjust it to equal one. 
}

\section{Polarizations and positivity}\label{appendix-polarizations}

\subsection{}\label{subsec-polarizations} 
Let $(T,F)$ be a Deligne module.
Let $K=\QQ[F,V]=\QQ[F]\subset\End(T,F).$  Then $K$ is isomorphic to a product of distinct
CM fields (\cite{Howe} \S 4.6) corresponding to the distinct irreducible factors of the
characteristic polynomial of $\gamma$, and the canonical complex conjugation on $K$ 
(which we denote with a bar) interchanges $F$ and
$V.$ An element $\iota \in K$ is {\em purely imaginary} if $\bar{\iota^{}} = -\iota.$ 
A CM type $\Phi$ on $K$ is a subset $\Phi \subset \Hom(K,\CC)$ containing exactly one element
from each complex conjugate pair of homomorphisms $K \to \CC$.
Let $\Phi$ be a CM type on $\QQ[F]$.  A purely imaginary element $\iota\in K$ is 
{\em totally $\Phi$-positive imaginary} if $\phi(\iota)\in \CC$ is positive imaginary 
for all $\phi \in \Phi.$    A polarization $\omega$ of the Deligne module 
$(T,F)$ that is {\em positive with respect to the CM type $\Phi$} 
is defined to be an alternating bilinear form 
$\omega:T \times T \to \ZZ$ such that \begin{enumerate}
\item[(0)] $\omega(x,y) = - \omega(y,x)$ for all $x,y\in T$,
\item $\omega:(T\otimes \QQ) \times (T \otimes \QQ) \to \QQ$ is nondegenerate,
\item $\omega(Fx,y) = \omega(x,Vy)$ for all $x,y \in T$,
\end{enumerate}
as well as the following {\em positivity condition} (see \S \ref{subsec-viable}) \begin{enumerate} 
\item[(3)] the form $R(x,y) = \omega(x,\iota y)$ is symmetric and positive definite,
for some (and hence any) totally $\Phi$-positive imaginary element $iota$,
\end{enumerate}
in which case we will say that $(T,F,\B)$ is {\em a $\Phi$-positively polarized Deligne module}.

\begin{lem}\label{lem-unique-positive}
Let $(T,F)$ be a Deligne module that is simple over $\QQ$.  For any CM type $\Phi$ on $\QQ[F]$ there
exists a $\Phi$-positive polarization of $(T,F)$.  Conversely, suppose $(T,F)$ is a 
$\QQ$-simple Deligne module and suppose $\omega:T \times T \to \ZZ$ is a symplectic form 
such that $\omega(Fx,y) = \omega(x,Vy)$.  Then there
exists $\iota \in \QQ[F]$ so that the bilinear form 
\begin{equation}\label{eqn-R-positive}
R(x,y) = \omega(x,\iota y)\ \text{ 
is symmetric and positive definite.}\end{equation}
Moreover there exists a unique CM type $\Phi$ on $\QQ[F]$ so that
every such element $\iota$ 
is totally $\Phi$-positive imaginary.
\end{lem}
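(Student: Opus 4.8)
The plan is to reduce the statement to the classical theory of CM types and Riemann forms. Since $(T,F)$ is $\QQ$-simple, $K = \QQ[F]$ is a CM field, $V = \bar F$ under the canonical complex conjugation, and $T\otimes\QQ$ is a one-dimensional $K$-vector space; fixing a cyclic vector identifies $T\otimes\QQ$ with $K$. Any alternating form $\B:T\times T\to\ZZ$ satisfying $\B(Fx,y)=\B(x,Vy)$ becomes, on $T\otimes\QQ\cong K$, a form satisfying $\B(ax,y)=\B(x,\bar a y)$ for all $a\in K$; a standard argument (compatibility of $\B$ with the $K$-action) shows there is a unique $\xi\in K$ with $\bar\xi=-\xi$ such that $\B(x,y)=\Tr_{K/\QQ}(\xi x\bar y)$. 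Conversely, given a CM type $\Phi$, one picks $\xi$ purely imaginary with $\phi(\xi)$ in the upper half plane for each $\phi\in\Phi$ (possible by weak approximation / the independence of the archimedean places indexed by $\Phi$), scales to clear denominators so that $\Tr_{K/\QQ}(\xi x\bar y)$ is $\ZZ$-valued on $T$, and checks nondegeneracy; this produces the desired $\Phi$-positive polarization, giving the first assertion.

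For the converse, start from $\omega(x,y)=\Tr_{K/\QQ}(\xi x\bar y)$ with $\bar\xi=-\xi$. For $\iota\in K$ the form $R(x,y)=\omega(x,\iota y)=\Tr_{K/\QQ}(\xi\iota x\bar y)$; symmetry of $R$ forces $\overline{\xi\iota}=\xi\iota$, i.e.\ $\bar\iota=-\iota$ (since $\bar\xi=-\xi$), so $\iota$ must be purely imaginary. Writing the trace as a sum over $\Hom(K,\CC)$ and grouping conjugate pairs, one finds
\[
R(x,y) = \sum_{\phi}\bigl(\phi(\xi\iota)\phi(x)\overline{\phi(y)} + \overline{\phi(\xi\iota)}\,\overline{\phi(x)}\phi(y)\bigr)
= 2\sum_\phi \Real\bigl(\phi(\xi\iota)\phi(x)\overline{\phi(y)}\bigr),
\]
where $\phi$ ranges over any choice of one embedding from each conjugate pair. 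Positive definiteness of $R$ is then equivalent to $\phi(\xi\iota)>0$ for every $\phi$ in that chosen set, i.e.\ to the condition that $\iota$ be totally positive imaginary with respect to the CM type $\Phi := \{\phi : \Im\phi(\iota)>0\}$, which equivalently is $\Phi = \{\phi : \Real\,\phi(\xi\iota)>0\} = \{\phi : \Im\,\phi(\iota)$ and $\Im\,\phi(\xi^{-1})$ have the same sign$\}$; the point is that for a valid such $\iota$ this set contains exactly one embedding from each conjugate pair, hence is a genuine CM type. Existence of at least one such $\iota$ is guaranteed by the hypothesis that $\omega$ is a polarization (or, absent that, by weak approximation again), and uniqueness of $\Phi$ is immediate since $\Phi$ is determined as the set of $\phi$ with $\phi(\iota)$ in the upper half plane, and two totally $\Phi$-positive resp.\ $\Phi'$-positive elements would force $\Phi=\Phi'$ by comparing signs of $\phi(\iota)\phi(\iota')^{-1}$, which are totally real and positive.

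The main obstacle is the bookkeeping in the converse direction: one must verify that the set $\Phi$ extracted from a single admissible $\iota$ really is a CM type (exactly one embedding per conjugate pair) rather than merely a subset, and that it is independent of the choice of admissible $\iota$ — this is where one uses that $\omega(x,\iota y)$ being positive definite is a genuine constraint forcing $\phi(\xi\iota)>0$ simultaneously at all chosen places, which pins down the pair $\{\phi,\bar\phi\}$ uniquely. Everything else (the identification $T\otimes\QQ\cong K$, the trace-form description of $\omega$, the weak approximation step to produce $\xi$) is routine and can be cited from the standard references on CM abelian varieties, e.g.\ the discussion already invoked in \S\ref{subsec-polarizations} and in \cite{Howe}.
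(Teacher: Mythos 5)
Your proposal is correct and follows essentially the same route as the paper: identify $T\otimes\QQ$ with the CM field $K=\QQ[F]$ via a cyclic vector, write $\omega(x,y)=\Tr_{K/\QQ}(\xi x\bar y)$ with $\bar\xi=-\xi$, and read the CM type $\Phi$ off from the archimedean signs of admissible $\iota$. The one place where the paper is more economical is the existence of an admissible $\iota$: rather than invoking weak approximation, the paper simply takes $\iota=\xi$ (in your notation) and observes that then $R(x,x)=\Tr_{K/\QQ}(\xi\bar\xi\,x\bar x)>0$ because $\xi\bar\xi$ and $x\bar x$ are totally positive, so no approximation argument is needed. Also note a small bookkeeping slip in your display: $\omega(x,\iota y)=\Tr_{K/\QQ}(\xi\bar\iota\, x\bar y)$, not $\Tr_{K/\QQ}(\xi\iota\, x\bar y)$; this only flips the sign convention on $\iota$ (since $\bar\iota=-\iota$) and does not affect the argument.
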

\begin{proof}
For the first part of the lemma, we may reduce to the case that $\QQ[F]\cong K$ is isomorphic to
a CM field of the same dimension as $T$ and we may assume that $T \subset \mathcal O_K$ is a
lattice that is preserved by $F$.  A polarization that is
positive with respect to $\Phi$ and compatible with complex conjugation is described in
\cite{ShimuraTaniyama} \S 6.2 and \cite{ShimuraCM} \S 6.2; see also
\cite{MilneToronto} Prop. 10.2 p. 335; we repeat the definition here.
Let $\alpha \in \mathcal O_K$ be totally $\Phi$-positive imaginary and set
\[ \B(x,y) = \tTr_{K/\QQ}(\alpha x.\overline{y}).\]
Then $\B:\mathcal O_K \times \mathcal O_K \to \ZZ$ is antisymmetric and the bilinear form
$R(x,y) = \B(x, \alpha y)$ is symmetric and positive definite.  We also remark that
\begin{equation}\label{eqn-omega-conjugation} \omega(\bar x, \bar y) = - \omega(x,y).
\end{equation}

Conversely, suppose $(T,F)$ is $\QQ$-simple and 
$\omega:T\times T \to \ZZ$ is alternating and nondegenerate over $\QQ$ with $\omega(Fx,y)
= \omega(x,Vy)$.
A choice of a $F$-cyclic vector gives an isomorphism of $\QQ[F]$-modules,
$T\otimes\QQ \cong \QQ[F]$.  Using this isomorphism, the symplectic form $\omega$ is given by
\[ \omega(x,y) = \tTr_{K/\QQ}(\alpha x \bar y)\]
for some element $\alpha \in K$ with $\bar \alpha = -\alpha$.  (In fact, the mapping
$x \mapsto \omega(x,1)$ is $\QQ$-linear, hence given by $\Tr_{K/\QQ}(\alpha x)$ for some
$\alpha \in K$ which is easily seen to satisfy $\bar \alpha = -\alpha$.  Therefore
$\omega(x,y) = \omega(\bar{y}x,1) = \Tr_{K/\QQ}(\alpha x \bar{y})$.)  
Let $\iota = \alpha$.  Then
\[ R(x,x) = \omega(x,\iota x) = \tTr_{K/\QQ}(\alpha \bar\alpha x \bar x) >0\]
which verifies the condition (\ref{eqn-R-positive}).
For any embedding $\phi:K \to \CC$ the image $\phi(\alpha)$ is purely imaginary.  It follows
that there is a unique choice $\phi$ from each pair of complex conjugate embeddings such
that $\phi(\alpha)$ is positive imaginary, and this defines a CM type $\Phi$ for $K$.  If
$\beta \in \QQ[F]$ is any other element such that $(x,y) \to \omega(x,\beta y)$ is
symmetric and positive definite then $\bar\beta = - \beta$ so $\phi(\beta)$ is purely imaginary
for every $\phi \in \Phi$.  Moreover,
\[ \omega(x,\beta x) = \sum_{\phi \in \Phi} \phi(\alpha \bar\beta x \bar x) + 
\overline{\phi}(\alpha \bar\beta x \bar x) = 2 \sum_{\phi \in\Phi} \phi(\alpha \bar\beta
x \bar x) > 0\]
for all $x \in \QQ[F]^{\times}$.  This implies that $\phi(\alpha) \phi(\bar\beta) =
- \phi(\alpha)\phi(\beta) >0$ for each $\phi \in \Phi$, hence $\phi(\beta)$ is 
also positive imaginary.

\end{proof}

\subsection{}\label{subsec-phi-varepsilon}
Following Serre (and Deligne \cite{Deligne} p.~242) it is possible to make a universal choice 
for the CM type associated to any Deligne module.  The embedding $\varepsilon:W(\bar k) \to 
\CC$ of (\ref{eqn-epsilon}) determines 
a $p$-adic valuation $\val_p$ on the algebraic closure $\overline{\QQ}$ of $\QQ$ in
$\CC$.  If $(T,F)$ is a Deligne module let $K = \QQ[F]$ as above and define 
$\Phi_{\varepsilon}$ to be the CM type on $K$ that consists 
of all homomorphisms $\phi:K \to \CC$ such that $\val_p(\phi(F))>0.$ 

\subsection{Howe's theorem}\label{subsec-HoweTheorem}
If $(T,F)$ is a Deligne module then the {\em dual} Deligne module
$(\widehat{T},\widehat{F})$ is defined by $\widehat{T} = \Hom(T,\ZZ)$
and $\widehat{F}(\phi)(x) = \phi(Vx)$ for all $\phi\in\widehat{T}.$
To give a $\Phi_{\varepsilon}$-positive polarization $\B$ on a Deligne module $(T,F)$ is therefore equivalent
(setting $\lambda(x)(y) = \B(x,y)$) to giving a homomorphism $\lambda:T \to \widehat{T}$ 
such that  $\lambda\otimes\QQ$ is an isomorphism, $\widehat{\lambda} = -\lambda$,
 $\lambda \circ F = \widehat{F} \circ \lambda$, and
$\widehat{\lambda\iota} = \lambda\iota$ is symmetric and negative definite (where $\iota
\in \QQ[F]$ is totally $\Phi_{\varepsilon}$ positive imaginary).

Let $A$ be an ordinary Abelian variety with associated Deligne module $(T_A,F_A)$ that is
determined by the embedding $\varepsilon$ of (\ref{eqn-epsilon}).
Let $B$ be the Abelian variety that is dual to $A.$  Then $B$ is ordinary and
(\cite{Howe} Prop. 4.5) there is a canonical ``duality" isomorphism
$(T_A,F_A) \cong (\widehat{T}_B,\widehat{F}_B)$.
Let $\omega:T_A \times T_A \to \ZZ$ be an alternating
bilinear form that satisfies conditions (1) and (2) of \S \ref{subsec-polarizations}.  It
induces an isomorphism $\lambda:(T_A\otimes \QQ,F)  \to (\widehat{T}_A\otimes\QQ, \widehat{F})$ 
and hence an isogeny $\lambda_A: A \to \widehat{A}$.  Then Howe proves (\cite{Howe}) that
$\omega$ is positive with respect to the CM type $\Phi_{\varepsilon}$ (that is, $\B$ is
a $\Phi_{\varepsilon}$-positive polarization of $(T_A,F_A)$) if and only if $\lambda_A$ 
is a polarization of the  Abelian variety $A$.

\begin{lem}\label{lem-Rosati}  Let $(T,F)$ be a Deligne module and let $\Phi$
be a CM type on $\QQ[F]$.  Let $\B$ be a $\Phi$-positive polarization.
Suppose $\beta \in \End_{\QQ}(T,F)$ is a self
isogeny that is fixed under the Rosati involution, that is, $\beta' = \beta$ where
$\B(\beta'x,y) = \B(x,\beta y)$ for all $x,y \in T\otimes\QQ$.  
Then there exists $\alpha \in \End(T,F)\otimes \overline{\QQ}$ such that
$\beta = \alpha'\alpha.$  If $\beta'=\beta$ and $\beta$ is positive definite then
the element $\alpha$ may be chosen to lie in $\End(T,F)\otimes\RR.$
If $\B_1,\B_2$ are two $\Phi$-positive polarizations of the same Deligne
module $(T,F)$ then there exists an $\RR$-isogeny $(T,F,\B_1) \to
(T,F,\B_2)$ with multiplier equal to 1  (cf.~\S\ref{sec-isogeny}).
\end{lem}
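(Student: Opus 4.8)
The plan is to prove the three assertions in sequence, the first two being essentially the statement that every totally positive element of a CM algebra is a sum-of-squares (or here, a single square after extending scalars), and the third being a direct corollary. First I would reduce to the case that $\QQ[F] \cong K$ is a single CM field, using that $\End_\QQ(T,F) = \QQ[F]$ (since $F$ is semisimple $\QQ[F]$ is a product of CM fields, and the Deligne module decomposes accordingly along the idempotents, with the polarization respecting this decomposition as in Proposition \ref{prop-real-existence}). Under this reduction $\beta \in K$ and the Rosati involution $\beta \mapsto \beta'$ is complex conjugation on $K$; the hypothesis $\beta' = \beta$ says $\beta$ lies in the maximal totally real subfield $K^+$. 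The key point is then that $\beta$ is \emph{totally positive}: for each $\phi \in \Phi$ we have, writing $\B(x,y) = \Tr_{K/\QQ}(\iota x \bar y)$ for a totally $\Phi$-positive imaginary $\iota$ as in Lemma \ref{lem-unique-positive}, that the positive-definiteness of $x \mapsto \B(\beta x, x) = \B(x,\beta x)$ combined with the computation $\B(x,\beta x) = 2\sum_{\phi\in\Phi}\phi(\iota \beta x\bar x)$ forces $\phi(\beta) > 0$ for every real embedding $\phi$. Once $\beta \in K^+$ is totally positive, pick $\alpha = \sqrt{\beta} \in K^+ \otimes \RR \subset \End(T,F)\otimes\RR$ (a genuine square root exists after tensoring with $\RR$, since each real completion of $K^+$ is $\RR$ and $\phi(\beta)>0$); then $\alpha' = \bar\alpha = \alpha$ and $\alpha'\alpha = \beta$. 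This simultaneously handles the general claim (where $\beta$ need only be fixed by Rosati, so $\beta \in K^+$ but possibly not totally positive — here one instead passes to $\overline{\QQ}$ and takes any square root $\alpha$ of $\beta$ in $K \otimes \overline{\QQ}$, using $\alpha' = \alpha$ is automatic when $\beta \in K^+$ after noting $\alpha$ and $\alpha'$ are both square roots of $\beta$ lying in the commutative algebra $K\otimes\overline\QQ$ and adjusting by a sign in each factor) and the positive-definite refinement (where the $\RR$-square root works).

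For the third statement, given two $\Phi$-positive polarizations $\B_1, \B_2$ of $(T,F)$, define $\beta \in \End_\QQ(T,F)$ by $\B_1(\beta x, y) = \B_2(x,y)$; this makes sense because $\B_1$ is nondegenerate over $\QQ$. One checks $\beta$ is fixed by the $\B_1$-Rosati involution (the same two-line computation as in the proof that (\ref{Q2})$\implies$(\ref{Q3}) in Lemma \ref{prop-five-part}, using antisymmetry of both forms), and that $\beta$ is positive definite because $\B_2$ is a $\Phi$-positive polarization: $\B_1(\beta x, \iota x) = \B_2(x,\iota x) = R_2(x,x) > 0$ shows $R_1(\beta x, x) > 0$, i.e.\ $\beta$ is positive with respect to the positive-definite form $R_1$. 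By the refined first part there is $\alpha \in \End(T,F)\otimes\RR$ with $\beta = \alpha'\alpha$, so
\[
\B_1(\alpha x, \alpha y) = \B_1(\alpha'\alpha x, y) = \B_1(\beta x, y) = \B_2(x,y),
\]
meaning $\alpha:(T,F,\B_1) \to (T,F,\B_2)$ is an $\RR$-isogeny with multiplier $1$.

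The main obstacle I anticipate is getting the positivity bookkeeping exactly right: tracking the sign conventions so that "$\beta$ fixed by Rosati" really lands in the totally real subfield, and that "$\B$ is a $\Phi$-positive polarization" translates into "$\beta$ totally positive for the embeddings in $\Phi$" rather than its conjugate — the computation $\B(x,\beta x) = 2\sum_{\phi\in\Phi}\phi(\iota\beta x\bar x)$ and the fact that $\phi(\iota)$ has a definite sign on $\Phi$ is what pins this down, and is exactly the mechanism already used at the end of the proof of Lemma \ref{lem-unique-positive}. The only other mild subtlety is the descent from $K$ a product of CM fields to $K$ a single CM field, but this is routine since $\iota$, $\beta$, and $\alpha$ all decompose compatibly along the central idempotents.
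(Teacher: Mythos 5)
Your argument has a genuine gap at the very first step: the claim that $\End_\QQ(T,F)=\QQ[F]$. This equality holds only when $T\otimes\QQ$ is a \emph{cyclic} $\QQ[F]$-module. In general $\End_\QQ(T,F)$ is the centralizer of the semisimple element $F$, namely a product $\prod_i M_{m_i}(K_i)$ of matrix algebras over the CM fields $K_i$ appearing in $\QQ[F]$, and the $m_i$ can be $>1$ (the Deligne module of $E\times E$ for an ordinary elliptic curve $E$ already gives $\End = M_2(K)$). The lemma is invoked in Lemma \ref{prop-five-part} for arbitrary polarized Deligne modules, so the noncommutative case cannot be discarded. Your whole square-root construction relies on this commutativity: once $\beta$ lives in a matrix algebra rather than in a CM field, the Rosati involution is no longer ``complex conjugation on $K$'', the hypothesis $\beta'=\beta$ says $\beta$ is \emph{Hermitian} rather than totally real, and one cannot simply extract $\sqrt{\beta}$ factor-by-factor and adjust signs. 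The paper's proof deals with exactly this: it invokes Mumford's structure theorem (\cite{Mumford} p.~220) that $\End(T,F)\otimes\RR \cong \prod M_{d\times d}(\CC)$ with Rosati becoming $\beta\mapsto\tr{\bar\beta}$, diagonalizes the Hermitian $\beta$ as $\tr{\bar U}DU$ with $U$ unitary and $D$ real, and then sets $\alpha=\sqrt{D}\,U$ --- with $\sqrt{D}$ taken in $\overline\QQ$ in general and in $\RR$ when $D>0$.

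Where you are closest to the paper is the translation ``$\beta$ positive definite $\Rightarrow$ the relevant scalars are positive'': your computation $\B(x,\beta x)=2\sum_{\phi\in\Phi}\phi(\iota\beta x\bar x)>0$ giving total positivity of $\beta$ is the commutative shadow of the paper's observation that $\beta$ Hermitian positive definite forces $D>0$, and the proof of the third assertion (constructing $\beta$ from $\B_1,\B_2$, checking it is Rosati-fixed and positive, then applying the real square root) is essentially identical to the paper's. If you wanted to salvage your strategy, the fix is not to re-derive $\End=\QQ[F]$ but to replace ``totally positive element of a totally real field has a square root'' with ``Hermitian positive-definite matrix has a Hermitian square root'', and in the non-positive-definite case to use diagonalization of Hermitian matrices rather than factorwise sign adjustment.
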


\begin{proof} (See also \cite{Kottwitz} p.~206.)
As indicated in \cite{Mumford} p.~220, the algebra $\End(T,F)\otimes\RR$ is
isomorphic to a product of matrix
algebras $M_{d\times d}(\CC)$ such that $\beta' = \tr{\bar\beta}.$  Then
$\beta'=\beta$ implies that $\beta$ is Hermitian so there exists a unitary matrix
$U \in M_{d \times d}(\CC)$ with $\beta = \tr{\bar{U}}DU$ where $D$ is a diagonal matrix
of real numbers.   Choose a square root $\sqrt{D} \in M_{d\times d}(\overline{\QQ})$
and set $\alpha = \sqrt{D} U \in \End(T,F)\otimes\overline{\QQ}$.  Then $\alpha'\alpha
= \tr{\bar U}DU = \beta$ as claimed.  Moreover, if $\beta$ is positive definite
then the entries of $D$ are positive real numbers so we may arrange that
$\sqrt{D} \in M_{d\times d}(\RR)$, so $\alpha \in \End(T,F)\times \RR$
as claimed.

For the ``moreover" part, let $\beta \in \End_{\QQ}(T,F)$ be the unique endomorphism 
so that $\B_2(x,y) = \B_1(\beta x,y)$.  Then
$\beta$ is fixed under the Rosati involution for the polarization $\B_1$ because
\[\B_1(\beta'x,y) = \B_1(x,\beta y) = - \B_1(\beta y, x) = -\B_2(y,x) =\B_2(x,y) = \B_1(\beta x,y).\]
Moreover, $\beta$ is positive definite for if $x \in T \otimes \RR$ is an eigenvector of $\beta$ with
eigenvalue $t$ then
\[tR_1(x,x) = R_1(\beta x,x) = \B_1(\beta x, \iota x) = \B_2(x,\iota x) = R_2(x,x) > 0\]
in the notation of \S \ref{subsec-polarizations}.  According to the first part of this
lemma, there exists $\alpha \in \End(T,F)\otimes \RR$ so that $\beta = \alpha'\alpha$, or
\[ \B_2(x,y) = \B_1(\alpha'\alpha x,y) = \B_1(\alpha x, \alpha y)\]
which says that $\alpha$ is an $\RR$-isogeny which takes $\B_1$ to $\B_2$ with multiplier equal to 1.
\end{proof}

\subsection{Viable elements}\label{subsec-viable}
Let $\gamma_0 \in \GSp_{2n}(\QQ)$ be a semisimple element with multiplier $c(\gamma) = q$,
whose characteristic polynomial is an ordinary Weil $q$-polynomial.  Choose a CM type $\Phi$
on the algebra $K = \QQ[\gamma_0]$.
Let us say that $\gamma_0$ is {\em viable} with respect to $\Phi$ if the pair $(\gamma_0,
\B_0)$ satisfies the positivity condition of \S \ref{subsec-polarizations}, namely, the form $R_0(x,y) = 
\B_0(x,\iota_0 y)$ is symmetric and positive definite, where $\iota_0\in K$ is a totally $\Phi$-positive
imaginary element and where $\B_0$ is the standard symplectic
form (\S \ref{subsec-preliminaries}).  Then $\gamma_0$ is $\Phi$-viable if and only if
 there exists a lattice $L$ so that $(L, \gamma_0, \B_0)$ is a $\Phi$-positively polarized Deligne module. 
Such a lattice $L$ may be taken, for example, to be the orbit  $L=\mathcal O_K \cdot x_0$ of 
an appropriate point $x_0 \in \QQ^{2n}$.
If $\gamma_0$ is also $q$-inversive then, by choosing the point $x_0$ so as to be fixed under
the standard involution $\tau_0$ of \S \ref{sec-involutions}, the lattice $L$ will be preserved by 
$\tau_0$.  Consequently, $(L, \gamma_0, \B_0, \tau_0)$ is a $\Phi$-positively polarized 
Deligne module with real structure.

\subsection{CM type of a conjugacy class}
Let $p(x)\in\QQ[x]$ be an ordinary Weil $q$-polynomial and let 
$\mathcal C(p)\subset \GSp_{2n}(\QQ)$ denote the
set of all rational, semisimple elements whose characteristic polynomial equals $p(x)$. 
It is the intersection of a $\GSp_{2n}(\overline{\QQ})$-conjugacy class with $\GSp_{2n}(\QQ)$. 
Let $\gamma, \gamma' \in \mathcal C(p)$.  Conjugation defines a unique isomorphism between
$\QQ[\gamma]$ and $\QQ[\gamma']$.  Therefore, a choice of CM type for $\QQ[\gamma]$
determines, in a canonical way, a CM type on $\QQ[\gamma']$, for every $\gamma' \in \mathcal 
C(p)$.  The distinct roots of $p(x)$ (that is, the distinct eigenvalues of $\gamma$)
occur in complex conjugate pairs, and a CM type for
$\QQ[\gamma]$ consists of a choice of one element from each such pair.  (If $\QQ[\gamma]$
is a field and $\beta\in\CC$ is a root of $p(x)$ then the corresponding embedding
$\phi:\QQ[\gamma] \to \CC$ is given by $\gamma \mapsto \beta$.)  In this way we may speak
of a choice of CM type for the conjugacy class $\mathcal C(p)$.

The set $\mathcal C(p)$ decomposes into finitely many classes 
under the equivalence relation of  $\GSp_{2n}(\RR)$-conjugacy.  By a slight abuse of terminology
we refer to these equivalence classes as $\GSp_{2n}(\RR)$-conjugacy classes in $\mathcal C(p)$.
Every $\GSp_{2n}(\RR)$-conjugacy class is a union of two $\Sp_{2n}(\RR)$-conjugacy classes,
and conjugation by $\tau_0$ exchanges them.  (If $h\in \GSp_{2n}(\RR)$ has 
multiplier $c\in\RR$ then $\tau_0h$ has multiplier $-c$.  If $c>0$ then
 conjugation by $\sqrt{c}^{-1}h \in \Sp_{2n}(\RR)$ agrees with that of $h$.) 
In the following proposition we identify
the viable elements in $\mathcal C(p)$.

\begin{prop} \label{prop-viable}  Let $p(x) \in \QQ[x]$ be an ordinary Weil $q$-polynomial.
Fix a CM type $\Phi$ on $\mathcal C(p)$. 
Among the $\Sp_{2n}(\RR)$-conjugacy classes in $\mathcal C(p)$ there is a unique one,
$\mathcal C^+(p)\subset \mathcal C(p)$ that contains  $\Phi$-viable elements, 
and for this class, every element $\gamma \in \mathcal C^+(p)$ is $\Phi$-viable.

\end{prop}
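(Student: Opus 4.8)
The statement has two parts: (i) exactly one $\Sp_{2n}(\RR)$-conjugacy class $\mathcal C^+(p)\subset\mathcal C(p)$ contains $\Phi$-viable elements; (ii) for that class, \emph{every} element is $\Phi$-viable. The plan is to reduce $\Phi$-viability to a signature condition on real symmetric forms, and then to observe that this signature condition is exactly an invariant of the $\Sp_{2n}(\RR)$-conjugacy class. Existence of at least one viable element comes for free from the existence results already proved.

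\textbf{Step 1: existence.} By Proposition \ref{prop-h(x)}, the polynomial $p(x)$ being an ordinary Weil $q$-polynomial corresponds to a totally real ordinary Weil $q$-polynomial $h(x)$. By Lemma \ref{lem-unique-positive} (or more directly by the construction in \S\ref{subsec-viable}, taking $L=\mathcal O_K\cdot x_0$), there exists a lattice $L$ and an element making $(L,\gamma_0,\B_0)$ a $\Phi$-positively polarized Deligne module; concretely, pick a semisimple $\gamma_0\in\GSp_{2n}(\QQ)$ with characteristic polynomial $p(x)$ (which exists by Proposition \ref{prop-companion}), use Lemma \ref{lem-Rosati} / Lemma \ref{lem-unique-positive} to adjust the symplectic form to be $\Phi$-positive, and then transport back to standard form via Lemma \ref{lem-Darboux}. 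This produces a $\Phi$-viable element $\gamma_0\in\mathcal C(p)$, so at least one $\Sp_{2n}(\RR)$-conjugacy class, call it $\mathcal C^+(p)$, contains one.

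\textbf{Step 2: $\Phi$-viability is an $\Sp_{2n}(\RR)$-conjugacy invariant, and conversely.} Here is the crux. For $\gamma\in\mathcal C(p)$ and a fixed totally $\Phi$-positive imaginary element $\iota\in K=\QQ[\gamma]$ (transported to $\QQ[\gamma']$ canonically for each $\gamma'$ as in \S ``CM type of a conjugacy class''), $\Phi$-viability of $\gamma$ means the symmetric bilinear form $R_\gamma(x,y)=\B_0(x,\iota(\gamma) y)$ on $\RR^{2n}$ is positive definite. First I would check that $\Phi$-viability is preserved under $\Sp_{2n}(\RR)$-conjugacy: if $\gamma'=h\gamma h^{-1}$ with $h\in\Sp_{2n}(\RR)$, then $\iota(\gamma')=h\iota(\gamma)h^{-1}$, and since $h$ preserves $\B_0$ we get $R_{\gamma'}(x,y)=\B_0(x,h\iota(\gamma)h^{-1}y)=\B_0(h^{-1}x,\iota(\gamma)h^{-1}y)=R_\gamma(h^{-1}x,h^{-1}y)$, so $R_{\gamma'}$ is congruent to $R_\gamma$ and hence has the same signature; in particular positive definiteness is preserved. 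Conversely, if $\gamma,\gamma'\in\mathcal C(p)$ are both $\Phi$-viable, I want them to be $\Sp_{2n}(\RR)$-conjugate. For this, by Proposition \ref{prop-classification} / Lemma \ref{lem-Darboux}, realize each as arising from a $\Phi_\varepsilon$-positively polarized real structure-free Deligne module over $\RR$; the forms $R_\gamma$ and $R_{\gamma'}$ are both positive definite, hence congruent by some element of $\GL_{2n}(\RR)$, and one upgrades this to a symplectic congruence using Lemma \ref{lem-Rosati} (the ``$\RR$-isogeny with multiplier $1$'' statement), which says that any two $\Phi$-positive polarizations of the same real Deligne module differ by an element of $\End(T,F)\otimes\RR$ preserving everything — equivalently, the two elements $\gamma,\gamma'$ are conjugate inside $\GSp_{2n}(\RR)$ by an element centralizing the characteristic-polynomial data, and after adjusting the multiplier (replacing $\gamma'$ by $\tau_0\gamma'\tau_0^{-1}$ if needed) by an element of $\Sp_{2n}(\RR)$. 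Hence all $\Phi$-viable elements of $\mathcal C(p)$ lie in the single class $\mathcal C^+(p)$, which together with Step 1 gives uniqueness, and combined with the forward implication gives that \emph{every} element of $\mathcal C^+(p)$ is $\Phi$-viable.

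\textbf{Main obstacle.} The delicate point is the converse direction in Step 2 — showing two $\Phi$-viable elements are genuinely $\Sp_{2n}(\RR)$-conjugate, not merely $\GL_{2n}(\RR)$-conjugate with matching signatures. The signature of $R_\gamma$ records only a $\GL_{2n}(\RR)$-congruence invariant; one must argue that \emph{within} the constraint of preserving $\B_0$ (and the $K$-module structure / the choice of $\iota$), positive definiteness still pins down a single orbit. This is exactly the content of the ``moreover'' clause of Lemma \ref{lem-Rosati}: given two $\Phi$-positive polarizations $\B_1,\B_2$ of the same Deligne module, the endomorphism $\beta$ with $\B_2(x,y)=\B_1(\beta x,y)$ is Rosati-fixed and positive definite, hence a square $\beta=\alpha'\alpha$ with $\alpha\in\End(T,F)\otimes\RR$, giving the required symplectic isogeny with multiplier $1$. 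Translating this into a statement about $\Sp_{2n}(\RR)$-conjugacy of $\gamma$ and $\gamma'$ (rather than isogeny of polarized modules), and handling the multiplier sign via $\tau_0$, is the step that requires the most care, but all the necessary ingredients are the lemmas cited above.
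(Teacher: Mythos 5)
Your proposal matches the paper's proof in its essentials: the forward implication is the same direct check that $\Sp_{2n}(\RR)$-conjugation by $h$ sends $\iota$ to $h\iota h^{-1}$ and preserves positive definiteness of $R$, and the converse hinges — exactly as in the paper — on the ``moreover'' clause of Lemma \ref{lem-Rosati} (two $\Phi$-positive polarizations of the same Deligne module admit an $\RR$-isogeny of multiplier $1$). The one minor difference is that your converse opens with a $\GL_{2n}(\RR)$-congruence of the positive definite forms $R_\gamma,R_{\gamma'}$, which by itself neither intertwines $\gamma$ and $\gamma'$ nor preserves $\B_0$ and so is a detour; the paper instead conjugates by $\phi\in\GL_{2n}(\QQ)$ using equality of characteristic polynomials, pulls back $\B_0$ to get a second $\Phi$-positive polarization of $(L_0,\gamma_0)$, and applies Lemma \ref{lem-Rosati} to that pair — the step you correctly identify as the crux, and the one you would in fact need to carry out.
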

\begin{proof}  By Proposition \ref{prop-Qbar-conjugacy} there exist viable elements in
$\mathcal C(p)$.  Fix one such element $\gamma_0\in \mathcal C(p)$ and choose a lattice
$L_0 \subset\QQ^{2n}$ so that  $(L_0, \gamma_0, \B_0)$ is a polarized Deligne module.

Suppose that $\gamma \in \mathcal C(p)$ is viable and that  $(L, \gamma, \B_0)$ is a polarized Deligne module. 
We claim this implies that $\gamma, \gamma_0$ are $\Sp_{2n}(\RR)$ conjugate.   Since $\gamma, \gamma_0$
have the same characteristic polynomial, they are conjugate by some element $\phi\in\GL_{2n}(\QQ)$
where therefore induces an identification $\QQ[\gamma] \cong \QQ[\gamma_0]$.  Then
$(\phi^{-1}(L), \gamma_0, \phi^*(\B_0))$ is a polarized Deligne module.  Choose $c \in \QQ$, $c>0$
so that $c\phi^*(\B_0)$ takes integer values on $L_0$.  Then $c\phi^*(\B_0)$ is a second polarization
of the Deligne module $(L_0, \gamma_0)$.
By Lemma \ref{lem-Rosati} there is an
$\RR$-isogeny $\psi:(L_0,\gamma_0,\B_0) \to (L_0,\gamma_0, c\phi^*(\B_0))$ with multiplier equal to $1$,
which implies that $\psi^*\phi^*(c\B_0) = \B_0$.  Thus, $\phi\circ\psi \in \Sp_{2n}(\RR)$
and conjugation by $\phi\circ\psi$ takes $\gamma_0$ to $\gamma$.

Conversely we claim that every element $\gamma \in \mathcal C(p)$ that is $\Sp_{2n}(\RR)$-conjugate
to $\gamma_0$ is also viable.  Suppose $\gamma = h \gamma_0 h^{-1}$ where $h \in \Sp_{2n}(\RR)$.
Let $\iota_0 \in \QQ[\gamma_0]$ be a totally $\Phi$-positive imaginary element with associated
positive definite symmetric bilinear form $R_0(x,y) = \B_0(x, \iota_0 y)$.  It follows that
$\iota = h \iota_0 h^{-1} \in \QQ[\gamma]$ is totally $\Phi$-positive imaginary, and
\[ R(x,x) = \B_0(x, h \iota_0 h^{-1}x) =  \B_0(h^{-1}x, \iota_0 h^{-1} x) >0\]
for all $x$.
\end{proof}

\quash{

Conversely we claim that every element of $\mathcal C^+(p)$ is viable.  
Let $\gamma \in \mathcal C^+(p)$.   Then $\gamma, \gamma_0$ are conjugate by some
element $h \in \GSp_{2n}(\RR)^+$ with positive multiplier, and also by some element 
$g \in \GSp_{2n}(\overline{\QQ})$, hence also by some element $t \in \GL_{2n}(\QQ)$, that is,
\[ \gamma = g \gamma_0 g^{-1} = h \gamma_0 h^{-1} = t \gamma_0 t^{-1}.\]
Since $tL_0$ is a lattice in $\QQ^{2n}$ there exists an integer $m$ so that
$\B_0$ takes integer values on $mtL_0$.
  Let $L = mtL_0$.  We claim the triple $(L, \gamma, \B_0)$
is a ($\Phi_{\varepsilon}$) polarized Deligne module (which, by Proposition \ref{prop-five-part}
is $\overline{\QQ}$-isogenous to $(L_0,\gamma_0,\B_0)$). Clearly, the lattice $L$
is preserved by $\gamma$ and by $q\gamma^{-1}$ and the symplectic form $\B_0$ takes integer
values on $L$.  Let us check that $\B_0$ is a $\Phi_{\varepsilon}$-positive polarization.
 Let $\iota_0 \in E_0 = \QQ[\gamma_0]$ be a totally $\Phi_{\varepsilon}$-positive
imaginary element with associated positive definite form $R_0(x,y) = \B_0(x, \iota_0y)$.
It follows that $\iota = h^{-1} \iota_0 h$  is a totally 
$\Phi_{\varepsilon}$-positive imaginary element in $E = \QQ[\gamma]$. The corresponding
symmetric bilinear form $R(x,y) = \B_0(x,\iota y)$ is positive definite because
\begin{equation}\label{eqn-Rxx}
 R(x,x) = \B_0(x, h^{-1} \iota_0 h x) = c(h)^{-1} \B_0(hx,\iota_0 hx) = 
 c(h)^{-1}R_0(hx,hx) >0\end{equation}
and by construction, the multiplier $c(h)>0$ is positive.
\end{proof}

}


\section{Involutions on the symplectic group} \label{sec-involutions} 
\subsection{}
Let $R$ be an integral domain and
let $\GSp_{2n}(R)=\GSp(R^{2n},\B_0)$ denote the symplectic group with
respect to the standard symplectic form $\B_0$, as in \S \ref{subsec-preliminaries}.
The {\em standard involution} $\tau_0:R^{2n} \to R^{2n}$ is
 $\tau_0 = \left( \begin{smallmatrix} -I_n & 0 \\ 0 & I_n \end{smallmatrix} \right).$ 
If $g \in \GSp_{2n}(R)$ let $\tilde g = \tau_0^{-1} g \tau_0$.
If $e_1,e_2,\cdots, e_{2n}$ denotes the standard basis of $R^{2n}.$
and if  $1 \le r \le n$ set
\begin{equation}\label{eqn-tau-cases}
\tau_r(e_i) = \begin{cases} e_i &\text{if}\ 1 \le i \le 2r\\
-e_i &\text{if}\ 2r+1\le i \le 2n.\end{cases}\end{equation}

\begin{prop}  \label{prop-classification}{\rm(}\cite{Hua, Dieudonne, Huppert}{\rm)}  
Let $R$ be an integral domain that contains $\frac{1}{2}$. 
Let $\tau: R^{2n}\to R^{2n}$ be an $R$-linear mapping such that $\tau^2=I,$ and suppose
that conjugation by $\tau$ preserves $\Sp_{2n}(R)\subset \GL_{2n}(R).$   
Then $\tau \in \GSp_{2n}(R)$ and its multiplier is $\pm 1.$  If it is
$-1$ then $\tau$ is $\Sp_{2n}(R)$-conjugate to $\tau_0.$
If its multiplier is $+1$ then $\tau$ is
$\Sp_{2n}(R)$-conjugate to $\tau_r$ for some $r$ with $1 \le r \le n.$
\end{prop}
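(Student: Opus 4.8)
The plan is to separate an algebraic step, recognizing $\tau$ as a symplectic similitude of multiplier $\pm1$, from a geometric step, classifying involutive similitudes through their eigenspace decomposition and then normalizing by the Darboux lemma (Lemma \ref{lem-Darboux}).

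For the algebraic step I would recover the form $\B_0$ from the group. Put $\B'(x,y)=\B_0(\tau x,\tau y)$; this is alternating, and strongly non-degenerate since $\tau$ is invertible and $\B_0$ is. Because conjugation by $\tau$ preserves $\Sp_{2n}(R)$, the form $\B'$ is $\Sp_{2n}(R)$-invariant: for $g\in\Sp_{2n}(R)$ one has $\B'(gx,gy)=\B_0\bigl((\tau g\tau^{-1})\tau x,(\tau g\tau^{-1})\tau y\bigr)=\B_0(\tau x,\tau y)=\B'(x,y)$. Writing $\B'(x,y)=\B_0(Sx,y)$ for some $S\in M_{2n}(R)$ (possible as $\B_0$ is strongly non-degenerate), invariance of $\B'$ is equivalent to $S$ centralizing $\Sp_{2n}(R)$, and I would show the centralizer of $\Sp_{2n}(R)$ in $M_{2n}(R)$ is $R\cdot I$ using the symplectic transvections $\rho_v\colon x\mapsto x+\B_0(x,v)v$, which lie in $\Sp_{2n}(R)$ for every $v\in R^{2n}$: the relation $S\rho_v=\rho_v S$ forces $\B_0(x,v)\,Sv=\B_0(Sx,v)\,v$ for all $x,v$, and specializing $v=e_i$ (with $x$ chosen so that $\B_0(x,e_i)=1$) gives $Se_i\in Re_i$, while specializing $v=e_i+e_j$ shows the resulting diagonal entries of $S$ all coincide. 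Hence $S=cI$, so $\B'=c\B_0$, i.e. $\tau\in\GSp_{2n}(R)$ with multiplier $c\in R^{\times}$ (a unit because $\B'$ is strongly non-degenerate), and $\tau^2=I$ forces $c^2=1$. Here only the hypothesis that $R$ is a domain is used, to conclude $c=\pm1$.

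For the geometric step I would use $\tfrac12\in R$ to split $R^{2n}=V_+\oplus V_-$ with $V_\pm=\ker(\tau\mp I)$, each the image of the idempotent $\tfrac12(1\pm\tau)$ and hence a direct summand. If $c=-1$: for $x,y\in V_+$, $\B_0(x,y)=\B_0(\tau x,\tau y)=-\B_0(x,y)$, so $\B_0$ vanishes on $V_+$, and likewise on $V_-$; thus $V_+,V_-$ are complementary Lagrangian summands and $\B_0$ identifies $V_+$ with $\Hom_R(V_-,R)$. Lemma \ref{lem-Darboux}, applied to this Lagrangian decomposition, yields $g\in\Sp_{2n}(R)$ carrying $V_-$ onto $\langle e_1,\dots,e_n\rangle$ and $V_+$ onto $\langle e_{n+1},\dots,e_{2n}\rangle$ (composing if necessary with the symplectic matrix $J$, which interchanges the two standard Lagrangians); since an involution is determined by its $\pm1$-eigenspaces, $g\tau g^{-1}=\tau_0$. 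If $c=+1$: for $x\in V_+$, $y\in V_-$, $\B_0(x,y)=\B_0(\tau x,\tau y)=-\B_0(x,y)=0$, so $R^{2n}=V_+\perp V_-$ orthogonally, and strong non-degeneracy of $\B_0$ on $R^{2n}$ restricts to each summand, forcing $\rank V_+=2r$ and $\rank V_-=2(n-r)$ to be even. Applying Lemma \ref{lem-Darboux} to $V_+$ and to $V_-$ separately and concatenating (legitimate because $V_+\perp V_-$) gives $g\in\Sp_{2n}(R)$ identifying $V_+$ with a non-degenerate symplectic coordinate block of rank $2r$ and $V_-$ with its complement, so that $g\tau g^{-1}$ is the model involution $\tau_r$ of (\ref{eqn-tau-cases}); the integer $r=\tfrac12\rank V_+$ is the only invariant (with $r=n$ giving $\tau=I$ and $r=0$ giving $\tau=-I$).

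The bulk of the real work is the transvection computation of the centralizer in the algebraic step, but the one genuinely delicate point is the appeal to Lemma \ref{lem-Darboux}, whose construction of an adapted symplectic basis presupposes that the Lagrangian (resp. orthogonal symplectic) summands $V_\pm$ are \emph{free} $R$-modules. For every ring to which the proposition is applied -- fields, principal ideal domains, and valuation rings -- a direct summand of a finite free module is automatically free (the ring is a PID or is local), so $V_\pm$ are free with no further argument; over a completely general integral domain one falls back on the classical treatment of \cite{Hua, Dieudonne, Huppert}. The remaining verifications -- that $\rho_v\in\Sp_{2n}(R)$ and preserves $\B_0$, the eigenspace bookkeeping, and the final comparison of $g\tau g^{-1}$ with (\ref{eqn-tau-cases}) -- are routine.
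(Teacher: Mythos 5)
Your proof is correct and follows the same two-step architecture as the paper's (first recognize $\tau$ as a symplectic similitude with multiplier $\pm 1$, then classify involutive similitudes by the $\pm 1$-eigenspace decomposition and normalize via Lemma~\ref{lem-Darboux}). The one genuine divergence is in the algebraic step: your bilinear form $\B'(x,y)=\B_0(Sx,y)$ is exactly the paper's intertwining operator $S = J^{-1}\,{}^t\tau J\tau$, but where the paper passes to the algebraic closure of the fraction field and invokes Schur's lemma to conclude $S$ is scalar, you pin $S$ down directly inside $M_{2n}(R)$ by commuting it with the symplectic transvections $\rho_v$. This is more elementary and has the small advantage of producing $c \in R$ at once (the paper gets $c$ in the algebraic closure and must observe that $J^{-1}M$ has entries in $R$ to land back in $R$), at the cost of the routine verification that each $\rho_v$ is symplectic and of an explicit basis-vector computation. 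The geometric step is identical, and you are right to flag the reliance of Lemma~\ref{lem-Darboux} on the freeness of the eigenspace summands $V_\pm$: the paper's proof makes the same tacit assumption when it "chooses a basis of $T_-$," and in both treatments this is unproblematic only because every ring to which the proposition is applied in the body of the paper ($\QQ$, $\QQ_p$, $\ZZ_p$, $W(k)$, $K(k)$) is a PID or a field, so projective direct summands of free modules are automatically free.
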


\begin{proof}
First we claim that $\tau \in \GSp_{2n}(R).$
The matrix $M = \tr{\tau}J\tau$ is antisymmetric and (strongly) nondegenerate so it defines
a symplectic form and we claim that $\Sp(R^{2n},M) = \Sp(R^{2n},\B_0)$ for
\begin{align*}
g \in \Sp_{2n}(M) &\iff \tr{g} \tr{\tau} J \tau g = \tr{\tau}J\tau \\
&\iff \tau g \tau^{-1} \in \Sp_{2n}(\B_0) \iff g \in \Sp_{2n}(\B_0) \end{align*}
since conjugation by $\tau$ preserves $\Sp_{2n}(\B_0).$  Then $J^{-1}M:R^{2n}
\to R^{2n}$ is an intertwining operator, for if $g \in \Sp_{2n}(\B_0)$ then
\[
g^{-1}(J^{-1}M) = \left(g^{-1}J^{-1}\tr{g}^{-1}\right)\left(\tr{g}Mg\right)g^{-1}
= (J^{-1}M)g^{-1}.\]
By Schur's lemma there exists $c$ (in the algebraic closure of the
fraction field of $R$) such that $J^{-1}M = cI$ or $\tr{\tau} J \tau = cJ.$
Thus $\tau \in \GSp_{2n}(\B_0)$ has multiplier equal to $c$, and since $\tau$ is an
involution we have $c = \pm 1.$

Write $T=R^{2n}.$
The symplectic form $J$ is (strongly) non-degenerate so it induces an isomorphism
\begin{equation}\label{eqn-adjoint}
T \cong \Hom(T,R)\quad \text{say,}\quad x \mapsto x^{\sharp}.\end{equation}
Let $T_+, T_-$ be the $\pm 1$ eigenspaces of $\tau.$  Since $2^{-1} \in R$, any
$x\in T$ may be written
\[ x = \frac{x-\tau(x)}{2} + \frac{x+\tau(x)}{2} \in T_- + T_+\]
so $T = T_- \oplus T_+.$  Apply this splitting to equation (\ref{eqn-adjoint}) to find
\begin{equation}\label{eqn-decomposition}
 \Phi: T_- \oplus T_+ \longrightarrow \Hom(T_-,R) \oplus \Hom(T_+,R).\end{equation}

Let us consider the case $c=-1,$ that is, $\B(\tau x, \tau y) = -\B(x,y).$
It follows that $\Phi(x,y) =(y^{\sharp}, x^{\sharp}),$ hence $\dim(T_-) = \dim(T_+) = n$
and we obtain an isomorphism
$T_+ \cong \Hom(T_-,R).$  Let $t_1,t_2,\cdots,t_n$ be a basis of $T_-$ and let
$\lambda_1,\cdots,\lambda_n \in \Hom(T_-,R)$ be the dual basis.  Using $\Phi$,
the dual basis translates into a basis $t'_1,\cdots,t'_n$ of $T_+.$  With respect to
this basis $\left\{t_1,\cdots,t_n, t'_1,\cdots,t'_n\right\}$ the matrix of the symplectic
form is $J,$ and the matrix of $\tau$ is $\left( \begin{smallmatrix}
-I & 0 \\ 0 & I \end{smallmatrix}\right).$

Now suppose $c=+1$ so that $\B(\tau(x),\tau(y)) = \B(x,y).$  Let $\dim(T_+) = r.$
Then $T_-$ and $T_+$ are orthogonal under
the symplectic form, so equation (\ref{eqn-decomposition}) gives isomorphisms
$T_- \cong \Hom(T_-,R)$ and $T_+ \cong \Hom(T_+,R)$ which is to say that the restriction
of the symplectic form to each of these subspaces is non-degenerate.  In particular, $r$ is
even.
A symplectic basis $\left\{v_1,\cdots,v_{2r}\right\}$ for $T_+$ and a symplectic basis
$\left\{v_{2r+1},\cdots,v_{2n}\right\}$ for $T_-$ gives a symplectic basis
$\left\{v_1,\cdots,v_{2n}\right\}$ for $T$ for which $\tau$ is given by equation
(\ref{eqn-tau-cases}).
\end{proof}

\subsection{}
According to \cite{Hua}, the other involutions of the symplectic group arise either from
an involution of the underlying ring $R$ or from conjugation by an element
$\eta \in \GSp_{2n}(R)$ such that $\eta^2 = \lambda.I$ where $\lambda \in R^{\times}$.
If the ring $R$ contains both $2^{-1}$ and $u = 
\sqrt{\lambda}$ then the above argument shows that $\eta$ is $\Sp_{2n}(R)$-conjugate to the matrix
\[ \left(\begin{matrix} -uI_n & 0 \\ 0 & uI_n \end{matrix} \right).\]
The proposition fails if the ring $R$ does not contain $\frac{1}{2}$, in fact we have:

\begin{lem}\label{lem-Z-involutions}
Let $\B_0$ be the standard symplectic form on $\ZZ^{2n}$ and let $\tau\in \GSp_{2n}(\ZZ)$ be
an involution with multiplier equal to $-1$.  Then $\tau$ is $\Sp_{2n}(\ZZ)$ conjugate to an
element
\[ \left( \begin{matrix} I & S \\ 0 & -I \end{matrix} \right)\]
where $S$ is a symmetric matrix consisting of zeroes and ones which may be taken to be
one of the following:  if $\rank(S) = r$ is odd then $S = \left( \begin{smallmatrix}
I_r & 0 \\ 0 & 0 \end{smallmatrix} \right)=I_r \oplus 0_{n-r}$; if $r$ is even then either
$S = I_r \oplus 0_{n-r}$ or $S = H\oplus H \cdots \oplus H\oplus 0_{n-r}$ 
where $H = 
\left(\begin{smallmatrix} 0 & 1 \\ 1 & 0 \end{smallmatrix} \right)$ appears $r/2$ times
in the sum.
\end{lem}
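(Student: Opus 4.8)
\emph{Plan of proof.} The idea is to work with the two eigenlattices of $\tau$. Write $T=\ZZ^{2n}$ and set $T_{\pm}=\ker(\tau\mp I)\subseteq T$. The identity $2x=(x+\tau x)+(x-\tau x)$ shows $2T\subseteq T_+\oplus T_-$, so $T_\pm$ are saturated and $\rank T_++\rank T_-=2n$; and the multiplier hypothesis $\B_0(\tau x,\tau y)=-\B_0(x,y)$ forces $2\B_0(x,y)=0$ whenever $x,y$ lie in a common $T_\pm$, so each $T_\pm$ is isotropic, hence of rank $\le n$. Therefore $T_+$ and $T_-$ are both saturated Lagrangian sublattices of the unimodular symplectic lattice $(T,\B_0)$.

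Next I would construct a symplectic $\ZZ$-basis adapted to $T_+$. Since $T_+$ is a saturated Lagrangian sublattice, $T/T_+$ is free of rank $n$ and is perfectly paired with $T_+$ by $\B_0$; lifting a $\ZZ$-basis of $T/T_+$ and correcting each lift by an element of $T_+$ so as to annihilate the resulting integral alternating matrix of pairings produces a Lagrangian complement to $T_+$, after which Lemma~\ref{lem-Darboux} (or the explicit construction just described) furnishes a symplectic basis $e_1,\dots,e_n,f_1,\dots,f_n$ with $T_+=\langle e_1,\dots,e_n\rangle$. In this basis the first $n$ columns of $\tau$ are $\left(\begin{smallmatrix}I\\0\end{smallmatrix}\right)$, so $\tau=\left(\begin{smallmatrix}I&S\\0&D\end{smallmatrix}\right)$, and the relations $\tau^2=I$ and $\tr{\tau}J\tau=-J$ (a two-line block computation) force $D=-I$ and $S=\tr{S}$. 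This gives the first assertion; since only the class of $S$ modulo $2$ will matter below, one may then replace $S$ by the $\{0,1\}$-matrix with the same reduction mod $2$.

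To obtain the normal forms I would conjugate by the Siegel parabolic $P(\ZZ)\subset\Sp_{2n}(\ZZ)$, generated by $\left(\begin{smallmatrix}X&0\\0&\tr{\!X}^{-1}\end{smallmatrix}\right)$ with $X\in\GL_n(\ZZ)$ and by $\left(\begin{smallmatrix}I&B\\0&I\end{smallmatrix}\right)$ with $B=\tr{B}$: these send $S$ to $XS\tr{\!X}$ and to $S-2B$ respectively, and they preserve the form $\left(\begin{smallmatrix}I&*\\0&-I\end{smallmatrix}\right)$ of $\tau$ (they are in fact exactly the elements of $\Sp_{2n}(\ZZ)$ fixing the Lagrangian $T_+$, so the resulting list of normal forms is a list of distinct conjugacy classes). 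Using surjectivity of $\GL_n(\ZZ)\to\GL_n(\FF_2)$, the classification of such $S$ up to $\Sp_{2n}(\ZZ)$-conjugacy becomes the classification of symmetric bilinear forms on $\FF_2^n$ up to congruence, \emph{allowing degeneracy}. Writing such a form as $0_{n-r}\perp b$ with $b$ nondegenerate of rank $r$, and splitting off copies of $\langle1\rangle$ while the form admits a vector $v$ with $b(v,v)=1$, then copies of $H=\left(\begin{smallmatrix}0&1\\1&0\end{smallmatrix}\right)$ once the form is alternating, together with the $\FF_2$-identity $\langle1\rangle\perp H\cong\langle1\rangle^{\perp3}$ (equivalently $I_2\perp H\cong I_4$), collapses every nondegenerate rank-$r$ form to $I_r$ when $r$ is odd and to either $I_r$ or $H^{\perp r/2}$ when $r$ is even (the two even-rank forms being non-congruent, one being of odd type and the other of even type). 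Lifting the corresponding $\{0,1\}$-matrices back to $\ZZ$ yields the stated list for $S$.

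The step I expect to be the main obstacle is the $\FF_2$ bilinear-form bookkeeping in the last paragraph: symmetric bilinear forms over $\FF_2$ are not the same as quadratic forms, and the list remains as short as stated only because of the collapse $\langle1\rangle\perp H\cong\langle1\rangle^{\perp3}$; without it one would spuriously find extra classes such as $I_2\perp H$ in even rank. The remaining ingredients — existence of a Lagrangian complement to $T_+$ over $\ZZ$, the block computation forcing $D=-I$ and $S$ symmetric, and the computation of the $P(\ZZ)$-action on $S$ — are short and routine.
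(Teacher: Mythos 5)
Your proof is correct, and it reaches the key block form $\left(\begin{smallmatrix}I&S\\0&-I\end{smallmatrix}\right)$ by a genuinely different route from the paper. The paper starts from a primitive vector $v$ with $\tau(v)=v$, uses a Siegel-type Euclidean-algorithm argument to conjugate $v$ to $e_1$, notes that the relations $\tau^2=I$ and $A=-\tr{D}$, $B=\tr{B}$, $C=\tr{C}$ (coming from multiplier $-1$) force the matrix into a partially blocked shape, and then recurses on the induced involution of the rank-$(2n-2)$ symplectic lattice $e_1^\perp/\ZZ e_1$. You instead identify the $+1$-eigenlattice $T_+$ directly as a saturated Lagrangian (using $2T\subseteq T_+\oplus T_-$ for saturation and the multiplier to get isotropy), construct a Lagrangian complement by the standard lift-and-correct argument, and read off the block form from Lemma~\ref{lem-Darboux}. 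Your route is more structural and avoids the induction; the paper's is more hands-on and self-contained via Siegel's lemma. Once both arguments arrive at the normal form $\left(\begin{smallmatrix}I&S\\0&-I\end{smallmatrix}\right)$, the endgame — reducing $S$ modulo the Siegel-parabolic action to a symmetric bilinear form over $\FF_2$ and classifying those — is essentially identical, down to the same key identity $\langle1\rangle\perp H\cong\langle1\rangle^{\perp3}$ (the paper has a small typo writing it as $I_r\oplus H\cong I_{r+1}$ rather than $I_{r+2}$). You are more explicit than the paper on two minor points worth keeping: that the reduction to $\FF_2$ uses surjectivity of $\GL_n(\ZZ)\to\GL_n(\FF_2)$, and that the Siegel parabolic is the full stabilizer of $T_+$, which is what guarantees the normal forms in the list are actually pairwise non-conjugate (a fact the lemma does not strictly assert, but which you correctly observe comes for free).
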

\quash{
 which may be taken to be one of
the following symmetric bilinear forms: if $\text{rank}(S) = 2r+1$ is odd then
\begin{align*}
\tr{x}Sx &= x_1^2 + 2(x_2x_3 + \cdots + x_{2r}x_{2r+1})
\intertext{if $\text{rank}(S)=2r$ is even then there are two possibilities:  either}
\tr{x}Sx &= 2(x_1x_2 + x_3x_4 + \cdots + x_{2r-1}x_{2r})  \intertext{or }
\tr{x}Sx &= x_1^2 + x_2^2 + 2(x_3x_4 + \cdots + x_{2r-1}x_{2r}). 
\end{align*}
}

\begin{proof}
There exists a vector $v \in \ZZ^{2n}$ that is primitive and has $\tau(v) = v$.  (Choose a
rational vector $u$ so that $\tau(u) = u$, clear denominators to obtain an integral vector,
and divide by common divisors to obtain a primitive vector.)  
We claim there exists $g \in \Sp_{2n}(\ZZ)$ so that $gv = e_1 = (1,0,\cdots,0)$.  This is
a lemma of Siegel (see \cite{Freitag} Satz A5.4) but here is an outline.
 Let $v = (a_1,\cdots,a_n, b_1,\cdots, b_n)$.  Acting by permutations
we can arrange that $|a_1|<|a_j|$ for $2 \le j \le n$.  Acting by
$\left(\begin{smallmatrix} A & 0 \\ 0 & \tr{A}^{-1}\end{smallmatrix}\right)$ we can subtract 
multiplies of $a_1$ from
the other $a_i$ and continuing in this way (by the Euclidean algorithm) we can eventually arrange
that $a_j = 0$ for $2 \le j \le n$.  Similarly, acting by $\left(\begin{smallmatrix}
I & 0 \\ S & I \end{smallmatrix} \right)$ we can arrange that $|b_i|<|a_1|$ for $1 \le i \le n$.
Then using $\left( \begin{smallmatrix}0 & I \\ -I & 0\end{smallmatrix} \right)$ we can switch the
$a's$ and the $b's$.  Continuing in this way we can arrange that $b_j = 0$ for all $j$.  This
implies that $a_1$ is a unit, so we can adjust it to equal one. 

It follows that $\tau$ is $\Sp_{2n}(\ZZ)$ conjugate to a matrix 
$\left( \begin{smallmatrix} A & B \\ C & D \end{smallmatrix} \right)$ where
\[ A = \left( \begin{matrix} 1 & * \\ 0 & A_1 \end{matrix}\right),\ \
B = \left(\begin{matrix} * & * \\ * & B_1 \end{matrix} \right),\ \
C = \left(\begin{matrix} 0 & 0 \\ 0 & C_1 \end{matrix} \right),\ \
D = \left(\begin{matrix}-1 & 0 \\ * & D_1 \end{matrix} \right)\]
and where $\left( \begin{matrix} A_1 & B_1 \\ C_1 & D_1 \end{matrix}\right) \in \GSp_{2n-2}(\ZZ)$
is an involution with multiplier equal to $-1$.  By induction, the involution $\tau$ is therefore
conjugate to such an element where $A_1 = I$, $B_1$ is symmetric, $C_1 = 0$ and $D_1 = -I$.  The
condition $\tau^2 = I$ then implies that $A = I$, $D = -I$, $C = 0$ and $B$ is symmetric.
Conjugating $\tau$ by any element $\left(\begin{smallmatrix} I & T \\ 0 & I \end{smallmatrix}
\right) \in \Sp_{2n}(\ZZ)$ (where $T$ is symmetric) we see that $B$ can be modified by the addition
of an even number to any symmetric pair $(b_{ij}, b_{ji})$ of its entries.  
Therefore, we may take $B$ to consist of zeroes and ones.

The problem then reduces to describing the list of possible 
symmetric bilinear forms on a $\ZZ/(2)$ vector space $V$. 
It suffices to consider the case of maximal rank.  Certainly, $B = I$ is one such.  Let 
$\langle v,w \rangle = \tr{\!v}Bw$.  Suppose it happens that
 $\langle v,v \rangle = 0$ for all $v\in V$.  Choose $v,w$ so that $\langle v,w \rangle = 1$,
let $W_1$ be the span of $v,w$ and apply the same reasoning to $W_1^{\perp}$ to obtain
$B \cong H \oplus \cdots \oplus H$ (and $\dim(V)$ is even). On the other hand, 
if there exists $v \in V$ so that 
$\langle v,v \rangle = 1$ let $W_1$ be the span of $v$ and consider $W_1^{\perp}$.  
One checks that $W_1 \oplus H \cong I_{3}$ and more generally that
$I_{r}\oplus H \cong I_{r+1}$ if $r$ is odd.  Thus, if $\dim(V)$ is odd then
$B \cong I$.
\end{proof}

\subsection{Remark}\label{remark-UFD}  
A similar argument classifies involutions of $\Sp_{2n}(R)$ with multiplier equal to $-1$,
whenever $R$ is a Euclidean domain.

\begin{lem}\label{lem-two-splittings}
Let $K$ be a field of characteristic not equal to 2.  Let $V$ be $K$-vector space of
dimension $2n$.  Let $\B:V \times V \to R$ be a nondegenerate symplectic form.
Let $\tau\in \GSp(V,\B)$ be an involution with multiplier $-1$.
Suppose $V = V' \oplus V^{\pp}$ is a decomposition into Lagrangian subspaces that are
exchanged by $\tau$.  Then there exists a numbers $a_1,\cdots, a_n \in K^{\times}$
and there exists a symplectic basis $\{e'_1,\cdots,e'_n,
e^{\pp}_1,\cdots, e^{\pp}_n\}$ of $V$ so that $\{e'_1,\cdots,e'_n\}$ form a basis
of $V'$, so that $\{e^{\pp}_1,\cdots,e^{\pp}_n\}$ form a basis of $V^{\pp}$, and 
so that $\tau(e'_i) = a_ie^{\pp}_i$.  
\end{lem}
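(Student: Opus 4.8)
The plan is to reduce to a block‑matrix computation by first normalizing the pair $(V',V'')$ against the symplectic form and then using the involution $\tau$ to pin down the remaining freedom. First I would choose an arbitrary basis $\{f_1,\dots,f_n\}$ of $V'$. Since $\B$ is nondegenerate and $V',V''$ are Lagrangian, the pairing $\B$ restricts to a perfect pairing $V'\times V'' \to K$, so there is a unique dual basis $\{g_1,\dots,g_n\}$ of $V''$ with $\B(f_i,g_j)=\delta_{ij}$; together these form a symplectic basis of $V$ in which $\B$ has the standard matrix $J$, and in which $\tau$, because it exchanges $V'$ and $V''$, has the block form $\tau = \left(\begin{smallmatrix} 0 & P \\ Q & 0\end{smallmatrix}\right)$ for some matrices $P,Q \in \GL_n(K)$ (invertibility because $\tau$ is an isomorphism exchanging the two summands).

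Next I would extract the constraints coming from $\tau^2=I$ and from $\B(\tau x,\tau y) = -\B(x,y)$ (multiplier $-1$). The relation $\tau^2 = I$ gives $PQ = QP = I$, i.e.\ $Q = P^{-1}$. The multiplier condition, written out as $\tr\tau \, J \, \tau = -J$, becomes (using the block form) a single matrix identity equivalent to $\tr P = P$, that is, $P$ is symmetric. So after the first normalization $\tau$ is determined by a single symmetric invertible matrix $P$, with $\tau(g_i) = \sum_j P_{ij} f_j$ essentially (up to bookkeeping of which block is which). The remaining task is: given a symmetric invertible $P$, find a further change of basis $f_i \mapsto e'_i = \sum_k X_{ki} f_k$ of $V'$, extended to the dual change on $V''$ so as to preserve both the symplectic normalization and the Lagrangian decomposition, after which $\tau$ takes the desired ``anti‑diagonal with scalars'' shape $\tau(e'_i) = a_i e''_i$.

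The key observation is that such an allowed change of basis replaces $P$ by $\tr X P X$ for $X \in \GL_n(K)$ — it is exactly congruence of the symmetric form $P$. Over a field $K$ of characteristic $\ne 2$, every symmetric bilinear form can be diagonalized, so there exists $X$ with $\tr X P X = \Diag(a_1,\dots,a_n)$, $a_i \in K^{\times}$ (nonzero since $P$ is nonsingular). In the corresponding symplectic basis $\{e'_1,\dots,e'_n,e''_1,\dots,e''_n\}$ one reads off $\tau(e'_i) = a_i e''_i$, which is the claim. I would close by noting the dual basis on $V''$ is forced by the symplectic normalization (it is $\tr X^{-1}$ applied to the $g_i$), so the Lagrangian splitting and the form $J$ are preserved throughout.

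\textbf{Main obstacle.} The only real point requiring care — and the step I expect to be the crux — is verifying that the ``allowed'' basis changes (those preserving $J$ and the decomposition $V = V' \oplus V''$ into the two given Lagrangians) act on the matrix $P$ precisely by congruence $P \mapsto \tr X P X$, with no extra constraint linking the two blocks; once that bookkeeping is pinned down, diagonalizability of symmetric forms over $K$ finishes it immediately. A secondary check is confirming that the multiplier$=-1$ condition really forces $P$ symmetric (rather than, say, skew), which is a short direct computation with the block form of $J$.
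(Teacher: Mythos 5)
Your proof is correct and follows essentially the same route as the paper: normalize via a symplectic basis adapted to the Lagrangian splitting so that $\tau$ has block anti-diagonal form, use $\tau^2=I$ together with the multiplier $-1$ condition to see the off-diagonal block is a symmetric invertible matrix, observe that the remaining basis changes act by congruence, and finish by diagonalizing a symmetric form over a field of characteristic $\ne 2$. The only cosmetic difference is the order in which the two constraints on $\tau$ are used (the paper first reads off the lower block $\tr{A}^{-1}$ from the multiplier condition and then gets symmetry from $\tau^2=I$, while you do it the other way around), but the two derivations are equivalent.
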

\begin{proof}
Any choice of basis $\{u'_1,\cdots,u'_n\}$ of $V'$ determines a dual basis
$\{u^{\pp}_1,\cdots, u^{\pp}_n\}$ of $V^{\pp}$ with respect to the nondegenerate
pairing $V' \times V^{\pp} \to K$ defined by $\B$.  The collection 
$\{u'_1,\cdots, u'_n, u^{\pp}_1,\cdots, u^{\pp}_n\}$ is thus a symplectic basis of
$V$ and it defines an isomorphism $\phi:V \to K^{2n}$ which takes the symplectic form
$\B$ to the standard symplectic form $J_0$ and it takes the Lagrangian spaces $V'$ and
$V^{\pp}$ to
the Lagrangian spaces $K^n \times \{0\}$ and $\{0\}\times K^n$ respectively.  
The matrix of $\tau$ with respect to this
basis is therefore $\left( \begin{smallmatrix} 0 & A \\ \tr{\!A}^{-1} & 0 \end{smallmatrix} \right)$.
The condition $\tau^2=I$ implies that $A$ is symmetric.  Let $g = \left(
\begin{smallmatrix} B & 0 \\ 0&\tr{B}^{-1} \end{smallmatrix} \right)$ be a symplectic change of basis
that preserves the decomposition $K^{n} \oplus K^{n}$.  This has the effect of changing
the matrix of $\tau$ by replacing $A$ with $BA\tr{B}$.  Thus, it is possible to choose the matrix
$B$ so that $B A \tr{B}$ is diagonal (and its diagonal entries are determined up to multiplication
by squares of elements in $K$). 
\end{proof}

\quash{
\subsection{}\label{sec-H}
Let $\GL^*_n$ denote the centralizer of $\tau_0$ or equivalently, the subgroup of
$\GSp_{2n}$ that is fixed under the involution defined by $\tau_0.$  It consists
of all elements $
 h = \left( \begin{smallmatrix}  X & 0 \\ 0 & Y
\end{smallmatrix} \right)$
where $X\tr{Y} = \mu I$ for some $\mu \ne 0.$
There is a short exact sequence
\[
1 \to \GL_n \to \GL^*_n \to \mathbb G_m \to 1\]
where $\GL_n$ is identified with its image under the {\em standard embedding}
$\delta:\GL_n\hookrightarrow\GSp_{2n}$ given by
$\delta(A) = \left(\begin{smallmatrix} A & 0 \\ 0 & {}^tA^{-1}\end{smallmatrix}\right).$
If
\begin{equation}\label{eqn-h}
h = \left(\begin{matrix} X & 0 \\ 0 & \mu\tr{\!X}^{-1} \end{matrix} \right) \in \GL^*_n
\end{equation} and if $\gamma = \left(
\begin{smallmatrix} A & B \\ C & D\end{smallmatrix}\right)\in \GSp_{2n}$ then conjugation by
$h$ is given by:
\begin{equation}\label{eqn-gln-action}
h \gamma h^{-1} = \left( \begin{matrix} XAX^{-1} & \frac{1}{\mu}XB\tr{\!X}\\
\mu\tr{\!X}^{-1}CX^{-1} & \tr{\!X}^{-1} D \tr{\!X} \end{matrix} \right)
\end{equation}
It follows that $\gamma$ is \qreal if and only if $h \gamma h^{-1}$ is \qreal.
We say that two elements of $\GSp_{2n}$ are $\GL_n$-conjugate if the conjugating element
lies in the image of $\delta.$  We will often identify $\GL_n$ with its image under
$\delta$ (and therefore suppress the $\delta$).
}

\section{Symplectic cohomology}\label{appendix-cohomology}
\subsection{Nonabelian cohomology}
Let $R$ be a commutative ring with $1$.  As in Appendix \ref{sec-involutions} the involution $\tau_0$ of
$R^n \times R^n$ is defined by $\tau_0(x,y) = (-x,y)$.
Let $\langle \tau_0 \rangle = \left\{ 1, \tau_0 \right\} \cong \ZZ/(2)$
denote the group generated by the involution $\tau_0$.  For $g \in \Sp(2n,R)$ 
let $\tilde g = \tau_0 g \tau_0^{-1}$.  This defines an action of the group 
$\langle\tau_0\rangle$ on $\Sp(2n,R)$.   Let $\Gamma \subset \Sp_{2n}(R)$ be a
subgroup that is preserved by this action (that is, $\widetilde{\Gamma} = \Gamma$).  
Recall that a 1-cocycle for this action
is a mapping $f:\langle\tau_0\rangle \to \Gamma$ such that $f(1) = I$ and $f(\tau_0) = g$
where $g \tilde g = I$.  We may write $f = f_g$ since the mapping $f$ is determined by the element $g$.
Then two cocycles $f_g, f_{g'}$ are cohomologous if there exists $h \in \Gamma$ such that
$g' = h^{-1} g \tilde{h}$ or equivalently, such that $g' = \tilde{h} g h^{-1}$.  The set of
cohomology classes is denoted 
\[H^1(\langle \tau_0 \rangle, \Gamma).\]

If $\tau \in \GSp_{2n}(R)$ is another involution (meaning that $\tau^2=I$) with multiplier 
equal to $-1$ then $g=\tau\tau_0$ defines a cocycle since $g \tilde{g} = 1$.  One easily checks the following.
\begin{prop} \label{prop-cohomology-involutions}
Let $\Gamma \subseteq \Sp_{2n}(R)$ be a subgroup that is normalized by $\tau_0$. The mapping $\tau \mapsto \tau\tau_0$ determines a one to one correspondence between the
set of $\Gamma$-conjugacy classes of involutions (i.e.~elements of order $2$),
$\tau \in \Gamma.\tau_0$  
and the cohomology set $H^1(\langle \tau_0 \rangle, \Gamma)$.
\end{prop}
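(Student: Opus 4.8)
\textbf{Proof proposal for Proposition \ref{prop-cohomology-involutions}.}

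The plan is to verify directly that $\tau \mapsto g = \tau\tau_0$ is a well-defined bijection from $\Gamma$-conjugacy classes of order-$2$ elements in the coset $\Gamma.\tau_0$ onto $H^1(\langle\tau_0\rangle,\Gamma)$, and to carry this out in four short steps. First I would check that the map lands in the set of cocycles: if $\tau\in\Gamma.\tau_0$ then $g=\tau\tau_0\in\Gamma$, and since $\tau^2=I$ one computes $g\tilde g = \tau\tau_0\cdot\tau_0 g\tau_0^{-1} = \tau\tau_0\cdot\tau_0\tau\tau_0\tau_0^{-1} = \tau(\tau_0^2)\tau = \tau^2 = I$ (using $\tau_0^2=I$). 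Hence $g$ defines a $1$-cocycle $f_g$ in the sense recalled just above the proposition. Conversely, given any cocycle $g\in\Gamma$ with $g\tilde g=I$, set $\tau = g\tau_0 \in \Gamma.\tau_0$; then $\tau^2 = g\tau_0 g\tau_0 = g(\tau_0 g\tau_0^{-1})(\tau_0^2) = g\tilde g = I$, so $\tau$ is an involution, and $\tau\tau_0 = g\tau_0^2 = g$, so the two constructions are mutually inverse on the level of elements (before passing to equivalence classes).

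Second, I would check that conjugacy classes match cohomology classes. If $\tau' = h\tau h^{-1}$ with $h\in\Gamma$, and $g = \tau\tau_0$, $g' = \tau'\tau_0$, then
\[
g' = h\tau h^{-1}\tau_0 = h\tau\tau_0\,(\tau_0^{-1}h^{-1}\tau_0) = h\,g\,\widetilde{h^{-1}} = h\,g\,\tilde h^{-1},
\]
which is exactly the statement that $f_g$ and $f_{g'}$ are cohomologous (recall two cocycles $f_g,f_{g'}$ are cohomologous iff $g' = \tilde h g h^{-1}$, equivalently $g' = h g \tilde h^{-1}$ after replacing $h$ by $h^{-1}$). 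Running this computation in reverse shows that cohomologous cocycles yield $\Gamma$-conjugate involutions. Combined with the first step, this gives a well-defined bijection on equivalence classes.

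Finally, I would note that all that remains is an observation already folded into the above: every element of order $2$ in $\Gamma.\tau_0$ automatically has multiplier $-1$, since if $\tau = g\tau_0$ with $g\in\Gamma\subseteq\Sp_{2n}(R)$ then the multiplier of $\tau$ equals that of $\tau_0$, namely $-1$; so the hypothesis ``multiplier $-1$'' in the discussion preceding the proposition is consistent and no case distinction is needed. I do not expect any serious obstacle here — the content is purely formal manipulation of the cocycle relation $g\tilde g = I$ versus the involution relation $\tau^2 = I$, and the only point requiring a moment's care is bookkeeping the direction of the coboundary relation ($h$ versus $h^{-1}$, left versus right translation) so that it matches the convention fixed in \S\ref{appendix-cohomology}. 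This is why the statement is phrased ``one easily checks the following,'' and the proof above is essentially that routine check written out.
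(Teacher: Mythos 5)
Your proof is correct and is precisely the routine verification the paper alludes to with ``one easily checks'' --- the paper gives no proof of this proposition, so writing out the check of $g\tilde g = I \Leftrightarrow \tau^2 = I$ and the matching of conjugation with the coboundary relation is exactly what is intended. One small bookkeeping slip in your parenthetical: replacing $h$ by $h^{-1}$ in the form $g' = \tilde h g h^{-1}$ yields $g' = \tilde h^{-1} g h$, not $g' = h g \tilde h^{-1}$; it is the \emph{other} form stated in \S\ref{appendix-cohomology}, namely $g' = h^{-1} g \tilde h$, that turns into your $g' = h g \tilde h^{-1}$ under $h \mapsto h^{-1}$. Since the paper asserts both forms as equivalent descriptions of the coboundary relation, this does not affect the correctness of your argument; your derived relation $g' = h g \tilde h^{-1}$ is indeed a valid form of the equivalence.
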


\subsection{Lattices and level structures}\label{appendix-level} 
If $L \subset \QQ^{2n}$ is a lattice its {\em symplectic dual} is the lattice
\[ L^{\vee} = \left\{ x \in \QQ^{2n}\left| \B_0(x,y)\in \ZZ \text{ for all } y \in L \right. \right\}\]
where $\B_0$ is the standard symplectic form.
A lattice $L \subset \QQ^{2n}$ is {\em symplectic} if $L^{\vee} = L$.  
A lattice $L \subset \QQ^{2n}$ is {\em symplectic up to homothety} if there exists
$c \in \QQ^{\times}$ so that $L^{\vee} = cL$.  In this case the symplectic form $b = c\B_0$ is
integer valued and strongly nondegenerate on $L$.  
A lattice $L\subset \QQ^{2n}$ is {\em real} if it is preserved by the standard involution
$\tau_0$, in which case write $\tau_L = \tau_0|L$.

Fix $N \ge 1$ and let $\bar L = L/NL$.
 A level $N$ structure on a lattice $L$ is an isomorphism $\alpha:\bar L \to 
(\ZZ/N\ZZ)^{2n}$. A level $N$ structure $\alpha$ is compatible with an integer valued symplectic form
$b:L \times L \to \ZZ$ if $\alpha_*(b) = \bar\B_0$ is the reduction modulo $N$ of the
standard symplectic form $\B_0$.  
A level $N$ structure $\alpha:\bar L \to \bar L_0$ is {\em real} if it is
 compatible with the standard involution, that is, if
$\bar\tau_0 \alpha = \alpha \bar \tau_L:\bar L \to \bar L_0$.

\subsection{Ad\`elic lattices}\label{subsec-adelic-lattices}
Let $\AA_f=\prod'_{v<\infty} \QQ_v$ (restricted direct product) denote the finite ad\`eles of $\QQ$
and let $\widehat{\ZZ} = \prod_p \ZZ_p$.
A $\widehat{Z}$-lattice $\widehat{M}\subset \AA_f^{2n}$ is a product $\widehat{M} = \prod_{v<\infty} M_v$ of 
$\ZZ_v$-lattices $M_v \subset \QQ_v^{2n}$ with $M_v = (\ZZ_v)^{2n}$ for almost all finite places $v$.  
If $\widehat{M} = \prod_{v<\infty}M_v$ is a lattice, its symplectic dual is 
$\widehat{M}^{\vee} = \prod_{v<\infty} {M}^{\vee}_v$ where
\[ (M_v)^{\vee} = \left\{ x \in \QQ_v^{2n}|\
\B_0(x,y) \in \ZZ_v \ \text{ for all } y \in M_v \right\}.\]
The lattice $\widehat{M}$ is {\em symplectic up to homothety} if there exists $\mathfrak c
\in \AA_f^{\times}$ so that $\widehat{M}^{\vee} = \mathfrak c \widehat{M}$.  In this case, 
there exists $c\in\QQ^{\times}$ (unique, up to multiplication
by $\pm 1$) so that $\widehat{M}^{\vee}=c\widehat{M}$, and the alternating form $b=c\B_0$ takes
$\widehat{Z}$ values on $\widehat{M}$. 
A lattice $\widehat M$ is {\em real} if it is preserved by the standard involution $\tau_0$.

 A level $N$ structure on an ad\`elic lattice
$\widehat{M}$ is an isomorphism $\beta:\widehat{M}/N\widehat{M} \to (\ZZ/N\ZZ)^{2n}$.  It is
compatible with a $\widehat{Z}$-valued symplectic form $b:\widehat{M} \times \widehat{M} \to \widehat{Z}$
if $\beta_*(b) = \bar{\B}_0$ is the reduction modulo $N$ of the standard symplectic form.  It is
{\em real} if it commutes with the standard involution $\tau_0$.
The following statement is standard, see for example \cite{Platonov} Theorem 1.15:
\begin{lem}\label{lem-Platonov} Let $L\subset  \QQ^{2n}$ be a $\ZZ$-lattice and let
$L_v = L \otimes \ZZ_v$ for each finite place $v$.  Then\begin{itemize}
\item $L_v =  \ZZ_v^{2n}$ for almost all $v<\infty.$
\item $L = \bigcap_{v<\infty} (\QQ^{2n} \cap L_v).$
\item Given any collection of lattices $M_v \subset \QQ_v^{2n}$ such that $M_v = \ZZ_v^{2n}$
for almost all $v<\infty,$ there exists a unique $\ZZ$-lattice $M \subset \QQ^{2n}$ such that 
$M_v = M \otimes \ZZ_v$ for all $v<\infty.$\end{itemize}\end{lem}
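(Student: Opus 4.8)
The final statement is Lemma \ref{lem-Platonov}, the standard correspondence between a $\ZZ$-lattice in $\QQ^{2n}$ and the compatible collection of its $v$-adic completions. The plan is to reduce everything to the elementary local–global principle for finitely generated $\ZZ$-modules: a lattice $L\subset\QQ^{2n}$ is recovered as the intersection over all finite places of $\QQ^{2n}\cap L_v$, and conversely any compatible family of local lattices patches to a unique global one.

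First I would establish the first bullet. Fix a $\ZZ$-basis of $L$; it spans a lattice commensurable with $\ZZ^{2n}$, so there is a single nonzero integer $m$ with $m\ZZ^{2n}\subseteq L\subseteq \tfrac1m\ZZ^{2n}$. For any prime $v$ not dividing $m$ one then has $L_v=L\otimes\ZZ_v=\ZZ_v^{2n}$, since $v$ is invertible in $\ZZ_v$. Second, for the intersection formula, the inclusion $L\subseteq\bigcap_{v<\infty}(\QQ^{2n}\cap L_v)$ is immediate; for the reverse, if $x\in\QQ^{2n}$ lies in $L_v$ for every $v$, then writing $x$ in the chosen basis of $L$ its coordinates are rational numbers that are $v$-integral at every prime, hence integers, so $x\in L$. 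Here I would use that $\ZZ=\bigcap_v\ZZ_v$ inside $\QQ$.

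Third, for existence and uniqueness of $M$ given a compatible family $\{M_v\}$: let $S$ be the finite set of places where $M_v\ne\ZZ_v^{2n}$, and choose $m$ an integer divisible by a large enough power of each prime in $S$ so that $m M_v\subseteq \ZZ_v^{2n}\subseteq m^{-1}M_v$ for all $v\in S$ (and trivially for $v\notin S$). Set $M=\bigcap_{v<\infty}(\QQ^{2n}\cap M_v)$; this is a $\ZZ$-submodule of $\QQ^{2n}$ sandwiched between $m\ZZ^{2n}$ and $m^{-1}\ZZ^{2n}$, hence a lattice. One then checks $M\otimes\ZZ_v=M_v$ for each $v$: the local ring $\ZZ_v$ is a PID, both sides have the same rank, and completing the intersection at $v$ only sees the conditions at places dividing the relevant denominators, which reproduce $M_v$ exactly; uniqueness follows from the intersection formula of the second bullet applied to any candidate. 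I would likely just cite \cite{Platonov} Theorem 1.15 for this last patching, as the statement already does, rather than reproduce the argument in full.

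This lemma is genuinely routine and has no real obstacle; the only point requiring a little care is keeping track of the finite ``bad'' set $S$ and the auxiliary integer $m$ so that all the lattices in play are simultaneously commensurable with $\ZZ^{2n}$, after which the proof is a two-line intersection argument at each place. I would present it in the three-bullet form matching the statement, deferring the genuinely standard patching step to the cited reference.
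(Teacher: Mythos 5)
The paper does not prove this lemma at all; it simply labels it ``standard'' and cites \cite{Platonov} Theorem 1.15, which is exactly what you ultimately propose to do. Your three-bullet sketch is correct (apart from the small slip ``since $v$ is invertible in $\ZZ_v$'' where you mean that $m$ is invertible in $\ZZ_v$), and the one genuinely non-routine step, verifying $M\otimes\ZZ_v = M_v$, is precisely the part you rightly defer to the cited reference, so you are taking essentially the same route.
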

This correspondence is clearly compatible with symplectic structures, real structures and level structures.

\subsection{The cohomology class of a symplectic lattice with ``real" structure}
Let $L\subset\QQ^{2n}$ be a lattice, symplectic up to homothety (say, $L^{\vee} = cL$ where $c\in\QQ$), and 
suppose that $L$ is preserved by the standard involution $\tau_0:\QQ^{2n} \to \QQ^{2n}$, in which case we
refer to $L$ as a ``real" lattice.
Let $\alpha:L/NL \to (\ZZ/N\ZZ)^{2n}$ be a level $N$ structure that is compatible with the involution (meaning that
$\alpha_*(\bar\tau) = \bar\tau_0$ is the standard involution, where $\tau = \tau_0|L$,
 and where the bar denotes reduction modulo $N$) and with the nondegenerate
symplectic form $b = c\B_0$ on $L$ (meaning that $\alpha_*(b) = \bar\omega_0$ is the standard symplectic
form on $(\ZZ/N\ZZ)^{2n}$).  By the strong approximation theorem, the mapping
\[ \Sp_{2n}(\ZZ) \to \Sp_{2n}(\ZZ/N\ZZ)\]
is surjective. Together with the symplectic basis theorem (Lemma \ref{lem-Darboux}) (and the
fact that $\alpha$ is compatible with $b=c\B_0$)
this implies that there exists $g \in \GSp_{2n}(\QQ)$ so that $(L,\alpha) = g.(L_0,\alpha_0)$, where
$L_0 = \ZZ^{2n}$ is the standard lattice with its standard level $N$ structure
$\alpha_0:L_0/NL_0 \to (\ZZ/N\ZZ)^{2n}$.  Both the lattice $L$ and the level structure
$\alpha$ are compatible with the involution which implies that 
$(L,\alpha) = g.(L_0,\alpha_0) =\tilde{g}.(L_0,\alpha_0)$ (where $\tilde g = \tau_0 g \tau_0^{-1}$).  
Therefore
\[ t = g^{-1} \tilde g \in K^0_N\subset\Sp_{2n}(\QQ)\]
is a cocycle (with multiplier equal to $1$) which lies in the principal congruence subgroup
\[ K^0_N = \ker\left(\Sp_{2n}(\ZZ) \to \Sp_{2n}(\ZZ/N\ZZ)\right).\]
Let $[(L,\alpha)] \in H^1(\langle\tau_0\rangle, K^0_N)$ denote the resulting cohomology class.

Similarly, an ad\`elic lattice $\widehat{L}$, symplectic up to homothety, and preserved by the involution
$\tau_0$, together with a level $N$ structure $\beta$, (compatible with the involution and 
with the corresponding symplectic form) determine a cohomology class $[(\widehat{L},\beta)] \in
H^1(\langle \tau_0 \rangle, \widehat{K}^0_N)$ where
\[\widehat{K}^0_N = \ker(\Sp_{2n}(\widehat{\ZZ}) \to \Sp_{2n}(\ZZ/N\ZZ)).\]
The following proposition is essentially the same as in \cite{Rohlfs}.

\begin{prop}\label{prop-cohomology-lattice}
The resulting cohomology classes $[(L,\alpha)]$ and $[(\widehat{L},\beta)]$ are well defined.
The mapping  $L \mapsto \widehat{L} = \prod_v(L\otimes\ZZ_v)$) determines a
one to one correspondence between
\begin{enumerate}
\item $GL_n^*(\QQ)$-orbits in the set of such pairs $(L,\alpha)$ that are symplectic up to homothety and
compatible with the involution (as above),
\item $\GL_n^*(\AA_f)$-orbits in the set of such pairs $(\widehat{L},\beta)$ that are symplectic
up to homothety and compatible with the involution (as above),
\item
elements of the cohomology set
\begin{equation}\label{eqn-hatH}
 H^1:= H^1(\langle\tau_0\rangle, K^0_N) \cong H^1(\langle\tau_0\rangle, \widehat{K}^0_N). 
\end{equation}
\end{enumerate}\end{prop}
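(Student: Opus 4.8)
The plan is to establish the three bijections by constructing explicit maps and checking their inverses, following the template already set up in Section~\ref{sec-complex} (Proposition~\ref{prop-cohomology-involutions} and the fixed-point-component argument in the proof of Rohlfs' proposition), but now at the level of lattices rather than double cosets. First I would verify that the cohomology class $[(L,\alpha)]$ is well defined: if $(L,\alpha) = g.(L_0,\alpha_0) = g'.(L_0,\alpha_0)$ then $g^{-1}g'$ stabilizes $(L_0,\alpha_0)$, hence lies in $\widehat{K}^0_N$ (using that $\alpha_0$ is a level $N$ structure, so the stabilizer of $(L_0,\alpha_0)$ in $\GSp_{2n}(\QQ)$ meeting $\Sp_{2n}(\ZZ)$ is exactly $K^0_N$), and a short computation shows $g^{-1}\tilde g$ and $(g')^{-1}\widetilde{g'}$ differ by the coboundary of that element. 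The same argument works ad\`elically. The isomorphism $H^1(\langle\tau_0\rangle, K^0_N) \cong H^1(\langle\tau_0\rangle, \widehat{K}^0_N)$ then follows from Lemma~\ref{lem-Platonov}: the natural inclusion $K^0_N \hookrightarrow \widehat{K}^0_N$ is a $\langle\tau_0\rangle$-equivariant isomorphism because both are principal congruence subgroups of level $N$ and $K^0_N$ consists precisely of the elements of $\widehat{K}^0_N$ with rational (equivalently, integral) entries, by strong approximation for $\Sp_{2n}$.

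\textbf{The bijection between (1) and (3).} Given a pair $(L,\alpha)$ as in (1), write $(L,\alpha) = g.(L_0,\alpha_0)$ with $g \in \GSp_{2n}(\QQ)$ (possible by the symplectic basis theorem, Lemma~\ref{lem-Darboux}, together with surjectivity of $\Sp_{2n}(\ZZ)\to\Sp_{2n}(\ZZ/N\ZZ)$ and compatibility of $\alpha$ with $b = c\B_0$), and set $t = g^{-1}\tilde g \in K^0_N$; this is the class map. Conversely, given a cocycle $t = g^{-1}\tilde g$ with $g\in\Sp_{2n}(\QQ)$ (lift from $K^0_N$ as in the proof in Section~\ref{sec-complex}), the pair $g.(L_0,\alpha_0)$ is real and compatible, and its class is $[t]$. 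To see this descends to a bijection on $\GL_n^*(\QQ)$-orbits, I would show that two pairs have the same class iff they lie in the same $\GL_n^*(\QQ)$-orbit: if $t = g^{-1}\tilde g$ and $t' = (g')^{-1}\widetilde{g'}$ are cohomologous via $h\in K^0_N$, then $t' = h^{-1} t \tilde h$ translates (after the substitution $g'' = g h$, mirroring the manipulation $y = bxu$ in the Rohlfs proof) to $(g')^{-1} g'' = $ an element fixed by conjugation-by-$\tau_0$, i.e.\ an element of $\GL_n^*(\QQ)$, which carries $(L',\alpha')$ to $(L,\alpha)$. Conversely a $\GL_n^*(\QQ)$-translation changes $g$ by left multiplication by a $\tau_0$-fixed element and so does not change $g^{-1}\tilde g$.

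\textbf{The bijection between (2) and (3), and (1)$\leftrightarrow$(2).} The ad\`elic statement is proved exactly the same way, replacing $\Sp_{2n}(\ZZ)$ by $\Sp_{2n}(\widehat\ZZ)$ and $K^0_N$ by $\widehat K^0_N$, using strong approximation ad\`elically. Finally, the map $L \mapsto \widehat L = \prod_v (L\otimes\ZZ_v)$ is a $\langle\tau_0\rangle$-equivariant bijection between rational and ad\`elic lattices (Lemma~\ref{lem-Platonov}), manifestly compatible with symplectic forms, level structures and the involution, and it intertwines the class maps into the two cohomology sets under the isomorphism $H^1(\langle\tau_0\rangle,K^0_N)\cong H^1(\langle\tau_0\rangle,\widehat K^0_N)$; since it also intertwines the $\GL_n^*(\QQ)$- and $\GL_n^*(\AA_f)$-actions in the appropriate sense, the triangle of correspondences commutes. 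I expect the main obstacle to be purely bookkeeping: carefully matching the left-versus-right conventions in the cocycle manipulations (the paper uses $\tilde g = \tau_0 g\tau_0^{-1}$ and the coboundary relation $g' = h^{-1}g\tilde h$) so that the $\GL_n^*$-orbit on the lattice side corresponds to the coboundary equivalence on the cohomology side without a sign or inverse slipping through, and in checking that the stabilizer of $(L_0,\alpha_0)$ is exactly $K^0_N$ rather than something slightly larger — this is where one genuinely uses $N\ge 1$ and the definition of level $N$ structure. None of these steps is deep; the content is entirely the combination of Lemma~\ref{lem-Platonov}, strong approximation, and the nonabelian $H^1$ formalism of Appendix~\ref{appendix-cohomology}.
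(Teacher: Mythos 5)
Your proposal follows essentially the same route as the paper: write $(L,\alpha)=g.(L_0,\alpha_0)$, check that the cocycle $g^{-1}\tilde g$ is independent of the choice of $g$, show that cohomologous cocycles produce pairs differing by a $\tau_0$-fixed element of $\GSp_{2n}(\QQ)$, and invoke Lemma~\ref{lem-Platonov} for the rational-versus-ad\`elic identification. Two small bookkeeping points of the kind you anticipated: the element carrying one pair to the other is $g'(g'')^{-1}$ (the paper's $g'u^{-1}g^{-1}$), not $(g')^{-1}g''$; and in well-definedness you should note explicitly that $g$ and $g'$ have the same multiplier because $L^{\vee}=cL$ pins it down, so that $g^{-1}g'$ lands in $\Sp_{2n}(\ZZ)$, hence in $K^0_N$, and not merely in $\GSp_{2n}(\ZZ)$.
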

\begin{proof}
 The cohomology class $[(L,\alpha)]$ is well defined for, suppose that 
$(L,\alpha) = h.(L_0,\alpha_0)$ for some $h \in \GSp_{2n}(\QQ)$.  Since $L$ is symplectic
up to homothety, the elements $g, h$ have the same multiplier hence $u = g^{-1}h 
\in K^0_N$.  Therefore the cocycle $h^{-1} \tilde h=u^{-1} (g^{-1}\tilde g) \tilde u$ is 
cohomologous to $g^{-1} \tilde g$.

Suppose $(L',\alpha')=g'.(L_0,\alpha_0)$ is another lattice with level $N$ structure, with the same
cohomology class.  Then $(g')^{-1}\tilde{g}' = u^{-1} (g^{-1} \tilde g) \tilde u$ for some
$u \in K^0_N$ which implies that the element $h = g'u^{-1} g^{-1}$ is fixed under the involution.
Hence $(L',\alpha') = h.(L,\alpha)$ is in the same $\GL_n^*(\QQ)$ orbit as $(L,\alpha)$.

Similar remarks apply to ad\`elic lattices. Finally, Lemma \ref{lem-Platonov} implies that the
cohomology sets (\ref{eqn-hatH}) may be canonically identified.
\end{proof}

\subsection{}
There is a simple relation between Propositions \ref{prop-cohomology-involutions}
and \ref{prop-cohomology-lattice} which identifies the cohomology class of
a lattice with a conjugacy class of involutions, as follows.  Suppose $(L,\alpha)$ is a 
``real" symplectic (up to homothety) lattice with a level $N$ structure.
Express $(L,\alpha) = g.(L_0,\alpha_0)$ for some $g \in \GSp_{2n}(\QQ)$.  Set
$\tau = g^{-1} \tau_0 g = h^{-1} \tau_0 h$ where $h \in \Sp_{2n}(\QQ)$.  Then
$\tau$ is an involution in $K^0_N.\tau_0$ because  $\tau\tau_0$ preserves $(L_0,\alpha_0)$,
and the cohomology class of $(L,\alpha)$ coincides with the cohomology class of $\tau$.
We remark, moreover, if the cohomology class $[(L,\alpha)]\in H^1(\langle \tau_0 \rangle, K^0_N)$ is
trivial then the lattice $L$ splits as a direct sum $L = L^{+} \oplus L^{-}$ of
$\pm 1$ eigenspaces of $\tau$ and $\alpha$ determines a principal level $N$
structure on each of the factors.

\quash{
\subsection{}\label{subsec-KN}  Fix $N \ge 1$.
Let $\widehat{Z} = \prod_p \ZZ_p$ and let $K^0_N, \widehat{K}^0_N, K_N,\widehat{K}_N$ denote
the principal level $N$ subgroups of $\Sp_{2n}(\ZZ)$,  $\Sp_{2n}(\widehat{\ZZ})$, $\GSp_{2n}(\ZZ)$
and $\GSp_{2n}(\widehat{\ZZ})$ respectively, that is,
\begin{alignat*}{2}
K^0_N &= \ker(\Sp_{2n}(\ZZ) \to \Sp_{2n}(\ZZ/N\ZZ)) \qquad& 
 \widehat{K}^0_N &= \ker(\Sp_{2n}(\widehat{Z}) \to \Sp_{2n}(\ZZ/N\ZZ))\\
{K}_N&=\ker(\GSp_{2n}({Z}) \to \GSp_{2n}(\ZZ/N\ZZ))
\qquad&
\widehat{K}_N &= \ker(\GSp_{2n}(\widehat{\ZZ}) \to \GSp_{2n}(\ZZ/N\ZZ)).\end{alignat*}
}

\begin{prop}\label{prop-trivial-cohomology} 
Let $R$ be an integral domain containing $\frac{1}{2}$.  Then $H^1(\langle \tau_0 \rangle,
\Sp_{2n}(R))$ is trivial.  If $2|N$ the mapping
$H^1(\langle\tau_0\rangle, K^0_N) \to H^1(\langle\tau_0\rangle,\Sp_{2n}(\ZZ))$ is trivial.
The cohomology sets 
\begin{equation}\label{eqn-p-adic-cohomology}
 H^1(\langle \tau_0\rangle,\Sp_{2n}(\ZZ)) 
\cong H^1(\langle\tau_0\rangle,\Sp_{2n}(\widehat{\ZZ})) \cong
H^1(\langle \tau_0 \rangle, \Sp_{2n}(\ZZ_2))\end{equation} 
are isomorphic and have order $(3n+1)/2$ if $n$ is odd, or $(3n+2)/2$ if $n$ is even.
\end{prop}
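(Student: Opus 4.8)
The plan is to prove the four assertions in sequence, each feeding into the next. \emph{First}, for the triviality of $H^1(\langle\tau_0\rangle,\Sp_{2n}(R))$ when $\tfrac12\in R$: a cocycle is an element $g\in\Sp_{2n}(R)$ with $g\tilde g=I$, i.e. $g\tau_0 g^{-1}=\tau_0^{-1}\cdot(\text{something})$; more precisely $\tau:=g\tau_0$ satisfies $\tau^2 = g\tau_0 g\tau_0 = g\tilde g\,\tau_0^2 = I$ and has multiplier $-1$. By Proposition \ref{prop-classification} any such $\tau$ is $\Sp_{2n}(R)$-conjugate to $\tau_0$ itself, say $\tau = h^{-1}\tau_0 h$ with $h\in\Sp_{2n}(R)$; unwinding, $g = \tau\tau_0^{-1} = h^{-1}\tau_0 h\tau_0^{-1} = h^{-1}\tilde h$, which is exactly the statement that the cocycle $g$ is cohomologous to the trivial one. (This is also the content of Proposition \ref{prop-cohomology-involutions}: the cohomology set is in bijection with $\Sp_{2n}(R)$-conjugacy classes of multiplier-$(-1)$ involutions in $\Sp_{2n}(R)\tau_0$, and there is only one such class.) In particular this applies to $R=\QQ_p$ for any $p$ (including $p=2$, since $\tfrac12\in\QQ_2$), to $R=\QQ$, and to $R=K(k)$, which is what was invoked in Proposition \ref{prop-dieudonne-basis} and elsewhere.

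\emph{Second}, the triviality of $H^1(\langle\tau_0\rangle, K^0_N)\to H^1(\langle\tau_0\rangle,\Sp_{2n}(\ZZ))$ for $2\mid N$. Here I would argue as follows: given a cocycle $t\in K^0_N$ with $t\tilde t=I$, set $\tau = t\tau_0$, an involution of multiplier $-1$ in $\Sp_{2n}(\ZZ)$ lying in $K^0_N\tau_0$. By Lemma \ref{lem-Z-involutions}, $\tau$ is $\Sp_{2n}(\ZZ)$-conjugate to an explicit normal form $\left(\begin{smallmatrix} I & S\\ 0 & -I\end{smallmatrix}\right)$ with $S$ symmetric $0/1$; call the conjugating element $h\in\Sp_{2n}(\ZZ)$. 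I claim that when $2\mid N$ the hypothesis $t\in K^0_N$ forces $S\equiv 0\pmod N$, so that in fact $\tau$ is conjugate to $\tau_0$ by an element of $\Sp_{2n}(\ZZ)$: indeed $t = \tau\tau_0 \equiv I\pmod N$ means $\tau\equiv\tau_0\pmod N$, i.e. the normal-form matrix is $\equiv\tau_0\pmod N$, forcing $S\equiv 0\pmod N$; but a symmetric $0/1$ matrix that is $0\bmod N$ (with $N\ge 2$) is the zero matrix. Hence $\tau = h^{-1}\tau_0 h$ and, as in the first step, $t = h^{-1}\tilde h$ maps to the trivial class in $H^1(\langle\tau_0\rangle,\Sp_{2n}(\ZZ))$. (The parity hypothesis $2\mid N$ is exactly what rules out the nonzero $0/1$ normal forms of Lemma \ref{lem-Z-involutions}; this is the step that was used for $N$ even in Section \ref{sec-complex} and in Proposition \ref{prop-number-real-lattices}.)

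\emph{Third}, the isomorphisms \eqref{eqn-p-adic-cohomology}. The identification $H^1(\langle\tau_0\rangle,\Sp_{2n}(\ZZ))\cong H^1(\langle\tau_0\rangle,\Sp_{2n}(\widehat\ZZ))$ follows from the local-global structure: by Proposition \ref{prop-cohomology-involutions} both sides classify conjugacy classes of multiplier-$(-1)$ involutions, and the first step (applied over $\QQ_p$ for odd $p$) shows that the local obstruction is concentrated at $p=2$, so that $\Sp_{2n}(\widehat\ZZ)$-conjugacy classes of such involutions are detected by $\Sp_{2n}(\ZZ_2)$-conjugacy classes; combined with Lemma \ref{lem-Platonov} (gluing lattices) and strong approximation for $\Sp_{2n}$ one gets that the $\ZZ$- and $\widehat\ZZ$-theories agree and reduce to the $2$-adic one. \emph{Fourth}, the count $(3n+1)/2$ for $n$ odd and $(3n+2)/2$ for $n$ even: by Proposition \ref{prop-cohomology-involutions} (over $R=\ZZ$, or $\ZZ_2$) this is the number of $\Sp_{2n}$-conjugacy classes of multiplier-$(-1)$ involutions, which by Lemma \ref{lem-Z-involutions} is the number of $0/1$ normal forms $\left(\begin{smallmatrix} I & S\\ 0 & -I\end{smallmatrix}\right)$: one for each rank $r$ odd ($r=1,3,\dots$, giving $S=I_r\oplus 0$), and for each rank $r$ even ($r=0,2,\dots$) two classes ($I_r\oplus 0$ and $H^{\oplus r/2}\oplus 0$) except $r=0$ which gives only one (namely $\tau_0$). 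Tabulating the ranks $0\le r\le n$ and summing gives $(3n+1)/2$ or $(3n+2)/2$ according to the parity of $n$; this is a short finite bookkeeping.

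The main obstacle I expect is the third step: making precise the passage from $\Sp_{2n}(\ZZ)$ to $\Sp_{2n}(\widehat\ZZ)$ and then to $\Sp_{2n}(\ZZ_2)$, i.e. checking that the non-abelian $H^1$ of the symplectic group behaves well under these arithmetic reductions (surjectivity of reduction maps via strong approximation, and vanishing of the local obstruction away from $2$). The first, second and fourth steps are essentially formal consequences of Propositions \ref{prop-classification}, \ref{prop-cohomology-involutions} and Lemma \ref{lem-Z-involutions} together with elementary combinatorics.
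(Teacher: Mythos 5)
Your plan follows essentially the paper's own route: use Proposition \ref{prop-cohomology-involutions} to translate cocycles into conjugacy classes of multiplier-$(-1)$ involutions, then apply Proposition \ref{prop-classification} for $\tfrac12\in R$, Lemma \ref{lem-Z-involutions} for the $\ZZ$ case and the $\GL_n(\ZZ)$-congruence count, and the observation that $\tfrac12\in\ZZ_p$ for $p$ odd to collapse the ad\`elic cohomology onto the $2$-adic one. The combinatorics in your fourth step check out.

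However, your second step contains an incorrect inference. You write that $\tau\equiv\tau_0\pmod N$ implies the normal form $h\tau h^{-1}=\left(\begin{smallmatrix} I & S\\ 0 & -I\end{smallmatrix}\right)$ is $\equiv\tau_0\pmod N$. That does not follow: conjugation by $h\in\Sp_{2n}(\ZZ)$ need not preserve a congruence class mod $N$ unless the matrix is central mod $N$, and $\tau_0$ is not central mod $N$ for $N>2$. For instance with $n=1$, $N=4$, $h=\left(\begin{smallmatrix}1&1\\0&1\end{smallmatrix}\right)$ one has $h\tau_0h^{-1}=\left(\begin{smallmatrix}-1&2\\0&1\end{smallmatrix}\right)\not\equiv\tau_0\pmod4$. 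The conclusion $S=0$ is nonetheless correct and the fix is to work modulo $2$ only: since $2\mid N$ and $\tau_0\equiv I\pmod 2$, the congruence $\tau\equiv I\pmod 2$ is preserved by conjugation by any $h\in\Sp_{2n}(\ZZ)$ (the identity is central), so the normal form is $\equiv I\pmod 2$ and hence $S\equiv 0\pmod 2$; since $S$ has $0/1$ entries this forces $S=0$, and then $t=h^{-1}\tilde h$ is a coboundary in $\Sp_{2n}(\ZZ)$. This is exactly the role played in the paper's proof by its explicit computation of $h^{-1}\tilde h\pmod 2$. On your third step, your appeal to strong approximation and Lemma \ref{lem-Platonov} is not quite the right instrument; the cleaner justification (implicit in the paper) is that $\tfrac12\in\ZZ_p$ for odd $p$ kills all factors away from $2$, while over $\ZZ_2$ one applies Remark \ref{remark-UFD} (Lemma \ref{lem-Z-involutions} over a Euclidean domain), which yields the same list of $0/1$ normal forms as over $\ZZ$, so the natural maps in \eqref{eqn-p-adic-cohomology} are bijections.
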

\begin{proof} 
By Proposition \ref{prop-cohomology-involutions} cohomology classes in $\Sp_{2n}(R)$
correspond to conjugacy classes of involutions with multiplier $-1$.  If 
$\frac{1}{2}\in R$ then Proposition \ref{prop-classification} says there is a unique
such, hence the cohomology is trivial.  For the second statement suppose $N \ge 2$ is even.
Suppose $\alpha \in\Sp_{2n}(\langle\tau_0\rangle, K^0_N)$ is a cocycle.  
Then $\alpha\tau_0$ is an involution which, by Lemma \ref{lem-Z-involutions} 
implies that there exists $h \in \Sp_{2n}({\ZZ})$ so that $h^{-1} \alpha \tilde h
= \left( \begin{smallmatrix} I & B \\ 0 & I \end{smallmatrix} \right)$ where $B$ is a
symmetric matrix of zeroes and ones.  It now suffices to show that $B = 0$ which 
follows from the fact that $\alpha \equiv I \mod 2$ and that $h^{-1} \tilde h \equiv I
\mod 2$, for if $h = \left( \begin{smallmatrix} a & b \\ c & d \end{smallmatrix} \right)$
then
\[h^{-1}\tilde h=
 I + 2\left(\begin{matrix} b\tr{c} & b \tr{a} \\ c \tr{d} & b \tr{c} \end{matrix}\right).\]
The cohomology set $H^1(\langle\tau_0\rangle,\Sp_{2n}(\ZZ))$ is finite because it 
may be identified with 
$\Sp_{2n}(\ZZ)$-conjugacy classes of involutions with multiplier $-1$ which, by Lemma
\ref{lem-Z-involutions} corresponds to $\GL_n(\ZZ)$-congruence classes of symmetric 
$n \times n$ matrices $B$ consisting of zeroes and ones. Summing over the possible
ranks $0 \le r \le n$ for the matrix $B$, with two possibilities when $r$ is even
and only one possibility when $r$ is odd gives $(3n+1)/2$ for $n$ odd and
$(3n+2)/2$ for $n$ even, cf.\cite{Lidl}.   Equation 
(\ref{eqn-p-adic-cohomology}) holds since $\frac{1}{2} \in \ZZ_p$ for $p$ odd.
\end{proof}

\section{Finiteness}\label{sec-finiteness}
Throughout this section, all polarizations are considered to be $\Phi_{\varepsilon}$-positive.
Recall the following result of A. Borel, \cite{Borel} (\S 9.11).
\begin{lem}\label{lem-finiteness}    
Let $G$ be a reductive algebraic group defined over $\QQ$ and let $\Gamma \subset G_{\QQ}$
be an arithmetic subgroup.  Let $G_{\QQ} \to \GL(V_{\QQ})$ be a rational representation of 
$G$ on some finite dimensional rational vector space.  Let $L \subset V_{\QQ}$ be a lattice that is
stable under $\Gamma$.  Let $v_0 \in V$ and suppose that the orbit $G_{\CC}.v_0$ is closed 
in $V_{\CC} = V_{\QQ} \otimes\CC$. Then $L \cap G_{\CC}.v_0$ consists of a finite number 
of orbits of $\Gamma$.
\end{lem}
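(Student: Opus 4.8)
The plan is to deduce Lemma~\ref{lem-finiteness} from the standard reduction theory for arithmetic groups, which tells us that $\Gamma\backslash G_{\QQ}$ (or rather the quotient of $G(\RR)$ by an arithmetic group) has finite volume and admits a Siegel domain. The essential point is the following theorem of Borel and Harish-Chandra: if $G$ is reductive over $\QQ$, $\rho:G\to\GL(V)$ is a $\QQ$-rational representation, $L\subset V_{\QQ}$ is a $\Gamma$-stable lattice, and $v_0\in V_{\QQ}$ has Zariski-closed orbit $G_{\CC}.v_0$, then $L\cap G_{\QQ}.v_0$ is a finite union of $\Gamma$-orbits. So really the task is just to cite \cite{Borel} \S9.11 correctly and observe that the orbit being closed over $\CC$ is exactly the hypothesis needed.

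First I would recall precisely the statement in Borel's \emph{Introduction aux groupes arithm\'etiques}, \S9.11 (or equivalently the Borel--Harish-Chandra finiteness theorem): the closedness of the orbit $G_{\CC}.v_0$ guarantees, via a theorem of Borel and Harish-Chandra on orbits of reductive groups, that the stabilizer $H=\mathrm{Stab}_G(v_0)$ is reductive and that $G_{\QQ}.v_0\cap L$ meets only finitely many $\Gamma$-orbits. Second, I would note that for our applications $v_0$ will be taken to be the element (or tuple of elements) encoding a polarized Deligne module with real structure --- e.g.\ a point in a representation space on which $\GSp_{2n}$ or $\GL_n^*$ acts, with $v_0$ recording the symplectic form, the Frobenius $\gamma$, and the involution $\tau_0$ --- and the orbit closedness will follow because the relevant orbit is cut out by the (closed) conditions defining viability, $q$-inversiveness, and the fixed characteristic polynomial, together with semisimplicity of $\gamma$, which forces the orbit to coincide with a fiber of an invariant-theoretic quotient map and hence to be closed. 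The hard part, such as it is, will be arranging the geometric setup so that the finiteness lemma applies verbatim: one must choose the representation $V$, the vector $v_0$, and the lattice $L$ so that isomorphism classes of principally polarized Deligne modules with real structure and level $N$ structure correspond bijectively to $\Gamma$-orbits in $L\cap G_{\CC}.v_0$, where $\Gamma$ is an arithmetic subgroup of the appropriate group (a congruence subgroup of $\GL_n^*(\ZZ)$ or $\GSp_{2n}(\ZZ)$).

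Concretely, the proof of Lemma~\ref{lem-finiteness} itself is simply a restatement and citation; the finiteness theorems of Section~\ref{sec-finiteness} that use it will then proceed as follows. Using Lemma~\ref{subsec-real-standard-form} we put a given module in standard form $(L,\gamma,\B_0,\tau_0)$; using Proposition~\ref{prop-Qbar-conjugacy} the $\overline\QQ$-isogeny class is pinned down by the characteristic polynomial, of which (for bounded $q$) there are only finitely many that are ordinary Weil $q$-polynomials; for each such polynomial, Lemma~\ref{lem-finiteness} applied to the centralizer group $Z_{\gamma_0}$ (or $I_0$) acting on the space of admissible lattices shows that there are only finitely many $\Gamma$-orbits of principally polarized lattices with level structure; hence finitely many isomorphism classes altogether. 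The role of the \emph{principal} polarization is crucial here: it forces $L^\vee=L$, which together with $\gamma_0$-stability pins $L$ down to lie in a single closed orbit (a bounded set of lattices), whereas without principality there could be infinitely many homothety classes. I expect the only real subtlety to be verifying the orbit-closedness hypothesis, i.e.\ that the $\CC$-orbit of the relevant vector $v_0$ is Zariski closed; this reduces to the semisimplicity of $\gamma_0$ (so that its $\GL$-orbit is closed, by the Kostant--Rosenlicht type argument cited earlier via \cite{Humphreys} \S1.7) and to the fact that the polarization form, being nondegenerate, has closed $\GSp$-orbit under congruence.
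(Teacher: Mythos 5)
You correctly identify that the paper gives no argument for Lemma~\ref{lem-finiteness} beyond the citation to Borel, \emph{Introduction aux groupes arithm\'etiques}, \S~9.11, and your gloss of the result as Borel--Harish-Chandra finiteness of integral orbits (closed orbit over $\CC$ implies finitely many $\Gamma$-orbits on the integral points) is accurate. Most of your proposal actually concerns the downstream Theorem~\ref{prop-finite-isomorphism}, which is outside the scope of the lemma itself, but since you spent most of the space there: the paper's setup differs from your sketch in that it first normalizes the lattice to be the standard $\ZZ^{2n}$ with the standard symplectic form (via Lemma~\ref{lem-Darboux}) and then applies Lemma~\ref{lem-finiteness} with $G=\Sp_{2n}$ acting by simultaneous conjugation on $V=M_{2n\times 2n}\times M_{2n\times 2n}$, $v_0=(\gamma_0,\eta_0)$ the pair of integral matrices giving Frobenius and involution, $\Gamma=\Sp_{2n}(\ZZ)$ and $L$ the lattice of integral matrix pairs, and it establishes closedness of $G_\CC.v_0$ by exhibiting it as the set of $q$-inversive pairs inside the product of the two closed (because semisimple) conjugacy classes using Proposition~\ref{prop-q-conjugacy}, rather than by letting a centralizer act on a variable space of lattices or by appeal to an invariant-theoretic quotient as you suggest.
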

\subsection{}
As in \S \ref{sec-ordinary}, let $\FF_q$ be a finite field of characteristic $p >0$, 
fix $N \ge 1$ not divisible by $p$ and let $n \ge 1$. We refer to \S 
\ref{appendix-level} for the definition of a level $N$ structure.  Recall the statement
of Theorem \ref{prop-finite-isomorphism}:
There are finitely many isomorphism classes of principally ($\Phi_{\varepsilon}$-positively)
polarized Deligne modules of rank $2n$ over $\FF_q$ with real structure and with 
principal level $N$ structure. 

\subsection{Proof of Theorem \ref{prop-finite-isomorphism}}
 It follows from Proposition \ref{prop-Qbar-conjugacy} that there are finitely many
$\overline{\QQ}$-isogeny classes of polarized Deligne modules with real structure.  Moreover, it is
easy to see that each isomorphism class (of principally polarized Deligne modules with real
structure) contains at most finitely many level $N$ structures.  So, for simplicity, we may omit
the level structure, and it
suffices to show that each $\overline{\QQ}$-isogeny class contains at most finitely many isomorphism
classes of principally polarized modules.  Therefore, let us fix a principally polarized
Deligne module with real structure, $(T,F,\B,\tau)$.  Using Lemma \ref{lem-Darboux} we may 
assume that $T = T_0 = \ZZ^{2n}$ is the standard lattice and that
$\B = \B_0$ is the standard symplectic form.  Using these coordinates the endomorphism
$F$ becomes an integral element $\gamma_0 \in \GSp_{2n}(\QQ) \cap M_{2n \times 2n}(\ZZ)$
and the involution $\tau$ becomes an element $\eta_0 \in \GSp_{2n}(\ZZ)$ with multiplier
equal to $-1$.  These elements have the following properties (from Lemma
\ref{subsec-real-standard-form}):
\begin{enumerate}
\item[(a)]  The eigenvalues of $\gamma_0$ are Weil $q$-numbers and
\item[(b)] $\eta_0 \gamma_0 \eta_0^{-1} = q \gamma_0^{-1}$.
\end{enumerate}

  The group $G = \Sp_{2n}$ acts on the vector space
\[ V = M_{2n \times 2n} \times M_{2n \times 2n}\]
by $g.(\gamma,\eta) = (g\gamma g^{-1}, g\eta g^{-1})$.  Let $\Gamma = \Sp_{2n}(\ZZ)$ be the arithmetic
subgroup that preserves the lattice
\[ L = M_{2n \times 2n}(\ZZ) \times M_{2n \times 2n}(\ZZ)\]
of integral elements.  It also preserves the set of pairs $(\gamma,\eta)$ that satisfy the
above conditions (a) and (b). 
Let $v_0 = (\gamma_0,\eta_0)$.   We claim \begin{enumerate}
\item the orbit $G_{\CC}.v_0$ is closed in $V_{\CC}$, and
\item there is a natural injection from \begin{enumerate}
\item the set of isomorphism classes of principally polarized
Abelian varieties with real structure within the $\overline{\QQ}$-isogeny class of
$(T_0, \gamma_0, \B_0, \eta_0)$ to
\item the set of $\Gamma$-orbits in $L \cap G_{\overline{\QQ}}.v_0$.\end{enumerate}
\quash{
\item isomorphism classes of principally polarized Abelian varieties with real structure within the 
$\overline{\QQ}$-conjugacy class of $(T_0, \gamma_0, \B_0, \eta_0)$ are in one to one correspondence 
with $\Gamma$-orbits in $L \cap G_{\overline{\QQ}}.v_0$.
}
\end{enumerate} 
Using claim (1) we may apply Borel's theorem and conclude that there are finitely 
many $\Gamma$ orbits in $L \cap
G_{\overline{\QQ}}.v_0$ which implies, by claim (2) that there are finitely many isomorphism 
classes, thus proving Theorem \ref{prop-finite-isomorphism}.

\medskip
\paragraph{\em Proof of claim (2).}
Consider a second principally polarized Deligne module with real structure within the same 
$\overline{\QQ}$-isogeny class.  As above, using Lemma \ref{lem-Darboux} may assume it to 
be of the form $(T_0, \gamma_1, \B_0, \eta_1)$ where $\gamma_1 \in \GSp_{2n}(\QQ) 
\cap M_{2n \times 2n}(\ZZ)$ and where $\eta_1 \in \GSp_{2n}(\ZZ)$ is an involution with 
multiplier equal to $-1$.  A $\overline{\QQ}$-isogeny between these two Deligne modules 
is an element $X \in \GSp_{2n}(\overline{\QQ})$ such that
$\gamma_1 = X \gamma_0 X^{-1}$ and $\eta_1 = X \eta_0 X^{-1}$.   In particular this means that
the pair $(\gamma_1,\eta_1)$ is in the orbit $\GSp_{2n}({\overline{\QQ}}).v_0$, which 
coincides with the orbit $G_{\overline{\QQ}}.v_0 = \Sp_{2n}(\overline{\QQ}).v_0$.   
Moreover, such an isogeny $X$ is an isomorphism (of principally polarized Deligne modules with real 
structure) if and only if $X$ and $X^{-1}$ preserve the lattice $T_0$ and the symplectic form $\B_0$, which is to say that $X \in \Gamma$.

We remark that the mapping from (2a) to (2b) above is not necessarily surjective for the
following reason.  The element $\gamma_0$ is {\em viable} (see \S \ref{subsec-viable}),
that is, it satisfies the ``positivity" condition of \S \ref{subsec-polarizations},
because it comes from a polarized Abelian variety.  However,  if $(\gamma,\eta) 
\in L \cap G_{\overline{\QQ}}.v_0$ is arbitrary then $\gamma$ may fail to be viable.
\medskip

\paragraph{\em Proof of claim (1).}
Since  $\gamma_0$ and $\eta_0$ are both semisimple, the conjugacy class
\[ (G_{\CC}.\gamma_0) \times (G_{\CC}.\eta_0) \subset M_{2n\times 2n}(\CC) 
\times M_{2n \times 2n}(\CC)\]
is closed (\cite{Humphreys2} \S 18.2).  We claim that the orbit $G_{\CC}.v_0$ 
coincides with the closed subset
\[S = \left\{ (\gamma,\tau) \in (G_{\CC}.\gamma_0) \times (G_{\CC}.\tau_0)|\ 
\tau \gamma \tau^{-1} = q \gamma^{-1} \right\}.\]
Clearly, $G_{\CC}.v_0\subset S$. If $(\gamma,\eta) \in (G_{\CC}.\gamma_0) \times (G_{\CC}.\eta_0)$ 
lies in the subset $S$ then by Proposition \ref{prop-classification}, conjugating by an 
element of $G_{\CC}$ if necessary, we may arrange that $\eta = \tau_0$ is the standard 
involution.  Consequently,
$\tau_0 \gamma \tau_0^{-1} = q \gamma^{-1}$, which is to say that $\gamma$ is $q$-inversive.  
By assumption it is also $G_{\CC}$-conjugate to $\gamma_0$.  According to 
Proposition \ref{prop-q-conjugacy}, over the
complex numbers there is a unique (up to reordering of the coordinates) standard form, and 
every $q$-inversive element $\gamma \in
G_{\CC}.\gamma_0$ is $\delta(\GL_n(\CC))$-conjugate to it.  Thus there exists $g \in \delta(\GL_n(\CC))$ 
so that $(g \gamma g^{-1}, g \tau_0 g^{-1}) = (\gamma_0, \tau_0)$.  In summary, the element 
$(\gamma,\eta)$ lies in the $G_{\CC}$-orbit of $(\gamma_0,\tau_0)$.  
This concludes the proof of Theorem \ref{prop-finite-isomorphism}.\qed

\subsection{The case $n=1$}
Fix $q = p^m$ and let $\FF_q$ denote the finite field with $q$ elements.  According to 
Proposition \ref{prop-Qbar-conjugacy} the set of $\overline{\QQ}$-isogeny classes of Deligne modules $(T,F)$ 
of rank 2, over $\FF_q$ is determined by a quadratic ordinary Weil $q$-number $\pi$, which we now fix. 
This means that $\pi$ satisfies an equation
\[ \pi^2 + B\pi + q = 0\]
where $p \nmid B.$ Let $D = B^2-4q$.  Then $D \equiv 0, 1 \mod 4$ and $-4q < D < 0$.
The pair $\{\pi, \bar\pi\}$ determines $D$ and vice versa.

Isomorphism classes of polarized Deligne modules with real structure fall into orbits that are identified by
certain cohomology classes as described in Proposition \ref{prop-cohomology-lattice} or equivalently
by integral conjugacy classes of involutions as described in Proposition \ref{prop-cohomology-involutions}. 
For $n=1$ there are two involutions (see Lemma \ref{lem-Z-involutions}) to consider, namely
\[ \tau_0 = \left(\begin{matrix} -1 & 0 \\ 0 & 1 \end{matrix}\right) \ \text{ and }\
\tau_1 = \left(\begin{matrix} -1 & 0 \\ 1 & 1 \end{matrix} \right).\] 

\begin{prop}\label{prop-case-n-equals-one}
Over the finite field $\FF_q$, the number of (real isomorphism classes of)
principally polarized Deligne modules $(T,F,\lambda,\eta)$ 
with real structure and rank 2, such that the eigenvalues of $F$ are $\{\pi, \bar\pi\}$, which correspond
to the cohomology class of the standard involution $\tau_0$ is:
\begin{equation*}
\begin{cases} \sigma_0(-D/4)&\text{if}\ D \equiv 0 \mod 4\\
0 &{} \text{otherwise} \end{cases}\end{equation*}
where $\sigma_0(m)$ denotes the number of positive divisors of $m>0$.
The number of isomorphism classes which correspond to the cohomology class of  $\tau_1$ is:
\begin{equation*}
\begin{cases}\sigma_0(-D)&\text{if}\ D \equiv 1 \mod 4\\
\sigma'_0(-D/4) &\text{if}\ D \equiv 0 \mod 4
 \end{cases}\end{equation*}
where $\sigma'_0(m)$ denotes the
number of ordered factorizations $m = uv$ such that $u,v>0$ have the same parity.
\end{prop}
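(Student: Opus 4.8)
\textbf{Proof plan for Proposition \ref{prop-case-n-equals-one}.}
The plan is to reduce, via Lemma \ref{subsec-real-standard-form} and Proposition \ref{prop-cohomology-lattice}, the enumeration to a count of suitable lattices together with level structure compatible with a fixed integral involution $\eta$ with multiplier $-1$. Since $n=1$ and $N=1$, the level structure contributes nothing, and by Lemma \ref{lem-Z-involutions} the only two conjugacy classes of such involutions in $\GSp_2(\ZZ)$ are those of $\tau_0$ and $\tau_1$. For each of these I would first write $(T,F,\B,\eta)$ in the standard form $(L,\gamma,\B_0,\eta)$ where $\gamma = \left(\begin{smallmatrix} a & b \\ c & a\end{smallmatrix}\right)\in \GSp_2(\QQ)$ is $q$-inversive, i.e.\ $a^2-bc=q$ with $b,c$ symmetric (scalars here), and the characteristic polynomial of $\gamma$ is $x^2 + Bx + q$, so that $2a = -B$ and $bc = a^2-q = D/4$. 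The real counterpart matrix is the scalar $A_0 = a = -B/2$, so $A_0\in\ZZ$ forces $B$ even, i.e.\ $D\equiv 0\bmod 4$, for the class of $\tau_0$. This immediately gives the dichotomy in the first formula, and for $\tau_1$ one sees by an explicit change of the integral basis that the parity constraint is different — $\gamma$ need not have integral diagonal entry once we conjugate by $\tau_1$ — which is the source of the two cases $D\equiv 1$ and $D\equiv 0$ in the second formula.

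Next I would enumerate, within a fixed $\overline\QQ$-isogeny class (hence fixed $\{\pi,\bar\pi\}$, fixed $D$), the isomorphism classes of principally polarized Deligne modules with real structure corresponding to a given cohomology class. Following the equivalence of categories in Proposition \ref{prop-iso-in-isogeny} and the lattice description in \S\ref{subsec-LN}, this amounts to counting $\GL_1^*(\QQ)$-congruence classes (equivalently, by Proposition \ref{prop-real-Q-isogeny}, $Z_{\GL_1}(A_0)$-congruence classes of $B$) of lattices $L\subset\QQ^2$ that are preserved by $\eta$, by $\gamma$ and by $q\gamma^{-1}$, principal for $\B_0$. Because $n=1$ the relevant algebra is the imaginary quadratic order $\ZZ[\pi]\subset K=\QQ(\pi)$, and the $q$-inversivity together with $\eta$-stability pins down $L$ up to homothety as a lattice in $K$ stable under $\pi$ and under complex conjugation, i.e.\ a fractional ideal of an order $\OO\supseteq\ZZ[\pi]$ that is fixed by conjugation. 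Such conjugation-stable invertible ideals in an imaginary quadratic order are the ambiguous ideal classes, but for the principally polarized count the relevant parametrization is cleaner: one counts the orders $\OO$ with $\ZZ[\pi]\subseteq\OO\subseteq\OO_K$, equivalently the positive integers $f$ with $f\mid \mathfrak{f}$ (the conductor of $\ZZ[\pi]$), which is where the divisor functions $\sigma_0$ and $\sigma_0'$ enter. The precise bookkeeping: for the $\tau_0$-class with $D\equiv 0\bmod 4$ one gets one class for each divisor of $-D/4$, giving $\sigma_0(-D/4)$; for the $\tau_1$-class one gets $\sigma_0(-D)$ when $D\equiv 1\bmod 4$ and $\sigma_0'(-D/4)$ when $D\equiv 0\bmod 4$, the parity condition in $\sigma_0'$ reflecting exactly which factorizations $-D/4 = uv$ produce an $\eta$-stable lattice integral for $\B_0$ with the correct involution type.

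Concretely I would carry this out as follows. Step 1: normalize $(T,F,\B,\eta)$ as above and split into the two cases $\eta\sim\tau_0$, $\eta\sim\tau_1$. Step 2: in each case solve explicitly for the integral matrices $\gamma$ with the prescribed characteristic polynomial that are $q$-inversive with respect to $\eta$, up to the appropriate congruence, and read off the arithmetic constraint on $D\bmod 4$. Step 3: for each admissible $\gamma$, enumerate the principal $\gamma$- and $q\gamma^{-1}$-stable, $\eta$-stable lattices $L$; identify these with a set of orders (or conductor-divisors) in the imaginary quadratic field $\QQ(\sqrt D)$, and match the count with $\sigma_0$, resp.\ $\sigma_0'$. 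Step 4: verify that distinct data give non-isomorphic modules and that every isomorphism class arises, using Lemma \ref{prop-five-part} and Proposition \ref{prop-real-Q-isogeny} to control $\QQ$-isogeny versus isomorphism. The main obstacle I expect is Step 3: getting the exact parity condition right in the $\tau_1$, $D\equiv 0\bmod 4$ case — i.e.\ showing that an $\eta_1$-stable integral self-dual lattice exists for the factorization $-D/4=uv$ precisely when $u$ and $v$ have the same parity — requires a careful analysis of how $\tau_1$ (as opposed to $\tau_0$) interacts with the dual lattice condition, and this is where the two involution types genuinely diverge rather than being related by a mere change of coordinates. Everything else is a fairly mechanical unwinding of the earlier structural results specialized to $n=1$.
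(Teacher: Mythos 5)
Your Step~2 is exactly the paper's proof: fix $\eta\in\{\tau_0,\tau_1\}$ by Lemma~\ref{lem-Z-involutions}, write a $q$-inversive integral $\gamma=\left(\begin{smallmatrix}a&b\\c&a\end{smallmatrix}\right)$ (resp.\ $\left(\begin{smallmatrix}a&b\\c&a-b\end{smallmatrix}\right)$) with $\det\gamma=q$ and characteristic polynomial $x^2+Bx+q$, observe $2a=-B$ (resp.\ $D=b(b+4c)$), read off the congruence on $D\bmod 4$, and count solutions with a factor of $\tfrac12$ for viability. The ``main obstacle'' you flag --- the parity condition in the $\tau_1$, $D\equiv 0\bmod 4$ case --- is in fact a one-line consequence of the identity $D=b(b+4c)$: writing $b=2b'$ gives $D/4=b'(b'+2c)$, an ordered factorization whose two factors automatically have the same parity. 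So if you just carry out Step~2 to completion you are done, and Steps~3--4 are unnecessary.

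Step~3 as written, however, contains a real error. You propose to identify the count with the number of orders $\OO$ with $\ZZ[\pi]\subseteq\OO\subseteq\OO_K$, ``equivalently the positive integers $f$ with $f\mid\mathfrak f$.'' This is not what is being counted. The object to enumerate (and this is what the paper's follow-up proposition does) is the set of homothety classes of lattices $\Lambda\subset\QQ(\pi)$ stable under $\pi$ and under complex conjugation, i.e.\ conjugation-stable fractional $\ZZ[\pi]$-ideals up to homothety --- not orders. These sets have different cardinalities. For example take $\QQ[\pi]=\QQ(i)$ with $D=-16$, so $\ZZ[\pi]$ has conductor $2$ in $\OO_K=\ZZ[i]$: there are exactly $\sigma_0(2)=2$ intermediate orders, but the lattice count is $\sigma_0(-D/4)+\sigma_0'(-D/4)=3+1=4$. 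The discrepancy persists in general because $D=\mathfrak f^2 d_K$, so $\sigma_0(\mathfrak f)$ and $\sigma_0(-D/4)$, $\sigma_0(-D)$ are genuinely different arithmetic functions. If you want the lattice-theoretic route rather than the direct matrix computation, you must count lattices up to homothety (normalizing to bases $\{1,s\sqrt{D'}\}$ or $\{1,\tfrac12+s\sqrt{D'}\}$ and imposing integrality of the matrix of $\pi$), not orders; otherwise the divisor functions in the statement will not come out.
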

\begin{proof}
According to Proposition \ref{prop-Qbar-conjugacy}
 the isomorphism classes of principally polarized Deligne modules with real structures correspond 
to $q$-inversive pairs $(\gamma,\eta)$ where the eigenvalues of $\gamma \in \GL_2(\ZZ)$ are 
$\pi$ and $\bar\pi$.  For the involution $\tau_0$ , the pair $(\gamma,\tau_0)$ is $q$-inversive if 
$\gamma = \left(\begin{smallmatrix}
a & b \\ c & a \end{smallmatrix}\right)$ and $\det(\gamma)=q$.  This implies that 
$a = -B/2$, so $B$ is even and $D \equiv 0 \mod 4$. Then $bc = a^2-q = D/4$ has a 
unique solution for every (signed) divisor $b$ of $D/4$.  Half of these will be viable 
(see \S \ref{subsec-viable}) so the number of solutions is equal to the number of 
positive divisors of $-D/4$.

For the involution $\tau_1$, the pair $(\gamma,\tau_1)$ is $q$-inversive if $\gamma = \left(\begin{smallmatrix}
a &b \\ c & a-b \end{smallmatrix} \right)$.  This implies that $ D = B^2-4q = b(b+4c)$.
Let us first consider the case that $b$ is odd or equivalently, that $D \equiv 1 \mod 4$.   For every divisor 
$b|D$ we can solve for an integer value of $c$ so we conclude that the number of viable solutions in this case is equal to $\sigma_0(-D)$.  
Next, suppose that $b$ is even, say, $b = 2b'$. Then $D$ is divisible by $4$, say, 
$D = 4D'$ and $D' = b'(b'+2c)$ is an ordered factorization of $D'$ with factors of the same parity.  So in this case the number of viable solutions is $\sigma'_0(-D/4)$.
\end{proof}

\subsection{}
For any totally positive imaginary integer $\alpha \in L=\QQ(\pi)$ the bilinear form
 $\omega(x,y) = \Tr_{L/\QQ}(\alpha x \bar y)$ is symplectic.  If $\Lambda \subset L$ is a lattice
then $\alpha$ may be chosen so that the form $\omega$ takes integer values on $\Lambda$.  Modifying
$\Lambda$ by a homothety if necessary, it can also be arranged that $\omega$ is a principal polarization,
hence $(\Lambda,\pi,\omega)$ is a principally polarized Deligne module.
 If complex conjugation on $L=\QQ(\pi)$ preserves $\Lambda$ then it defines a real structure 
on this Deligne module. 
\begin{prop}
The set of isomorphism classes of principally polarized Deligne modules (of rank $2$) with real structure
and with eigenvalues $\{\pi, \bar\pi\}$ may be identified with the set of homothety classes of lattices
$\Lambda \subset \QQ(\pi)$ that are preserved by complex conjugation and by multiplication by $\pi$.
\end{prop}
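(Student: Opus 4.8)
The plan is to establish a bijection between the two sets by exhibiting explicit constructions in both directions and checking they are mutually inverse. In one direction, start with a homothety class of lattices $\Lambda \subset \QQ(\pi)$ that is preserved by complex conjugation and by multiplication by $\pi$; pick a representative $\Lambda$. As recalled in the last displayed paragraph of the excerpt, one may choose a totally $\Phi_\varepsilon$-positive imaginary integer $\alpha \in \QQ(\pi)$ so that $\omega(x,y) = \Tr_{\QQ(\pi)/\QQ}(\alpha x \bar y)$ takes integer values on $\Lambda$, and after an extra homothety one may arrange that $\omega$ is a principal polarization. Taking $F$ to be multiplication by $\pi$ and $\tau$ to be complex conjugation, the triple $(\Lambda, F, \omega, \tau)$ is a principally polarized Deligne module with real structure: the relations $\tau F \tau^{-1} = V$ and $\omega(\tau x, \tau y) = -\omega(x,y)$ follow from $\pi\bar\pi = q$ and equation (\ref{eqn-omega-conjugation}). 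The eigenvalues of $F$ are $\{\pi, \bar\pi\}$ by construction.

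The main point that needs care is well-definedness: first, the output isomorphism class must not depend on the choice of $\alpha$, and second, it must not depend on the homothety representative $\Lambda$. For the independence of $\alpha$, I would invoke Lemma~\ref{lem-unique-positive} together with Lemma~\ref{lem-Rosati}: any two $\Phi_\varepsilon$-positive polarizations of the same $(\Lambda, F)$ differ by an $\RR$-isogeny with multiplier $1$, and in the rank $2$ CM case one checks that such an isogeny is actually realized by a unit in $\QQ(\pi)$ fixed by complex conjugation, hence commutes with $\tau$; when both polarizations are principal the ambiguity is resolved inside a single isomorphism class. For the independence of the homothety representative, note that multiplying $\Lambda$ by $c \in \QQ^\times$ and correspondingly rescaling $\alpha$ gives an isomorphic polarized Deligne module with real structure via the scaling map, which is $\QQ(\pi)$-linear and commutes with complex conjugation.

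In the reverse direction, start with a principally polarized Deligne module with real structure $(T, F, \omega, \tau)$ whose characteristic polynomial of $F$ is $x^2 + Bx + q$ with roots $\{\pi,\bar\pi\}$. Since this polynomial is irreducible (as $D = B^2 - 4q < 0$), the algebra $\QQ[F]$ is the CM field $\QQ(\pi)$, and $T \otimes \QQ$ is a free $\QQ[F]$-module of rank one; choosing an $F$-cyclic generator identifies $T \otimes \QQ$ with $\QQ(\pi)$ so that $F$ becomes multiplication by $\pi$ and, by Lemma~\ref{lem-tau-switches} (or directly, arguing as in the proof of Lemma~\ref{lem-fixed-iso-class}), $\tau$ becomes complex conjugation composed with multiplication by a fixed element; a further adjustment of the cyclic generator arranges that $\tau$ is exactly complex conjugation. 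Under this identification $T$ becomes a lattice $\Lambda \subset \QQ(\pi)$ preserved by multiplication by $\pi$ and by complex conjugation, and I would pass to its homothety class. I then check that the two constructions are mutually inverse: composing them returns the same homothety class of lattices, and in the other order returns the same isomorphism class, the polarization being recovered up to the ambiguity already shown to be harmless. The hard part will be the well-definedness argument for the polarization, i.e.\ pinning down that the CM-theoretic ``uniqueness of $\alpha$ up to totally positive units'' statement genuinely lands everything in one isomorphism class of \emph{principally} polarized modules with real structure; the lattice bookkeeping and the eigenvalue/irreducibility checks are routine.
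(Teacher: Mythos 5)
Your explicit-bijection strategy is a genuinely different route from the proof the paper gives, which is a counting argument: the paper parametrizes the conjugation- and $\pi$-stable lattices up to homothety by bases of the form $\{1, t^{-1}\sqrt{D'}\}$ or $\{1, \tfrac{1}{2}+t^{-1}\sqrt{D'}\}$, extracts which integer parameters $t$ let $\pi$ act integrally, and observes that the resulting tally agrees with the count of $q$-inversive pairs $(\gamma,\eta)$ in Proposition~\ref{prop-case-n-equals-one}. (It also sketches, without carrying out, a ``natural'' proof through $\CC/\Phi_{\varepsilon}(\Lambda)$ and the canonical lift; your plan is closer to that.) A direct bijection is more illuminating, but two steps need content that neither your sketch nor the paragraph you cite supplies.

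For surjectivity you must show that every such lattice $\Lambda$ carries a \emph{principal} polarization $\omega(x,y)=\Tr(\alpha x\bar y)$, and ``modifying $\Lambda$ by a homothety'' is not the right mechanism: a homothety by $c\in\QQ^\times$ rescales the obstruction $\Lambda^{\vee,\omega}/\Lambda$ by $c^2$, so it only works when the obstruction is already a square. The correct input is special to rank $2$: $R=\End(\Lambda)$ is a quadratic order, hence monogenic, so its trace codifferent $R^{\vee,\Tr}$ is a principal fractional ideal generated by an imaginary $\alpha_0$; and because $\Lambda$ is a conjugation-stable invertible fractional $R$-ideal, $\Lambda\bar\Lambda=\Lambda^2=N(\Lambda)\,R$ with $N(\Lambda)\in\QQ_{>0}$. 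Taking $\alpha=\pm\alpha_0/N(\Lambda)$ (sign fixed by $\Phi_{\varepsilon}$-positivity) then gives $\Lambda^{\vee,\omega}=\Lambda$ on the nose. Both ingredients fail for CM fields of higher degree, so this is precisely the point at which a naive generalization of the Proposition would break, and it is not covered by the paper's unproved assertion.

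For well-definedness, the detour through Lemma~\ref{lem-Rosati} does not quite close: the isogeny it produces lives only in $\End(\Lambda,F)\otimes\RR$, need not be rational nor preserve $\Lambda$, and so does not by itself give the unit in $\QQ(\pi)^{\tau}$ you want. The clean statement, which makes Rosati unnecessary, is that the principal $\Phi_{\varepsilon}$-positive polarization is \emph{literally unique}: writing $\omega_i=\Tr(\alpha_i x\bar y)$, the ratio $\beta=\alpha_2/\alpha_1$ lies in $\QQ(\pi)^+=\QQ$, positivity gives $\beta>0$, and principality of both forms gives $\beta\Lambda=\Lambda$, so $\beta\in\ZZ^\times\cap\QQ_{>0}=\{1\}$, hence $\omega_1=\omega_2$. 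This identity is what guarantees the backward-then-forward composite returns the polarization exactly. With these two points secured, the Hilbert~90 normalization of $\tau$ and the mutual-inverse check go through as you outline.
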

\begin{proof}
The most natural proof, which involves considerable checking, provides a map back 
from lattices $\Lambda$ to Deligne modules:  use the CM type of \S \ref{subsec-polarizations}
which determines an embedding $\Phi_{\varepsilon}:\QQ(\pi) \to \CC$, then realize the elliptic 
curve $\CC/\Phi_{\varepsilon}(\Lambda)$ as the complex
points of the canonical lift of an ordinary elliptic curve over $\FF_q$ whose associated Deligne module 
is $(\Lambda,\pi)$.  Then check that complex conjugation is compatible with these constructions.

A simpler but less illuminating proof is simply to count the number of homothety classes of lattices and to see that this number coincides with the number in Proposition \ref{prop-case-n-equals-one}.
Fix $\pi, B, D$ as above and suppose
first, that $B$ is even, say $B=2B'$, hence $D \equiv 0 \mod 4$, say $D = 4D'$, and $\pi = -B' + \sqrt{D'}$.
If a lattice $\Lambda \subset \QQ(\sqrt{D'})$ is preserved by complex conjugation then there are 
two possibilities up to homothety: either it has a basis consisting of $\{1, s\sqrt{D'}\}$ 
 or it has a basis consisting of $\{ 1, \frac{1}{2}+s\sqrt{D'}\}$ (for some $s \in \QQ$).  The matrix of
$\pi$ with respect to these bases is
\[
\left( \begin{matrix} -B' & sD' \\ 1/s & -B' \end{matrix} \right)\ \text{ or }\
\left( \begin{matrix} -B' -1/2s & sD' - 1/4s \\ 1/s & -B' + 1/2s \end{matrix} \right) \]
respectively, which must be integral.  In the first case this implies that $t = 1/s \in \ZZ$ divides $D'$.  
In the second case it implies that $t = 1/2s$ is integral and if we write $D' = tu$ then $t-u$ is even.

Similarly, if $B$ is odd then $D \equiv 1 \mod 4$.  In this case the matrix for $\pi$ with respect to any
lattice spanned by $\{ 1, s\sqrt{D}\}$ is never integral.  The matrix for $\pi$ with respect to a lattice
spanned by $\{ 1, \frac{1}{2} + s \sqrt{D}\}$ is
\[ \left( \begin{matrix}
\frac{1}{2}(-B-\frac{1}{s}) & \frac{1}{4} (sD-\frac{1}{s}) \\
\frac{1}{s} & \frac{1}{2}(-B+\frac{1}{s})\end{matrix}\right)\]
This implies that $t = 1/s$ is integral and odd.  Writing $D = tu$ we also require that 
$(u-t)/4$ is integral.  However, since $D \equiv 1 \mod 4$ this integrality condition holds for
any factorization $D = tu$.  So we obtain a lattice for every divisor $t |D$.
\end{proof}

\section{Proof of Proposition \ref{prop-Dieudonne-module}}\label{appendix-Dieudonne}
\subsection{}  Let $k = \FF_{q}$ with $q = p^a$ be the finite field with $p^a$ elements.
Let $\sigma:W(\bar k) \to W(\bar k)$ be the lift of the Frobenius
mapping $\sigma:\bar k \to \bar k$, $\sigma(x) = x^p.$  Recall that
a Dieudonn\'e module $M$ over $W(k)$ is a finite dimensional free $W(k)$ module with
endomorphisms $\mathcal F,\mathcal V:M \to M$ such that $\mathcal F$ is $\sigma$-linear and
$\mathcal{FV} = \mathcal{VF} = p.$  
Let $A/k$ be an ordinary Abelian variety with Deligne module $(T,F)$, with its decomposition
$T \otimes \ZZ_p = T' \oplus T^{\pp}$.  The associated finite group scheme $A[p^r] = \ker(\cdot p^r)$ 
 decomposes similarly
into a sum $A'[p^r] \oplus A^{\pp}[p^r]$ of an \'etale-local scheme and a local-\'etale scheme, with
a corresponding decomposition of the associated $p$-divisible group, $A[p^{\infty}] = A' \oplus
A^{\pp}$. 

\begin{lem}  The Dieudonn\'e module $M(A')$ of the $p$-divisible group $A'$ may be identified as follows,
\[ M(A') \cong (T' \otimes W(\bar k))^{\Gal}\]
where the Galois action is determined by $\pi.(t'\otimes w) = F(t')\otimes \sigma^a(w)$ and where
$\mathcal F(t'\otimes w) = pt'\otimes \sigma(w)$.
\end{lem}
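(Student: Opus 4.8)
The plan is to recognize $A'$ as an \'etale $p$-divisible group over $k$, to compute its covariant Dieudonn\'e module after base change to $\bar k$ (where the answer is forced by the structure theory of \'etale $p$-divisible groups), and then to recover the module over $W(k)$ by Galois descent.

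First I would record the structural facts. Since $A$ is ordinary and $k$ is perfect, $A[p^\infty]$ splits into its connected and \'etale parts, and $A'$ --- the summand on which $F$ acts invertibly, i.e.\ the slope-$0$ part --- is precisely the \'etale $p$-divisible group whose physical Tate module is $T_p(A')=T'$, the action of the Frobenius generator $\pi\in\Gal(\bar k/k)$ being given by $F|T'$ (this is part of Deligne's construction; cf.\ Theorem \ref{thm-Deligne} and Proposition \ref{lem-tau-switches}). Over the algebraically closed field $\bar k$ every \'etale $p$-divisible group is a sum of copies of $\QQ_p/\ZZ_p$; concretely, choosing a $\ZZ_p$-basis of $T'$ gives $A'_{\bar k}\cong (\QQ_p/\ZZ_p)^{\oplus n}$, so that $A'_{\bar k}\cong (\QQ_p/\ZZ_p)\otimes_{\ZZ_p}T'$ functorially in $(T',\Gal)$.

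Next I would compute $M(\QQ_p/\ZZ_p)$ over $\bar k$ from the normalization of the covariant Dieudonn\'e functor used here (\cite{OortChai, Goren, Pink}): it is the free rank-one module $W(\bar k)$ on which $\mathcal F$ acts by $p\sigma$ and $\mathcal V$ by $\sigma^{-1}$ (consistently with $\mathcal F\mathcal V=\mathcal V\mathcal F=p$; the factor $p$ in $\mathcal F$ records that $\QQ_p/\ZZ_p$ is \'etale, so on the covariant side $\mathcal F$ is topologically nilpotent and $\mathcal V$ bijective --- the mirror image of $\mu_{p^\infty}$). Since $M$ is additive and $\ZZ_p$-linear, it follows that
\[ M(A'_{\bar k}) \;=\; M\!\bigl((\QQ_p/\ZZ_p)\otimes_{\ZZ_p}T'\bigr) \;\cong\; T'\otimes_{\ZZ_p}W(\bar k), \]
with $\mathcal F(t'\otimes w)=pt'\otimes\sigma(w)$ and $\mathcal V(t'\otimes w)=t'\otimes\sigma^{-1}(w)$, and with $\Gal(\bar k/k)$ acting diagonally: through $F$ on $T'$ (so $\pi$ acts by $F$) and through $\sigma^{a}$ on $W(\bar k)$. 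Because the Dieudonn\'e functor is compatible with the base change $k\to\bar k$ and with \'etale descent, $M(A')$ is the submodule of $\Gal(\bar k/k)$-invariants of $M(A'_{\bar k})$, which gives $M(A')=(T'\otimes_{\ZZ_p}W(\bar k))^{\Gal}$ with $\pi.(t'\otimes w)=F(t')\otimes\sigma^{a}(w)$, as asserted. One checks that $\mathcal F=p\otimes\sigma$ commutes with $\pi=F\otimes\sigma^{a}$ --- both send $t'\otimes w$ to $pF(t')\otimes\sigma^{a+1}(w)$ --- so $\mathcal F$ restricts to the invariants with the stated formula, while a Hilbert~$90$ / $H^1\bigl(\Gal,\GL_n(W(\bar k))\bigr)=1$ argument (in the style of Proposition \ref{prop-trivial-cohomology}) shows the invariants form a free $W(k)$-module of rank $n$ whose extension of scalars back to $W(\bar k)$ recovers $T'\otimes W(\bar k)$.

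The step I expect to be the real obstacle is bookkeeping with conventions: verifying that the particular covariant Dieudonn\'e functor cited really assigns $(W(\bar k),\mathcal F=p\sigma)$ to $\QQ_p/\ZZ_p$ (rather than a linear- or Cartier-dual, or Tate-twisted, variant), and correctly matching ``$F$ invertible on $T'$'' with ``$A'$ \'etale / slope $0$'' and hence with ``$\mathcal F$ topologically nilpotent, $\mathcal V$ bijective'' on the covariant module. Once that normalization is pinned down, the descent step and the verification of the two displayed formulas are routine.
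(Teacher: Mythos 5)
Your proof is correct and takes essentially the same route as the paper: reduce to the (constant) situation over $W(\bar k)$, identify the Dieudonn\'e module there as $T'\otimes W(\bar k)$ with $\mathcal F = p\sigma$, and recover $M(A')$ over $W(k)$ by taking $\Gal(\bar k/k)$-invariants. The paper does the base-change computation one finite level $A'[p^r]$ at a time and then passes to the inverse limit (checking exactness of $-\otimes W(\bar k)$ along the way), whereas you apply the functor directly to $A'_{\bar k}\cong(\QQ_p/\ZZ_p)\otimes_{\ZZ_p}T'$; this is a cosmetic difference, and your closing caution about pinning down the covariant-functor normalization is exactly the point where the conventions (Demazure p.~68, cited in the paper) have to be matched.
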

\begin{proof}
Over $W(\bar k)$ the finite \'etale group scheme $A'[p^r]$ becomes constant and canonically isomorphic to
$p^{-r}T'_A/T'_A$ so its covariant Dieudonn\'e module over $W(\bar k)$ is the following:
\begin{equation}\label{eqn-Mtensor}
 \overline{M}(A'[p^r]) = (p^{-r}T'_A/T'_A)\otimes_{\ZZ}W(\bar k) \cong (T'_A/p^rT'_A)\otimes_{\ZZ} W(\bar k)\end{equation}
with $\mathcal F(t'\otimes w) = pt'\otimes \sigma(w)$, see \cite{Demazure} p. 68.  Then (see
\cite{Demazure} p. 71 or \cite{OortChai} \S B.3.5.9, p. 350), 
\begin{equation}\label{eqn-Mlimit}
\overline{M}(A') = \underset{\longleftarrow}{\lim}\overline{M}(A'[p^r]).\end{equation}
Tensoring the short exact sequence 
\[\begin{CD}
0 @>>> T_A' @>{\cdot p^r}>>T_A' @>>> T_A'/p^rT_A' @>>> 0 \end{CD}\]
with $W(\bar k)$ gives a sequence, which turns out to be exact, so we obtain a canonical isomorphism
\[ \left(T'_A/p^rT'_A\right)\otimes_{\ZZ}W(\bar k) \cong T'_A \otimes W(\bar k)/p^r(T'_A\otimes W(\bar k)).\]
Equation (\ref{eqn-Mlimit}) therefore gives
\begin{align*}
M(A') &=\left(\underset{\longleftarrow}{\lim}(T_A'/p^rT_A')\otimes W(\bar k)\right)^{\Gal}\\
&\cong\left(\underset{\longleftarrow}{\lim}\left(T'_A\otimes W(\bar k)/p^r (T'_A \otimes W(\bar k)\right)\right)^{\Gal}\\
&\cong \left(T'_A \otimes W(\bar k)\right)^{\Gal}.\qedhere\end{align*}
\end{proof}

\begin{lem}
The Dieudonn\'e module corresponding to the $p$-divisible group $A^{\pp}$ may be identified as follows,
\[ M(A^{\pp}) \cong (T^{\pp}\otimes W(\bar k))^{\Gal}\]
where the Galois action is given by $\pi(t^{\pp}\otimes w) = q^{-1}F(t^{\pp})\otimes \sigma^a(w)$ and where
$\mathcal F(t^{\pp}\otimes w) = t^{\pp} \otimes \sigma(w)$.
\end{lem}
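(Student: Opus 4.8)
The plan is to deduce this from the \'etale case just established, applied to the dual abelian variety $\widehat A$, together with Cartier duality. Recall that $A^{\pp}=A[p^{\infty}]^0$ is the connected part of the $p$-divisible group, which is of multiplicative type because $A$ is ordinary, and that its complement under Cartier duality is the \'etale part of $\widehat A[p^{\infty}]$: the Weil pairing $A[p^r]\times\widehat A[p^r]\to\mu_{p^r}$ is perfect and defined over $k$, and it pairs the multiplicative summand $A^{\pp}[p^r]$ perfectly with the \'etale summand $\widehat A'[p^r]$, the two remaining cross--pairings vanishing as they are homomorphisms between two \'etale, or between two multiplicative, group schemes into $\mu_{p^r}$. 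Hence $A^{\pp}[p^r]\cong(\widehat A'[p^r])^{\vee}$ over $k$.

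First I would record the \'etale computation for $\widehat A$. By Howe's duality (\S\ref{subsec-HoweTheorem}) the Deligne module $(T_{\widehat A},F_{\widehat A})$ of $\widehat A$ is the dual $(\Hom(T,\ZZ),\,{}^tV)$ of $(T,F)$, where ${}^tV(\phi)=\phi\circ V$; consequently the summand $(T_{\widehat A})'$ of $T_{\widehat A}\otimes\ZZ_p$ on which $F_{\widehat A}$ is invertible is the annihilator $\Hom(T^{\pp},\ZZ_p)$ of $T'$, and $F_{\widehat A}$ restricted to it is the transpose of $V|_{T^{\pp}}$. Applying the \'etale lemma already proven to $\widehat A$ gives $\overline M(\widehat A'[p^r])\cong\bigl((T_{\widehat A})'/p^r(T_{\widehat A})'\bigr)\otimes W(\bar k)$, with geometric Frobenius acting by $F_{\widehat A}\otimes\sigma^a$, with $\mathcal F$ acting by $p\otimes\sigma$, and $\mathcal V$ by $\mathrm{id}\otimes\sigma^{-1}$ (cf.\ Proposition~\ref{prop-Dieudonne-module}).

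Next I would dualize. Under the covariant Dieudonn\'e functor, Cartier duality $G\mapsto G^{\vee}$ corresponds to the $W_r(\bar k)$-linear dual $M\mapsto\Hom(M,W_r(\bar k))$ with the roles of $\mathcal F$ and $\mathcal V$ interchanged (see \cite{Demazure} or \cite{OortChai}). Applying this to $A^{\pp}[p^r]=(\widehat A'[p^r])^{\vee}$ and using $\Hom(\Hom(T^{\pp},\ZZ_p),\ZZ_p)=T^{\pp}$ yields $\overline M(A^{\pp}[p^r])\cong(T^{\pp}/p^rT^{\pp})\otimes W(\bar k)$; the interchange of $\mathcal F$ and $\mathcal V$ turns $\mathcal V=\mathrm{id}\otimes\sigma^{-1}$ into $\mathcal F=\mathrm{id}\otimes\sigma$, i.e.\ $\mathcal F(t^{\pp}\otimes w)=t^{\pp}\otimes\sigma(w)$ with no factor of $p$, as claimed; and the Galois action, being the contragredient of the transpose of $V|_{T^{\pp}}$ tensored with $\sigma^a$, becomes $(V|_{T^{\pp}})^{-1}\otimes\sigma^a=q^{-1}F\otimes\sigma^a$, i.e.\ $\pi(t^{\pp}\otimes w)=q^{-1}F(t^{\pp})\otimes\sigma^a(w)$. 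From here the proof closes exactly as in the \'etale case: tensoring the short exact sequence given by multiplication by $p^r$ on $T^{\pp}$ with the torsion-free module $W(\bar k)$ is again exact, so $\overline M(A^{\pp}[p^r])$ is identified with $(T^{\pp}\otimes W(\bar k))/p^r(T^{\pp}\otimes W(\bar k))$; passing to the inverse limit gives $\overline M(A^{\pp})\cong T^{\pp}\otimes W(\bar k)$; and taking $\Gal(\bar k/k)$-invariants gives $M(A^{\pp})\cong(T^{\pp}\otimes W(\bar k))^{\Gal}$ with the stated operators.

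The step I expect to be the main obstacle is pinning down the exact form of the Galois action --- that it is $q^{-1}F=V^{-1}$ on the $T^{\pp}$ factor, rather than $F$ or $V$. This requires care with three conventions at once: geometric versus arithmetic Frobenius; the precise statement of covariant Dieudonn\'e theory under Cartier duality, in particular that passing to the linear dual replaces a semilinear action by its contragredient, which is what produces the inverse; and Howe's identification $F_{\widehat A}={}^tV$, so that the operator transported back to $T^{\pp}$ comes out correctly. Everything else is either a transcription of the \'etale argument or a standard fact about multiplicative-type group schemes and their Dieudonn\'e modules.
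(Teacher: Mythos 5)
Your proof is correct and follows essentially the same route as the paper: both pass to the dual abelian variety, apply the \'etale computation to its \'etale part, and then use Cartier duality (with the consequent swap of $\mathcal F$ and $\mathcal V$) together with Howe's identification of the dual Deligne module to transport the structure back to $T^{\pp}$, obtaining $V^{-1}\otimes\sigma^a$ for the Galois action and $\mathrm{id}\otimes\sigma$ for $\mathcal F$. The paper's version makes the adjunction explicit via the map $\Psi_{t^{\pp}\otimes w}(\phi\otimes u)=\phi(t^{\pp})wu$, but this is the same calculation you describe conceptually via the contragredient of ${}^tV$.
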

\begin{proof}
 Let $B$ denote the ordinary Abelian variety that is dual to $A$ with Deligne module $(T_B,F_B)$
and corresponding $p$-divisible groups
$B'$, $B^{\pp}$.  Then $B'$ is dual to $A^{\pp}$ (and vice versa), hence it follows from the preceding Lemma 
(see also \cite{OortChai} \S B.3.5.9, \cite{Demazure} p. 72 and \cite{Howe} Prop. 4.5) that:
\renewcommand{\sc}{\scriptstyle}
\begin{align*}
\overline{M}(B') &= T'_B \otimes_{\ZZ_p} W(\bar k) &{ \begin{matrix}
\sc{\pi(t'\otimes w) =F_B(t')\otimes \sigma^a(w)}\\
\sc{\mathcal F(t'\otimes w) =pt'\otimes \sigma(w)} 
\end{matrix}}\\
\overline{M}(A^{\pp}) &= \hHom_{W(\bar k)}(\overline{M}(B'), W(\bar k) )& \begin{matrix}
\scriptstyle{\pi_A\psi(m) = \sigma^a\psi(\pi_B^{-1}(m))}\\
\scriptstyle{\mathcal F \psi(m) = \sigma \psi(\mathcal V(m))}
\end{matrix}\\
T'_B &= \hHom_{\ZZ_p}(T^{\pp}_A, \ZZ_p) &
\scriptstyle{F_B\phi(t') = \phi V_A(t')}
\end{align*}
From this, we calculate that the isomorphism 
\[\Psi: T^{\pp}_A \otimes W(\bar k) \to \hHom_{W(\bar k)}\left(\hHom_{\ZZ_p}(T^{\pp}_A,\ZZ_p)
\otimes W(\bar k), W(\bar k)\right) =\overline{M}(A^{\pp})\]
defined by 
\[ \Psi_{t^{\pp}\otimes w}(\phi \otimes u) = \phi(t^{\pp}).wu\]
(for $t^{\pp} \in T^{\pp}_A$, for $\phi \in \Hom(T^{\pp}_A, \ZZ_p)$ and for $w,u \in W(\bar k)$) satisfies:
\begin{align*}
\left(\pi.\Psi_{t^{\pp}\otimes w}\right)(\phi \otimes u) &= \sigma^a \Psi_{t^{\pp}\otimes w}(\pi_B^{-1}(\phi \otimes u))\\
&= \sigma^a \Psi_{t^{\pp}\otimes w}(F_B^{-1} \phi \otimes \sigma^{-a} u)\\
&= \sigma^a\left( (F_B^{-1}\phi)(t^{\pp}).w.\sigma^{-a} u\right)\\
&= \phi(V_A^{-1}(t^{\pp})).\sigma^a(w).u)\\
&=\left(\Psi_{V_A^{-1}t^{\pp}\otimes \sigma^a(w)}\right)(\phi \otimes u)
\end{align*}
Therefore $\pi(t^{\pp}\otimes w) = V_A^{-1}(t^{\pp})\otimes \sigma^a(w)$, which proves equation
(\ref{eqn-Galois-action}).  Similarly
\[ \left(\mathcal F.\Psi_{t^{\pp}\otimes w}\right)(\phi \otimes u) = 
\Psi_{t^{\pp}\otimes \sigma(w)}(\phi \otimes u)\]
hence $\mathcal F(t^{\pp}\otimes w) = t^{\pp}\otimes \sigma(w)$, which is equation (\ref{eqn-curlyF}).
Finally, $M(A^{\pp}) = \left(\overline{M}(A^{\pp})\right)^{\Gal}$.
\end{proof}

\begin{lem}\label{lem-justification1}  Let $(T,F)$ be a Deligne module of rank $2n$ with 
its decomposition $T\otimes \ZZ_p = T' \oplus T^{\pp}$.  Then the Dieudonn\'e module 
\[M(T) = (T'\otimes W(\bar k))^{\Gal} \oplus (T^{\pp}\otimes W(\bar k))^{\Gal}\]
is a free module over $W(k)$ of rank $2n$.
\end{lem}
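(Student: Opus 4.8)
The plan is to descend from the $W(\bar k)$-level to the $W(k)$-level using ordinary Galois descent for the two $\Gal(\bar k/k)$-actions described in the preceding two lemmas, and then to count ranks. First I would recall that by the two lemmas just proved, $M(T') \cong (T'\otimes W(\bar k))^{\Gal}$ and $M(T^{\pp}) \cong (T^{\pp}\otimes W(\bar k))^{\Gal}$, where the Galois action on $T'\otimes W(\bar k)$ is $\pi.(t'\otimes w) = F(t')\otimes\sigma^a(w)$ and that on $T^{\pp}\otimes W(\bar k)$ is $\pi.(t^{\pp}\otimes w) = V^{-1}(t^{\pp})\otimes \sigma^a(w)$. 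In each case the operator $F$ (resp.\ $V^{-1} = q^{-1}F$) is an invertible $\ZZ_p$-linear (in fact invertible) endomorphism of the free $\ZZ_p$-module $T'$ (resp.\ $T^{\pp}$) of rank $n$, since $T'$ is precisely the part of $T\otimes\ZZ_p$ on which $F$ is a $p$-adic unit and $T^{\pp}$ the part on which $F$ is divisible by $q$ (so $V$ is a unit there).

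Next I would invoke a twisted form of the standard fact that $W(\bar k)^{\langle\sigma^a\rangle} = W(k)$ together with the vanishing of $H^1$ of $\Gal(\bar k/k)$ acting on $\GL_n(W(\bar k))$ (a Lang/Hilbert-90 type statement for the complete unramified extension $W(\bar k)/W(k)$, which is pro-finite of the relevant kind). Concretely: fix a $\ZZ_p$-basis $e_1,\dots,e_n$ of $T'$; in this basis $F$ is a matrix $\Phi\in\GL_n(\ZZ_p)$ and the action of $\pi$ on $T'\otimes W(\bar k)\cong W(\bar k)^n$ is $v\mapsto \Phi\,\sigma^a(v)$. An element is Galois-fixed iff $v = \Phi\,\sigma^a(v)$. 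By Hilbert 90 for the (topologically) cyclic group generated by $\sigma^a$ acting on $\GL_n(W(\bar k))$, the cocycle $\Phi$ is a coboundary: $\Phi = B^{-1}\sigma^a(B)$ for some $B\in\GL_n(W(\bar k))$. Then $v$ is fixed iff $B v$ is fixed by $\sigma^a$, i.e.\ $Bv\in W(k)^n$, so $M(T') = B^{-1}W(k)^n$ is free of rank $n$ over $W(k)$. The same argument applied to the unit $q^{-1}\Phi^{\pp}$ (the matrix of $V^{-1}$ on $T^{\pp}$) shows $M(T^{\pp})$ is free of rank $n$ over $W(k)$. Adding, $M(T) = M(T')\oplus M(T^{\pp})$ is free of rank $2n$ over $W(k)$, as claimed. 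Alternatively, and perhaps cleaner for the write-up, one can avoid explicit cocycles: $T'\otimes W(\bar k)$ with its semilinear $\pi$-action is a finite free $W(\bar k)$-module with a $\sigma^a$-semilinear bijective operator, hence by the equivalence of categories between such objects and finite free $W(k)$-modules (the classical descent for unramified extensions of complete DVRs, cf.\ the proof that \'etale $\phi$-modules descend), its fixed module is finite free over $W(k)$ of the same rank $n$; one then checks $W(k)$-freeness of the direct sum is automatic.

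The main obstacle, and the only point requiring care, is justifying the descent/Hilbert-90 step in the correct generality: $W(\bar k)$ is the $p$-adic completion of the maximal unramified extension $W_0(\bar k)$ of $W(k)$, the residue extension $\bar k/k$ is an infinite (pro-cyclic) extension, and the relevant Galois group $\widehat{\langle\pi\rangle}\cong\widehat{\ZZ}$ acts by $\sigma^a$; one must check that $H^1$ of this profinite group acting continuously on $\GL_n(W(\bar k))$ vanishes and that $W(\bar k)^{\Gal} = W(k)$ (continuity/completeness is what makes this work, and is the reason the excerpt passes to the completion $W(\bar k)$ of $W_0(\bar k)$ in \S\ref{sec-Tate-Dieudonne}). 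This is standard but worth a sentence or two of citation. I expect everything else — compatibility of the direct-sum decomposition, the rank count, and identifying $M(T) = M(T')\oplus M(T^{\pp})$ — to be routine once this descent lemma is in hand.
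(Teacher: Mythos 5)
Your approach is correct and, at bottom, is the same as the paper's: both reduce the lemma to the assertion that the fixed module of a semilinear bijective $\sigma^{a}$-action on a finite free $W(\bar k)$-module is finite free of the same rank over $W(k)$, i.e.\ to a Hilbert--90/Lang-type descent statement for $\GL_n$ over the extension $W(\bar k)/W(k)$. The genuine difference is in how that descent statement is treated. You invoke it as a known black box (vanishing of continuous $H^1(\widehat{\ZZ},\GL_n(W(\bar k)))$, equivalently the \'etale $\phi$-module descent equivalence) and correctly flag the only delicate point: $W(\bar k)$ is the $p$-adic completion of the maximal unramified extension, the Galois group is profinite, and one needs continuity and completeness for the statement to hold. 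The paper instead proves exactly this descent statement by hand: it constructs the trivializing matrix $B'$ by an averaging operator
\[
S_m(x)=\sum_{i=0}^{f_m-1}(\gamma')^{-i}\,\sigma^{-ai}(x)
\]
over finite subextensions $k_m/k$ chosen so that $(\gamma')^{f_m}\equiv I \pmod{q^m}$, shows the reduction $\bar S_m$ has enough image using linear independence of the characters $\sigma^{-ai}$, and then extracts a convergent subsequence of the resulting matrices $B'_m\in\GL_n(W(k_m))$ using compactness of $\GL_n(W(\bar k))$. Your version buys brevity at the cost of a citation for descent over a completed (infinite) unramified extension of a DVR rather than a finite Galois extension of fields, which is exactly what you anticipated; the paper's version is longer but self-contained and makes the role of completeness explicit through the limiting step. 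Either write-up is acceptable; if you keep yours, the ``few sentences of citation'' should point to a source that proves the descent for $W(\bar k)$ itself (e.g.\ the standard lemma underlying Dieudonn\'e theory or \'etale $\phi$-modules), not merely classical Hilbert~90 for fields.
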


\begin{proof}
  Choose a basis
\[\Phi:T\otimes \ZZ_p = T' \oplus T^{\prime\prime} \to \ZZ_p^{n}\oplus \ZZ_p^n\] 
and let $\gamma = \gamma' \oplus \gamma^{\prime\prime} = \Phi F \Phi^{-1}$.  This induces an isomorphism
\[ (T'\otimes W(\bar k))^{\Gal} \cong \mathcal J(\gamma'):= \left\{ b \in W(\bar k)^n|\ \gamma'b = \sigma^{-a}b\right\}.\]
It suffices to show that this is a free module over $W(k)$ of rank $n$.  
We must find a matrix $B' \in \GL_n(W(\bar k))$ so that
$\gamma'B' = \sigma^{-a}B'$ whose columns span $\mathcal J(\gamma').$
Let $L = (\ZZ_p)^n$. For each $m = 1, 2, \cdots$ choose an extension $k_m$  of $k$ so that
\begin{enumerate}
\item $(\gamma')^{f_m} \equiv I$ on $L/q^{m}L$.  
\item $(\sigma^{-a})^{f_m} = I$ on $k_m$ (hence, also on $W(k_m)$.)
\end{enumerate}where $f_m= [k_m:k]$ is the degree of $k_m$ over $k$.
Such an extension exists:  since $L/q^mL$ is finite and $\gamma$ is invertible, there exists an
integer  $f_m\ge 1$ so that $(\gamma')^{f_m}$ acts as the identity on $L/q^m L$.  Let $k_m$ be the unique extension of $k$ with degree $f_m$.  Then elements of $k_m$ are fixed by $\sigma^{-af_m}$.  
As usual we denote by $W(k_m)$ the ring of Witt vectors
and by $K(k_m))$ its fraction field. We identify $L \otimes W(k_m) \cong W(k_m)^n$. 
Define $S_m:W(k_m)^n \to W(k_m)^n$ by
\[ S_m(x) = x + (\gamma')^{-1} \sigma^{-a}(x) + (\gamma')^{-2} \sigma^{-2a}(x) +\cdots + 
(\gamma')^{-(f_m-1)}\sigma^{-a(f_m-1)}(x).\]
This mapping is $K(k)$-linear but not $K(k_m)$-linear.  For any $x \in W(k_m)^n$ we have
\[(\gamma')^{-1}\sigma^{-a} S_m(x) \equiv S_m(x) \mod{q^m},\] hence
\[ \gamma' S_m(x) \equiv \sigma^{-a} S_m(x) \mod{q^mW(k_m)^n}.\]
We will show that the image of $S_m$ contains $n$ vectors $x_1,x_2,\cdots,x_n \in W(k_m)^n$
that are linearly independent over $K(k_m)$ such that the matrix $B'_m = [x_1,\cdots,x_n]$ (whose
columns are the vectors $x_i$) is in $\GL(n, W(k_m))$. 

Let $\bar S_m:k_m^n \to k_m^n$ be the reduction of $S$ modulo the maximal ideal, and let
$\bar \gamma' $ denote the reduction of $\gamma'$ modulo $p$.  We claim that the image of
$\bar S_m$ contains $n$ linearly independent vectors $\bar x_1,\bar x_2,\cdots, \bar x_n \in k_m^n$.
For this, following a standard technique, it suffices to show that if $\alpha:k_m^n \to k_m$ is 
$k_m$-linear and if it vanishes on the image of $\bar S_m$ then it is zero. 

Suppose that $\alpha$ is such a linear map, that is, for any $\bar x \in k_m^n$ and any 
$c\in k_m$ 
\[
\alpha(\bar S_m(c\bar x)) = \sum_{i=0}^{f_m-1} \sigma^{-ai}(c) \alpha(\bar{\gamma}')^{-i} 
\sigma^{-ai}(\bar x))=0.
\]
The characters $\sigma^0,\sigma^{-a},\cdots,\sigma^{-a(f_m-1)}$ are linearly independent over $k_m$
and since $c$ is arbitrary, it follows that $\alpha(\bar{\gamma}'^{-i} \sigma^{-ai}(\bar x)) = 0$ for all
$i$.  But $\bar\gamma'$ and $\sigma^{-a}$ are invertible, hence $\alpha = 0$, which proves
the claim.

Choose lifts $x_1,x_2,\cdots,x_n \in W(k_m)^n$ of these vectors $\bar x_1,\cdots, \bar x_n$ and
let 
\[B'_m = [x_1,x_2,\cdots,x_n] \in M_{n\times n}(W(k_m))\] 
be the matrix whose columns are these  vectors.  The reduction $\bar B'_m$ lies in $\GL(n, k_m)$ 
hence $B'_m \in \GL(n, W(k_m))$.  Moreover,
\[ \gamma' B'_m \equiv \sigma^{-a} B'_m \mod{q^mW(k_m)^n}.\]
By replacing the sequence $\{B'_m\}$ with
 a subsequence if necessary (since $\GL(n,W(\bar k))$ is compact) we can arrange that the
elements $B'_m$ converge to some element $B' \in \GL(n,W(\bar k))$ which therefore satisfies
$\gamma' B' = \sigma^{-a} B'$.  

Finally, suppose that $b' \in \mathcal J(\gamma') \in W(\bar k)^n$ is any vector.  Then
 $\gamma' b' = \sigma^{-a} b'$.   Then
\[ \sigma^{-a}((B')^{-1}b') = (B')^{-1} (\gamma')^{-1} \gamma' b' = (B')^{-1} b'.\]
Consequently the vector $c = (B')^{-1}b'$ lies in $W(k)^n$ which implies $b' = B'c$ is a $W(k)$-linear
combination of the columns of $B'$.
Thus, the columns of the matrix $B' \in \GL(n, W(\bar k))$ 
form a $W(k)$-basis of $\mathcal J(\gamma')$. 
\end{proof}

\quash{ 

\section{Galois cohomology}\label{subsec-galois-cohomology}
Let $k \subset E$ be finite fields with associated rings of Witt vectors $W(k)$ and $W(E)$ and their
fraction fields $K(k)$ and $K(E)$ respectively.  We use the canonical identification 
\[\Gal(K(E)/K(k))\cong \Gal(E/k)\] 
which we simply denote by $\Gal$.  
Fix $n \ge 1$. The following facts are well known.
\begin{prop}\label{prop-galois-vectors}
The Galois cohomology set
\[ H^1(\Gal(E/k), \GL(n,W(E))) = \{1\}\]
is trivial.  \end{prop}
\begin{proof}
Let $a:\Gal \to \GL(n,W(E))$ be a $1$-cocycle, meaning that $a_{\sigma\tau} = a_{\sigma} \sigma(a_{\tau})$.
For any $X \in W(E)^n$ let $\theta(X) = \sum_{\sigma \in \Gal}a_{\sigma}\sigma(X)$. Then for any
$\tau \in \Gal$ we have:
\[ \tau \theta(X) = a_{\tau}^{-1}(\theta(X)).\]
It suffices to show that there exist vectors $X_1,X_2, \cdots, X_n \in W(E)^n$ such that the matrix
formed by the column vectors $\theta(X_i)$ is invertible, that is, 
\[ \Theta = \left[ \theta(X_1), \theta(X_2), \cdots , \theta(X_n)\right] \in
\GL(n, W(E).\]
This will prove that the cocycle $a$ is a coboundary for if $\tau \in \Gal$ we have
$\tau\Theta = a_{\tau}^{-1}(\Theta)$ so $a_{\tau} = \Theta (\tau \Theta)^{-1}$ is a coboundary.

Using the action of $\Gal$ on the residue field $E$ we obtain a similar mapping
$\theta: E^n \to E^n$.
First we show there exist vectors $\bar X_1, \bar X_2, \cdots, \bar X_n \in E^n$ so that
vectors $\theta(\bar X_1), \theta(\bar X_2),\cdots, \theta(\bar X_n)$ are linearly independent
over $E$.  Otherwise, there exists a linear mapping $f:E^n \to E$ so that
$f(\theta(\bar X)) = 0$ for all $\bar X\in E^n$.  Then for any $c \in E$ and any $\bar X \in
E^n$ we also have
\[ f(\theta(c\bar X)) = \sum_{\sigma\in\Gal}\sigma(c)f(a_{\sigma}(\bar X)) =0.\]
The characters $\sigma:E \to E$ are linearly independent over $k$ so this equation implies that
$f(a_{\sigma}\bar X) = 0$ for all $\bar X$, hence $f = 0$.

Now let $X_i \in W(E)^n$ be a lift of $\bar X_i \in E^n$ for $1 \le i \le n$.  Then the matrix
 $\Theta =\left[ \theta(X_1), \theta(X_2), \cdots , \theta(X_n)\right]$
is in $\GL(n,W(E))$ because its determinant is a unit.  
\end{proof}

\begin{cor}\label{cor-galois-lattices}
 Let $g \in \GL(n, K(\bar k))$ and suppose $g^{-1} \sigma(g) \in \GL(n, W(\bar k))$.
Then there exists $B \in \GL(n, W(\bar k))$ so that $g^{-1} \sigma(g) = B^{-1} \sigma(B)$.
\end{cor}

\begin{proof}
Choose a sequence of finite field extensions $k_m \subset k_{m+1}\subset \cdots$ and elements
$g_m \in \GL(n, K(k_m))$ so that $g_m \to g$ in $\GL(n, K(\bar k))$.  Then
$g_m^{-1}\sigma(g_m) \to g^{-1}\sigma(g)$ which is integral, so for $m$ sufficiently large we have
$g_m^{-1} \sigma(g_m) \in \GL(n, W(k_m))$.  Using Proposition \ref{prop-galois-vectors} there
exists $B_m \in \GL_n(W(k_m))$ so that $g_m^{-1}\sigma(g_m) = B_m^{-1} \sigma(B_m)$.
Since $\GL_n(W(\bar k))$ is compact, there exists a convergent
subsequence and the limiting element $B \in \GL(n, W(\bar k))$ satisfies $B^{-1} \sigma(B)
= g^{-1}\sigma(g)$.
\end{proof}

}

\section{Involutions on the Witt vectors}\label{sec-Witt-involution}
Fix a finite field $\k$ of characteristic $p>0$ having $q = p^a = |\k|$
elements.  Fix an algebraic closure $\overline{\k}$ and let $W(\k)$, $W(\overline \k)$
denote the ring of (infinite) Witt vectors.  These are lattices within
the corresponding fraction fields, $K(\k)$ and $K(\overline \k).$  Let $W_0(\overline\k)$ be
the valuation ring in the maximal unramified extension $K_0(\overline\k)$ of $\QQ_p \subset K(\k)$.  
We may canonically identify $W(\overline\k)$ with the completion of $W_0(\overline\k)$.
Denote by $\pi:\overline \k \to \overline \k$ the Frobenius
$\pi(x) = x^q.$  It has a unique lift, which we also denote by
$\pi:W(\overline \k) \to W(\overline \k),$ and the cyclic group
$\langle \pi \rangle \cong \ZZ$ is dense in the Galois group
$G = \Gal(K(\overline \k)/K(\k) \cong\Gal(\overline \k/\k).$  If $L\supset \k$
is a finite extension, for simplicity we write $\Gal(L/\k)$ in place
of $\Gal(K(L)/K(\k))$ and we write $\Tr_{L/\k}$ for the trace $W(L) \to W(\k).$

\begin{prop}\label{prop-tau-Witt}
There exists a continuous $W(\k)$-linear mapping $\taub:W(\overline \k) \to W(\overline \k)$
such that
\begin{enumerate}
\item \label{item-1}$\taub^2 = I.$
\item \label{item-2}$\taub \pi = \pi^{-1} \taub.$
\item \label{item-3}
For any finite extension $E/\k,$ the mapping $\taub$ preserves
$W(E)\subset W(\overline \k).$
\item \label{item-4}
For any finite extension $L \contains E \contains \k$ the following diagrams commute
\begin{diagram}[size=2em]
W(L) & \rTo_{\taub} & W(L) &\qquad& W(L) & \rTo_{\taub} & W(L)\\
\dTo^{\Tr_{L/E}} && \dTo_{\Tr_{L/E}}\qquad && \uTo && \uTo \\
W(E) & \rTo_{\taub} & W(E) &\qquad& W(E) & \rTo_{\taub} & W(E)
\end{diagram}
\end{enumerate}
\end{prop}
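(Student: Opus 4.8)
The plan is to assemble $\bar\tau$ from a coherent family of $W(\k)$-linear involutions on the layers $W(\FF_{q^n})$ of the tower $W_0(\overline{\k})=\varinjlim_n W(\FF_{q^n})$, and then pass to the completion $W(\overline{\k})$. For each $n\ge 1$ write $L_n=K(\FF_{q^n})$; then $K(\overline{\k})/K(\k)=\bigcup_n L_n$, the extension $L_n/L_1$ is cyclic with $\Gal(L_n/L_1)=\langle\pi\rangle$ of order $n$, and $W(\FF_{q^n})$ is free of rank $n$ over $W(\k)$. Let $X_n\subset W(\FF_{q^n})$ be the set of $\theta$ such that $\{\theta,\pi\theta,\dots,\pi^{n-1}\theta\}$ is a $W(\k)$-basis of $W(\FF_{q^n})$. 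By the normal basis theorem for finite fields together with Nakayama's lemma, $X_n$ is exactly the preimage of the (nonempty) set of normal basis generators of $\FF_{q^n}/\FF_q$ under reduction mod $p$; hence $X_n$ is nonempty and clopen, so compact. For $\theta_n\in X_n$, let $\bar\tau_n\colon W(\FF_{q^n})\to W(\FF_{q^n})$ be the $W(\k)$-linear map determined by $\bar\tau_n(\pi^i\theta_n)=\pi^{-i}\theta_n$; one checks at once that $\bar\tau_n^2=I$ and $\bar\tau_n\pi=\pi^{-1}\bar\tau_n$.

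The crux is to choose the generators $\theta_n$ coherently so that the $\bar\tau_n$ glue. If $m\mid n$, then expanding a $\pi^m$-fixed element of $W(\FF_{q^n})$ in the $\theta_n$-basis shows that its coefficients are $m$-periodic; from this one reads off both that $\Tr_{L_n/L_m}(\theta_n)$ is again a normal basis generator, so that $\Tr_{L_n/L_m}(X_n)\subset X_m$, and that $\bar\tau_n$ restricted to $W(\FF_{q^m})$ coincides with the involution attached to $\theta_m:=\Tr_{L_n/L_m}(\theta_n)$. Thus it suffices to produce a family $(\theta_n)_{n\ge 1}$ with $\theta_n\in X_n$ and $\Tr_{L_n/L_m}(\theta_n)=\theta_m$ whenever $m\mid n$, i.e.\ an element of the inverse limit of the system $(X_n,\Tr_{L_n/L_m})$ over the directed poset $(\ZZ_{\ge 1},\mid)$. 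This inverse limit is nonempty: it is the intersection of closed subsets of the compact Hausdorff product $\prod_n X_n$, and these have the finite intersection property because any finitely many of the gluing conditions involve indices all dividing a common multiple $N$, and pushing any element of $X_N$ down along the trace maps satisfies them. Fixing such a family, the $\bar\tau_n$ glue to a $W(\k)$-linear involution $\bar\tau_0$ of $W_0(\overline{\k})=\varinjlim_n W(\FF_{q^n})$ with $\bar\tau_0\pi=\pi^{-1}\bar\tau_0$ which preserves every $W(\FF_{q^n})$.

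Finally I would pass to the completion. Since $\bar\tau_0$ is $\ZZ_p$-linear it preserves $p^rW_0(\overline{\k})$ for every $r$, hence is an isometry for the $p$-adic topology and extends uniquely to a continuous $W(\k)$-linear map $\bar\tau$ on $W(\overline{\k})$; the identities $\bar\tau^2=I$ and $\bar\tau\pi=\pi^{-1}\bar\tau$ persist by density (and $\bar\tau$ also extends $K(\k)$-linearly to $K(\overline{\k})$, preserving $W(\overline{\k})$). This gives (1) and (2); (3) holds because a finite $E/\k$ is some $\FF_{q^n}$ and $\bar\tau|_{W(E)}=\bar\tau_n$. For (4), the inclusion square commutes because $\bar\tau|_{W(E)}=\bar\tau_E$ by construction, and the trace square commutes because $\Tr_{L/E}=\sum_{\sigma\in\Gal(L/E)}\sigma$, $\Gal(L/E)$ is a finite cyclic group generated by a power of $\pi$, and $\bar\tau\sigma=\sigma^{-1}\bar\tau$ for each $\sigma$ while $\sigma\mapsto\sigma^{-1}$ permutes $\Gal(L/E)$, so $\bar\tau\circ\Tr_{L/E}=\Tr_{L/E}\circ\bar\tau$.

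The main obstacle is precisely the coherence in the second paragraph: at each single layer an involution inverting $\pi$ is immediate from any normal basis, but for arbitrary choices these do not restrict to one another, and a naive induction along the tower stumbles on the fact that a normal basis generator of $W(\FF_{q^m})$ need not be the trace of one of $W(\FF_{q^n})$ (the relevant restricted trace maps $X_n\to X_m$ need not be surjective). Replacing the induction by the compactness/inverse-limit argument circumvents this. The remaining ingredients --- the finite-field normal basis theorem, Nakayama's lemma to move between $\FF_{q^n}$ and $W(\FF_{q^n})$, and bookkeeping with cyclic Galois groups --- are routine.
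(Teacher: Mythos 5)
Your construction agrees with the paper's at the structural level --- both build $\bar\tau$ from a compatible system of normal basis generators $\theta_E$ for $W(E)/W(\k)$, define $\bar\tau_E(\sum a_i\pi^i\theta_E)=\sum a_i\pi^{-i}\theta_E$, verify that these glue and satisfy the trace-compatibility, and extend continuously to the completion. The difference lies in how the compatible system $(\theta_E)$ with $\Tr_{L/E}(\theta_L)=\theta_E$ is obtained: the paper simply cites Lundstr\"om's theorem (generalizing Lenstra) on normal bases for infinite Galois ring extensions, whereas you give a self-contained argument. You observe that the set $X_n$ of normal basis generators in $W(\FF_{q^n})$ is the full preimage of the (nonempty) set of residue-field normal basis generators, hence nonempty, clopen, and compact; that the trace maps carry $X_n$ into $X_m$ when $m\mid n$ (your $m$-periodicity computation is correct and also proves the gluing of the $\bar\tau_n$); and that the inverse limit of the resulting projective system of nonempty compact spaces over the directed poset $(\ZZ_{\ge 1},\mid)$ is nonempty by the finite-intersection-property argument, with the finite case handled by pushing down from a common multiple. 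This is a clean and correct replacement for the external citation; it makes the proof self-contained at the modest cost of a compactness argument, and it usefully pins down the exact obstruction (trace maps $X_n\to X_m$ need not be surjective, so a naive induction along the tower fails) that Lundstr\"om's theorem or the inverse-limit trick must circumvent. The remaining verifications --- $\bar\tau_E^2=I$, $\bar\tau_E\pi=\pi^{-1}\bar\tau_E$, restriction and trace compatibility for $\bar\tau_L$ and $\bar\tau_E$, and continuous extension to $W(\overline\k)$ --- match the paper's.
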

Such an involution will be referred to as an {\em anti-algebraic involution of the
Witt vectors}.
\begin{proof}
Let $E\contains \k$ be a finite extension of degree $r.$
Recall that an element $\theta_E \in W(E)$ is a
{\em normal basis generator} if the collection $\theta_E, \pi\theta_E,\pi^2\theta_E,\cdots,
\pi^{r-1}\theta_E$ forms a basis of the lattice $W(E)$ over $W(\k).$  By simplifying and
extending the argument in \cite{Lenstra}, P.~Lundstr\"om showed \cite{Lundstrom} that
there exists a compatible collection $\{ \theta_E\}$ of normal basis generators of
$W(E)$ over $W(\k)$, where $E$ varies over all finite extensions of $\k,$ and where
``compatible''
means that $\Tr_{L/E}(\theta_L) = \theta_E$ for any finite extension
$L \contains E \contains \k.$
Let us fix, once and for all, such a collection of generators.  This is equivalent to fixing
a ``normal basis generator" $\theta$ of the free rank one module
\[ \underset{\underset{E}{\longleftarrow}}{\lim\ } W(E)\]
over the group ring
\[ W[[G]] = \underset{\underset{ E}\longleftarrow}{\lim\ } W(\k)[\Gal(E/\k)].\]

For each finite extension $E/\k$ define $\tau_E:W(E) \to W(E)$ by
\[ \tau_E\left(\sum_{i=0}^{r-1} a_i \pi^i\theta_E\right) :=
\sum_{i=0}^{r-1} a_i \pi^{-i} \theta_E =
\sum_{i=0}^{r-1} a_i \pi^{r-i} \theta_E\]
where $a_0,a_1,\cdots, a_{r-1} \in W(\k).$
Then $\tau_E^2=I$ and $\tau_E\pi = \pi^{-1}\tau_E.$  We refer to $\tau_E$ as an
anti-algebraic involution of $W(E).$  The mapping $\tau_E$ is an isometry (hence, continuous)
because it takes units to units.  To see this, suppose $v \in W(E)$ is a unit and set
$\tau_E(v) = p^ru$ where $u\in W(E)$ is a unit.  Then $v = \tau_E^2(v) = p^r\tau_E(u) \in p^rW(E)$
is a unit, hence $r=0$.

Next, we wish to show, for every finite extension $L \contains E \contains \k,$ that
$\tau_L|W(E) =\tau_E$ (so that $\tau_E$ is well defined) and that 
$\tau_E\circ\Tr_{L/E} = \Tr_{L/E}\circ \tau_L.$  We have an exact sequence
\begin{diagram}[size=2em]
1 &\rTo& \Gal(L/E) & \rTo & \Gal(L/\k) & \rTo & \Gal(E/\k) & \rTo & 1.
\end{diagram}
For each $h \in \Gal(E/\k)$ choose a lift $\hat h \in \Gal(L/\k)$ so that
\[\Gal(L/\k) = \{ \hat h g:\ h \in \Gal(E/\k), g \in \Gal(L/E)\}.\]
Let $ x = \sum_{h \in \Gal(E/\k)} a_h h\theta_E \in W(E)$ where
$a_h \in W(\k).$ Then
\begin{align*}
x &= \sum_{h \in \Gal(E/\k)} a_h h\sum_{g \in \Gal(L/E)} g\theta_L\\
&= \sum_{h \in \Gal(E/\k)} a_h \sum_{g \in \Gal(L/E)}\hat h g \theta_L\\
\intertext{so that}
\tau_L(x) &= \sum_{h \in \Gal(E/\k)} a_h \sum_{g \in \Gal(L/E)} \hat h^{-1} g^{-1} \theta_L\\
&= \sum_{h \in \Gal(E/\k)} a_h \hat h^{-1} \sum_{g \in \Gal(L/E)} g^{-1}\theta_L\\
&= \sum_{h \in \Gal(E/\k)}a_h h^{-1} \theta _E = \tau_E(x).
\end{align*}
To verify that $\tau_E\circ\Tr_{L/E}(x) = \Tr_{L/E}\circ\tau_L(x)$ it suffices to consider
basis vectors $x = \hat h g \theta_L$ where $g \in \Gal(L/E)$ and $h \in \Gal(E/\k).$
Then $\Tr_{L/E}(x) = h \theta_E$ and
\begin{align*}
\tTr_{L/E}(\tau_L(x)) &= \sum_{y \in \Gal(L/E)} y \hat h^{-1} g^{-1} \theta_L\\
&= \hat h^{-1} \sum_{z \in \Gal(L/E)}z\theta_L \\
&= h^{-1}\tTr(\theta_L) = \tau_E\tTr_{L/E}{(x)}. \end{align*}
It follows that the collection of involutions $\{\tau_E\}$ determines an involution 
\[ \bar\tau:W_0(\overline\k) \to W_0(\overline\k)\]
of the maximal unramified extension of $W(\k)$. It is a continuous isometry (so it takes 
units to units) and it satisfies the conditions (\ref{item-1}) to (\ref{item-4}).  
Therefore it extends uniquely and continuously to the completion $W(\bar\k)$. 
\end{proof}

\quash{
Let $\taub':W(\overline \k) \to W(\overline \k)$ be another anti-algebraic involution of the
Witt vectors, that is, a continuous $W(\k)$-linear isomorphism that satisfies conditions
(\ref{item-1}) to (\ref{item-4}) above.  We will say that $\taub$ and $\taub'$ are
{\em conjugate} if there exists a continuous $W(\k)$-linear isomorphism
$\psi:W(\overline \k) \to W(\overline \k)$ such that $\psi$ commutes with the action of
$\Gal(\overline \k/\k)$ and such that
\[ \taub' = \psi \circ \taub \circ \psi^{-1}.\]

Let $W = W(\k)$ and let $G = \Gal(\overline \k/\k).$  Let $W[G]^{\times}$ be the group of
invertible
elements in the group ring of $G.$  Using multiplicative notation for the additive group
$\ZZ/(2)$, write $\ZZ/(2) = \{ 1, u \} = \langle u \rangle$ where
$u^2 = 1.$  The group $\langle u \rangle$ acts $W$-linearly on $W[G]^{\times}$ by setting
$u(g) = g^{-1}$ for any $g \in G.$
For simplicity write $u(x) = \tilde{x}$ for any $x \in W[G]^{\times}.$
Let $H^1(\langle u \rangle,W[G]^{\times})$ be the first non-Abelian
cohomology set with respect to this action.
Any element $x \in W[G]^{\times}$ determines a 1-chain,
\[ h_x:\ZZ/(2) \to W[G]^{\times}\]
by $h_x(1) = 1$ and $h_x(u) = x.$  Then $h_x$ is a 1-cocycle if and only if
$x \tilde{x} = 1.$ Two such cocyles $h_x,h_{x'}$ are cohomologous if there exists
$y \in W[G]^{\times}$ such that $x' = \tilde{y}xy^{-1}.$

\begin{prop}  A choice of normal basis generator $\theta = \left\{ \theta_E \right\}$
 determines a natural one to one correspondence between the set of
conjugacy classes of anti-algebraic involutions $W(\overline \k) \to
W(\overline \k)$ and elements of the non-Abelian cohomology group
\[ H^1(\langle u \rangle, W[G]^{\times})\]
\end{prop}

\begin{proof}  Fix $\theta = \left\{ \theta_E \right\}$ as above.
First let us consider the case of a finite extension $E/\k$ of degree $r.$
Let $W = W(\k)$, and let   The group
$G_{E/\k} = \Gal(E/\k) \cong \ZZ/(r)$ is generated by $\pi$ so we obtain a
canonical isomorphism $W[G] \cong W[X]/(X^r-1).$  The normal basis element
$\theta_E$ determines a $W(k)$-linear isomorphism
\[ W[G_{E/\k}] \cong W(E)\]
which assigns to any polynomial $g(X) \in W[X]/(X^r-1)$ the element
$x=g(\pi)\theta_E,$ which is to say that every element $x\in W(E)$ has a
unique expression $x = \sum_{i=0}^{r-1} w_i \pi^i\theta_E$ where $w_i \in W(\k).$
If $\tau_E:W(E) \to W(E)$ is an anti-algebraic involution then
$\tau (x) = g(\pi^{-1})\tau \theta_E.$

Let $f \in W[X]/(X^n-1)$ be the unique polynomial such that
$ \tau_E(\theta_E) = f(\pi)\theta_E.$  Then
\[ \theta_E = \tau_E^2\theta_E = f(\pi^{-1})f(\pi)\theta_E\]
from which it follows that $ f(X^{-1})f(X) = 1 \text{ in } W[X]/(X^r-1)$,
that is, $f(\pi)\in W[G_{E/k}]$ is invertible (or equivalently, $f(X)$ is
relatively prime to $x^r-1$ in $W[X]$), and the 1-chain that it defines
is a 1-cocycle.  Thus $\tau_E$ determines an element $[f] \in
H^1(\langle u \rangle, W(E)^{\times}).$

Now suppose that $\tau'_E$ is another anti-algebraic involution with 1-cocycle $f'(\pi)$
and suppose that $\tau'_E$ is conjugate to $\tau_E$ via some $W$-module isomorphism
$\psi:W[G_{E/k}] \to W[G_{E/k}]$ that commutes with the action of $G_{E/k}.$  Then
$\psi(\theta) = g(\pi)\theta$ for some invertible element $g(\pi) \in
W[G_{E/k}].$  It follows that $\psi(x) = g(\pi)x$ for any $x \in W(E)$ and that
\[ f'(\pi)\theta=\tau'_E(\theta) = \psi^{-1}\tau_E\psi(\theta) = g(\pi)^{-1}g(\pi^{-1})f(\pi)
\tau_E(\theta)\]
so $f'(\pi) =\tilde{g}(\pi) f(\pi) g(\pi)^{-1}.$  Thus the cocycles
$f$ and $f'$ are cohomologous. \cite{Serre}

Now suppose that $\k \subset E\subset L$ are finite extensions and that $\theta_E =
\Tr_{L/E}\theta_L$ are normal basis generators with associated involutions $\tau_L, \tau_E.$
Then ???
\end{proof}

}

Starting in Section \ref{sec-real-Dieudonne}) we make a choice, once and for all, of a $W(k)$-linear
anti-algebraic involution $\bar\tau:W(\bar k) \to W(\bar k)$, where 
$\k = k = \FF_{p^a}$ is the finite field that was  fixed in \S \ref{sec-ordinary}.

\quash{
 Then Frobenius reciprocity gives (cf.~\cite{Demazure})
\begin{align*}
M(A'') &\cong \hHom_{W(k)}\left(\hHom_{W(\bar k)}(T'_B\otimes W(\bar k), W(\bar k))^{\Gal},
W(k)\right)\\
&\cong \hHom_{W(\bar k)}\left( \hHom_{W(\bar k)}(T'_B\otimes W(\bar k), W(\bar k)), W(\bar k)
\right)^{\Gal}\\
&\cong \left(T'_B\otimes W(\bar k)\right)^{\Gal}.\end{align*}
Here, the action of $\Gal$ is given by: $\pi.(t_B'\otimes w) = F_Bt_B'\otimes \sigma^a(w),$
and $\mathcal F(t'_B\otimes w) = pt'_B\otimes\sigma(w)$ as in \cite{Demazure}.
This gives
\begin{align*}
M(A^{\prime\prime}) &= \left(\hHom_{\ZZ_p}(T^{\prime\prime}_A, \ZZ_p)\otimes W(\bar k)\right)^
{\Gal}\\
&= \hHom_{\ZZ_p}(T^{\prime\prime}_A,W(\bar k))^{\Gal} \end{align*}
where $(\pi.\psi)(t^{\prime\prime}_A) = \sigma^a\left(\psi(V_A(t^{\prime\prime}_A))\right).$
Therefore
this Dieudonn\'e module consists of $\ZZ_p$-linear homomorphism
$\psi:T^{\prime\prime}_A \to W(\bar k)$ such that
\begin{equation}\label{eqn-equivariant2}
\psi(t^{\prime\prime}_A) = \sigma^a(\psi(V_At^{\prime\prime}_A)).  \end{equation}
and $\mathcal F.\psi(t^{\prime\prime}_A) = p\sigma(\psi(t^{\prime\prime}_A).$
}


\end{document}